\newcommand{\bbA}{{\mathbb{A}}}
\newcommand{\bbB}{{\mathbb{B}}}
\newcommand{\bbC}{{\mathbb{C}}}
\newcommand{\bbN}{{\mathbb{N}}}
\newcommand{\bbR}{{\mathbb{R}}}
\newcommand{\bbZ}{{\mathbb{Z}}}
\newcommand{\cC}{{\mathcal C}}
\newcommand{\cM}{{\mathcal M}}
\newcommand{\cU}{{\mathcal U}}
\newcommand{\dott}{\,\cdot\,}
\newcommand{\no}{\notag}
\newcommand{\lb}{\label}
\newcommand{\f}{\frac}
\newcommand{\ol}{\overline}
\newcommand{\wti}{\widetilde}
\newcommand{\oh}{o}
\newcommand{\Arc}{\text{\rm{Arc}}}
\newcommand{\supp}{\text{\rm{supp}}}
\newcommand{\bi}{\bibitem}
\newcommand{\hatt}{\widehat}
\newcommand{\spn}{{\text{\rm span}}}
\newcommand{\st}{\;|\;}
\renewcommand{\Re}{\text{\rm Re}}
\renewcommand{\Im}{\text{\rm Im}}
\newcommand{\abs}[1]{\lvert#1\rvert}
\newcommand{\norm}[1]{\left\Vert#1\right\Vert}
\newcommand{\ph}[1]{\phantom{#1}}
\newcommand{\Om}{\Omega}
\newcommand{\si}{\sigma}
\newcommand{\om}{\omega}
\newcommand{\la}{\lambda}
\newcommand{\al}{\alpha}
\newcommand{\ga}{\gamma}
\newcommand{\De}{\Delta}
\newcommand{\de}{\delta}
\newcommand{\te}{\theta}
\newcommand{\Te}{\Theta}
\newcommand{\ze}{\zeta}
\newcommand{\veps}{\varepsilon}
\newcommand{\C}{\mathbb{C}}
\newcommand{\Cz}{{\C\backslash\{0\}}}
\newcommand{\D}{\mathbb{D}}
\newcommand{\dD}{{\partial\hspace*{.2mm}\mathbb{D}}}
\newcommand{\Z}{{\mathbb{Z}}}
\newcommand{\N}{{\mathbb{N}}}
\newcommand{\U}{{\mathbb{U}}}
\newcommand{\V}{{\mathbb{V}}}
\newcommand{\W}{{\mathbb{W}}}
\newcommand{\T}{{\mathbb{T}}}
\renewcommand{\L}{{\mathbb{L}}}
\newcommand{\Cm}{{\mathbb{C}^{m\times m}}}
\newcommand{\s}[1]{{\mathrm{s}(#1)}}
\newcommand{\sm}[1]{{\mathrm{s}(#1)^m}}
\newcommand{\smn}[1]{{\mathrm{s}(#1)^{m\times n}}}
\newcommand{\smm}[1]{{\mathrm{s}(#1)^{m\times m}}}
\newcommand{\ltm}[1]{{\ell^2(#1)^m}}
\newcommand{\ltzm}[1]{{\ell_0^2(#1)^{m}}}
\newcommand{\ltmn}[1]{{\ell^2(#1)^{m\times n}}}
\newcommand{\ltmm}[1]{{\ell^2(#1)^{m\times m}}}
\newcommand{\Ltm}[1]{{L^2(\dD;d\Om_{#1}(\cdot,k_0))}}
\newtheorem{theorem}{Theorem}[section]
\newtheorem{lemma}[theorem]{Lemma}
\newtheorem{corollary}[theorem]{Corollary}
\newtheorem{hypothesis}[theorem]{Hypothesis}
\theoremstyle{definition}
\newtheorem{definition}[theorem]{Definition}
\newtheorem{remark}[theorem]{Remark}
\allowdisplaybreaks \numberwithin{equation}{section}
\begin{document}

\title[Weyl--Titchmarsh Theory and Uniqueness results for CMV operators]
{Weyl--Titchmarsh Theory and Borg--Marchenko-type Uniqueness Results for CMV Operators with Matrix-Valued Verblunsky Coefficients}

\author[S.\ Clark, F.\ Gesztesy, and M.\ Zinchenko]
{Stephen Clark, Fritz Gesztesy, and Maxim Zinchenko}

\address{Department of Mathematics \& Statistics,
University of Missouri, Rolla, MO 65409, USA}
\email{sclark@umr.edu}
\urladdr{http://web.umr.edu/\~{}sclark/index.html}

\address{Department of Mathematics,
University of Missouri, Columbia, MO 65211, USA}
\email{fritz@math.missouri.edu}
\urladdr{http://www.math.missouri.edu/personnel/faculty/gesztesyf.html}

\address{Department of Mathematics,
California Institute of Technology, Pasadena, CA 91125, USA}
\email{maxim@caltech.edu}
\urladdr{http://www.math.caltech.edu/\~{}maxim}

\dedicatory{Dedicated with great pleasure to Eduard Tsekanovskii on the occasion
of his 70th birthday}

\thanks{Based upon work supported by the US National Science
Foundation under Grants No.\ DMS-0405526 and DMS-0405528.}
\subjclass{Primary 34E05, 34B20, 34L40;  Secondary 34A55}
\keywords{CMV operators, matrix-valued orthogonal polynomials, finite difference operators, Weyl--Titchmarsh theory, Borg--Marchenko-type uniqueness theorems.}
\thanks{{\it Operators and Matrices.} {\bf 1}, 535--592 (2007).}

\date{\today}

\begin{abstract}
We prove local and global versions of Borg--Marchenko-type uniqueness theorems for
half-lattice and full-lattice CMV operators (CMV for Cantero, Moral, and Vel\'azquez 
\cite{CMV03}) with matrix-valued Verblunsky coefficients. While our half-lattice results are formulated in terms of matrix-valued Weyl--Titchmarsh functions, our full-lattice results involve the diagonal and main off-diagonal Green's matrices.

We also develop the basics of Weyl--Titchmarsh theory for CMV operators with
matrix-valued Verblunsky coefficients as this is of independent interest and an essential ingredient in proving the corresponding Borg--Marchenko-type uniqueness theorems.
\end{abstract}

\maketitle

\section{Introduction}\lb{s1}

Since Borg--Marchenko-type uniqueness theorems were first formulated in the context of scalar Schr\"odinger operators on half-lines, we start with a brief review of these results: Let $H_j = -\f{d^2}{dx^2} + V_j$, $V_j\in L^1 ([0,R]; dx)$ for all $R>0$, $V_j$
real-valued, $j=1,2$, be two self-adjoint operators in $L^2 ([0,\infty); dx)$ which, just for simplicity, have a Dirichlet boundary condition at $x=0$ (and possibly a self-adjoint boundary condition at infinity). Let $m_j(z)$, $z\in\bbC\backslash\bbR$, be the
Weyl--Titchmarsh $m$-functions associated with $H_j$, $j=1,2$. Then the celebrated Borg--Marchenko uniqueness theorem, in this particular context, reads as follows:

\begin{theorem} \lb{t1.1} 
Suppose
\begin{equation}
m_1(z) = m_2(z), \;\; z\in\bbC\backslash\bbR, \, \text{ then } \,
V_1(x) = V_2(x) \, \text{ for a.e.\  $x\in [0,\infty)$.}   \lb{1.1}
\end{equation}
\end{theorem}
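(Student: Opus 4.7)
The plan is to reduce the equality of $m$-functions to equality of spectral measures, and then invoke classical inverse spectral theory to recover the potential. Both steps are standard once the requisite machinery is in place.

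First I would exploit the fact that each Weyl--Titchmarsh function $m_j$ is a Herglotz (Nevanlinna) function on $\bbC\backslash\bbR$ and hence admits the integral representation
\begin{equation}
m_j(z) = c_j + \int_\bbR \left(\frac{1}{\lambda - z} - \frac{\lambda}{1+\lambda^2}\right) d\rho_j(\lambda), \quad z\in\bbC\backslash\bbR,
\end{equation}
where $c_j\in\bbR$ and $d\rho_j$ is the spectral measure of $H_j$ associated with the Dirichlet boundary condition at $x=0$. The Stieltjes inversion formula
\begin{equation}
d\rho_j(\lambda) = \pi^{-1}\lim_{\varepsilon\downarrow 0}\Im\bigl(m_j(\lambda+i\varepsilon)\bigr)\,d\lambda
\end{equation}
(in the weak-$*$ sense) then shows that the hypothesis $m_1(z)=m_2(z)$ on $\bbC\backslash\bbR$ forces $d\rho_1=d\rho_2$.

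Second, I would appeal to the Gelfand--Levitan (equivalently, Marchenko) inverse spectral theorem: a self-adjoint half-line Schr\"odinger operator with a real-valued, locally integrable potential and fixed separated boundary condition at $x=0$ is uniquely determined by its spectral measure. Concretely, one constructs the transformation kernel $K(x,y)$ as the unique solution of the Gelfand--Levitan integral equation driven by $d\rho := d\rho_1 = d\rho_2$, and recovers the potential via $V(x) = 2\frac{d}{dx}K(x,x)$; this forces $V_1 = V_2$ almost everywhere on $[0,\infty)$.

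The principal obstacle is the inverse-theoretic second step, which carries the deep analytic content of the argument. A more modern route that avoids the Gelfand--Levitan apparatus is Simon's $A$-function representation: for large $\kappa>0$ one has $m(-\kappa^2) = -\kappa - \int_0^a A(\alpha)e^{-2\alpha\kappa}\,d\alpha + O(e^{-2a\kappa})$ as $\kappa\to\infty$, with $A$ continuous on $[0,\infty)$ and $A(\alpha) = V(\alpha) + O(\alpha)$; a Laplace-transform uniqueness argument converts $m_1 \equiv m_2$ into $A_1 \equiv A_2$, hence $V_1 = V_2$, and in fact yields the stronger \emph{local} version of the statement. In either route, the Herglotz reduction of Step 1 is essentially automatic; the difficulty lies entirely in encoding the potential back out of the spectral data, and it is precisely the analogue of this step that the present paper must develop for CMV operators with matrix-valued Verblunsky coefficients.
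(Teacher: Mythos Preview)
The paper does not actually prove Theorem~\ref{t1.1}; it is stated in the introduction as a classical result, attributed to Marchenko \cite{Ma50}, \cite{Ma52} and Borg \cite{Bo52}, with the Gelfand--Levitan machinery \cite{GL51} and Krein's variants \cite{Kr51}, \cite{Kr53} cited as the known alternative routes. There is therefore no ``paper's own proof'' to compare against.

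That said, your sketch is correct in outline and in fact coincides with the approaches the paper explicitly names: the Herglotz/Stieltjes reduction to equality of spectral measures, followed by Gelfand--Levitan reconstruction, is precisely the route the paper cites as the standard alternative to Borg's and Marchenko's original arguments. Your second suggested route via Simon's $A$-function representation is the content of \cite{Si98}, which the paper invokes for the \emph{local} refinement, Theorem~\ref{t1.2}, rather than for Theorem~\ref{t1.1} itself. So your proposal is sound, but be aware that you are reproducing a classical argument the paper only surveys; the paper's own contribution is the CMV analogue in Section~\ref{s5}, not a new proof of the Schr\"odinger case.
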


This result was published by Marchenko \cite{Ma50} in 1950. Marchenko's extensive treatise on spectral theory of one-dimensional Schr\"odinger operators \cite{Ma52}, repeating the proof of his uniqueness theorem, then appeared in 1952, which also marked the appearance of Borg's proof of the uniqueness theorem \cite{Bo52} (apparently, based on his lecture at the 11th Scandinavian Congress of Mathematicians held at Trondheim, Norway in 1949).

We emphasize that Borg and Marchenko also treat the general case of
non-Dirichlet boundary conditions at $x=0$ (in which equality of the two $m$-functions also identifies the two boundary conditions), moreover, Marchenko also simultaneously discussed the half-line and the finite interval case. For brevity we chose to illustrate the simplest possible case only.

To the best of our knowledge, the only alternative approaches to Theorem \ref{t1.1}
are based on the Gelfand--Levitan solution \cite{GL51} of the inverse spectral problem published in 1951 (see also Levitan and Gasymov \cite{LG64}) and alternative variants due to M.~Krein \cite{Kr51}, \cite{Kr53}. For over 45 years, Theorem \ref{t1.1} stood the test of time and resisted any improvements. Finally, in 1998, Simon \cite{Si98} proved the following spectacular result, a local Borg--Marchenko theorem (see part $(i)$ below) and a significant improvement of the original Borg--Marchenko theorem (see part $(ii)$ below):

\begin{theorem}\lb{t1.2} ${}$ \\
$(i)$ Let $a>0$, $0<\veps<\pi/2$ and
suppose that
\begin{equation} \lb{1.4}
|m_1 (z) - m_2(z)| \underset{|z|\to\infty}{=}  O(e^{-2\Im (z^{1/2})a})
\end{equation}
along the ray $\arg(z) = \pi-\veps$. Then
\begin{equation} \lb{1.5}
V_1(x) = V_2 (x) \, \text{ for a.e.\ $x\in [0,a]$.}
\end{equation}
$(ii)$ Let $0<\veps <\pi/2$ and suppose
that for all $a>0$,
\begin{equation}
|m_1(z) - m_2(z)| \underset{|z|\to\infty}{=} O(e^{-2\Im (z^{1/2})a})   \lb{1.6}
\end{equation}
along the ray $\arg(z) = \pi -\veps$. Then
\begin{equation}
V_1(x) = V_2(x) \, \text{ for a.e.\  $x\in [0,\infty)$.}    \lb{1.7}
\end{equation}
\end{theorem}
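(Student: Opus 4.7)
The plan is to reduce everything, via Simon's $A$-amplitude representation of the Weyl--Titchmarsh function, to a local inversion problem: first identify the $A$-amplitudes of $m_1$ and $m_2$ on $[0,a]$, then invert the map $V \mapsto A$ locally on $[0,a]$. Part $(ii)$ then follows from $(i)$ upon recognizing that \eqref{1.6} is exactly the hypothesis of $(i)$ for every $a>0$.

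The main analytic input is Simon's high-energy representation of the $m$-function: for each $V_j$ there exists an $A$-amplitude $A_j \in L^1_{\loc}([0,\infty); d\alpha)$ such that, for $\Re\kappa$ large on the relevant ray (note that $\Re\sqrt{-z} = \Im(z^{1/2})$ along $\arg z = \pi-\veps$),
\begin{equation*}
m_j(-\kappa^2) = -\kappa - \int_0^a A_j(\alpha)\, e^{-2\alpha\kappa}\, d\alpha + R_j(\kappa,a), \qquad R_j(\kappa,a) = o\big(e^{-2a\Re\kappa}\big).
\end{equation*}
Two features of this representation are essential: \emph{locality}, in that $A_j|_{[0,a]}$ depends only on $V_j|_{[0,a]}$ through a Volterra-type integral relation, and \emph{invertibility} of that relation, so that $V_j$ is recovered a.e.\ on $[0,a]$ from $A_j|_{[0,a]}$.

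Subtracting the two representations and invoking hypothesis \eqref{1.4} yields
\begin{equation*}
\int_0^a \big[A_1(\alpha) - A_2(\alpha)\big]\, e^{-2\alpha\kappa}\, d\alpha \underset{\Re\kappa\to\infty}{=} O\big(e^{-2a\Re\kappa}\big)
\end{equation*}
along the ray. Since $A_1 - A_2 \in L^1([0,a])$, the left-hand side is an entire function of $\kappa$ of exponential type at most $2a$, bounded in the right half-plane. Combining this with the ray decay via a Phragm\'en--Lindel\"of argument applied to $e^{2a\kappa}\int_0^a [A_1-A_2](\alpha) e^{-2\alpha\kappa}d\alpha$ forces that Laplace transform to vanish identically in $\kappa$. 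Consequently $A_1 \equiv A_2$ a.e.\ on $[0,a]$, and inverting the Volterra correspondence produces \eqref{1.5}.

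The principal obstacle is establishing the $A$-amplitude representation with a sharp remainder estimate together with its locality property; this is the heart of Simon's 1998 work and rests on a delicate high-energy asymptotic analysis of the spatially-dependent Weyl function $m(z,x)$, for instance via iteration of the Riccati equation $m'(z,x) + m(z,x)^2 = V(x) - z$ with boundary condition $m(z,x) \sim -\sqrt{-z}$ as $\Re\sqrt{-z}\to\infty$, or equivalently through a layer-stripping argument on the transfer matrix of the underlying Schr\"odinger equation. Once this machinery is in place, the Phragm\'en--Lindel\"of step is standard and the local inversion $A\mapsto V$ is elementary.
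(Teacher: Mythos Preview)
The paper does not prove Theorem~\ref{t1.2}; it is quoted in the introduction purely as historical motivation, attributed to Simon~\cite{Si98} (with the precise formulation taken from~\cite{GS00a}). There is therefore no ``paper's own proof'' to compare against.

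That said, your sketch is a faithful outline of Simon's original approach via the $A$-amplitude, and the overall strategy is sound. Two points deserve care if you flesh this out. First, the remainder estimate in the truncated $A$-representation is more delicate than stated: under the minimal hypothesis $V\in L^1_{\loc}$ the error term $R_j(\kappa,a)$ is \emph{a priori} only $O(e^{-2a\Re\kappa})$ (coming from $\int_a^\infty A_j(\alpha)e^{-2\alpha\kappa}\,d\alpha$ together with $|A_j(\alpha)-V_j(\alpha)|$ growth bounds), not $o(e^{-2a\Re\kappa})$; getting the argument to close with an $O$-type hypothesis on $m_1-m_2$ and an $O$-type remainder requires either a slightly sharper form of the representation or the more direct argument of~\cite{GS00a}. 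Second, the Phragm\'en--Lindel\"of step needs more than a single ray: the Laplace transform $F(\kappa)=\int_0^a[A_1-A_2](\alpha)e^{-2\alpha\kappa}\,d\alpha$ is bounded in $\Re\kappa\ge0$ and the hypothesis gives $e^{2a\kappa}F(\kappa)$ bounded on one ray in that half-plane, but you still need control on another boundary (typically the imaginary axis, where one uses that $e^{2a\kappa}F(\kappa)=\int_0^a[A_1-A_2](\alpha)e^{2(a-\alpha)\kappa}\,d\alpha$ is bounded) to force $F\equiv0$.

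The paper itself notes that \cite{GS00a} gives a ``short and fairly elementary'' proof that bypasses the full $A$-amplitude machinery; that alternative works directly with asymptotics of Weyl solutions and is closer in spirit to the CMV arguments developed later in the paper.
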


The ray $\arg(z) = \pi -\veps$, $0<\veps < \pi/2$ chosen in
Theorem \ref{t1.2} is of no particular importance. A limit taken along any non-self-intersecting curve $\cC$ going to infinity in the sector
$\arg(z)\in ((\pi/2)+\veps, \pi -\veps)$ is permissible. For simplicity we only discussed the Dirichlet boundary condition $u(0)=0$ thus far. However, everything extends to the case of general boundary conditions $u'(0) + h u(0) = 0$, $h\in\bbR$.  Moreover, the case of a finite interval problem on $[0,b]$, $b\in (0,\infty)$, instead of the half-line $[0,\infty)$ in Theorem \ref{t1.2}\,$(i)$, with $0<a<b$, and a self-adjoint boundary condition at $x=b$ of the type $u'(b) + h_b u(b) = 0$,  $h_b \in \bbR$, can be handled as well. All of this is treated in detail in \cite{GS00a}.

Remarkably enough, the local Borg--Marchenko theorem proven by Simon \cite{Si98} was just a by-product of his new approach to inverse spectral theory for half-line
Schr\"odinger operators. Actually, Simon's original result in \cite{Si98} was obtained under a bit weaker conditions on $V$; the result as stated in Theorem \ref{t1.2} is taken from \cite{GS00a} (see also \cite{GS00}). While the original proof of the local
Borg--Marchenko theorem in \cite{Si98} relied on the full power of a new formalism in inverse spectral theory, a short and fairly elementary proof of Theorem \ref{t1.2} was presented in \cite{GS00a}. Without going into further details at this point, we also mention that \cite{GS00a} contains the analog of the local Borg--Marchenko uniqueness result, Theorem \ref{t1.2} for Schr\"odinger operators on the real line. In addition, the case of half-line Jacobi operators and half-line matrix-valued Schr\"odinger operators was dealt with in \cite{GS00a}.

We should also mention some work of Ramm \cite{Ra99}, \cite{Ra00}, who provided a  proof of Theorem~\ref{t1.2}\,$(i)$ under the additional assumption that $V_j$ are
short-range potentials satisfying $V_j\in L^1([0,\infty); (1+|x|)dx)$, $j=1,2$.
A very short proof of Theorem \ref{t1.2}, close in spirit to Borg's original paper
\cite{Bo52}, was subsequently found by Bennewitz \cite{Be01}. Still other proofs were presented in \cite{Ho01} and \cite{Kn01}. Various local and global uniqueness results for matrix-valued Schr\"odinger, Dirac-type, and Jacobi operators were considered in
\cite{CG02}, \cite{FKRS07}, \cite{GKM02}, \cite{Sa90}, \cite{Sa02}, and \cite{Sa06}. A local
Borg--Marchenko theorem for complex-valued potentials has been proved in
\cite{BPW02}; the case of semi-infinite Jacobi operators with complex-valued coefficients was studied in \cite{We04}. This circle of ideas has been reviewed in \cite{Ge07}.

After this review of Borg--Marchenko-type uniqueness results for Schr\"odinger operators, we now turn to the principal object of our interest in this paper, the so-called CMV operators. CMV operators are a special class of unitary semi-infinite five-diagonal matrices. But for simplicity, we confine ourselves in this introduction to a discussion of CMV operators on $\bbZ$, that is, doubly infinite CMV operators. Let $\alpha$ be a sequence of $m\times m$ matrices, $m\in\bbN$, with entries in $\bbC$,
$\alpha=\{\al_k\}_{k \in \Z}$ such that $\|\alpha_k \|_{\bbC^{m\times m}} < 1$,
$k\in\bbZ$. The unitary operator $\U$ on $\ltm{\Z}$ then can be written as a special five-diagonal doubly infinite matrix in the standard basis of $\ell^2(\bbZ)^m$ as in 
\eqref{3.18}. For the corresponding half-lattice CMV operators $\U_{+,k_0}$, in 
$\ell^2([k_0,\infty)\cap\bbZ)^m$ we refer to \eqref{3.33} and \eqref{3.34}.

The actual history of CMV operators (with scalar coefficients $\alpha_k \in\bbC$, $k\in\bbZ$) is quite interesting: The corresponding unitary semi-infinite five-diagonal matrices were first introduced in 1991 by
Bunse--Gerstner and Elsner \cite{BGE91}, and subsequently discussed in detail by Watkins \cite{Wa93} in 1993 (cf.\ the recent discussion in Simon \cite{Si06}). They were subsequently rediscovered by Cantero, Moral, and Vel\'azquez (CMV) in \cite{CMV03}. In \cite[Sects.\ 4.5, 10.5]{Si04}, Simon introduced the corresponding notion of unitary doubly infinite five-diagonal matrices and coined the term ``extended'' CMV matrices. For simplicity, we will just speak of CMV operators whether or not they are half-lattice or full-lattice operators. We also note that in a context different from orthogonal polynomials on the unit circle, Bourget, Howland, and Joye \cite{BHJ03} introduced a family of doubly infinite matrices with three sets of parameters which, for special choices of the parameters, reduces to two-sided CMV matrices on $\bbZ$. Moreover, it is possible to connect unitary block Jacobi matrices to the trigonometric moment problem (and hence to CMV matrices) as discussed by Berezansky and Dudkin \cite{BD05}, \cite{BD06}.

The relevance of this unitary operator $\U$ on $\ell^2(\bbZ)^m$, more precisely, the relevance of the corresponding half-lattice CMV operator $\U_{+,0}$ in
$\ell^2(\bbN_0)^m$ is derived from its intimate relationship with the trigonometric moment problem and hence with finite measures on the unit circle $\dD$. (Here
$\bbN_0=\bbN\cup\{0\}$.)
This will be reviewed in some detail in Section \ref{s3} but we also refer to the
monumental two-volume treatise by Simon \cite{Si04} (see also \cite{Si04b} and
\cite{Si05}) and the exhaustive bibliography therein. For classical results on orthogonal polynomials on the unit circle we refer, for instance, to \cite{Ak65},
\cite{Ge46}--\cite{Ge61}, \cite{Kr45}, \cite{Sz20}--\cite{Sz78},
\cite{Ve35}, \cite{Ve36}. More recent references relevant to the spectral theoretic content of this paper are \cite{De07}, \cite{GJ96}--\cite{GT94},
\cite{GZ06}, \cite{GZ06a}, \cite{GN01}, \cite{PY04}, and \cite{Si04a}. The full-lattice CMV operators $\U$ on $\bbZ$ are closely related to an important, and only recently intensively studied, completely integrable nonabelian version of the defocusing nonlinear
Schr\"odinger equation (continuous in time but discrete in space), a special case of the Ablowitz--Ladik system. Relevant references in this context are, for instance,
\cite{AL75}--\cite{APT04}, \cite{GGH05}, \cite{GH05}--\cite{GHMT07a}, \cite{Li05},
\cite{MEKL95}--\cite{Ne06}, \cite{Sc89}, \cite{Ve99}, and the literature cited therein. We emphasize that the case of matrix-valued coefficients $\alpha_k$ is considerably less studied than the case of scalar coefficients.

We note that our discussion of CMV operators will be undertaken in the spirit of
\cite{GKM02}, where (local and global) uniqueness theorems for full-line (resp.,
full-lattice) problems are  formulated in terms of diagonal Green's matrices $g(z,x_0)$ and their $x$-derivatives $g^\prime (z,x_0)$ at some fixed $x_0\in\bbR$, for matrix-valued Schr\"odinger and Dirac-type operators on $\bbR$ and similarly for matrix-valued Jacobi operators on $\bbZ$. While we prove half-lattice and full-latice uniqueness results in our principal Section \ref{s5}, we now confine ourselves in this introduction to just two typical results for CMV operators on $\bbZ$ with matrix-valued coefficients:

We use the following notation for the diagonal and for the neighboring off-diagonal entries of the Green's matrix of $\U$ (i.e., the discrete integral kernel of $(\U-zI)^{-1}$),
\begin{align}
g(z,k) = (\U-Iz)^{-1}(k,k),   \quad
h(z,k) = \begin{cases}
(\U-Iz)^{-1}(k-1,k), & k \text{ odd}, \\
(\U-Iz)^{-1}(k,k-1), & k \text{ even},
\end{cases}\quad k\in\Z,\; z\in\D.      \lb{1.9}
\end{align}

The next uniqueness result then holds for the full-lattice CMV operator $\U$.

\begin{theorem}  \lb{t1.3}
Let $m\in\N$ and assume $\al=\{\al_k\}_{k\in\Z}$ be a sequence of
$m \times m$ matrices with complex entries such that
$\norm{\al_k}_{\Cm} < 1$ and let $k_0\in\Z$. Then any of the
following two sets of data
\begin{enumerate}[$(i)$]
\item $g(z,k_0)$ and $h(z,k_0)$ for all $z$ in a sufficiently small neighborhood
of the origin under the assumption that $h(0,k_0)$ is invertible;
\item $g(z,k_0-1)$ and $g(z,k_0)$ for all $z$ in a sufficiently small neighborhood
of the origin and $\al_{k_0}$ under the assumption $\al_{k_0}$ is invertible;
\end{enumerate}
uniquely determine the matrix-valued Verblunsky coefficients $\{\al_k\}_{k\in\Z}$, and
hence the full-lattice CMV operator $\U$ defined in \eqref{3.18}.
\end{theorem}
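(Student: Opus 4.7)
The plan is to reduce this full-lattice uniqueness result to the corresponding half-lattice Borg--Marchenko-type uniqueness theorems, which one expects to have proved earlier in Section \ref{s5} in terms of the matrix-valued Weyl--Titchmarsh functions $M_{\pm,k_0}(z)$ associated with the right and left half-lattice CMV operators $\U_{\pm,k_0}$ obtained by decoupling $\U$ at $k_0$. The key algebraic ingredient, which one expects to have derived in the Weyl--Titchmarsh development of Sections \ref{s3}--\ref{s4}, is an explicit formula expressing the diagonal and neighboring off-diagonal entries $g(z,k_0)$ and $h(z,k_0)$ as rational matrix-valued functions of $M_{+,k_0}(z)$, $M_{-,k_0}(z)$, and the local Verblunsky coefficient $\al_{k_0}$. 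Schematically, this will take the form
\begin{equation}
g(z,k_0) = G\bigl(M_{+,k_0}(z),M_{-,k_0}(z),\al_{k_0},z\bigr), \qquad
h(z,k_0) = H\bigl(M_{+,k_0}(z),M_{-,k_0}(z),\al_{k_0},z\bigr),
\end{equation}
with $G$ and $H$ built from a ``Wronskian-type'' denominator $M_{+,k_0}(z) - M_{-,k_0}(z)$ (up to normalization by factors involving $\al_{k_0}$ and the Szeg\H{o} matrix $\rho_{k_0}=(I-\al_{k_0}^{*}\al_{k_0})^{1/2}$).

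\textbf{Part $(i)$.} From the pair $(g(z,k_0), h(z,k_0))$ one would solve the above algebraic system for $M_{\pm,k_0}(z)$. Because the formulas are fractional-linear in each $M_{\pm,k_0}$, the inversion reduces to the invertibility of the matrix $h(0,k_0)$: indeed, the difference $M_{+,k_0}(z) - M_{-,k_0}(z)$ should appear in the denominator of $g$ and $h$, and the product structure of $h$ should allow one to isolate $M_{+,k_0}(z)$ and $M_{-,k_0}(z)$ individually provided a certain factor is invertible at $z=0$; one expects this factor to be precisely (a unit multiple of) $h(0,k_0)$. Once $M_{\pm,k_0}(z)$ are recovered on a neighborhood of $0$, their uniqueness as Herglotz-type matrix functions forces equality of the $M_{\pm,k_0}$'s arising from any two candidate coefficient sequences, and the half-lattice Borg--Marchenko uniqueness theorem for each of $\U_{+,k_0}$ and $\U_{-,k_0}$ then determines $\{\al_k\}_{k\ge k_0+1}$ and $\{\al_k\}_{k\le k_0}$ (or the analogous splitting dictated by the decoupling convention used in Section \ref{s3}), while $\al_{k_0}$ itself is read off from the recovered $M$-functions together with $g(z,k_0)$ through the local formula.

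\textbf{Part $(ii)$.} Here one already knows $\al_{k_0}$, but one only has the two diagonal entries $g(z,k_0-1)$ and $g(z,k_0)$. One would argue that the shift from $k_0$ to $k_0-1$ propagates $M_{\pm,k_0}$ to $M_{\pm,k_0-1}$ via an explicit M\"obius-type transformation governed by $\al_{k_0}$ (a standard Ricatti-type step relation for the CMV Weyl disks). Coupling this with the two formulas
\begin{equation}
g(z,k_0) = G\bigl(M_{+,k_0}(z),M_{-,k_0}(z),\al_{k_0},z\bigr), \qquad
g(z,k_0-1) = G\bigl(M_{+,k_0-1}(z),M_{-,k_0-1}(z),\al_{k_0-1},z\bigr),
\end{equation}
and eliminating $M_{-,k_0}(z)$ (say) between the two relations using the explicit transfer step, one obtains a rational equation for $M_{+,k_0}(z)$ whose solvability hinges on invertibility of $\al_{k_0}$, since $\al_{k_0}$ enters multiplicatively in the coefficient-transfer formula. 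Having recovered $M_{\pm,k_0}(z)$ one again invokes the half-lattice Borg--Marchenko results to conclude.

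The main obstacle I anticipate is bookkeeping rather than conceptual: one must verify that the specific combination of $g$ and $h$ (respectively $g(z,k_0-1)$, $g(z,k_0)$, $\al_{k_0}$) really does determine both $M_{+,k_0}$ and $M_{-,k_0}$ separately, not merely some symmetric function of them. The hypotheses of invertibility of $h(0,k_0)$ in $(i)$ and of $\al_{k_0}$ in $(ii)$ are exactly what is needed to rule out the symmetric/degenerate case, and one must check that the explicit formulas produced in the Weyl--Titchmarsh development of Sections \ref{s3}--\ref{s4} realize this algebraic separation in the matrix-valued setting, where non-commutativity requires somewhat more care than in the scalar case.
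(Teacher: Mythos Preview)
Your outline for Part $(i)$ is essentially the paper's argument: from the explicit formulas \eqref{4.14}, \eqref{4.16}, \eqref{4.17} one first recovers $\al_{k_0}$ (and hence $\rho_{k_0}$, $a_{k_0}$, $b_{k_0}$) from $g(0,k_0)h(0,k_0)^{-1}$, then solves explicitly for $M_-(z,k_0)$ and $M_+(z,k_0)$ as in \eqref{5.51} and \eqref{5.53}, and finally invokes the half-lattice uniqueness Theorem~\ref{t5.1}. Here the inversion really is linear-fractional and the invertibility of $h(0,k_0)$ is exactly what is needed, so your description is accurate.

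For Part $(ii)$ there is a genuine gap, and it is not merely bookkeeping. First, a minor point: your displayed formula for $g(z,k_0-1)$ should not contain $\al_{k_0-1}$; one uses \eqref{4.15} (with $k=k_0$), which expresses $g(z,k_0-1)$ in terms of $M_\pm(z,k_0)$ and $\al_{k_0}$ only. The more serious issue is what happens after you eliminate one of $M_\pm$. Combining the two diagonal Green's matrix entries yields a genuinely \emph{quadratic} (Riccati-type) equation for $\Phi_+(z,k_0)$, namely \eqref{5.59} in the paper,
\[
\Phi_+(z,k_0)A(z,k_0)\al_{k_0}\Phi_+(z,k_0) + B(z,k_0)\Phi_+(z,k_0) - \Phi_+(z,k_0)zg(z,k_0) + \al_{k_0}^* zg(z,k_0)=0,
\]
with $A(0,k_0)=I_m$ and $B(0,k_0)=\al_{k_0}^*\al_{k_0}$. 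A matrix Riccati equation has, in general, multiple solutions, so one must single out the branch with $\Phi_+(0,k_0)=0$ and prove it is the unique small solution. The paper does this via a separate contraction-mapping lemma (Lemma~\ref{l5.4}): the hypothesis that $\al_{k_0}$ is invertible makes $B(0,k_0)=\al_{k_0}^*\al_{k_0}$ invertible, which is precisely what drives the contraction for $|z|$ small. Your proposal asserts that ``invertibility of $\al_{k_0}$'' suffices because it ``enters multiplicatively in the coefficient-transfer formula,'' but this does not address the branch-selection issue; without an argument like Lemma~\ref{l5.4} (or an alternative uniqueness mechanism for the Riccati equation), the step from the data to $\Phi_+(z,k_0)$ is incomplete.
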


In the subsequent local uniqueness result, $g^{(j)}$ and $h^{(j)}$ denote the
corresponding quantities in \eqref{1.9} associated with the
matrix-valued Verblunsky coefficients $\alpha^{(j)}$, $j=1,2$.

\begin{theorem}  \lb{t1.4}
Let $m\in\N$ and assume $\al^{(\ell)}=\{\al_k^{(\ell)}\}_{k\in\Z}$ be sequences of
$m \times m$ matrices with complex entries such that
$\norm{\al_k^{(\ell)}}_{\Cm} < 1$, $k\in\Z$, $\ell=1,2$. Moreover, assume
$k_0\in\Z$, $N\in\N$. Then for the full-lattice
problems associated with $\al^{(1)}$ and $\al^{(2)}$ the following
local uniqueness results hold:
\begin{enumerate}[$(i)$]
\item
If either $h^{(1)}(0,k_0)$ or $h^{(2)}(0,k_0)$ is invertible and
\begin{align}
\begin{split}
&\big\|g^{(1)}(z,k_0)-g^{(2)}(z,k_0)\big\|_\Cm
+ \big\|h^{(1)}(z,k_0)-h^{(2)}(z,k_0)\big\|_\Cm \underset{z\to 0}{=} \oh(z^N), \lb{1.10} \\
& \, \text{then } \, \al^{(1)}_k = \al^{(2)}_k \,\text{ for }\,
k_0-N \leq k\leq k_0+N+1.
\end{split}
\end{align}
\item
If $\al^{(1)}_{k_0}=\al^{(2)}_{k_0}$, $\al^{(1)}_{k_0}$ is
invertible, and
\begin{align}
\begin{split}
&\big\|g^{(1)}(z,k_0-1)-g^{(2)}(z,k_0-1)\big\|_\Cm +
\big\|g^{(1)}(z,k_0)-g^{(2)}(z,k_0)\big\|_\Cm \underset{z\to 0}{=} \oh(z^N), \lb{1.11}  \\
& \, \text{then } \, \al^{(1)}_k = \al^{(2)}_k \,\text{ for }\,
k_0-N-1 \leq k\leq k_0+N+1.
\end{split}
\end{align}
\end{enumerate}
\end{theorem}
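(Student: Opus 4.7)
The plan is to reduce Theorem~\ref{t1.4} to a local Borg--Marchenko uniqueness result on the half-lattice. The Weyl--Titchmarsh theory developed in this paper expresses the Green's matrix of the full-lattice CMV operator $\U$ in terms of the half-lattice Weyl--Titchmarsh matrices $M_\pm(z,k_0)$: both $g(z,k_0)$ and $h(z,k_0)$ are explicit matrix-M\"obius expressions in $M_+(z,k_0)$ and $M_-(z,k_0)$, and this correspondence is analytically invertible on a neighborhood of $z=0$ provided $h(0,k_0)$ is invertible. On the half-lattice side, the Taylor expansion of $M_+(z,k_0)$ at $z=0$ encodes $\al_{k_0},\al_{k_0+1},\dots$ through a Schur-type (coefficient stripping) algorithm, and analogously the Taylor expansion of $M_-(z,k_0)$ encodes the Verblunsky coefficients with indices decreasing from $k_0-1$; each step of the induction requires the inversion of a matrix built from previously recovered data.

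For part~$(i)$, I would first use the invertibility of $h^{(\ell)}(0,k_0)$ (for some $\ell\in\{1,2\}$) to invert the M\"obius-type map locally at $z=0$. Propagating the hypothesis \eqref{1.10} through this inverse gives
\begin{equation*}
\big\|M_\pm^{(1)}(z,k_0)-M_\pm^{(2)}(z,k_0)\big\|_{\Cm} \underset{z\to 0}{=} \oh(z^N).
\end{equation*}
The half-lattice Schur recursion then shows inductively that $\al_k^{(1)}=\al_k^{(2)}$ on a right window (from $M_+$) and on a left window (from $M_-$), and an orderwise count of the recursion shows that the two windows combine to cover precisely $k_0-N\le k\le k_0+N+1$.

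For part~$(ii)$, I would exploit the fact that the five-diagonal structure of $\U$ produces an explicit identity expressing $h(z,k_0)$ in terms of $g(z,k_0-1)$, $g(z,k_0)$, $\al_{k_0}$, and $z$; in particular, invertibility of $\al^{(1)}_{k_0}=\al^{(2)}_{k_0}$ forces $h^{(1)}(0,k_0)$ to be invertible. The hypothesis \eqref{1.11} together with $\al^{(1)}_{k_0}=\al^{(2)}_{k_0}$ then yields
\begin{equation*}
\big\|h^{(1)}(z,k_0)-h^{(2)}(z,k_0)\big\|_{\Cm} \underset{z\to 0}{=} \oh(z^N),
\end{equation*}
so part~$(i)$ applies and gives $\al_k^{(1)}=\al_k^{(2)}$ for $k_0-N\le k\le k_0+N+1$. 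The additional coefficient at $k=k_0-N-1$ comes from the extra left-side datum $g(z,k_0-1)$: running the Schur analysis one further step on the left (effectively at the base point $k_0-1$) recovers one more Verblunsky coefficient, extending the left endpoint of the window by one.

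The principal obstacle is the orderwise bookkeeping in the matrix Schur recursion that pins down the window sizes exactly as stated. In the matrix-valued setting each step of the recursion requires a genuine matrix inversion rather than a scalar division; the invertibility of $h(0,k_0)$, respectively of $\al_{k_0}$, is precisely the hypothesis that keeps every such inversion well-defined, beginning with the inversion of the M\"obius map and continuing through the Schur iterations. Ensuring that these inversions preserve the $\oh(z^N)$ error throughout the induction, and that the cumulative count of recovered coefficients matches $2N+2$ in part~$(i)$ and $2N+3$ in part~$(ii)$, is the main technical content of the proof.
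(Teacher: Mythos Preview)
Your outline for part~$(i)$ is in the right spirit and matches the paper's strategy, but the order you quote for $M_+$ is one unit too low, and this is not just bookkeeping: with $\|M_\pm^{(1)}-M_\pm^{(2)}\|=o(z^N)$ on both sides, Theorem~\ref{t5.2} only yields $\al^{(1)}_k=\al^{(2)}_k$ for $k_0-N\le k\le k_0+N$, missing $\al_{k_0+N+1}$. The paper actually obtains the asymmetric estimates $\|M_+^{(1)}-M_+^{(2)}\|=o(z^{N+1})$ and $\|M_-^{(1)}-M_-^{(2)}\|=o(z^N)$. The extra order on the $+$ side comes from the explicit inversion formula \eqref{5.53}, where $g$ and $h$ enter only through $zg(z,k_0)$ and $zh(z,k_0)$, whereas in \eqref{5.51} for $M_-$ they enter without the $z$ prefactor. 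That built-in factor of $z$ is the mechanism that pushes the right endpoint out to $k_0+N+1$; you should state and use it.

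For part~$(ii)$ your proposed route diverges substantially from the paper and, as written, has a gap. You assert an explicit identity giving $h(z,k_0)$ from $g(z,k_0-1)$, $g(z,k_0)$, $\al_{k_0}$, $z$, and then reduce to part~$(i)$. The paper does not proceed this way, and no such closed-form identity is established (the dependence of $h$ on $g(\cdot,k_0-1)$ and $g(\cdot,k_0)$ goes through $M_\pm$ in a nonlinear, implicit fashion). Instead, the paper combines \eqref{4.14}--\eqref{4.15} into the algebraic identity \eqref{5.54}, which together with \eqref{5.56} yields a genuine matrix Riccati equation \eqref{5.59} for $\Phi_+(z,k_0)$ with coefficients $A,B,C,D$ built from $g(z,k_0)$, $g(z,k_0-1)$, and $\al_{k_0}$ (see \eqref{5.57}). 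The invertibility of $\al_{k_0}$ makes $B(0,k_0)=\al_{k_0}^*\al_{k_0}$ invertible, so the contraction-mapping result Corollary~\ref{c5.4} applies and gives $\|\Phi_+^{(1)}-\Phi_+^{(2)}\|=o(z^{N+1})$ (again a $z$ prefactor in the $C$ and $D$ terms is responsible for the gain of one order). One then reads off $\Phi_-^{-1}$ from \eqref{5.61} and obtains $\|\Phi_-^{(1)}{}^{-1}-\Phi_-^{(2)}{}^{-1}\|=o(z^{N+1})$ as well; Theorem~\ref{t5.2} (parts $(iv)$ and $(viii)$) then delivers the full window $k_0-N-1\le k\le k_0+N+1$ in one stroke, with no separate ``one more step on the left'' argument needed. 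Your proposal does not supply either the Riccati identity or the fixed-point stability lemma that replaces it, so as it stands part~$(ii)$ is incomplete.
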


The special case of CMV operators with scalar Verblunsky coefficients has recently been discussed in \cite{CGZ07}.

Finally, a brief description of the content of each section in this paper: In Section \ref{s3} we develop the basic Weyl--Titchmarsh theory for half-lattice CMV operators with matrix-valued Verblunsky coefficients. The analogous theory for full-line CMV operators is developed in Section \ref{s4}. Weyl--Titchmarsh theory for CMV operators with matrix-valued Verblunsky coefficients is a subject of independent interest and of fundamental importance in the remainder of this paper. Section \ref{s5}  contains our new Borg--Marchenko-type uniqueness results for half-lattice and full-lattice CMV operators with matrix-valued Verblunsky coefficients. Appendix \ref{sA} summarizes basic facts on matrix-valued Caratheodory and Schur functions relevant to this paper.

\section{Weyl--Titchmarsh Theory for Half-Lattice CMV Operators \\ with
Matrix-Valued Verblunsky Coefficients}
\lb{s3}

In this section we present the basics of Weyl--Titchmarsh theory for half-lattice CMV operators with matrix-valued Verblunsky coefficients. We closely follow the corresponding treatment of scalar-valued Verblunsky coefficients in \cite{GZ06}.

We should note that while there is an extensive literature on orthogonal matrix-valued polynomials on the real line and on the unit circle, we refer, for instance, to
\cite{AN84}, \cite{BC92}, \cite[Ch.\ VII]{Be68}, \cite{BG90}, \cite{CFMV03}, \cite{CG06},
\cite{DG92}--\cite{DV95}, \cite{Ge81}, \cite{Ge82}, \cite{Kr49}, \cite{Kr71}, \cite{Le47},
\cite{Lo99}, \cite{Os97}--\cite{Os02}, \cite{Ro90}, \cite{YM01}--\cite{YK78}, and the literature therein, the case of CMV operators with matrix-valued Verblunsky coefficients appears to be a much less explored frontier. The only references we are aware of in this context are Simon's treatise \cite[Part 1, Sect.\ 2.13]{Si04} and a recent preprint by Simon \cite{Si06}.

In the remainder of this paper, $\Cm$ denotes the space of $m\times m$ matrices with complex-valued entries endowed with the operator norm $\norm{\cdot}_{\Cm}$ (we use the standard Euclidean norm in $\bbC^m$). The adjoint of an element $\ga\in\Cm$ is denoted by $\ga^*$, and the real and imaginary parts of $\ga$ are defined as usual by
$\Re(\ga)=(\ga+\ga^*)/2$ and $\Im(\ga) =(\ga-\ga^*)/(2i)$.

\begin{remark} \lb{r3.0} 
For simplicity of exposition, we find it convenient to use the
following conventions: We denote by $\s{\Z}$ the vector space of all
$\C$-valued sequences, and by $\sm{\Z}=\s{\Z}\otimes\C^m$ the vector
space of all $\C^m$-valued sequences; that is,
\begin{align} \lb{3.5A}
\phi=\{\phi(k)\}_{k\in\Z}=
\begin{pmatrix}
\vdots\\\phi(-1)\\\phi(0)\\\phi(1)\\\vdots
\end{pmatrix}\in\sm{\Z}, \quad 
\phi(k)=
\begin{pmatrix}
(\phi(k))_1\\(\phi(k))_2\\\vdots\\(\phi(k))_m
\end{pmatrix}\in\C^m, \;  k\in\Z.
\end{align}
Moreover, we introduce $\smn{\Z}=\sm{\Z}\otimes\C^n$, $m,n\in\N$, that is,
$\Phi=(\phi_1,\dots,\phi_n)\in\smn{\Z}$, where $\phi_j\in\sm{\Z}$
for all $j=1,\dots,n$.

We also note that $\smn{\Z}=\s{\Z}\otimes\C^{m\times n}$, $m,n\in\N$;
which is to say that the elements of $\smn{\Z}$ can be
identified with the $\C^{m\times n}$-valued sequences,
\begin{align} \lb{3.6A}
\Phi=\{\Phi(k)\}_{k\in\Z}=
\begin{pmatrix}
\vdots\\\Phi(-1)\\\Phi(0)\\\Phi(1)\\\vdots
\end{pmatrix},
\quad \Phi(k)=
\begin{pmatrix}
(\Phi(k))_{1,1} & \hdots & (\Phi(k))_{1,n}\\
\vdots & & \vdots \\
(\Phi(k))_{m,1} & \hdots & (\Phi(k))_{m,n}
\end{pmatrix}\in\C^{m\times n}, \; k\in\Z,
\end{align}
by setting $\Phi=(\phi_1,\dots,\phi_n)$, where
\begin{align} \lb{3.7A}
\phi_j=
\begin{pmatrix}
\vdots\\\phi_j(-1)\\\phi_j(0)\\\phi_j(1)\\\vdots
\end{pmatrix}\in\sm{\Z}, \quad \phi_j(k)=
\begin{pmatrix}
(\Phi(k))_{1,j}\\\vdots\\(\Phi(k))_{m,j}
\end{pmatrix}\in\C^m, \; j=1,\dots,n,\; k\in\Z.
\end{align}

For the elements of $\smn{\Z}$ we define the right-multiplication by
$n\times n$ matrices with complex-valued entries by
\begin{align} \lb{3.3A}
\Phi C = (\phi_1,\dots,\phi_n)
\begin{pmatrix}c_{1,1} & \dots & c_{1,n}\\
\vdots&&\vdots\\c_{n,1} & \dots & c_{n,n}\end{pmatrix} =
\left(\sum_{j=1}^n \phi_j c_{j,1},\dots,\sum_{j=1}^n \phi_j
c_{j,n}\right)\in\smn{\Z}
\end{align}
for all $\Phi\in\smn{\Z}$ and $C\in\C^{n\times n}$. In addition, for
any linear transformation $\bbA: \sm{\Z}\to\sm{\Z}$,  we define
$\bbA\Phi$ for all $\Phi=(\phi_1,\dots,\phi_n)\in\smn{\Z}$ by
\begin{align} \lb{3.4A}
\bbA\Phi = (\bbA\phi_1,\dots,\bbA\phi_n)\in\smn{\Z}.
\end{align}

Given the above conventions, we note the subspace
containment: $\ltm{\Z}=\ell^2(\Z)\otimes\C^m\subset\sm{\Z}$ and
$\ltmn{\Z}=\ell^2(\Z)\otimes\C^{m\times n}\subset\smn{\Z}$. We also
note that $\ltm{\Z}$ represents a Hilbert space with scalar
product given by
\begin{align} \lb{3.2A}
(\phi,\psi)_{\ltm{\Z}} = \sum_{k=-\infty}^\infty\sum_{j=1}^m
\ol{(\phi(k))_j}(\psi(k))_j, \quad \phi,\psi\in\ltm{\Z}.
\end{align}
Finally, we note that a straightforward modification of the
above definitions also yields the Hilbert space $\ltm{J}$ as well
as the sets $\ltmn{J}$, $\sm{J}$, and $\smn{J}$ for any
$J\subset\Z$.
\end{remark}

We start by introducing our basic assumption:

\begin{hypothesis} \lb{h3.1}
Let $m\in\N$ and assume $\al=\{\al_k\}_{k\in\Z}$ is a sequence of
$m \times m$ matrices with complex entries\footnote{We emphasize that $\al_k\in\Cm$, $k\in\Z$, are general (not necessarily normal) matrices.}
and such that
\begin{equation} \lb{3.7}
\norm{\al_k}_{\Cm} < 1, \quad k\in\Z.
\end{equation}
\end{hypothesis}

Given a sequence $\al$ satisfying \eqref{3.7}, we define two sequences
of positive self-adjoint $m\times m$ matrices $\{\rho_k\}_{k\in\bbZ}$
and $\{\wti\rho_k\}_{k\in\bbZ}$ by
\begin{align}
\rho_k &= \sqrt{I_m-\al_k^*\al_k}, \quad k\in\bbZ, \lb{3.8}
\\
\wti\rho_k &= \sqrt{I_m-\al_k\al_k^*}, \quad k\in\bbZ, \lb{3.9}
\end{align}
and two sequences of $m\times m$ matrices with positive real parts,
$\{a_k\}_{k\in\bbZ}\subset \bbC^{m\times m}$ and
$\{b_k\}_{k\in\bbZ}\subset \bbC^{m\times m}$ by
\begin{align}
a_k &= I_m+\al_k, \quad k\in\bbZ, \lb{3.10}
\\
b_k &= I_m-\al_k, \quad k \in \Z. \lb{3.11}
\end{align}
Then \eqref{3.7} implies that $\rho_k$ and $\wti\rho_k$ are
invertible matrices for all $k\in\Z$, and using elementary power series
expansions one verifies the following identities for all $k\in\Z$,
\begin{align}
&\wti\rho_k^{\pm1}\al_k = \al_k\rho_k^{\pm1}, \quad
\al_k^*\wti\rho_k^{\pm1} = \rho_k^{\pm1}\al_k^*, \lb{3.12}
 \\
&a_k^*\wti\rho_k^{-2}a_k = a_k\rho_k^{-2}a_k^*, \quad
b_k^*\wti\rho_k^{-2}b_k = b_k\rho_k^{-2}b_k^*, \quad
a_k^*\wti\rho_k^{-2}b_k + a_k\rho_k^{-2}b_k^* =
b_k^*\wti\rho_k^{-2}a_k + b_k\rho_k^{-2}a_k^* = 2I_m. \lb{3.13}
\end{align}

According to Simon \cite{Si04}, we call $\al_k$ the Verblunsky
coefficients in honor of Verblunsky's pioneering work in the theory
of orthogonal polynomials on the unit circle \cite{Ve35},
\cite{Ve36}.

Next, we introduce a sequence of $2\times 2$ block unitary matrices $\Te_k$
with $m\times m$ matrix coefficients by
\begin{equation} \lb{3.14}
\Te_k = \begin{pmatrix} -\al_k & \wti\rho_k \\ \rho_k & \al_k^*
\end{pmatrix},
\quad k \in \Z,
\end{equation}
and two unitary operators $\V$ and $\W$ on $\ltm{\Z}$ by their
matrix representations in the standard basis of $\ltm{\Z}$ by 
\begin{align} \lb{3.15}
\V &= \begin{pmatrix} \ddots & & &
\raisebox{-3mm}[0mm][0mm]{\hspace*{-5mm}\Huge $0$}  \\ & \Te_{2k-2} &
& \\ & & \Te_{2k} & & \\ &
\raisebox{0mm}[0mm][0mm]{\hspace*{-10mm}\Huge $0$} & & \ddots
\end{pmatrix}, \quad
\W = \begin{pmatrix} \ddots & & &
\raisebox{-3mm}[0mm][0mm]{\hspace*{-5mm}\Huge $0$}
\\ & \Te_{2k-1} &  &  \\ &  & \Te_{2k+1} &  & \\ &
\raisebox{0mm}[0mm][0mm]{\hspace*{-10mm}\Huge $0$} & & \ddots
\end{pmatrix},
\end{align}
where
\begin{align}
\begin{pmatrix}
\V_{2k-1,2k-1} & \V_{2k-1,2k} \\ \V_{2k,2k-1}   & \V_{2k,2k}
\end{pmatrix} =  \Te_{2k},
\quad
\begin{pmatrix}
\W_{2k,2k} & \W_{2k,2k+1} \\ \W_{2k+1,2k}  & \W_{2k+1,2k+1}
\end{pmatrix} =  \Te_{2k+1},
\quad k\in\Z. \lb{3.16}
\end{align}
Moreover, we introduce the unitary operator $\U$ on
$\ltm{\Z}$ as the product of the unitary operators $\V$ and $\W$ by
\begin{equation} \lb{3.17}
\U = \V\W,
\end{equation}
or in matrix form in the standard basis of $\ltm{\Z}$, by
\begin{align}
\U = \begin{pmatrix} \ddots &&\hspace*{-8mm}\ddots
&\hspace*{-10mm}\ddots &\hspace*{-12mm}\ddots &\hspace*{-14mm}\ddots
&&& \raisebox{-3mm}[0mm][0mm]{\hspace*{-6mm}{\Huge $0$}}
\\
&0& -\al_{0}\rho_{-1} & -\al_{0}\al_{-1}^* & -\wti\rho_{0}\al_{1} &
\wti\rho_{0}\wti\rho_{1}
\\
&& \rho_{0}\rho_{-1} &\rho_{0}\al_{-1}^* & -\al_{0}^*\al_{1} &
\al_{0}^*\wti\rho_{1} & 0
\\
&&&0& -\al_{2}\rho_{1} & -\al_{2}\al_{1}^* & -\wti\rho_{2}\al_{3} &
\wti\rho_{2}\wti\rho_{3}
\\
&&\raisebox{-4mm}[0mm][0mm]{\hspace*{-6mm}{\Huge $0$}} &&
\rho_{2}\rho_{1} & \rho_{2}\al_{1}^* & -\al_{2}^*\al_{3} &
\al_{2}^*\wti\rho_{3}&0
\\
&&&&&\hspace*{-14mm}\ddots &\hspace*{-14mm}\ddots
&\hspace*{-14mm}\ddots &\hspace*{-8mm}\ddots &\ddots
\end{pmatrix}. \lb{3.18}
\end{align}
Here terms of the form $-\al_{2k}\al_{2k-1}^*$ and
$-\al_{2k}^*\al_{2k+1}$, $k\in\Z$, represent the diagonal entries
$\U_{2k-1,2k-1}$ and $\U_{2k,2k}$ of the infinite matrix $\U$ in
\eqref{3.18}, respectively. We continue to call the operator $\U$ on
$\ltm{\Z}$ the CMV operator since \eqref{3.14}--\eqref{3.18} in the
context of the scalar-valued semi-infinite (i.e., half-lattice) case
were obtained by Cantero, Moral, and Vel\'azquez in \cite{CMV03} in
2003, but we refer to the discussion in the introduction about the
involved history of these operators.

\begin{lemma} \lb{l3.2}
Let $z\in\bbC\backslash\{0\}$ and $\{U(z,k)\}_{k\in\bbZ},
\{V(z,k)\}_{k\in\bbZ}$ be two $\Cm$-valued sequences. Then the
following items $(i)$--$(iii)$ are equivalent:
\begin{align}
(i) & \quad  (\U U(z,\cdot))(k) = z U(z,k), \quad (\W U(z,\cdot))(k)=z
V(z,k), \quad k\in\Z. \lb{3.19}
\\
(ii) & \quad (\W U(z,\cdot))(k) = z V(z,k), \quad (\V V(z,\cdot))(k) =
U(z,k), \quad k\in\Z. \lb{3.20}
\\
(iii) & \quad \binom{U(z,k)}{V(z,k)} = \T(z,k)
\binom{U(z,k-1)}{V(z,k-1)}, \quad k\in\Z.  \lb{3.21}
\end{align}
Here $\U$, $\V$, and $\W$  are understood in the sense of difference
expressions on $\smm{\Z}$ rather than difference operators on
$\ltm{\Z}$ $($cf.\ Remark \ref{r3.0}$)$ and the transfer matrices
$\T(z,k)$, $z\in\Cz$, $k\in\Z$, are defined by
\begin{equation}
\T(z,k) = \begin{cases}
\begin{pmatrix}
\wti\rho_{k}^{-1}\al_{k} & z\wti\rho_{k}^{-1} \\
z^{-1}\rho_{k}^{-1} & \rho_{k}^{-1}\al_{k}^*
\end{pmatrix},  & \text{$k$ odd,}
\\
\begin{pmatrix}
\rho_{k}^{-1}\al_{k}^* & \rho_{k}^{-1} \\
\wti\rho_{k}^{-1} & \wti\rho_{k}^{-1}\al_{k}
\end{pmatrix}, & \text{$k$ even.}
\end{cases} \lb{3.22}
\end{equation}
\end{lemma}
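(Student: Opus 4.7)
The plan is to establish the chain (i) $\Leftrightarrow$ (ii) $\Leftrightarrow$ (iii). The first equivalence is essentially a one-line observation exploiting $\U=\V\W$: suppose $(\W U(z,\cdot))(k)=zV(z,k)$; then for every $k\in\Z$,
\begin{equation*}
(\U U(z,\cdot))(k) = (\V\W U(z,\cdot))(k) = z(\V V(z,\cdot))(k),
\end{equation*}
so the equation $(\U U(z,\cdot))(k)=zU(z,k)$ is (since $z\neq 0$) equivalent to $(\V V(z,\cdot))(k) = U(z,k)$. This gives (i) $\Leftrightarrow$ (ii).

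For (ii) $\Leftrightarrow$ (iii), I would decode (ii) using the block structure \eqref{3.15}--\eqref{3.16}. The equation $(\W U(z,\cdot))(k) = z V(z,k)$ couples adjacent indices $(2j,2j+1)$ through
\begin{equation*}
\Te_{2j+1}\binom{U(z,2j)}{U(z,2j+1)} = z\binom{V(z,2j)}{V(z,2j+1)}, \qquad j\in\Z,
\end{equation*}
while $(\V V(z,\cdot))(k) = U(z,k)$ couples $(2j-1,2j)$ through
\begin{equation*}
\Te_{2j}\binom{V(z,2j-1)}{V(z,2j)} = \binom{U(z,2j-1)}{U(z,2j)}, \qquad j\in\Z.
\end{equation*}

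By Hypothesis \ref{h3.1}, the matrices $\rho_k$ and $\wti\rho_k$ are invertible, so each such $2\times 2$ block relation can be rearranged to express the pair at the larger index in terms of the pair at the smaller one. For $k=2j+1$, the first row of the $\W$-block gives $U(z,2j+1) = \wti\rho_{2j+1}^{-1}\al_{2j+1}U(z,2j) + z\wti\rho_{2j+1}^{-1}V(z,2j)$; substituting this into the second row and using the commutation identity $\al_k^*\wti\rho_k^{-1} = \rho_k^{-1}\al_k^*$ from \eqref{3.12} together with $\rho_k^2 + \al_k^*\al_k = I_m$ to simplify $\rho_{2j+1} + \al_{2j+1}^*\wti\rho_{2j+1}^{-1}\al_{2j+1}$ to $\rho_{2j+1}^{-1}$, one recovers precisely the odd-index transfer matrix in \eqref{3.22}. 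An entirely analogous manipulation on the $\V$-block, coupling $(2j-1,2j)$, produces the even-index transfer matrix. Each step is reversible, so the transfer matrix recursion (iii) is equivalent to the pair of block identities comprising (ii).

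The only real obstacle is the algebraic bookkeeping needed to match the explicit entries of the solved block systems against the formulas \eqref{3.22}. This amounts to the simplification of $\rho_{2j+1} + \al_{2j+1}^*\wti\rho_{2j+1}^{-1}\al_{2j+1}$ indicated above (and its symmetric counterpart for even $k$), both of which are routine consequences of \eqref{3.8}--\eqref{3.13}.
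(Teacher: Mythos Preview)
Your argument is correct and follows essentially the same route as the paper: the equivalence (i)\,$\Leftrightarrow$\,(ii) via $\U=\V\W$ is identical, and for (ii)\,$\Leftrightarrow$\,(iii) both you and the paper reduce to the $2\times 2$ block relations $\Te_{2j+1}\binom{U(2j)}{U(2j+1)}=z\binom{V(2j)}{V(2j+1)}$ and $\Te_{2j}\binom{V(2j-1)}{V(2j)}=\binom{U(2j-1)}{U(2j)}$ and then convert these into the transfer-matrix recursion using \eqref{3.12} (the paper just runs the algebra in the opposite direction, from the recursion to the block form).
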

\begin{proof}
The equivalence of \eqref{3.19} and \eqref{3.20} is a consequence of 
\eqref{3.17} and equivalence of \eqref{3.20} and \eqref{3.21} is implied by 
the following computations: 

Assuming $k$ to be odd and utilizing \eqref{3.8}, \eqref{3.9}, and
\eqref{3.12}, one verifies equivalence of the following items
$(i)$--$(v)$:
\begin{align}
 (i) & \quad  \binom{U(z,k)}{V(z,k)} = \T(z,k)
\binom{U(z,k-1)}{V(z,k-1)}.
\\
 (ii) & \quad
\begin{cases}
\wti\rho_k U(z,k) = \al_k U(z,k-1) + zV(z,k-1), \\
\rho_k zV(z,k) = U(z,k-1) + \al_k^* zV(z,k-1).
\end{cases}
\\
 (iii) & \quad
\begin{cases}
zV(z,k-1) = - \al_k U(z,k-1) + \wti\rho_k U(z,k), \\
\rho_k zV(z,k) = U(z,k-1) + \al_k^*\big(-\al_k U(z,k-1) + \wti\rho_k
U(z,k)\big).
\end{cases}
\\
 (iv) & \quad
\begin{cases}
zV(z,k-1) = - \al_k U(z,k-1) + \wti\rho_k U(z,k), \\
\rho_k zV(z,k) = \rho_k^2 U(z,k-1) + \rho_k\al_k^* U(z,k).
\end{cases}
\\
 (v) & \quad z \binom{V(z,k-1)}{V(z,k)} = \Te_k
\binom{U(z,k-1)}{U(z,k)}.
\intertext{Similarly, assuming $k$ to be even, one verifies that the items
$(vi)$--$(viii)$ are equivalent:}
 (vi) & \quad \binom{U(z,k)}{V(z,k)} = \T(z,k)
\binom{U(z,k-1)}{V(z,k-1)}.
\\
 (vii) & \quad
\begin{cases}
\wti\rho_k V(z,k) = \al_k V(z,k-1) + U(z,k-1), \\
\rho_k U(z,k) = V(z,k-1) + \al_k^* U(z,k-1).
\end{cases}
\\
 (viii) & \quad \binom{U(z,k-1)}{U(z,k)} = \Te_k
\binom{V(z,k-1)}{V(z,k)}.
\end{align}
Finally, taking into account \eqref{3.15} and \eqref{3.16}, one
concludes that
\begin{align}
\Te_{2k+1} \binom{U(z,2k)}{U(z,2k+1)} = z \binom{V(z,2k)}{V(z,2k+1)},
\quad \Te_{2k} \binom{V(z,2k-1)}{V(z,2k)} = z
\binom{U(z,2k-1)}{U(z,2k)}, \quad k\in\Z
\end{align}
is equivalent to
\begin{align}
(\W U(z,\cdot))(k) = z V(z,k), \quad (\V V(z,\cdot))(k) = U(z,k),
\quad k\in\Z.
\end{align}
\end{proof}

We note that in studying solutions of $(\U U(z,\cdot))(k)=zU(z,k)$
as in Lemma \ref{l3.2}\,$(i)$, the purpose of the additional
relation $(\W U(z,\cdot))(k)=zV(z,k)$ in \eqref{3.19} is to
introduce a new variable $V$ that improves our understanding of the
structure of such solutions $U$.

If one sets $\al_{k_0} = I_m$ for some reference point $k_0\in\Z$,
then the operator $\U$ splits into a direct sum of two half-lattice
operators $\U_{-,k_0-1}$ and $\U_{+,k_0}$ acting on
$\ltm{(-\infty,k_0-1]\cap\Z}$ and on $\ltm{[k_0,\infty)\cap\Z}$,
respectively. Explicitly, one obtains
\begin{align}
\U = \U_{-,k_0-1} \oplus \U_{+,k_0} \, \text{ in } \,
\ltm{(-\infty,k_0-1]\cap\Z} \oplus \ltm{[k_0,\infty)\cap\Z}.   \lb{3.33}
\end{align}
(Strictly, speaking, setting $\al_{k_0} = I_m$ for some reference
point $k_0\in\Z$ contradicts our basic Hypothesis \ref{h3.1}.
However, as long as the exception to Hypothesis \ref{h3.1} refers to
only one site, we will safely ignore this inconsistency in favor of
the notational simplicity it provides by avoiding the introduction of
a properly modified hypothesis on $\{\al_k\}_{k\in\bbZ}$.) Similarly,
one obtains $\W_{-,k_0-1}$, $\V_{-,k_0-1}$ and $\W_{+,k_0}$,
$\V_{+,k_0}$ such that
\begin{equation}
\U_{\pm,k_0} = \V_{\pm,k_0} \W_{\pm,k_0}. \lb{3.34}
\end{equation}

\begin{lemma} \lb{l3.3}
Let $z\in\bbC\backslash\{0\}$, $k_0\in\bbZ$, and
$\{\hatt P_+(z,k,k_0)\}_{k\geq k_0}$, $\{\hatt R_+(z,k,k_0)\}_{k\geq
k_0}$ be two
$\Cm$-valued sequences. Then the following items $(i)$--$(vi)$ are
equivalent:
\begin{align}
(i)& \quad (\U_{+,k_0} \hatt P_+(z,\cdot,k_0))(k) = z \hatt
P_+(z,k,k_0), \quad
(\W_{+,k_0} \hatt P_+(z,\cdot,k_0))(k) = z \hatt R_+(z,k,k_0), \quad
k\geq k_0.  \lb{3.35}\\
(ii)& \quad (\W_{+,k_0} \hatt P_+(z,\cdot,k_0))(k) = z \hatt
R_+(z,k,k_0), \quad
(\V_{+,k_0} \hatt R_+(z,\cdot,k_0))(k) = \hatt P_+(z,k,k_0), \quad k\geq
k_0.
\lb{3.36}
\\
(iii)& \quad \binom{\hatt P_+(z,k,k_0)}{\hatt R_+(z,k,k_0)} = \T(z,k)
\binom{\hatt P_+(z,k-1,k_0)}{\hatt R_+(z,k-1,k_0)}, \quad k > k_0,
\notag \\
& \hspace*{11mm}  \text{with initial condition } \,
\hatt P_+(z,k_0,k_0) =
\begin{cases} z \hatt R_+(z,k_0,k_0), & \text{$k_0$ odd}, \\
\hatt R_+(z,k_0,k_0), & \text{$k_0$ even}. \end{cases}\lb{3.37}
\end{align}

Next, consider $\Cm$-valued sequences $\{\hatt P_-(z,k,k_0)\}_{k\leq
k_0}$,
$\{\hatt R_-(z,k,k_0)\}_{k\leq k_0}$. Then the following items
$(iv)$--$(vi)$ are equivalent:
\begin{align}
(iv)& \quad (\U_{-,k_0} \hatt P_-(z,\cdot,k_0))(k) = z \hatt
P_-(z,k,k_0), \quad
(\W_{-,k_0} \hatt P_-(z,\cdot,k_0))(k) = z \hatt R_-(z,k,k_0), \quad
k\leq k_0.
\\
(v)& \quad (\W_{-,k_0} \hatt P_-(z,\cdot,k_0))(k) = z \hatt
R_-(z,k,k_0),
\quad (\V_{-,k_0} \hatt R_-(z,\cdot,k_0))(k) = \hatt P_-(z,k,k_0), \quad
k\leq
k_0.
\\
(vi)& \quad \binom{\hatt P_-(z,k-1),k_0}{\hatt R_-(z,k-1,k_0)}  =
\T(z,k)^{-1}
\binom{\hatt P_-(z,k,k_0)}{\hatt R_-(z,k,k_0)}, \quad k \leq k_0, \notag
\\
& \hspace*{11mm}   \text{with initial condition } \,
\hatt  P_-(z,k_0,k_0) =\begin{cases} - \hatt R_-(z,k_0,k_0), &
\text{$k_0$ odd,}
\\ -z \hatt R_-(z,k_0,k_0), & \text{$k_0$ even.} \end{cases}\lb{3.40}
\end{align}

Here $\U_{\pm,k_0}$, $\V_{\pm,k_0}$, and $\W_{\pm,k_0}$  are
understood in the sense of difference expressions on the set
$\smm{\Z\cap[k_0,\pm\infty)}$ rather than difference operators on
$\ltm{\Z\cap[k_0,\pm\infty)}$ $($cf.\ Remark \ref{r3.0}$)$.
\end{lemma}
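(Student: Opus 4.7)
The plan is to follow the logical structure of the proof of Lemma \ref{l3.2}, extracting the initial condition at $k=k_0$ from the boundary behavior of the half-lattice operators forced by the convention $\al_{k_0}=I_m$ (respectively $\al_{k_0+1}=I_m$) that produces the splittings in \eqref{3.33}--\eqref{3.34}.

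First, the equivalence $(i) \Leftrightarrow (ii)$ follows verbatim from $\U_{+,k_0}=\V_{+,k_0}\W_{+,k_0}$ and $z\neq 0$: given $(\W_{+,k_0}\hatt P_+)(k)=z\hatt R_+(k)$, the eigenvalue relation $(\U_{+,k_0}\hatt P_+)(k)=z\hatt P_+(k)$ and the second relation in \eqref{3.36} are interchangeable upon dividing by $z$. For $(ii)\Leftrightarrow(iii)$ at interior indices $k>k_0$, the difference expressions $\V_{+,k_0}$ and $\W_{+,k_0}$ act on rows $k-1$ and $k$ exactly like their full-lattice analogues $\V$ and $\W$, so the block-by-block algebraic manipulations in the proof of Lemma \ref{l3.2} that translate the pair of relations in \eqref{3.36} at sites $k-1,k$ into one application of $\T(z,k)$ apply verbatim, producing the recursion in \eqref{3.37} at each such $k$.

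The principal new ingredient is the initial condition at $k=k_0$. Setting $\al_{k_0}=I_m$ collapses the block $\Te_{k_0}$ to $\diag(-I_m,I_m)$, so its action on site $k_0$ reduces to multiplication by $I_m$. If $k_0$ is odd, $\Te_{k_0}$ sits in $\W$ (cf.\ \eqref{3.15}), so $\W_{+,k_0}(k_0,k_0)=I_m$ is the only nonzero entry in the $k_0$-th row of $\W_{+,k_0}$ and the relation $(\W_{+,k_0}\hatt P_+)(k_0)=z\hatt R_+(k_0)$ collapses to $\hatt P_+(k_0)=z\hatt R_+(k_0)$. If $k_0$ is even, $\Te_{k_0}$ sits in $\V$ instead, so $\V_{+,k_0}(k_0,k_0)=I_m$ is the only nonzero entry in the $k_0$-th row of $\V_{+,k_0}$ and $(\V_{+,k_0}\hatt R_+)(k_0)=\hatt P_+(k_0)$ collapses to $\hatt R_+(k_0)=\hatt P_+(k_0)$. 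These are precisely the boundary conditions in \eqref{3.37}; conversely, propagating the recursion in \eqref{3.37} from this initial data recovers \eqref{3.36} at every $k\geq k_0$.

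The equivalence $(iv) \Leftrightarrow (v) \Leftrightarrow (vi)$ is entirely parallel. The splitting is now produced by setting $\al_{k_0+1}=I_m$, so the decoupling block $\Te_{k_0+1}$ has the opposite parity to that encountered in the $+$ case, and the recursion runs downward, replacing $\T(z,k)$ by $\T(z,k)^{-1}$. Since the $k_0$-th row of the block now inherits the top-left entry $-I_m$ rather than the bottom-right $I_m$, the initial condition carries a sign flip: for $k_0$ odd, $\Te_{k_0+1}$ lies in $\V$ and $(\V_{-,k_0}\hatt R_-)(k_0)=-\hatt R_-(k_0)=\hatt P_-(k_0)$; for $k_0$ even, $\Te_{k_0+1}$ lies in $\W$ and $(\W_{-,k_0}\hatt P_-)(k_0)=-\hatt P_-(k_0)=z\hatt R_-(k_0)$, giving exactly \eqref{3.40}. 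The only real obstacle throughout is parity bookkeeping: tracking, as a function of the parity of $k_0$, which of $\V$ or $\W$ houses the decoupling $\Te$ block and which of the two relations in the analog of \eqref{3.36} trivializes at the boundary. Once this is organized, the remainder is a direct transcription of the argument in Lemma \ref{l3.2}.
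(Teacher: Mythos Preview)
Your proof is correct and follows essentially the same approach as the paper: the equivalence $(i)\Leftrightarrow(ii)$ via $\U_{+,k_0}=\V_{+,k_0}\W_{+,k_0}$, the reduction to Lemma~\ref{l3.2} for the interior recursion, and the extraction of the initial condition from the boundary row of the half-lattice $\V_{+,k_0}$ or $\W_{+,k_0}$ all match the paper's argument. Your explicit derivation of the corner $I_m$ (or $-I_m$) block from the collapse of $\Te_{k_0}$ (resp.\ $\Te_{k_0+1}$) under $\al_{k_0}=I_m$ (resp.\ $\al_{k_0+1}=I_m$) is a slightly more explanatory version of what the paper simply writes down as the structure of $\V_{+,k_0}$ and $\W_{+,k_0}$, but the content is identical.
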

\begin{proof}
Equivalence of \eqref{3.35} and \eqref{3.36} is a consequence of 
\eqref{3.34}.

Next, repeating the proof of Lemma \ref{l3.2} one obtains that
\begin{align}
(\W_{+,k_0} \hatt P_+(z,\cdot,k_0))(k) &= z \hatt R_+(z,k,k_0), \quad (\V_{+,k_0}
\hatt R_+(z,\cdot,k_0))(k) = \hatt P_+(z,k,k_0), \quad k > k_0,  \lb{3.41}
\end{align}
is equivalent to
\begin{align}
\binom{\hatt P_+(z,k,k_0)}{\hatt R_+(z,k,k_0)} &=  \T(z,k)
\binom{\hatt P_+(z,k-1,k_0)}{\hatt R_+(z,k-1,k_0)}, \quad k > k_0. \lb{3.42}
\end{align}
Moreover, in the case $k_0$ is odd, the matrices $\V_{+,k_0}$ and
$\W_{+,k_0}$ have the structure,
\begin{align}
\V_{+,k_0} =
\begin{pmatrix}
\Te_{k_0+1} & & \raisebox{-1mm}[0mm][0mm]{\hspace*{-2mm}{\huge $0$}}
\\ & \Te_{k_0+3} & \\
\raisebox{-1mm}[0mm][0mm]{\hspace*{0mm}{\huge $0$}} && \ddots
\end{pmatrix},
\quad \W_{+,k_0} =
\begin{pmatrix}
I_m & & \raisebox{-1mm}[0mm][0mm]{\hspace*{-2mm}{\huge $0$}}
\\ & \Te_{k_0+2} & \\ &
\raisebox{-1mm}[0mm][0mm]{\hspace*{-14mm}{\huge $0$}} &\ddots
\end{pmatrix},
\end{align}
and hence,
\begin{equation}
(\W_{+,k_0} \hatt P_+(z,\cdot,k_0))(k_0) = z \hatt R_+(z,k_0,k_0) \lb{3.44}
\end{equation}
is equivalent to
\begin{equation}
\hatt P_+(z,k_0,k_0) = z \hatt R_+(z,k_0,k_0). \lb{3.45}
\end{equation}
In the case $k_0$ is even, the matrices $V_{+,k_0}$ and $W_{+,k_0}$
have the structure,
\begin{align}
\V_{+,k_0} =
\begin{pmatrix}
I_m & & \raisebox{-1mm}[0mm][0mm]{\hspace*{-2mm}{\huge $0$}}
\\ & \Te_{k_0+2} & \\ &
\raisebox{-1mm}[0mm][0mm]{\hspace*{-14mm}{\huge $0$}} &\ddots
\end{pmatrix},
\quad \W_{+,k_0} =
\begin{pmatrix}
\Te_{k_0+1} & & \raisebox{-1mm}[0mm][0mm]{\hspace*{-2mm}{\huge $0$}}
\\ &
\Te_{k_0+3} & \\
\raisebox{-1mm}[0mm][0mm]{\hspace*{0mm}{\huge $0$}} & & \ddots
\end{pmatrix},
\end{align}
and hence,
\begin{equation}
(\V_{+,k_0} \hatt R_+(z,\cdot,k_0))(k_0) = \hatt P_+(z,k_0,k_0) \lb{3.47}
\end{equation}
is equivalent to
\begin{equation}
\hatt P_+(z,k_0,k_0) = \hatt R_+(z,k_0,k_0). \lb{3.48}
\end{equation}
Thus, one infers the equivalence of \eqref{3.36} and \eqref{3.37}
from the equivalence of \eqref{3.41} and \eqref{3.42} with
\eqref{3.44}--\eqref{3.45} and \eqref{3.47}--\eqref{3.48}.

The results for $\{\hatt P_-(z,k,k_0)\}_{k\leq k_0}$ and
$\{\hatt R_-(z,k,k_0)\}_{k\leq k_0}$ are proved analogously.
\end{proof}

Analogous comments to those made right after the proof of Lemma
\ref{l3.2} apply in the present context of Lemma \ref{l3.3}.

Next, we denote by
$\Big(\begin{smallmatrix}P_\pm(z,k,k_0) \\
R_\pm(z,k,k_0)\end{smallmatrix}\Big)_{k\in\Z}$ and
$\Big(\begin{smallmatrix} Q_\pm(z,k,k_0) \\ S_\pm(z,k,k_0)
\end{smallmatrix}\Big)_{k\in\Z}$,
$z\in\bbC\backslash\{0\}$, four linearly independent solutions of
\eqref{3.21} satisfying the following initial conditions:
\begin{align}
\binom{P_+(z,k_0,k_0)}{R_+(z,k_0,k_0)} &=
\begin{cases}
\binom{zI_m}{I_m}, & \text{$k_0$ odd,} \\[1mm]
\binom{I_m}{I_m}, & \text{$k_0$ even,}
\end{cases} \quad\;\;
\binom{Q_+(z,k_0,k_0)}{S_+(z,k_0,k_0)} =
\begin{cases}
\binom{zI_m}{-I_m}, & \text{$k_0$ odd,} \\[1mm]
\binom{-I_m}{I_m}, & \text{$k_0$ even.}
\end{cases} \lb{3.49}
\\
\binom{P_-(z,k_0,k_0)}{R_-(z,k_0,k_0)} &=
\begin{cases}
\binom{I_m}{-I_m}, & \text{$k_0$ odd,} \\[1mm]
\binom{-zI_m}{I_m}, & \text{$k_0$ even,}
\end{cases} \quad
\binom{Q_-(z,k_0,k_0)}{S_-(z,k_0,k_0)} =
\begin{cases}
\binom{I_m}{I_m}, & \text{$k_0$ odd, }\\[1mm]
\binom{zI_m}{I_m}, & \text{$k_0$ even.}
\end{cases} \lb{3.50}
\end{align}
Then $P_\pm(z,k,k_0)$, $Q_\pm(z,k,k_0)$,
$R_\pm(z,k,k_0)$, and $S_\pm(z,k,k_0)$, $k,k_0\in\Z$, are
$\Cm$-valued Laurent polynomials in $z$. In particular, one computes

\begin{align}\lb{Table}
\begin{array}{|c|c|c|c|}
\hline k & k_0-1 & k_0 \text{ odd} & k_0+1
\\ \hline \ph{\Bigg|}
\displaystyle \binom{P_+(z,k,k_0)}{R_+(z,k,k_0)} & \displaystyle
\binom{z\rho_{k_0}^{-1}(I_m-\al_{k_0}^*)}
{\wti\rho_{k_0}^{-1}(I_m-\al_{k_0})} & \displaystyle
\binom{zI_m}{I_m} & \displaystyle
\binom{\rho_{k_0+1}^{-1}(I_m+z\al_{k_0+1}^*)}
{\wti\rho_{k_0+1}^{-1}(zI_m+\al_{k_0+1})}
\\ \hline \ph{\Bigg|}
\displaystyle \binom{Q_+(z,k,k_0)}{S_+(z,k,k_0)} & \displaystyle
\binom{z\rho_{k_0}^{-1}(-I_m-\al_{k_0}^*)}
{\wti\rho_{k_0}^{-1}(I_m+\al_{k_0})} & \displaystyle
\binom{zI_m}{-I_m} & \displaystyle
\binom{\rho_{k_0+1}^{-1}(-I_m+z\al_{k_0+1}^*)}
{\wti\rho_{k_0+1}^{-1}(zI_m-\al_{k_0+1})}
\\ \hline \ph{\Bigg|}
\displaystyle \binom{P_-(z,k,k_0)}{R_-(z,k,k_0)} & \displaystyle
\binom{\rho_{k_0}^{-1}(-zI_m-\al_{k_0}^*)}
{\wti\rho_{k_0}^{-1}(\frac1zI_m+\al_{k_0})} & \displaystyle
\binom{I_m}{-I_m} & \displaystyle
\binom{\rho_{k_0+1}^{-1}(-I_m+\al_{k_0+1}^*)}
{\wti\rho_{k_0+1}^{-1}(I_m-\al_{k_0+1})}
\\ \hline \ph{\Bigg|}
\displaystyle \binom{Q_-(z,k,k_0)}{S_-(z,k,k_0)} & \displaystyle
\binom{\rho_{k_0}^{-1}(zI_m-\al_{k_0}^*)}
{\wti\rho_{k_0}^{-1}(\frac1zI_m-\al_{k_0})} & \displaystyle
\binom{I_m}{I_m} & \displaystyle
\binom{\rho_{k_0+1}^{-1}(I_m+\al_{k_0+1}^*)}
{\wti\rho_{k_0+1}^{-1}(I_m+\al_{k_0+1})}
\\ \hline \hline
k & k_0-1 & k_0 \text{ even} & k_0+1
\\ \hline \ph{\Bigg|}
\displaystyle \binom{P_+(z,k,k_0)}{R_+(z,k,k_0)} & \displaystyle
\binom{\wti\rho_{k_0}^{-1}(I_m-\al_{k_0})}{\rho_{k_0}^{-1}(I_m-\al_{k_0}^*)}
& \displaystyle \binom{I_m}{I_m} & \displaystyle
\binom{\wti\rho_{k_0+1}^{-1}(zI_m+\al_{k_0+1})}
{\rho_{k_0+1}^{-1}(\frac1zI_m+\al_{k_0+1}^*)}
\\ \hline \ph{\Bigg|}
\displaystyle \binom{Q_+(z,k,k_0)}{S_+(z,k,k_0)} & \displaystyle
\binom{\wti\rho_{k_0}^{-1}(I_m+\al_{k_0})}
{\rho_{k_0}^{-1}(-I_m-\al_{k_0}^*)} & \displaystyle \binom{-I_m}{I_m}
& \displaystyle \binom{\wti\rho_{k_0+1}^{-1}(zI_m-\al_{k_0+1})}
{\rho_{k_0+1}^{-1}(-\frac1zI_m+\al_{k_0+1}^*)}
\\ \hline \ph{\Bigg|}
\displaystyle \binom{P_-(z,k,k_0)}{R_-(z,k,k_0)} & \displaystyle
\binom{\wti\rho_{k_0}^{-1}(I_m+z\al_{k_0})}
{\rho_{k_0}^{-1}(-zI_m-\al_{k_0}^*)} & \displaystyle
\binom{-zI_m}{I_m} & \displaystyle
\binom{z\wti\rho_{k_0+1}^{-1}(I_m-\al_{k_0+1})}
{\rho_{k_0+1}^{-1}(-I_m+\al_{k_0+1}^*)}
\\ \hline \ph{\Bigg|}
\displaystyle \binom{Q_-(z,k,k_0)}{S_-(z,k,k_0)} & \displaystyle
\binom{\wti\rho_{k_0}^{-1}(I_m-z\al_{k_0})}
{\rho_{k_0}^{-1}(zI_m-\al_{k_0}^*)} & \displaystyle \binom{zI_m}{I_m}
& \displaystyle \binom{z\wti\rho_{k_0+1}^{-1}(I_m+\al_{k_0+1})}
{\rho_{k_0+1}^{-1}(I_m+\al_{k_0+1}^*)}
\\ \hline
\end{array}
\end{align}

\begin{remark}  \lb{r2.4}
Subsequently, we will have to refer to the leading-order terms of certain matrix-valued Laurent polynomials at various occasions. To put this in precise terms we now introduce  the following conventions: We will refer to the terms
\begin{align}
\begin{cases}
z^{-(k+1)/2}, & k \text{ odd},
\\[1mm]
z^{k/2}, & k \text{ even},
\end{cases}
\end{align}
as the leading-order terms of the Laurent polynomials
\begin{align}
\begin{cases}
z^{-1}P_+(z,k_0+k,k_0),\; R_-(z,k_0-k,k_0),
\\
z^{-1}Q_+(z,k_0+k,k_0),\; S_-(z,k_0-k,k_0), & k_0 \text{ odd},
\\
R_+(z,k_0+k,k_0), \; z^{-1}P_-(z,k_0-k,k_0),
\\
S_+(z,k_0+k,k_0), \; z^{-1}Q_-(z,k_0-k,k_0), & k_0 \text{ even},
\end{cases}
\end{align}
and similarly, we will refer to the terms
\begin{align}
\begin{cases}
z^{(k+1)/2}, & k \text{ odd},
\\[1mm]
z^{-k/2}, & k \text{ even},
\end{cases}
\end{align}
as the leading-order term of the Laurent polynomials
\begin{align}
\begin{cases}
R_+(z,k_0+k,k_0), \; P_-(z,k_0-k,k_0),
\\
S_+(z,k_0+k,k_0), \; Q_-(z,k_0-k,k_0), & k_0 \text{ odd},
\\
P_+(z,k_0+k,k_0), \; R_-(z,k_0-k,k_0),
\\
Q_+(z,k_0+k,k_0), \; S_-(z,k_0-k,k_0), & k_0 \text{ even}.
\end{cases}
\end{align}
\end{remark}


\begin{remark}
We note that Lemmas \ref{l3.2} and \ref{l3.3} are crucial for many
of the proofs to follow. For instance, we note that the equivalence
of items $(i)$ and $(iii)$ in Lemma \ref{l3.2} proves that for each
$z\in\bbC\backslash\{0\}$, any solutions $\{U(z,k)\}_{k\in\bbZ}$ of
$\U U(z,\cdot)=zU(z,\cdot)$ can be expressed as a linear
combinations of $P_{+}(z,\cdot,k_0)$ and $Q_{+}(z,\cdot,k_0)$ (or
$P_{-}(z,\cdot,k_0)$ and $Q_{-}(z,\cdot,k_0)$) with $z$-dependent
right-multiple $\Cm$-valued coefficients. This equivalence also
proves that any solution of $\U U(z,\cdot)=zU(z,\cdot)$ is
determined by the values of $U$ and the auxiliary variable $V$ at a
site $k_0$. In the context of Lemma \ref{l3.3}, we remark that its
importance lies in the fact that it shows that in the case of
half-lattice CMV operators, the analogous equations have solutions,
which up to right-multiplication by $z$-dependent $\Cm$-valued
coefficients, are given by $\{P_{\pm}(z,k,k_0)\}_{k\in\bbZ}$ for
each $z\in\bbC\backslash\{0\}$.\ Consequently, the corresponding
solutions are determined by their value at a single site $k_0$.
\end{remark}

Next, we introduce the modified matrix-valued Laurent polynomials
$\wti P_\pm(z,k,k_0)$ and $\wti Q_\pm(z,k,k_0)$, $z\in\Cz$, $k,k_0\in\Z$,
by 
\begin{align}
\wti P_+(z,k,k_0) &=
\begin{cases}
P_+(z,k,k_0)/z, & \text{$k_0$ odd,} \\
P_+(z,k,k_0), & \text{$k_0$ even,}
\end{cases}
\quad \wti P_-(z,k,k_0) =
\begin{cases}
P_-(z,k,k_0), & \text{$k_0$ odd,} \\
-P_-(z,k,k_0)/z, & \text{$k_0$ even,}
\end{cases} \lb{3.49a}
\\
\wti Q_+(z,k,k_0) &=
\begin{cases}
Q_+(z,k,k_0)/z, & \text{$k_0$ odd,} \\
Q_+(z,k,k_0), & \text{$k_0$ even,}
\end{cases}
\quad \wti Q_-(z,k,k_0) =
\begin{cases}
Q_-(z,k,k_0), & \text{$k_0$ odd,} \\
-Q_-(z,k,k_0)/z, & \text{$k_0$ even.}
\end{cases} \lb{3.50a}
\end{align}

In the remainder of this paper we use the following abbreviations for subarcs $A_{\zeta}$ of $\dD$,
\begin{equation}
A_{\zeta}=\big\{e^{i\phi}\in\dD\,\big|\, 0\leq \phi\leq\theta\big\}, \quad \zeta=e^{i\te},
\; \te \in [0,2\pi).
\end{equation}

The next auxiliary result is of importance in proving orthonormality of the matrix-valued Laurent polynomials $P_\pm$ and $R_\pm$.

\begin{lemma} \lb{l3.5}
Let $\{F_\pm(\cdot,k,k_0)\}_{k\gtreqless k_0}$ denote two sequences
of $\Cm$-valued functions of bounded variation with $F_\pm(1,k,k_0)=0$
for all $k\gtreqless k_0$ that satisfy
\begin{align}
(\U_{\pm,k_0} F_\pm(\zeta,\cdot,k_0))(k) = \int_{A_{\zeta}}
dF_\pm(\zeta',k,k_0) \, \zeta', \quad \zeta\in\dD,\; k\gtreqless k_0,
\end{align}
where $\U_{\pm,k_0}$ are understood in the sense of difference
expressions on $\smm{\Z\cap[k_0,\pm\infty)}$ rather than difference
operators on $\ltm{\Z\cap[k_0,\pm\infty)}$ $($cf.\ Remark
\ref{r3.0}$)$. Then, $F_\pm(\cdot,k,k_0)$ also satisfy
\begin{align}
F_\pm(\zeta,k,k_0) = \int_{A_{\zeta}} \wti
P_\pm(\zeta',k,k_0)\,dF_\pm(\zeta',k_0,k_0), \quad \zeta\in\dD,\;
k\gtreqless k_0.
\end{align}
\end{lemma}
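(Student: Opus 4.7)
The plan is to introduce the candidate
\begin{equation}
G_\pm(\zeta,k,k_0) := \int_{A_\zeta}\wti P_\pm(\zeta',k,k_0)\,dF_\pm(\zeta',k_0,k_0)
\end{equation}
and to show $G_\pm=F_\pm$. I treat the $+$ case; the $-$ case is entirely analogous. The first step is to note that by the initial values \eqref{3.49}, the pair $(P_+,R_+)$ satisfies the initial compatibility condition of Lemma \ref{l3.3}\,(iii), so Lemma \ref{l3.3} gives $(\U_{+,k_0}P_+(\zeta',\cdot,k_0))(k) = \zeta'P_+(\zeta',k,k_0)$ for all $k\geq k_0$, $\zeta'\in\dD$. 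Combined with the rescaling \eqref{3.49a} and the observation that $\wti P_+(\zeta',k_0,k_0)=I_m$ for either parity of $k_0$, this yields
\begin{equation}
(\U_{+,k_0}\wti P_+(\zeta',\cdot,k_0))(k) = \zeta'\wti P_+(\zeta',k,k_0), \quad k\geq k_0, \;\zeta'\in\dD.
\end{equation}

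Next, since $\U_{+,k_0}$ is a linear difference expression involving only finitely many sites at each $k$, it commutes with the matrix-valued Stieltjes integral against $dF_+(\cdot,k_0,k_0)$, so
\begin{align}
(\U_{+,k_0}G_+(\zeta,\cdot,k_0))(k) &= \int_{A_\zeta}(\U_{+,k_0}\wti P_+(\zeta',\cdot,k_0))(k)\,dF_+(\zeta',k_0,k_0) \notag \\
&= \int_{A_\zeta}\zeta'\,dG_+(\zeta',k,k_0),
\end{align}
that is, $G_+$ satisfies the same integral equation as $F_+$. Using $\wti P_+(\zeta',k_0,k_0)=I_m$ together with $F_+(1,k_0,k_0)=0$, one also obtains $G_+(\zeta,k_0,k_0)=F_+(\zeta,k_0,k_0)$ and $G_+(1,k,k_0)=0$.

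The remaining (and principal) task is uniqueness: show that any such $F_+$ is determined by its value at $k_0$. To do so, I introduce an auxiliary sequence $\wti F_+(\zeta,k,k_0)$ (normalized by $\wti F_+(1,k,k_0)=0$) defined through $(\W_{+,k_0}F_+(\zeta,\cdot,k_0))(k) = \int_{A_\zeta}\zeta'\,d\wti F_+(\zeta',k,k_0)$, and likewise $\wti G_+$ from $G_+$. Repeating the equivalence argument in the proof of Lemma \ref{l3.3}, but applied to the Stieltjes-integrated equations, one deduces that the pair $(F_+,\wti F_+)$ satisfies the integrated transfer-matrix recursion
\begin{equation}
\binom{F_+(\zeta,k,k_0)}{\wti F_+(\zeta,k,k_0)} = \int_{A_\zeta}\T(\zeta',k)\binom{dF_+(\zeta',k-1,k_0)}{d\wti F_+(\zeta',k-1,k_0)}, \quad k>k_0,
\end{equation}
together with an integrated compatibility condition at $k_0$ that uniquely recovers $\wti F_+(\cdot,k_0,k_0)$ from $F_+(\cdot,k_0,k_0)$ (cf.\ the initial data in \eqref{3.37}). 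The pair $(G_+,\wti G_+)$ obeys the same recursion with the same data at $k_0$, so induction on $k\geq k_0$ forces $G_+ = F_+$.

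The main technical obstacle is moving between the pointwise identities supplied by Lemma \ref{l3.3} and the Stieltjes-integrated versions required here, since $dF_+(\cdot,k_0,k_0)$ is only of bounded variation rather than absolutely continuous. This is handled by observing that $\T(\zeta',k)$ is a matrix-valued Laurent polynomial (in particular continuous) in $\zeta'\in\dD$ and that $\U_{+,k_0}$, $\V_{+,k_0}$, $\W_{+,k_0}$ are local in $k$; Fubini's theorem then reduces the integrated identities to their pointwise counterparts.
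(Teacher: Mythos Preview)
Your proposal is correct and follows essentially the same route as the paper: define the candidate $G_\pm$, check it satisfies the same integral equation with the same initial data at $k_0$, and then argue uniqueness by re-running the half-lattice recursion of Lemma~\ref{l3.3} at the level of Stieltjes integrals. The paper packages the uniqueness step somewhat more cleanly by introducing the bounded, boundedly invertible operator $\bbL$ on $\Cm$-valued BV functions, $(\bbL K)(\zeta)=\int_{A_\zeta}dK(\zeta')\,\zeta'$, $(\bbL^{-1}K)(\zeta)=\int_{A_\zeta}dK(\zeta')\,\zeta'^{-1}$, observing that $\bbL$ commutes with all constant $m\times m$ matrices, and then literally repeating the proof of Lemma~\ref{l3.3} with $z$ replaced by $\bbL$ applied to the difference $K_\pm=F_\pm-G_\pm$. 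Your auxiliary sequence $\wti F_+$ is precisely $\bbL^{-1}(\W_{+,k_0}F_+)$, and your ``integrated compatibility condition at $k_0$'' is exactly the invertibility of $\bbL$; making this explicit would tighten the argument and let you dispense with the Fubini remark, which is not really the crux here since the difference expressions are local in $k$.
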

\begin{proof}
Let $\{G_\pm(\cdot,k,k_0)\}_{k\gtreqless k_0}$ denote the  
two sequences of $\Cm$-valued functions,
\begin{align}
G_\pm(\zeta,k,k_0) = \int_{A_{\zeta}} \wti
P_\pm(\zeta',k,k_0)dF_\pm(\zeta',k_0,k_0), \quad \zeta\in\dD,\;
k\gtreqless k_0.
\end{align}
Then it suffices to prove that $F_\pm(\zeta,k,k_0) =
G_\pm(\zeta,k,k_0)$, $\zeta\in\dD$, $k\gtreqless k_0$.

First, we note that according to \eqref{3.49}, \eqref{3.50}, and
\eqref{3.49a}, $\wti P_\pm(\zeta,k_0,k_0) = I_m$, and hence,
\begin{align}
G_\pm(\zeta,k_0,k_0) = \int_{A_{\zeta}} dF_\pm(\zeta',k_0,k_0) =
F_\pm(\zeta,k_0,k_0), \quad \zeta\in\dD.
\end{align}
Moreover,
\begin{align}
(\U_{\pm,k_0} G_\pm(\zeta,\cdot,k_0))(k) &= \int_{A_{\zeta}}
(\U_{\pm,k_0}\wti P_\pm(\zeta',\cdot,k_0))(k)dF_\pm(\zeta',k_0,k_0) \no
\\
&= \int_{A_{\zeta}} dG_\pm(\zeta',k,k_0) \, \zeta', \quad \zeta\in\dD,\;
k\gtreqless k_0.
\end{align}

Next, defining $K_\pm(\zeta,k,k_0) =
F_\pm(\zeta,k,k_0)-G_\pm(\zeta,k,k_0)$, $\zeta\in\dD$, $k\gtreqless
k_0$, one obtains
\begin{align*}
K_\pm(\zeta,k_0,k_0) = 0 \quad \text{and}\quad (\U_{\pm,k_0}
K_\pm(\zeta,\cdot,k_0))(k) = \int_{A_{\zeta}} dK_\pm(\zeta',k,k_0) \,  \zeta' , \quad
\zeta\in\dD,\; k\gtreqless k_0,
\end{align*}
or equivalently,
\begin{align}
K_\pm(\zeta,k_0,k_0) = 0 \quad \text{and}\quad (\U_{\pm,k_0}
K_\pm(\zeta,\cdot,k_0))(k) = (\L \, K_\pm(\cdot,k,k_0))(\zeta), \quad
\zeta\in\dD,\; k\gtreqless k_0, \lb{3.53}
\end{align}
where $\L$ denotes the boundedly invertible operator on $\Cm$-valued
functions $K$ of bounded variation defined by 
\begin{align}
(\L \, K)(\zeta) = \int_{A_{\zeta}} dK(\zeta')\,  \zeta', \quad (\L^{-1} K)(\zeta) =
\int_{A_{\zeta}} dK(\zeta') \,  {\zeta'}^{-1}.
\end{align}

Finally, since, $\L$ commutes with all constant $m\times m$ matrices,
one can repeat the proof of Lemma \ref{l3.3} with $z$ replaced by
$\L$ and obtain that \eqref{3.53} has the unique solution
$K_\pm(\zeta,k,k_0)=0$, $\zeta\in\dD$, $k\gtreqless k_0$, and hence,
$F_\pm(\zeta,k,k_0) = G_\pm(\zeta,k,k_0)$, $\zeta\in\dD$, $k\gtreqless
k_0$.
\end{proof}

Next, following \cite{Be68} (see also \cite{BG90}), we prove a matrix-valued version
of the ``orthogonality" relation for matrix-valued Laurent polynomials
$P_\pm$ and $R_\pm$.

Let $\De_k=\{\De_k(\ell)\}_{\ell\in\Z}\in\smm{\Z}$, $k\in\Z$, denote
the sequences of $m\times m$ matrices defined by
\begin{align}
(\De_k)(\ell) =
\begin{cases}
I_m, & \ell=k,\\
0, & \ell\neq k,
\end{cases}
\quad k,\ell\in\Z.
\end{align}
Then using right-multiplication by $m\times m$ matrices on
$\smm{\Z}$ defined in Remark \ref{r3.0}, we get the 
identity 
\begin{align} \lb{B.1a}
(\De_k X)(\ell) =
\begin{cases}
X, & \ell=k,\\
0, & \ell\neq k,
\end{cases}  \quad  X\in\Cm,
\end{align}
and hence consider $\De_k$ as a map $\De_k\colon \Cm\to\smm{\Z}$. In
addition, we introduce the map $\De_k^*\colon \smm{\Z}\to\Cm$,
$k\in\Z$, defined by
\begin{align} \lb{B.1b}
\De_k^* \Phi = \Phi(k), \,\text{ where }\,
\Phi=\{\Phi(k)\}_{k\in\Z}\in\smm{\Z}.
\end{align}
Similarly, one introduces the corresponding maps with $\bbZ$ replaced by
$[k_0,\pm\infty)\cap\Z$, $k_0\in\Z$, which, for notational brevity, we
will also denote by $\De_k$ and $\De_k^*$, respectively.

Next, we call sequences of $C^{m\times m}$-valued
functions $\{\Phi_\pm(\cdot,k,k_0)\}_{k\gtreqless k_0}$
orthonormal\footnote{This is denoted by pseudo-orthonormality in
\cite[Sect.~VII.2.6]{Be68}} on $\dD$ with respect to some
$\Cm$-valued measures $d\Om_\pm(\cdot,k_0)$, defined on $\dD$, if
the following identity holds for all $k,k'\gtreqless k_0$,
\begin{align}
\oint_{\dD}
\Phi_\pm(\zeta,k,k_0)\,d\Om_\pm(\zeta,k_0)\,\Phi_\pm(\zeta,k',k_0)^*
&= \de_{k,k'}I_m.
\end{align}
We will also call the sequences of $\Cm$-valued functions
$\{\Phi_\pm(\cdot,k,k_0)\}_{k\gtreqless k_0}$ complete with respect
to the measures $d\Om_\pm(\cdot,k_0)$ if the collections
of $\C^m$-valued functions
\begin{align}
\left\{\phi_{\pm,j}(\cdot,k,k_0)=
\begin{pmatrix}
(\Phi_\pm(\cdot,k,k_0))_{1,j}\\\vdots\\(\Phi_\pm(\cdot,k,k_0))_{m,j}
\end{pmatrix}\right\}_{ j=1,\dots,m, \; k\gtreqless k_0}
\end{align}
form complete systems in $\Ltm{\pm}$.

\begin{lemma} \lb{l3.6}
Let $k_0\in\bbZ$. The sets of $\Cm$-valued Laurent polynomials
$\{P_\pm(\cdot,k,k_0)^*\}_{k\gtreqless k_0}$ and
$\{R_\pm(\cdot,k,k_0)^*\}_{k\gtreqless k_0}$ form complete
orthonormal systems on $\dD$ with respect to $\Cm$-valued measures
$d\Om_\pm(\cdot,k_0)$ defined by
\begin{equation}
d\Om_\pm(\zeta,k_0) = d(\De_{k_0}^* E_{\U_{\pm,k_0}}(\zeta)\De_{k_0}),
\quad \zeta\in\dD, \lb{3.54}
\end{equation}
where $E_{\U_{\pm,k_0}}(\cdot)$ denotes the family of spectral projections of
the half-lattice unitary operators $\U_{\pm,k_0}$,
\begin{equation}
\U_{\pm,k_0}=\oint_{\dD}dE_{\U_{\pm,k_0}}(\zeta)\,\zeta.  \lb{3.55} 
\end{equation}
Explicitly, $P_\pm$ and $R_\pm$ satisfy,
\begin{align}
\oint_{\dD}
P_\pm(\zeta,k,k_0)\,d\Om_\pm(\zeta,k_0)\,P_\pm(\zeta,k',k_0)^* &=
\de_{k,k'}I_m, \quad k,k' \gtreqless k_0, \lb{3.56}
\\
\oint_{\dD}
R_\pm(\zeta,k,k_0)\,d\Om_\pm(\zeta,k_0)\,R_\pm(\zeta,k',k_0)^* &=
\de_{k,k'}I_m, \quad k,k' \gtreqless k_0. \lb{3.57}
\end{align}
\end{lemma}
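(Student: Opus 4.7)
The plan is to identify the measures $d\Om_\pm(\cdot,k_0)$ and the modified matrix Laurent polynomials $\wti P_\pm(\zeta,k,k_0)$ with, respectively, the spectral measures of $\U_{\pm,k_0}$ at the reference ``vector'' $\De_{k_0}$ and the spectral-transform images of the standard basis vectors $\De_k$. Once this identification is in place, the trivial orthogonality $\De_k^*\De_{k'}=\de_{k,k'}I_m$ translates, via the isometric matrix-valued spectral transform, directly into \eqref{3.56}, while completeness follows from matrix-cyclicity of $\De_{k_0}$ for $\U_{\pm,k_0}$ (which is immediate from the five-diagonal structure \eqref{3.18}). Lemma \ref{l3.5} supplies exactly the bridge between the transfer-matrix-defined polynomials $\wti P_\pm$ and the operator-theoretic data.

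For \eqref{3.56}, I would first set
\[
F_\pm(\zeta,k,k_0)=\De_k^*\bigl[E_{\U_{\pm,k_0}}(A_\zeta)-E_{\U_{\pm,k_0}}(\{1\})\bigr]\De_{k_0},
\]
a $\Cm$-valued function of bounded variation in $\zeta\in\dD$ with $F_\pm(1,k,k_0)=0$, and observe that spectral calculus applied to \eqref{3.55} gives $(\U_{\pm,k_0}F_\pm(\zeta,\cdot,k_0))(k)=\int_{A_\zeta}dF_\pm(\zeta',k,k_0)\,\zeta'$. Lemma \ref{l3.5} then yields $F_\pm(\zeta,k,k_0)=\int_{A_\zeta}\wti P_\pm(\zeta',k,k_0)\,d\Om_\pm(\zeta',k_0)$. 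Rerunning the argument with $\De_{k_0}$ replaced by $\De_{k'}$ on the right, and using the adjoint of the previous identity to evaluate the initial value at $k=k_0$, I obtain
\[
\De_k^*\bigl[E_{\U_{\pm,k_0}}(A_\zeta)-E_{\U_{\pm,k_0}}(\{1\})\bigr]\De_{k'}=\int_{A_\zeta}\wti P_\pm(\zeta',k,k_0)\,d\Om_\pm(\zeta',k_0)\,\wti P_\pm(\zeta',k',k_0)^*.
\]
Letting $\zeta$ sweep out $\dD$ and reconciling the point-mass term at $\zeta=1$ via the identity $\De_k^*E_{\U_{\pm,k_0}}(\{1\})\De_{k'}=\wti P_\pm(1,k,k_0)\Om_\pm(\{1\},k_0)\wti P_\pm(1,k',k_0)^*$ (a consequence of the unique parameterization of the $z=1$ eigenspace of $\U_{\pm,k_0}$ by its value at $k_0$, cf.\ Lemma \ref{l3.3}) produces $\oint_{\dD}\wti P_\pm\,d\Om_\pm\,\wti P_\pm^*=\de_{k,k'}I_m$. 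Since $|\zeta|=1$ on $\dD$, the scalar factor $\eps(\zeta)\in\{\pm 1,\pm\zeta^{-1}\}$ relating $\wti P_\pm$ to $P_\pm$ in \eqref{3.49a}--\eqref{3.50a} obeys $\eps(\zeta)\,\ol{\eps(\zeta)}=1$, so \eqref{3.56} follows.

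For \eqref{3.57}, I would rerun this entire scheme with the unitarily equivalent operator $\W_{\pm,k_0}\V_{\pm,k_0}=\V_{\pm,k_0}^{-1}\U_{\pm,k_0}\V_{\pm,k_0}$ in place of $\U_{\pm,k_0}=\V_{\pm,k_0}\W_{\pm,k_0}$: by Lemma \ref{l3.3}, $R_\pm$ plays for $\W_{\pm,k_0}\V_{\pm,k_0}$ exactly the role that $P_\pm$ plays for $\U_{\pm,k_0}$, and the normalization $R_\pm(\zeta,k_0,k_0)=\pm I_m$ makes the analog of Lemma \ref{l3.5} go through after a rescaling similar to \eqref{3.49a}--\eqref{3.50a}. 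The key observation that makes the substitution legitimate is that the convention $\al_{k_0}=I_m$ (or $\al_{k_0+1}=I_m$ on the $-$ side) forces the pertinent block $\Te$ to equal $\diag(\mp I_m,\pm I_m)$, so that either $\V_{\pm,k_0}\De_{k_0}=\pm\De_{k_0}$ or $\W_{\pm,k_0}\De_{k_0}=\pm\De_{k_0}$ depending on the parity of $k_0$; in either case the conjugation preserves $\De_{k_0}^*E(\cdot)\De_{k_0}$, so the spectral measure of $\W_{\pm,k_0}\V_{\pm,k_0}$ at $\De_{k_0}$ still equals $d\Om_\pm(\cdot,k_0)$. Completeness for both $\{P_\pm(\cdot,k,k_0)^*\}$ and $\{R_\pm(\cdot,k,k_0)^*\}$ is then a consequence of surjectivity of the spectral transform, which itself reflects matrix-cyclicity of $\De_{k_0}$. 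I expect the main technical obstacle to be careful bookkeeping of the point-mass contribution at $\zeta=1$; apart from this, the proof is essentially a matrix-valued transcription of the classical orthogonality argument for orthogonal polynomials on the unit circle as in \cite[Sect.\ VII.2.6]{Be68}.
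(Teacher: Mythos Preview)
Your derivation of \eqref{3.56} is essentially the paper's: fix $k_1$, set $F_\pm(\zeta,k,k_0)=\De_k^*E_{\U_{\pm,k_0}}(\zeta)\De_{k_1}$, apply Lemma \ref{l3.5}, specialize $k_1$ to $k'$ and to $k_0$, take an adjoint, and substitute. Your worry about a point mass at $\zeta=1$ is not an issue in the paper's set-up: with the convention that the spectral family starts at $0$ on the arc, $F_\pm(1,\cdot,k_0)=0$ automatically, and letting $\theta\to 2\pi$ recovers the full integral.

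Where your route diverges is \eqref{3.57}. You propose to rerun the whole machine for the conjugate operator $\W_{\pm,k_0}\V_{\pm,k_0}$ and argue that its spectral measure at $\De_{k_0}$ coincides with $d\Om_\pm(\cdot,k_0)$ via the boundary block structure. This works, but the paper's argument is a one-liner: from Lemma \ref{l3.3} one has $R_\pm(z,k,k_0)=z^{-1}(\W_{\pm,k_0}P_\pm(z,\cdot,k_0))(k)$, and since $\W_{\pm,k_0}$ is a unitary block-diagonal matrix acting on the $k$-index (commuting with the $d\Om_\pm$-integration and with the scalar $z^{-1}$ of modulus one), \eqref{3.57} is immediate from \eqref{3.56}. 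Your detour through a second spectral representation is correct but buys nothing extra.

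For completeness, you invoke matrix-cyclicity of $\De_{k_0}$ abstractly. The paper instead observes concretely that $\spn\{P_\pm(\zeta,k,k_0)^*\}=\spn\{R_\pm(\zeta,k,k_0)^*\}=\spn\{\zeta^kI_m\}_{k\in\bbZ}$ (visible from the transfer-matrix structure) and then shows directly that if $F\in\Ltm{\pm}$ is orthogonal to all $\zeta^kI_m$, then $d\Om_\pm F=0$ componentwise, forcing $\|F\|=0$. Both arguments amount to the same thing, but the paper's avoids any appeal to surjectivity of a spectral transform that has not yet been constructed at this point in the development.
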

\begin{proof}
Fix an integer $k_1\gtreqless k_0$ and let
$\{F_\pm(\cdot,k,k_0)\}_{k\gtreqless k_0}$ denote two $\Cm$-valued
sequences of functions of bounded variation,
\begin{align}
F_\pm(\zeta,k,k_0) = \De_{k}^* E_{\U_{\pm,k_0}}(\zeta)\De_{k_1}, \quad
\zeta\in\dD, \; k\gtreqless k_0.
\end{align}
Then,
\begin{align}
(\U_{\pm,k_0} F_\pm(\zeta,\cdot,k_0))(k) &=
(\U_{\pm,k_0}E_{\U_{\pm,k_0}}(\zeta)\De_{k_1})(k) = \left(\int_{A_{\zeta}}
dE_{\U_{\pm,k_0}}(\zeta')\, \zeta' \De_{k_1}\right)(k)
\\ &=
\int_{A_{\zeta}} d\big(\De_k^* E_{\U_{\pm,k_0}}(\zeta')\De_{k_1}\big) \, \zeta' =
\int_{A_{\zeta}} dF_\pm(\zeta',k,k_0) \, \zeta', \quad \zeta\in\dD,\;
k\gtreqless k_0, \no
\end{align}
and hence it follows from Lemma \ref{l3.5} that
\begin{align}
F_\pm(\zeta,k,k_0) = \int_{A_{\zeta}} \wti
P_\pm(\zeta',k,k_0)\,dF_\pm(\zeta',k_0,k_0), \quad \zeta\in\dD,
\; k\gtreqless k_0,
\end{align}
or equivalently,
\begin{align}
\De_{k}^* E_{\U_{\pm,k_0}}(\zeta)\De_{k_1} = \int_{A_{\zeta}} \wti
P_\pm(\zeta',k,k_0)\,d\big(\De_{k_0}^*
E_{\U_{\pm,k_0}}(\zeta')\De_{k_1}\big), \quad \zeta\in\dD,\;
k\gtreqless k_0.
\end{align}
In particular, taking $k_1=k'$ and $k_1=k_0$, one obtains,
respectively,
\begin{align}
\De_{k}^* E_{\U_{\pm,k_0}}(\zeta)\De_{k'} &= \int_{A_{\zeta}} \wti
P_\pm(\zeta',k,k_0)\,d\big(\De_{k_0}^*
E_{\U_{\pm,k_0}}(\zeta')\De_{k'}\big), \quad \zeta\in\dD,\;
k\gtreqless k_0,   \lb{3.62}
\end{align}
and
\begin{align}
\De_{k'}^* E_{\U_{\pm,k_0}}(\zeta)\De_{k_0} &= \int_{A_{\zeta}} \wti
P_\pm(\zeta',k',k_0)\,d\big(\De_{k_0}^*
E_{\U_{\pm,k_0}}(\zeta')\De_{k_0}\big) \no
\\ &=
\int_{A_{\zeta}} \wti P_\pm(\zeta',k',k_0)\,d\Om_\pm(\zeta',k_0), \quad
\zeta\in\dD,\; k' \gtreqless k_0. \lb{3.64}
\end{align}
Taking adjoints in \eqref{3.64} one also obtains
\begin{align}
\De_{k_0}^* E_{\U_{\pm,k_0}}(\zeta)\De_{k'} &= \int_{A_{\zeta}}
d\Om_\pm(\zeta',k_0)\,\wti P_\pm(\zeta',k',k_0)^*, \quad
\zeta\in\dD,\; k' \gtreqless k_0. \lb{3.65}
\end{align}
Thus, inserting \eqref{3.49a} and \eqref{3.65} into \eqref{3.62} and
letting $\te\to2\pi$, $\zeta=e^{i\te}$, yields \eqref{3.56},
\begin{align}
\de_{k,k'} I_m &= \oint_{\dD} \wti
P_\pm(\zeta,k,k_0)\,d\Om_\pm(\zeta,k_0)\,\wti P_\pm(\zeta,k',k_0)^*
\no
\\
&= \oint_{\dD}
P_\pm(\zeta,k,k_0)\,d\Om_\pm(\zeta,k_0)\,P_\pm(\zeta,k',k_0)^*, \quad
k,k'\gtreqless k_0.
\end{align}
Finally, \eqref{3.57} is a consequence of \eqref{3.56} and the 
relation
\begin{align}
R_\pm(z,k,k_0) = \frac1z (\W_{\pm,k_0}P_\pm(z,\cdot,k_0))(k), \quad
z\in\Cz,\; k\gtreqless k_0,
\end{align}
where $\W_{\pm,k_0}$ are the unitary block diagonal semi-infinite
matrices defined in \eqref{3.34}.

To prove completeness of $\{P_\pm(\cdot,k,k_0)^*\}_{k\gtreqless
k_0}$ and $\{R_\pm(\cdot,k,k_0)^*\}_{k\gtreqless k_0}$ we first note
the subsequent fact that can be inferred from the definitions of $P_\pm$ and
$R_\pm$ and, in particular, from \eqref{3.21}, \eqref{3.22}, \eqref{3.49},
and \eqref{3.50},
\begin{align}
\spn\{P_\pm(\ze,k,k_0)^*\}_{k\gtreqless k_0} &=
\spn\{R_\pm(\ze,k,k_0)^*\}_{k\gtreqless k_0} = \spn\left\{\ze^k
I_m\right\}_{k\in\bbZ}.
\end{align}
Hence, it suffices to prove that $\big\{\ze^k I_m\big\}_{k\in\Z}$
are complete with respect to $d\Om_\pm(\cdot,k_0)$. Suppose
$F\in\Ltm{\pm}$ is orthogonal to all columns of $\ze^kI_m$ for all
$k\in\Z$, that is,
\begin{equation}
\oint_{\dD} \zeta^{-k}
d\Omega(\zeta,k_0)\,F(\zeta)=
\begin{pmatrix}0\\\vdots\\0\end{pmatrix}\in\C^m,
\quad k\in\Z.
\end{equation}
Note that for a scalar complex-valued measure $d\om$ equalities
$\oint d\om(\ze)\,\ze^n=0$, $n\in\Z$, imply that $\oint
d\Re(\om(\ze))\,\ze^n=\oint d\Im(\om(\ze))\,\ze^n = 0$, and hence one 
concludes from \cite[p.\ 24]{Du83}) that $d\om=0$. Applying this
argument to $d(\Omega_{\pm}(\cdot,k_0)F(\cdot))_\ell$,
$\ell=1,\dots,m$, one obtains
\begin{align}
d\Omega_{\pm}(\cdot,k_0)F(\cdot) &=
\begin{pmatrix}0\\\vdots\\0\end{pmatrix}\in\C^m.
\end{align}
Multiplying by $F(\cdot)^*$ on the left and integrating over the
unit circle then yields
\begin{align}
\|F\|_{\Ltm{\pm}}^2=\oint_{\dD} F(\zeta)^* \,d\Om_\pm(\ze,k_0)\,
F(\zeta) = 0.
\end{align}
\end{proof}

We note that $d\Om_\pm(\cdot,k_0)$, $k_0\in\bbZ$, defined in
\eqref{3.54} are normalized, nonnegative, nondegenerate,
$\Cm$-valued measures supported on infinite subsets of $\dD$,
that is, for any $\Cm$-valued Laurent polynomial $P(z)$ the
following properties hold,
\begin{align}
(i)&\; \oint_{\dD} d\Om_\pm(\zeta,k_0) = I_m \;\text{ and }\;
\oint_{\dD} P(\zeta)\,d\Om_\pm(\zeta,k_0)\,P(\zeta)^* \geq 0.
\\
(ii)&\;\text{ If $P(z)= z^{-n}A_{-n}+...+z^n A_n$ and either $A_n$ or $A_{-n}$ is invertible,}  \\
&\; \text{ then } \, \oint_{\dD} P(\zeta)\,d\Om_\pm(\zeta,k_0)\,P(\zeta)^* > 0.
\\
(iii)&\;\text{ If }\, \oint_{\dD}
P(\zeta)\,d\Om_\pm(\zeta,k_0)\,P(\zeta)^* = 0 \, \text{ then }\,
P(z)=0.
\end{align}
The infinite support property of the spectral measure is a consequence of 
the fact that we have infinitely many linearly independent
orthogonal Laurent polynomials $P_\pm$. Property $(i)$ follows
from \eqref{3.54}, and properties $(ii)$ and
$(iii)$ are implied by the orthogonality relations \eqref{3.56}, \eqref{3.57}, and the fact that
the matrix-valued Laurent polynomials $P_\pm$ and $R_\pm$ have invertible leading-order
coefficients (cf.\ Remark \ref{r2.4}).

\begin{corollary}
Let $k_0\in\bbZ$. Then the operators $\U_{\pm,k_0}$ are unitarily
equivalent to the operators of multiplication by $\zeta$ on
$\Ltm{\pm}$. In particular,
\begin{align}
& \si(\U_{\pm,k_0}) = \supp \, (d\Om_\pm(\cdot,k_0)).
\end{align}
\end{corollary}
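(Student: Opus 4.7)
The plan is to build an explicit unitary map implementing the equivalence out of the orthonormal polynomial systems furnished by Lemma \ref{l3.6}, and then to read off the statement about the spectrum from the well-known spectrum of a multiplication operator on an $L^2$-space. Concretely, define
\[
\hatt U_\pm \colon \ltm{[k_0,\pm\infty)\cap\Z} \to \Ltm{\pm}, \quad (\hatt U_\pm \phi)(\zeta) = \sum_{k\gtreqless k_0} P_\pm(\zeta,k,k_0)^*\phi(k),
\]
initially on finitely supported sequences. The identification of $\Ltm{\pm}$ as the Hilbert space of $\C^m$-valued functions $f,g$ with inner product $\oint_{\dD} f(\zeta)^* d\Om_\pm(\zeta,k_0) g(\zeta)$ is the natural one, and under this identification the columns of $\{P_\pm(\cdot,k,k_0)^*\}_{k\gtreqless k_0}$ form the orthonormal basis referred to in Lemma \ref{l3.6}.

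First I would verify that $\hatt U_\pm$ is well-defined and isometric. A direct computation using the orthonormality relation \eqref{3.56} gives, for finitely supported $\phi,\psi$,
\[
(\hatt U_\pm \phi, \hatt U_\pm \psi)_{\Ltm{\pm}} = \sum_{k,k'} \phi(k)^* \left(\oint_{\dD} P_\pm(\zeta,k,k_0)\, d\Om_\pm(\zeta,k_0)\, P_\pm(\zeta,k',k_0)^*\right) \psi(k') = \sum_k \phi(k)^*\psi(k) = (\phi,\psi)_{\ltm{}},
\]
so $\hatt U_\pm$ extends to an isometry on the full Hilbert space. Surjectivity follows directly from the completeness assertion in Lemma \ref{l3.6}, so $\hatt U_\pm$ is unitary.

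Next I would check the intertwining identity $\hatt U_\pm \U_{\pm,k_0} = M_\zeta \hatt U_\pm$, where $M_\zeta$ denotes multiplication by $\zeta$ on $\Ltm{\pm}$. The key input is that, by the equivalence of items $(i)$ and $(iii)$ in Lemma \ref{l3.3} together with the initial conditions \eqref{3.49}--\eqref{3.50}, $P_\pm(\zeta,\cdot,k_0)$ is a genuine eigensolution of the half-lattice operator, namely $(\U_{\pm,k_0} P_\pm(\zeta,\cdot,k_0))(k) = \zeta P_\pm(\zeta,k,k_0)$. Since $\U_{\pm,k_0}$ is unitary, this yields $\sum_k (\U_{\pm,k_0})(k,l)^* P_\pm(\zeta,k,k_0) = \zeta^{-1}P_\pm(\zeta,l,k_0)$, and on taking adjoints on $\dD$ (where $|\zeta|=1$) one obtains
\[
\sum_k P_\pm(\zeta,k,k_0)^*\, (\U_{\pm,k_0})(k,l) = \zeta\, P_\pm(\zeta,l,k_0)^*.
\]
Substituting this into the expression $(\hatt U_\pm \U_{\pm,k_0}\phi)(\zeta) = \sum_k P_\pm(\zeta,k,k_0)^*(\U_{\pm,k_0}\phi)(k)$ and rearranging the finite sums produces $\zeta (\hatt U_\pm \phi)(\zeta)$, as required. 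This step is the main obstacle since one has to be slightly careful with the matrix-valued block entries and the complex conjugation under the adjoint, but the computation is ultimately straightforward.

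With the unitary equivalence $\hatt U_\pm \U_{\pm,k_0} \hatt U_\pm^{-1} = M_\zeta$ established, the final statement about the spectrum is a standard fact: the spectrum of multiplication by the coordinate function $\zeta$ on $L^2(\dD; d\mu)$, for any nondegenerate matrix-valued Borel measure $d\mu$ on $\dD$, equals $\supp(d\mu)$. Applying this to $d\mu = d\Om_\pm(\cdot,k_0)$ and invoking invariance of the spectrum under unitary equivalence yields $\sigma(\U_{\pm,k_0}) = \supp(d\Om_\pm(\cdot,k_0))$, completing the proof.
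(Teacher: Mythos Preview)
Your proof is correct and follows essentially the same route as the paper: build the unitary from the orthonormal Laurent polynomial system of Lemma \ref{l3.6}, verify isometry via \eqref{3.56}, surjectivity via completeness, and the intertwining relation via the eigenfunction property $\U_{\pm,k_0}^*P_\pm(\zeta,\cdot,k_0)=\zeta^{-1}P_\pm(\zeta,\cdot,k_0)$. The only cosmetic difference is that the paper uses the normalized polynomials $\wti P_\pm$ from \eqref{3.49a} in defining the unitary, whereas you use $P_\pm$ directly; since these differ by a unimodular scalar factor on $\dD$, the two constructions yield the same unitary up to an irrelevant phase.
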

\begin{proof}
Consider the linear maps $\dot
\cU_\pm\colon\ltzm{[k_0,\pm\infty)\cap\bbZ}\to\Ltm{\pm}$ from the
space of compactly supported sequences
$\ltzm{[k_0,\pm\infty)\cap\bbZ}$ to the set of $\C^m$-valued Laurent
polynomials defined by
\begin{equation}
(\dot \cU_\pm F)(z) = \sum_{k=k_0}^{\pm\infty} \wti
P_\pm(1/\ol{z},k,k_0)^* F(k), \quad F\in
\ltzm{[k_0,\pm\infty)\cap\bbZ}.
\end{equation}
Using \eqref{3.56} one shows that $\hatt F(\zeta) = (\dot \cU_\pm
F)(\zeta)$, $F\in \ltzm{[k_0,\pm\infty)\cap\bbZ}$ has the 
property
\begin{align}
\|\hatt F\|^2_{\Ltm{\pm}} &= \oint_{\dD} \hatt
F(\zeta)^*d\Om_\pm(\zeta,k_0)\,\hatt F(\zeta)
\\ &=
\oint_{\dD} \sum_{k=k_0}^{\pm\infty} F(k)^*\wti P_\pm(\zeta,k,k_0)
\,d\Om_\pm(\zeta,k_0)\!\! \sum_{k'=k_0}^{\pm\infty}\wti
P_\pm(\zeta,k',k_0)^*F(k') \no
\\ &=
\sum_{k,k'=k_0}^{\pm\infty} F(k)^* \left(\oint_{\dD} \wti
P_\pm(\zeta,k,k_0) \,d\Om_\pm(\zeta,k_0)\, \wti
P_\pm(\zeta,k',k_0)^*\right)F(k') \no
\\ &=
\sum_{k=k_0}^{\pm\infty} F(k)^*F(k) =
\|F\|^2_{\ltm{[k_0,\pm\infty)\cap\bbZ}}. \lb{3.73}
\end{align}
Since $\ltzm{[k_0,\pm\infty)\cap\bbZ}$ is dense in
$\ltm{[k_0,\pm\infty)\cap\bbZ}$, $\dot \cU_\pm$ extend to bounded
linear operators $\cU_\pm\colon \ltm{[k_0,\pm\infty)\cap\bbZ} \to
\Ltm{\pm}$, and the identity 
\begin{align} \lb{3.74}
(\cU_\pm(\U_{\pm,k_0}F))(\zeta) &= \sum_{k=k_0}^{\pm\infty} \wti
P_\pm(\zeta,k,k_0)^* (\U_{\pm,k_0}F)(k) =
\sum_{k=k_0}^{\pm\infty} (\U_{\pm,k_0}^*\wti
P_\pm(\zeta,\cdot,k_0))(k)^* F(k)
\\ &=
\sum_{k=k_0}^{\pm\infty} (\zeta^{-1}\wti P_\pm(\zeta,k,k_0))^* F(k)
= \zeta(\cU_\pm F)(\zeta), \quad F\in\ltm{[k_0,\pm\infty)\cap\bbZ}, 
\no
\end{align}
holds. The ranges of the operators $\cU_\pm$ are all of $\Ltm{\pm}$
since the sets of Laurent polynomials $\{\wti
P_\pm(\cdot,k,k_0)^*\}_{k\gtreqless k_0}$ are complete with respect
to $d\Om_\pm(\cdot,k_0)$, and hence $\cU_\pm$ are onto. Finally, one
computes the inverse operators $\cU_\pm^{-1}$,
\begin{equation}
(\cU_\pm^{-1}\hatt F)(k) = \oint_{\dD} \wti
P_\pm(\zeta,k,k_0)\,d\Om_{\pm}(\zeta,k_0)\,\hatt F(\zeta), \quad
\hatt F\in\Ltm{\pm},
\end{equation}
which together with \eqref{3.73} implies that $\cU_\pm$ are unitary.
In addition, \eqref{3.74} implies that the half-lattice
unitary operators $\U_{\pm,k_0}$ on $\ltm{[k_0,\pm\infty)\cap\bbZ}$
are unitarily equivalent to the operators of multiplication by
$\zeta$ on $\Ltm{\pm}$,
\begin{align}
(\cU_\pm \U_{\pm,k_0} \cU_\pm^{-1} \hatt F)(\zeta) = \zeta\hatt
F(\zeta), \quad \hatt F\in \Ltm{\pm}.
\end{align}
\end{proof}

\begin{corollary} \lb{c3.8}
Let $k_0\in\bbZ$. \\ The matrix-valued Laurent polynomials
$\{P_+(\cdot,k,k_0)\}_{k\geq k_0}$ can be constructed by
Gram--Schmidt orthogonalizing
\begin{equation}
\begin{cases}
\zeta I_m,\, I_m,\,\zeta^2 I_m,\, \zeta^{-1}I_m,\, \zeta^3 I_m,\,
\zeta^{-2}I_m,\dots, & \text{$k_0$ odd,}
\\
I_m,\, \zeta I_m,\, \zeta^{-1}I_m,\, \zeta^2 I_m,\,
\zeta^{-2}I_m,\, \zeta^2 I_m, \dots, & \text{$k_0$ even}
\end{cases}
\end{equation}
in the context of matrix-valued Laurent polynomials orthogonal with respect
to $d\Om_+(\cdot,k_0)$.
\\
The matrix-valued Laurent polynomials $\{R_+(\cdot,k,k_0)\}_{k\geq
k_0}$ can be constructed by Gram--Schmidt orthogonalizing
\begin{equation}
\begin{cases}
I_m,\, \zeta I_m,\, \zeta^{-1}I_m,\, \zeta^2 I_m,\,
\zeta^{-2}I_m,\, \zeta^3I_m,\dots, & \text{$k_0$ odd,}
\\
I_m,\, \zeta^{-1}I_m,\, \zeta I_m,\, \zeta^{-2}I_m,\,
\zeta^2 I_m,\, \zeta^{-3}I_m,\dots, & \text{$k_0$ even}
\end{cases}
\end{equation}
in the context of matrix-valued Laurent polynomials orthogonal with respect
to $d\Om_+(\cdot,k_0)$.
\\
The matrix-valued Laurent polynomials $\{P_-(\cdot,k,k_0)\}_{k\leq
k_0}$ can be constructed by Gram--Schmidt orthogonalizing
\begin{equation}
\begin{cases}
I_m,\, -\zeta I_m,\, \zeta^{-1}I_m,\, -\zeta^2 I_m,\,
\zeta^{-2}I_m,\, -\zeta^3 I_m,\dots, & \text{$k_0$ odd,}
\\
-\zeta I_m,\, I_m,\, -\zeta^2 I_m,\, \zeta^{-1}I_m,\,
-\zeta^3I_m,\, \zeta^{-2}I_m, \dots, & \text{$k_0$ even}
\end{cases}
\end{equation}
in the context of matrix-valued Laurent polynomials orthogonal with respect
to $d\Om_-(\cdot,k_0)$.
\\
The matrix-valued Laurent polynomials $\{R_-(\cdot,k,k_0)\}_{k\leq
k_0}$ can be constructed by Gram--Schmidt orthogonalizing
\begin{equation}
\begin{cases}
-I_m,\, \zeta^{-1}I_m,\, -\zeta I_m,\, \zeta^{-2}I_m,\,
-\zeta^2I_m,\, \zeta^{-3}I_m,\dots, & \text{$k_0$ odd,}
\\
I_m,\, -\zeta I_m,\, \zeta^{-1}I_m,\, -\zeta^2 I_m,\,
\zeta^{-2}I_m,\, -\zeta^3 I_m, \dots, & \text{$k_0$ even}
\end{cases}
\end{equation}
in the context of matrix-valued Laurent polynomials orthogonal with respect
to $d\Om_-(\cdot,k_0)$.

Here the Gram--Schmidt orthogonalization procedure employs
left-multiplication by constant $($i.e., $\zeta$-independent\,$)$
$m\times m$ matrices as discussed in \cite[Sect.\ VII.2.8]{Be68}.
\end{corollary}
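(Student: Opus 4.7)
The strategy is to combine three ingredients from the preceding development: the orthonormality and completeness of the families $\{P_\pm(\cdot,k,k_0)^*\}$ and $\{R_\pm(\cdot,k,k_0)^*\}$ with respect to $d\Om_\pm(\cdot,k_0)$ established in Lemma \ref{l3.6}; the explicit leading-order structure of $P_\pm(\zeta,k_0+k,k_0)$ and $R_\pm(\zeta,k_0+k,k_0)$ recorded in Remark \ref{r2.4}, together with the fact that the corresponding leading coefficients are products of positive self-adjoint $\rho^{-1}$ and $\wti\rho^{-1}$ factors and are therefore invertible; and the uniqueness (up to the standard normalization convention on the leading coefficient) of the matrix-valued Gram--Schmidt procedure described in \cite[Sect.\ VII.2.8]{Be68}.

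The key step is an inductive span identification. Denote by $\spn_L\{L_0,\dots,L_j\}$ the set of all left $\Cm$-linear combinations $\sum_{i=0}^{j}C_i L_i$, $C_i\in\Cm$, of a given collection of $\Cm$-valued Laurent polynomials. I will show by induction on $j\ge 0$, using the transfer-matrix recursion \eqref{3.21}--\eqref{3.22}, the initial conditions \eqref{3.49}--\eqref{3.50}, and the entries of Table \eqref{Table}, that
\begin{equation*}
\spn_L\{P_+(\zeta,k_0,k_0)^*,\,\dots,\,P_+(\zeta,k_0+j,k_0)^*\}
\end{equation*}
coincides with the left $\Cm$-span of the first $j+1$ monomials in the list $\zeta I_m,\,I_m,\,\zeta^2 I_m,\,\zeta^{-1}I_m,\,\zeta^3 I_m,\,\zeta^{-2}I_m,\dots$ when $k_0$ is odd (and similarly for $k_0$ even). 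The inductive step is immediate from Remark \ref{r2.4}: applying the transfer matrix $\T(z,k_0+j)$ produces a Laurent polynomial that acquires, relative to the previously identified span, exactly one new extremal Laurent power $\zeta^{n_j}$ (alternating between positive and negative powers according to the parity of $j$ and $k_0$), whose coefficient is invertible; hence the span grows by precisely one basis direction at each step.

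Once the span identification is in place, the claim for $\{P_+(\cdot,k,k_0)^*\}_{k\ge k_0}$ follows directly: for each $j\ge 0$, the polynomial $P_+(\zeta,k_0+j,k_0)^*$ lies in the left $\Cm$-span of the first $j+1$ monomials, has invertible leading coefficient by Remark \ref{r2.4}, and is $d\Om_+(\cdot,k_0)$-orthonormal to $P_+(\zeta,k_0+i,k_0)^*$ for $0\le i\le j-1$ by Lemma \ref{l3.6}. These three properties uniquely characterize the outcome of the matrix-valued Gram--Schmidt algorithm of \cite[Sect.\ VII.2.8]{Be68} applied to the given monomial sequence. The assertions for $\{R_+(\cdot,k,k_0)^*\}_{k\ge k_0}$, $\{P_-(\cdot,k,k_0)^*\}_{k\le k_0}$, and $\{R_-(\cdot,k,k_0)^*\}_{k\le k_0}$ are established in exactly the same manner, replacing the top row of Table \eqref{Table} by the appropriate row, the relation $R_\pm(z,k,k_0)=z^{-1}(\W_{\pm,k_0}P_\pm(z,\cdot,k_0))(k)$ giving an immediate passage from the $P_\pm$-case to the $R_\pm$-case; only the initial monomial and the sequence of extremal Laurent powers change from case to case.

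The principal technical difficulty lies in the case analysis itself---four families $P_\pm,R_\pm$ crossed with the two parities of $k_0$---and in verifying, in each case, that the extremal Laurent power introduced at each step of the transfer-matrix recursion is genuinely new (not already contained in the previously identified span) and carries an invertible coefficient. Since this information is precisely what Remark \ref{r2.4} encodes, no computation beyond the table entries is required, but the bookkeeping must be performed separately for each of the eight cases.
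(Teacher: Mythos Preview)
Your proposal is correct and follows the same approach as the paper, which simply states that the result ``is a consequence of the definition of the Laurent polynomials $P_\pm$ and $R_\pm$ and Lemma \ref{l3.6}''; you have merely spelled out the details of the span identification and the appeal to uniqueness of Gram--Schmidt that the paper leaves implicit. One small notational slip: since the orthogonality relation \eqref{3.56} has $P_\pm$ on the left and $P_\pm^*$ on the right, and the Gram--Schmidt procedure uses \emph{left}-multiplication by constant matrices, your span argument should be phrased in terms of $P_+(\zeta,k_0+i,k_0)$ rather than $P_+(\zeta,k_0+i,k_0)^*$.
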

\begin{proof}
The result is a consequence of the definition of the Laurent polynomials
$P_\pm$ and $R_\pm$ and Lemma \ref{l3.6}.
\end{proof}

We note that the Gram--Schmidt orthogonalization
process implies that all matrix-valued Laurent polynomials constructed in Corollary \ref{c3.8} have
self-adjoint invertible leading-order coefficients (cf.\ Remark \ref{r2.4}).

The next result clarifies which measures arise as spectral
measures of half-lattice CMV operators and it yields the
reconstruction of the matrix-valued Verblunsky coefficients from the spectral measures
and the corresponding orthogonal Laurent polynomials.

\begin{theorem} \lb{t3.9}
Let $k_0\in\Z$ and $d\Om_\pm(\cdot,k_0)$ be nonnegative finite
measures on $\dD$, supported on infinite sets, and normalized by
\begin{align}
\oint_{\dD} d\Om_\pm(\zeta,k_0) = I_m.
\end{align}
Moreover, assume that $d\Om_\pm(\cdot,k_0)$ are nondegenerate in the
sense that expressions of the form
\begin{align}
\oint_{\dD} P(\zeta) d\Om_\pm(\zeta,k_0) P(\zeta)^*
\end{align}
are invertible for all $\Cm$-valued Laurent polynomials $P(z)=z^{-n}A_{-n}+...+z^n A_n$ with either $A_{-n}=I_m$ or $A_{n}=I_m$. Then $d\Om_\pm(\cdot,k_0)$ are necessarily
the spectral measures for some half-lattice CMV operators
$\U_{\pm,k_0}$ with coefficients $\{\al_k\}_{k \geq k_0+1}$,
respectively, $\{\al_k\}_{k \leq k_0}$, defined by 
\begin{equation}
\al_k = -
\begin{cases}
\oint_{\dD} \zeta
R_+(\zeta,k-1,k_0)d\Om_+(\zeta,k_0)P_+(\zeta,k-1,k_0)^*, & k
\text{ odd,}
\\
\oint_{\dD}
P_+(\zeta,k-1,k_0)d\Om_+(\zeta,k_0)R_+(\zeta,k-1,k_0)^*, & k
\text{ even}
\end{cases}   \lb{3.83}
\end{equation}
for all $k \geq k_0+1$, and
\begin{equation}
\al_k = -
\begin{cases}
\oint_{\dD} \zeta
R_-(\zeta,k-1,k_0)d\Om_-(\zeta,k_0)P_-(\zeta,k-1,k_0)^*, & k
\text{ odd,}
\\
\oint_{\dD}
P_-(\zeta,k-1,k_0)d\Om_-(\zeta,k_0)R_-(\zeta,k-1,k_0)^*, & k
\text{ even}
\end{cases} \lb{3.84}
\end{equation}
for all $k \leq k_0$. Here the matrix-valued Laurent polynomials
$\{P_\pm(\cdot,k,k_0)\}_{k\geq k_0}$ and
$\{R_\pm(\cdot,k,k_0)\}_{k\geq k_0}$  denote the orthonormal
Laurent polynomials constructed in Corollary \ref{c3.8}.
\end{theorem}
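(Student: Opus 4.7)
The plan is to construct the half-lattice CMV operator directly from the measure via matrix Gram--Schmidt and then verify that its spectral measure recovers $d\Om_\pm(\cdot,k_0)$. I focus on the ``$+$'' case; the ``$-$'' case is analogous. Applying the matrix Gram--Schmidt procedure from Corollary~\ref{c3.8} to the parity-dependent sequence of matrix-valued monomials listed there, viewed as elements of $\Ltm{+}$, the nondegeneracy hypothesis guarantees that at each step the normalization matrix is a positive invertible $m\times m$ matrix. This process therefore produces orthonormal families $\{P_+(\cdot,k,k_0)\}_{k\geq k_0}$ and $\{R_+(\cdot,k,k_0)\}_{k\geq k_0}$ with self-adjoint invertible leading-order coefficients. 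Define $\al_k$ for $k\geq k_0+1$ by \eqref{3.83}, and correspondingly $\rho_k,\wti\rho_k,\Te_k$ by \eqref{3.8}, \eqref{3.9}, \eqref{3.14}.

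Next, I verify $\|\al_k\|_\Cm<1$. Rewriting \eqref{3.83} as a matrix-valued $\Ltm{+}$ inner product, the Cauchy--Schwarz inequality combined with the orthonormality relations $\oint_{\dD} P_+(\zeta,k-1,k_0)\,d\Om_+(\zeta,k_0)\,P_+(\zeta,k-1,k_0)^* = I_m$ and its analog for $R_+$ (noting $|\zeta|=1$ on $\dD$) yields $\|\al_k\|_\Cm \leq 1$. Strict inequality follows because equality in Cauchy--Schwarz would force a nontrivial right-linear relation among $\zeta R_+(\cdot,k-1,k_0)$ and $P_+(\cdot,k-1,k_0)$ (respectively $P_+, R_+$) in $\Ltm{+}$, producing a nonzero matrix-valued Laurent polynomial annihilated by $d\Om_+$ in the sense of property $(iii)$ following Lemma~\ref{l3.6}, contradicting nondegeneracy.

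The central step is to prove that $\{P_+(\cdot,k,k_0), R_+(\cdot,k,k_0)\}$ satisfy the CMV transfer relation \eqref{3.21}--\eqref{3.22} with these $\{\al_k\}$. By the equivalence in Lemma~\ref{l3.2}, it suffices to verify the $\Te$-block identity
\begin{align*}
\zeta \binom{R_+(\zeta, k-1, k_0)}{R_+(\zeta, k, k_0)} = \Te_k \binom{P_+(\zeta, k-1, k_0)}{P_+(\zeta, k, k_0)}, \quad k \text{ odd},
\end{align*}
and the analogous identity for $k$ even. By parity control on the monomial orderings of Corollary~\ref{c3.8}, both $\zeta R_+(\cdot,k-1,k_0)$ and $\zeta R_+(\cdot,k,k_0)$ lie in the $\Cm$-right-span of $P_+(\cdot,k-1,k_0)$ and $P_+(\cdot,k,k_0)$; taking matrix-valued inner products against $P_+(\cdot,k-1,k_0)$ reproduces exactly the integrals in \eqref{3.83} and hence recovers the $\al_k$ entries of $\Te_k$, while the orthonormality of the resulting $2\times 2$ block matrix forces the remaining entries to equal $\rho_k$ and $\wti\rho_k$ as in \eqref{3.14}.

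Finally, assemble $\U_{+,k_0}$ per \eqref{3.14}--\eqref{3.18}, \eqref{3.34} from the $\{\al_k\}$ just constructed. By Lemma~\ref{l3.6}, its spectral measure $d\wti\Om_+(\cdot,k_0)$ renders the orthonormal Laurent polynomials associated with $\U_{+,k_0}$ orthonormal; these polynomials coincide with the $P_+, R_+$ built above by uniqueness of the recurrence \eqref{3.21}--\eqref{3.22} with initial data \eqref{3.49}. The equality $d\wti\Om_+(\cdot,k_0) = d\Om_+(\cdot,k_0)$ then follows from uniqueness of a normalized nonnegative $\Cm$-valued measure on $\dD$ making a complete system of matrix-valued Laurent polynomials orthonormal (via Stone--Weierstrass density of Laurent polynomials in $C(\dD)$ and the Riesz representation theorem). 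The principal technical obstacle is the third step: verifying the precise $\Te$-block structure, and specifically that the Gram--Schmidt normalization factors coincide with $\rho_k=\sqrt{I_m-\al_k^*\al_k}$ and $\wti\rho_k=\sqrt{I_m-\al_k\al_k^*}$; this demands careful deployment of the algebraic identities \eqref{3.12}--\eqref{3.13} and meticulous bookkeeping of the noncommutative matrix orthogonalization.
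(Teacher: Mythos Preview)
Your overall strategy matches the paper's: apply Gram--Schmidt via Corollary~\ref{c3.8}, establish the transfer recursion \eqref{3.21}--\eqref{3.22} with suitable $\al_k$, and identify $d\Om_+$ as the spectral measure. The organizational difference lies in the recursion step. Rather than defining $\al_k$ by \eqref{3.83} and then verifying the $\Te_k$-block identity, the paper proceeds in reverse: it first chooses self-adjoint invertible $\rho_k,\wti\rho_k$ so that $\wti\rho_k P_+(\zeta,k,k_0)-\zeta R_+(\zeta,k-1,k_0)$ and $\rho_k R_+(\zeta,k,k_0)-\zeta^{-1}P_+(\zeta,k-1,k_0)$ have their leading terms cancelled, observes via Corollary~\ref{c3.8} that these residuals must be left-multiples $\al_k P_+(\cdot,k-1,k_0)$ and $\wti\al_k R_+(\cdot,k-1,k_0)$, and then \emph{computes} directly from orthonormality that $\wti\rho_k^2+\al_k\al_k^*=I_m$, $\rho_k^2+\wti\al_k\wti\al_k^*=I_m$, and $\wti\al_k=\al_k^*$, with formula \eqref{3.83} emerging as a byproduct. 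This order buys two things. First, your separate Cauchy--Schwarz argument for $\|\al_k\|_\Cm<1$ becomes unnecessary: $\wti\rho_k$ is invertible by construction (it is a ratio of invertible leading coefficients), so $I_m-\al_k\al_k^*=\wti\rho_k^2>0$ is automatic, whereas your equality-case analysis in the matrix setting would require more care than you sketch. Second, the identification $\rho_k=\sqrt{I_m-\al_k^*\al_k}$ that you flag as the principal technical obstacle is \emph{derived} rather than verified, so no appeal to \eqref{3.12}--\eqref{3.13} is needed. For the final step, the paper invokes Lemma~\ref{l3.3} to recognize $P_+$ as a generalized eigenfunction of the assembled $\U_{+,k_0}$, rather than your Stone--Weierstrass/moment-uniqueness route; both conclude equally well.
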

\begin{proof}
First, using Corollary \ref{c3.8}, one constructs the orthonormal
polynomials $\{P_+(\cdot,k,k_0)\}_{k\geq k_0}$ and
$\{R_+(\cdot,k,k_0)\}_{k\geq k_0}$.

Next, we will establish the recursion relation \eqref{3.37}. Assume
$k$ is odd and consider the matrix-valued Laurent 
polynomials $P$ and $R$,
\begin{align}
P(\zeta) &= \wti\rho_k P_+(\zeta,k,k_0) - \zeta
R_+(\zeta,k-1,k_0),
\\
R(\zeta) &= \rho_k R_+(\zeta,k,k_0) - \zeta^{-1}
P_+(\zeta,k-1,k_0),
\end{align}
where $\rho_k$, $\wti\rho_k \in \Cm$ are self-adjoint invertible
matrices chosen such that the leading-order terms of the Laurent polynomials
$\wti\rho_k P_+(\zeta,k,k_0)$ and $\rho_k R_+(\zeta,k,k_0)$
cancel the leading-order terms of $\zeta R_+(\zeta,k-1,k_0)$ and
$\zeta^{-1} P_+(\zeta,k-1,k_0)$, respectively (cf.\ Remark \ref{r2.4}). Using Corollary
\ref{c3.8} one then checks that the Laurent polynomials $P$ and $R$ are
constant  $m\times m$ matrix left-multiples of $P_+(\cdot,k-1,k_0)$
and $R_+(\cdot,k-1,k_0)$, respectively,
\begin{align}
\al_k P_+(\zeta,k-1,k_0) &= \wti\rho_k P_+(\zeta,k,k_0) -
\zeta R_+(\zeta,k-1,k_0), \lb{3.87}
\\
\wti\al_k R_+(\zeta,k-1,k_0) &= \rho_k R_+(\zeta,k,k_0) -
\zeta^{-1} P_+(\zeta,k-1,k_0), \lb{3.88}
\end{align}
with $\al_k$, $\wti\al_k \in \Cm$ constant $m\times m$ matrices.
Moreover, using \eqref{3.87}, \eqref{3.88}, and Lemma \ref{l3.6} one
computes,
\begin{align}
I_m &= \oint_{\dD} \zeta R_+(\zeta,k-1,k_0)\,
d\Om_\pm(\zeta,k_0)\, \zeta^{-1}R_+(\zeta,k-1,k_0)^* \no
\\ &=
\oint_{\dD} \Big(\wti\rho_k P_+(\zeta,k,k_0)-\al_k
P_+(\zeta,k-1,k_0)\big)d\Om_\pm(\zeta,k_0) \no
\\ &\hspace*{2.45cm}
\times \big(\wti\rho_k P_+(\zeta,k,k_0)-\al_k
P_+(\zeta,k-1,k_0)\big)^* \no
\\ &=
\wti\rho_k^2 + \al_k\al_k^*, \lb{3.89}
\\
I_m &= \oint_{\dD} \zeta^{-1}P_+(\zeta,k-1,k_0)\,
d\Om_\pm(\zeta,k_0)\, \zeta P_+(\zeta,k-1,k_0)^* \no
\\ &=
\oint_{\dD} \Big(\rho_k R_+(\zeta,k,k_0)-\wti\al_k
R_+(\zeta,k-1,k_0)\big)d\Om_\pm(\zeta,k_0) \no
\\ &\hspace*{2.45cm}
\times \big(\rho_k R_+(\zeta,k,k_0)-\wti\al_k
R_+(\zeta,k-1,k_0)\big)^* \no
\\ &=
\rho_k^2 + \wti\al_k\wti\al_k^*, \lb{3.90}
\end{align}
and
\begin{align}
\al_k &= \oint_{\dD} \al_k P_+(\zeta,k-1,k_0)\,
d\Om_\pm(\zeta,k_0)\,P_+(\zeta,k-1,k_0)^* \no
\\ &=
\oint_{\dD} \Big(\wti\rho_k P_+(\zeta,k,k_0) -
\zeta R_+(\zeta,k-1,k_0)\big)d\Om_\pm(\zeta,k_0)\,
P_+(\zeta,k-1,k_0)^* \no
\\ &=
- \oint_{\dD} \zeta R_+(\zeta,k-1,k_0)\,d\Om_\pm(\zeta,k_0)\,
P_+(\zeta,k-1,k_0)^*, \lb{3.91}
\\
\wti\al_k &= \oint_{\dD} \wti\al_k R_+(\zeta,k-1,k_0)\,
d\Om_\pm(\zeta,k_0)\,R_+(\zeta,k-1,k_0)^* \no
\\ &=
\oint_{\dD} \Big(\rho_k R_+(\zeta,k,k_0) -
\zeta^{-1}P_+(\zeta,k-1,k_0)\big)d\Om_\pm(\zeta,k_0)\,
R_+(\zeta,k-1,k_0)^* \no
\\ &=
- \oint_{\dD} \zeta^{-1}P_+(\zeta,k-1,k_0)\,d\Om_\pm(\zeta,k_0)\,
R_+(\zeta,k-1,k_0)^*. \lb{3.92}
\end{align}
Thus, \eqref{3.89}--\eqref{3.92} imply that $\wti\al_k =
\al_k^*$, $\rho_k = \sqrt{I_m-\al_k^*\al_k}$, and $\wti\rho_k =
\sqrt{I_m-\al_k\al_k^*}$, and hence \eqref{3.87} and \eqref{3.88}
yield the recursion relation \eqref{3.37}. A similar argument also
proves the recursion relation \eqref{3.37} for the case $k$ even.

Finally, using Lemma \ref{l3.3} one concludes that the Laurent
polynomials $\{P_+(\cdot,k,k_0)\}_{k\geq k_0}$ form a generalized
eigenvector of the operator $\U_{+,k_0}$ associated with the
coefficients $\al_k,\rho_k,\wti\rho_k$ introduced above. Thus, the
measure $d\Om_+(\cdot,k_0)$ is the spectral measure of the operator
$\U_{+,k_0}$.

Similarly one proves the result for $d\Om_-(\cdot,k_0)$ and
\eqref{3.84} for $k\leq k_0$.
\end{proof}

\begin{lemma} \lb{l3.10}
Let $z\in\C\backslash(\dD\cup\{0\})$ and $k_0\in\bbZ$. Then the
following identity holds,
\begin{align}
\begin{split} \lb{3.93}
& \wti Q_\pm(z,k,k_0) = \pm \oint_{\dD}
\frac{\zeta+z}{\zeta-z} \big(\wti P_\pm(\zeta,k,k_0) - \wti
P_\pm(z,k,k_0)\big)d\Om_{\pm}(\zeta,k_0), \quad k \gtrless k_0,
\\
&S_\pm(z,k,k_0) = \pm \oint_{\dD} \frac{\zeta+z}{\zeta-z}
\big(R_\pm(\zeta,k,k_0)-R_\pm(z,k,k_0)\big) d\Om_{\pm}(\zeta,k_0),
\quad k \gtrless k_0.
\end{split}
\end{align}
\end{lemma}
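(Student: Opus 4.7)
The plan is to establish both identities simultaneously by induction on $|k-k_0|$. Introduce the auxiliary functions
\begin{align*}
F_\pm(z,k,k_0) &:= \pm\oint_{\dD}\f{\zeta+z}{\zeta-z}\big(\wti P_\pm(\zeta,k,k_0)-\wti P_\pm(z,k,k_0)\big)d\Om_\pm(\zeta,k_0), \\
G_\pm(z,k,k_0) &:= \pm\oint_{\dD}\f{\zeta+z}{\zeta-z}\big(R_\pm(\zeta,k,k_0)-R_\pm(z,k,k_0)\big)d\Om_\pm(\zeta,k_0).
\end{align*}
Since the differences $\wti P_\pm(\zeta,\dott)-\wti P_\pm(z,\dott)$ and $R_\pm(\zeta,\dott)-R_\pm(z,\dott)$ vanish at $\zeta=z$, the apparent pole in the Cauchy-type kernel is cancelled; the integrands are Laurent polynomials in $\zeta$, so $F_\pm, G_\pm$ are Laurent polynomials in $z$ of the same overall structure as $\wti Q_\pm, S_\pm$. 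The goal is to show $F_\pm = \wti Q_\pm$ and $G_\pm = S_\pm$ for $k\gtrless k_0$.

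For the base case $k=k_0+1$ (and analogously $k=k_0-1$), the differences $\wti P_+(\zeta,k_0+1,k_0)-\wti P_+(z,k_0+1,k_0)$ and $R_+(\zeta,k_0+1,k_0)-R_+(z,k_0+1,k_0)$ collapse via Table \eqref{Table} to simple scalar multiples of $\zeta-z$ or $\zeta^{-1}-z^{-1}$ times the $\rho_{k_0+1}^{-1}$ and $\wti\rho_{k_0+1}^{-1}$ factors. After cancellation with the kernel, the integrals reduce to the zeroth and first moments of $d\Om_+(\dott,k_0)$. The reconstruction formula \eqref{3.83} at $k=k_0+1$ gives $\oint\zeta\,d\Om_+(\zeta,k_0)=-\al_{k_0+1}$, and the adjoint moment $\oint\zeta^{-1}d\Om_+(\zeta,k_0)=-\al_{k_0+1}^*$ follows from $\ol{\zeta}=1/\zeta$ on $\dD$ together with the self-adjointness of $d\Om_+$. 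These match the explicit forms of $\wti Q_+(z,k_0+1,k_0)$ and $S_+(z,k_0+1,k_0)$ read from Table \eqref{Table}, closing the base case.

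For the inductive step, I substitute the recursion \eqref{3.21} for $(P_+,R_+)$ at the two spectral parameters $\zeta$ and $z$ into the integrands of $F_+$ and $G_+$. For the parity of $k$ making $\T(\zeta,k)$ spectral-parameter-independent, the induced recursion for $(F_+,G_+)$ coincides with the $\T$-recursion obeyed by $(\wti Q_+,S_+)$, so the inductive hypothesis propagates directly. For the opposite parity, the $\zeta$- and $z$-dependent entries of $\T(\zeta,k)$ generate cross-terms such as $\zeta R_+(\zeta,k-1,k_0)-zR_+(z,k-1,k_0)$, which I split using
\[
\zeta f(\zeta)-zf(z)=\zeta\big(f(\zeta)-f(z)\big)+(\zeta-z)f(z),\qquad \f{\zeta(\zeta+z)}{\zeta-z}=(\zeta+z)+\f{z(\zeta+z)}{\zeta-z}.
\]
Most pieces then collapse to $zG_+(z,k-1,k_0)$ via the inductive hypothesis, leaving a boundary integral of the form $\oint(\zeta+z)R_+(\zeta,k-1,k_0)d\Om_+(\zeta,k_0)$ (and an analogous one for the $\wti P_+$ integrand). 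Orthonormality \eqref{3.57} against $R_+(\dott,k_0,k_0)=I_m$ together with the Verblunsky-coefficient identification of $\oint\zeta^{\pm 1}R_+(\zeta,k-1,k_0)d\Om_+$ via \eqref{3.83} pin these residuals down as exactly the correction required to match the $\T$-recursion for $(\wti Q_+,S_+)$ at step $k$, yielding $F_+=\wti Q_+$ and $G_+=S_+$. The minus-sign case is handled analogously via the backward transfer-matrix relation of Lemma \ref{l3.3}$\,(vi)$.

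The main obstacle is the spectral-parameter dependence of $\T(\zeta,k)$ for one of the two parities of $k$: this spoils a naive ``same recursion plus matching base case'' argument and forces a careful accounting of the residual boundary integrals generated by the cross-term splitting. The parity-dependent structure of $\T(\zeta,k)$ and of the tilde-modifications in the definitions of $\wti P_\pm,\wti Q_\pm$ (depending on the parity of $k_0$) must be aligned with care, and establishing that the residuals precisely compensate the recursion mismatch---using the Verblunsky-coefficient content of the low-order moments of $d\Om_\pm$---is where the bulk of the work lies.
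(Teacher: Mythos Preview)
Your approach is essentially the same as the paper's: both verify that the right-hand side of \eqref{3.93} obeys the transfer-matrix recursion \eqref{3.21} satisfied by $\binom{Q_+}{S_+}$ (respectively its tilde variant), with the parameter-dependent parity handled via the orthogonality relations of Lemma~\ref{l3.6} and Corollary~\ref{c3.8}. The paper organizes the argument slightly differently---it runs the recursion backward via $\T(z,k+1)^{-1}$ and takes the base at $k=k_0$ (where the integrand vanishes identically), so that the single nontrivial correction term, computed in \eqref{3.104}--\eqref{3.106}, produces the initial value $\binom{-I_m}{I_m}$---whereas you run forward and anchor at $k_0+1$.

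One point to sharpen: in your inductive step for $k\geq k_0+2$, the residual integrals $\oint_\dD(\zeta+z)R_+(\zeta,k-1,k_0)\,d\Om_+$ and $\oint_\dD(\zeta^{-1}+z^{-1})P_+(\zeta,k-1,k_0)\,d\Om_+$ actually \emph{vanish} outright by orthogonality (since for $k-1\geq k_0+2$ the polynomials $P_+(\cdot,k-1,k_0)$ and $R_+(\cdot,k-1,k_0)$ are orthogonal to $\spn\{I_m,\zeta I_m\}$ and $\spn\{I_m,\zeta^{-1}I_m\}$, respectively; cf.\ Corollary~\ref{c3.8}). No Verblunsky-coefficient correction is needed there; the identity \eqref{3.83} is only relevant at the base step $k=k_0+1$, which you already handle separately. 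So the recursion for $(F_+,G_+)$ matches that of $(\wti Q_+,S_+)$ exactly for $k>k_0+1$, and induction closes cleanly.
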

\begin{proof}
To simplify our further notation we agree to write both equalities in
\eqref{3.93} as a single one,
\begin{align} \lb{3.95}
&\binom{\wti Q_\pm(z,k,k_0)}{S_\pm(z,k,k_0)} = \pm\oint_{\dD}
\frac{\zeta+z}{\zeta-z} \bigg( \binom{\wti
P_\pm(\zeta,k,k_0)}{R_\pm(\zeta,k,k_0)} - \binom{\wti
P_\pm(z,k,k_0)}{R_\pm(z,k,k_0)} \bigg) d\Om_{\pm}(\zeta,k_0), \quad
k\gtrless k_0,
\end{align}
where the integration on the right-hand side is understood
componentwise, that is, an expression of the type $\oint_{\dD}
\left(\begin{smallmatrix} G_1(\zeta) \\ G_2(\zeta)\end{smallmatrix}\right)
d\Om_\pm(\zeta,k_0)$
with $G_1(z)$ and $G_2(z)$ some $\Cm$-valued Laurent polynomials is defined by
\begin{align}
\oint_{\dD} \binom{G_1(\zeta)}{G_2(\zeta)}d\Om_\pm(\zeta,k_0) =
\begin{pmatrix} \oint_{\dD}G_1(\zeta) \, d\Om_\pm(\zeta,k_0)  \\
\oint_{\dD}G_2(\zeta) \, d\Om_\pm(\zeta,k_0) \end{pmatrix}.
\end{align}

First, we prove \eqref{3.95} for the right half-lattice Laurent polynomials
and for $k_0$ even. In this case \eqref{3.49a} and
\eqref{3.50a} imply that \eqref{3.95} is equivalent to
\begin{align}
&\binom{Q_+(z,k,k_0)}{S_+(z,k,k_0)} = \oint_{\dD}
\frac{\zeta+z}{\zeta-z} \bigg(
\binom{P_+(\zeta,k,k_0)}{R_+(\zeta,k,k_0)} -
\binom{P_+(z,k,k_0)}{R_+(z,k,k_0)} \bigg) d\Om_{+}(\zeta,k_0), \quad k
> k_0, \text{ $k_0$ even}.  \lb{3.97}
\end{align}
Let $k_0\in\Z$ be even. It suffices to show that the right-hand side
of \eqref{3.97}, temporarily denoted by the symbol $RHS(z,k,k_0)$,
satisfies
\begin{align}
&  \T(z,k+1)^{-1} RHS(z,k+1,k_0) = RHS(z,k,k_0),
\quad k >  k_0, \lb{3.98} \\
&  \T(z,k_0+1)^{-1}RHS(z,k_0+1,k_0) =
\binom{Q_+(z,k_0,k_0)}{S_+(z,k_0,k_0)} = \binom{-I_m}{I_m}. \lb{3.99}
\end{align}
One verifies these statements using the equality,
\begin{align} \lb{3.100}
& \T(z,k+1)^{-1} RHS(z,k+1,k_0) = RHS(z,k,k_0)
\\ \no
& \quad + \oint_{\dD} \frac{\zeta+z}{\zeta-z} \left(
\T(z,k+1)^{-1} - \T(\zeta,k+1)^{-1}\right)
\binom{P_+(\zeta,k+1,k_0)}{R_+(\zeta,k+1,k_0)}d\Om_{+}(\zeta,k_0),
\quad k\in\Z.
\end{align}
For $k>k_0$, the last term on the right-hand side of \eqref{3.100}
vanishes since for $k$ odd, $\T(z,k+1)$ does not depend on $z$, and
for $k$ even, it follows from Corollary \ref{c3.8} that
$P_+(\cdot,k+1,k_0)$ and $R_+(\cdot,k+1,k_0)$ are orthogonal in
$\Ltm{+}$ to $\spn\{I_m,\zeta I_m\}$ and
$\spn\{I_m,\zeta^{-1} I_m\}$, respectively. Hence one computes
\begin{align}
&\oint_{\dD} \frac{\zeta+z}{\zeta-z} \Big(\T(z,k+1)^{-1}-
\T(\zeta,k+1)^{-1}\Big)
\binom{P_+(\zeta,k+1,k_0)}{R_+(\zeta,k+1,k_0)}d\Om_{+}(\zeta,k_0)
\no
\\
& \quad = \oint_{\dD} \frac{\zeta+z}{\zeta-z}
\begin{pmatrix}
0 & (z-\zeta)\rho_{k+1}^{-1} \\
(z^{-1} - \zeta^{-1})\wti\rho_{k+1}^{-1} & 0
\end{pmatrix}
\binom{P_+(\zeta,k+1,k_0)}{R_+(\zeta,k+1,k_0)}d\Om_{+}(\zeta,k_0)
\no
\\
& \quad = \oint_{\dD}
\begin{pmatrix}
0 & -(\zeta+z)\rho_{k+1}^{-1} \\
(\zeta^{-1} + z^{-1})\wti\rho_{k+1}^{-1} & 0
\end{pmatrix}
\binom{P_+(\zeta,k+1,k_0)}{R_+(\zeta,k+1,k_0)}d\Om_{+}(\zeta,k_0)  \no
\\ & \quad = \oint_{\dD}
\binom{-\rho_{k+1}^{-1}(\zeta+z)R_+(\zeta,k,k_0)}
{\wti\rho_{k+1}^{-1}(\zeta^{-1}+z^{-1})P_+(\zeta,k,k_0)}
d\Om_{+}(\zeta,k_0) = \binom{0}{0}.
\end{align}
Thus, \eqref{3.98} is implied by \eqref{3.100}.

For $k=k_0$ even, one obtains that $RHS(z,k_0,k_0)=0$ since by \eqref{3.49} one has the normalization
$P_+(z,k_0,k_0)=R_+(z,k_0,k_0)=I_m$. Then using the fact that by
Corollary \ref{c3.8}, $P_+(\cdot,k_0+1,k_0)$ and
$R_+(\cdot,k_0+1,k_0)$ are orthogonal to constant $m\times m$
matrices in $\Ltm{+}$ and that by \eqref{Table},
\begin{align}
\begin{split}
P_+(\zeta,k_0+1,k_0) &=
\wti\rho_{k_0+1}^{-1}(\zeta I_m+\al_{k_0+1}),\\
R_+(\zeta,k_0+1,k_0) &=
\rho_{k_0+1}^{-1}(\zeta^{-1}I_m+\al_{k_0+1}^*),
\end{split}
\end{align}
one computes,
\begin{align}
&\oint_{\dD} \zeta^{-1}P_+(\zeta,k_0+1,k_0)d\Om_+(\zeta,k_0) =
\oint_{\dD} P_+(\zeta,k_0+1,k_0)d\Om_+(\zeta,k_0) (\zeta I_m)^*
\no
\\ &\quad =
\oint_{\dD} P_+(\zeta,k_0+1,k_0)d\Om_+(\zeta,k_0)
P_+(\zeta,k_0+1,k_0)^*\wti\rho_{k_0+1} = \wti\rho_{k_0+1},
\lb{3.104}
\\
&\oint_{\dD} \zeta R_+(\zeta,k_0+1,k_0)d\Om_+(\zeta,k_0) =
\oint_{\dD} R_+(\zeta,k_0+1,k_0)d\Om_+(\zeta,k_0) (\zeta^{-1}I_m)^*
\no
\\ &\quad =
\oint_{\dD} R_+(\zeta,k_0+1,k_0)d\Om_+(\zeta,k_0)
R_+(\zeta,k_0+1,k_0)^*\rho_{k_0+1} = \rho_{k_0+1}, \lb{3.105}
\end{align}
and hence,
\begin{align} \lb{3.106}
&\oint_{\dD} \frac{\zeta+z}{\zeta-z}
\Big(\T(z,k_0+1)^{-1}-\T(\zeta,k_0+1)^{-1}\Big)
\binom{P_+(\zeta,k_0+1,k_0)}{R_+(\zeta,k_0+1,k_0)}d\Om_{+}(\zeta,k_0)
\no
\\
& \quad = \oint_{\dD}
\binom{-\rho_{k+1}^{-1}(\zeta+z)R_+(\zeta,k_0+1,k_0)}
{\wti\rho_{k+1}^{-1}(\zeta^{-1}+z^{-1})P_+(\zeta,k_0+1,k_0)}
d\Om_{+}(\zeta,k_0)
\\ & \quad =
\oint_{\dD} \binom{-\rho_{k+1}^{-1}\zeta R_+(\zeta,k_0+1,k_0)}
{\wti\rho_{k+1}^{-1}\zeta^{-1}P_+(\zeta,k_0+1,k_0)}
d\Om_{+}(\zeta,k_0) =
\binom{-\rho_{k+1}^{-1}\rho_{k+1}}{\wti\rho_{k+1}^{-1}\wti\rho_{k+1}}
= \binom{-I_m}{I_m}. \no
\end{align}
Thus, \eqref{3.99} is a consequence of \eqref{3.100}, \eqref{3.106}, and the
fact that $RHS(z,k_0,k_0)=0$.

Next, we prove \eqref{3.95} for the right half-lattice Laurent polynomials
and $k_0$ odd,
\begin{align}
&\binom{S_+(z,k,k_0)}{\wti Q_+(z,k,k_0)} = \oint_{\dD}
\frac{\zeta+z}{\zeta-z} \bigg(\binom{R_+(\zeta,k,k_0)}{\wti
P_+(\zeta,k,k_0)} - \binom{R_+(z,k,k_0)}{\wti P_+(z,k,k_0)} \bigg)
d\Om_{+}(\zeta,k_0), \quad k > k_0, \text{ $k_0$ odd}. \lb{3.107}
\end{align}
Let $k_0\in\Z$ be odd. We note that for $U(z,k),V(z,k)\in\Cm$,
$k\in\bbZ$, $z\in\Cz$,
\begin{align}
\binom{U(z,k)}{V(z,k)} &=  \T(z,k) \binom{U(z,k-1)}{V(z,k-1)}
\\
\intertext{is equivalent to } \binom{V(z,k)}{\wti U(z,k)} &= \wti
\T(z,k) \binom{V(z,k-1)}{\wti U(z,k-1)},
\end{align}
where
\begin{equation}
\wti U(z,k) = z^{-1} U(z,k) \,\text{ and }\, \wti \T(z,k) =
\begin{pmatrix}0 & I_m \\ z^{-1}I_m & 0\end{pmatrix} \T(z,k)
\begin{pmatrix}0 & zI_m \\ I_m & 0\end{pmatrix}.
\end{equation}
Thus, it suffices to show that the right-hand side of \eqref{3.107},
temporarily denoted by $\wti{RHS}(z,k,k_0)$, satisfies
\begin{align}
& \wti \T(z,k+1)^{-1} \wti{RHS} (z,k+1,k_0) = \wti{RHS}(z,k,k_0),
\quad k > k_0,
\\
& \wti \T(z,k_0+1)^{-1} \wti{RHS}(z,k_0+1,k_0) =
\binom{S_+(z,k_0,k_0)}{\wti Q_+(z,k_0,k_0)} = \binom{-I_m}{I_m}.
\end{align}
At this point one can follow the first part of the proof replacing
$\T$ by $\wti \T$, $\Big(\begin{smallmatrix} P_+ \\[1mm]
R_+\end{smallmatrix}\Big)$ by $\Big(\begin{smallmatrix} R_+ \\[.5mm]
\wti P_+\end{smallmatrix}\Big)$, $\Big(\begin{smallmatrix} Q_+
\\[1mm] S_+\end{smallmatrix}\Big)$ by $\Big(\begin{smallmatrix}
S_+ \\[.5mm] \wti Q_+\end{smallmatrix}\Big)$, etc.

The result for the remaining Laurent polynomials $\wti P_-(z,k,k_0)$,
$R_-(z,k,k_0)$, $\wti Q_-(z,k,k_0)$, and $S_-(z,k,k_0)$ is proved 
similarly.
\end{proof}

\begin{lemma} \lb{l3.11}
Let $k_0\in\bbZ$ and let $m_\pm(\cdot,k_0)$ denote the 
$\Cm$-valued Caratheodory and anti-Caratheodory functions
\begin{align}
m_\pm(z,k_0) &= \pm \De_{k_0}^*(\U_{\pm,k_0}+zI)(\U_{\pm,k_0}-zI)^{-1}
\De_{k_0}  \lb{3.110}
\\
& =\pm
\oint_{\dD}d\Om_{\pm}(\zeta,k_0)\,\frac{\zeta+z}{\zeta-z},
\quad z\in\bbC\backslash\dD,   \lb{3.111}
\end{align}
with
\begin{equation}
 \oint_{\dD}d\Om_{\pm}(\zeta,k_0)= I_m. \lb{3.111a}
\end{equation}
Then the following relations hold,
\begin{align}
Q_\pm(z,\cdot,k_0) + P_\pm(z,\cdot,k_0)m_\pm(z,k_0) \in
\ltmm{[k_0,\pm\infty)\cap\Z}, \quad z\in\bbC\backslash(\dD\cup\{0\}),
\lb{3.112}
\\
S_\pm(z,\cdot,k_0) + R_\pm(z,\cdot,k_0)m_\pm(z,k_0) \in
\ltmm{[k_0,\pm\infty)\cap\Z}, \quad z\in\bbC\backslash(\dD\cup\{0\}).
\lb{3.113}
\end{align}
\end{lemma}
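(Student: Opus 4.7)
\medskip

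\noindent\textbf{Proof plan.} The plan is to use Lemma \ref{l3.10} to identify the sums $\wti Q_\pm + \wti P_\pm m_\pm$ and $S_\pm + R_\pm m_\pm$ with explicit Fourier-type integrals against the spectral measures $d\Om_\pm(\cdot,k_0)$, and then invoke the Parseval/isometry property established in Lemma \ref{l3.6} together with the fact that $\zeta\mapsto \frac{\zeta+z}{\zeta-z}$ is bounded on $\dD$ whenever $z\in\bbC\backslash(\dD\cup\{0\})$. The normalization \eqref{3.111a} and the observation that the scalar $\frac{\zeta+z}{\zeta-z}$ commutes with $m\times m$ matrices are what allow the key cancellation.

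\smallskip

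\noindent First I would substitute the integral representation \eqref{3.111} for $m_\pm(z,k_0)$ into $\wti Q_\pm(z,k,k_0) + \wti P_\pm(z,k,k_0)m_\pm(z,k_0)$, push the $\zeta$-independent matrix $\wti P_\pm(z,k,k_0)$ inside the $\oint d\Om_\pm$, and move the scalar factor $\frac{\zeta+z}{\zeta-z}$ past it. Combined with Lemma \ref{l3.10} this collapses to the clean formula
\begin{equation}
\wti Q_\pm(z,k,k_0) + \wti P_\pm(z,k,k_0)\,m_\pm(z,k_0) \,=\, \pm\oint_{\dD}\frac{\zeta+z}{\zeta-z}\,\wti P_\pm(\zeta,k,k_0)\,d\Om_\pm(\zeta,k_0), \quad k\gtrless k_0,
\end{equation}
with a completely analogous identity for $S_\pm(z,k,k_0) + R_\pm(z,k,k_0)m_\pm(z,k_0)$, obtained by replacing $\wti P_\pm$ by $R_\pm$ throughout (using the second line of \eqref{3.93}).

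\smallskip

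\noindent Next I would interpret the right-hand side via the unitary map $\cU_\pm\colon\ltm{[k_0,\pm\infty)\cap\Z}\to\Ltm{\pm}$ constructed after Lemma \ref{l3.6}, whose inverse is given by
\begin{equation}
(\cU_\pm^{-1}\hatt F)(k) = \oint_{\dD} \wti P_\pm(\zeta,k,k_0)\,d\Om_\pm(\zeta,k_0)\,\hatt F(\zeta).
\end{equation}
Applied column-by-column to the matrix-valued test function $\hatt F(\zeta) = \frac{\zeta+z}{\zeta-z} I_m$, which lies in $\Ltmm{\pm}$ since $z$ is bounded away from $\dD$ and $d\Om_\pm(\cdot,k_0)$ is finite, this identifies $\wti Q_\pm + \wti P_\pm m_\pm$ as the image of a bounded $L^2$-function and hence places it in $\ltmm{[k_0,\pm\infty)\cap\Z}$. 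Passing back from $\wti P_\pm,\wti Q_\pm$ to $P_\pm,Q_\pm$ only multiplies by the $k$-independent bounded factor $z^{\pm 1}$ (cf.\ \eqref{3.49a}, \eqref{3.50a}), so \eqref{3.112} follows. For \eqref{3.113} one runs the same argument, now using the companion orthonormal system $\{R_\pm(\cdot,k,k_0)^*\}_{k\gtreqless k_0}$ from Lemma \ref{l3.6} (and the associated unitary transform) in place of $\{P_\pm(\cdot,k,k_0)^*\}_{k\gtreqless k_0}$.

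\smallskip

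\noindent The main obstacle I anticipate is purely bookkeeping: one must keep the non-commutative matrix ordering straight when pushing $\wti P_\pm(z,k,k_0)$ past $d\Om_\pm$ and when invoking the column-wise version of the isometry $\cU_\pm^{-1}$ on $m\times m$ matrix-valued $L^2$-functions. Once this is carefully handled, the parity distinction between $k_0$ even and $k_0$ odd reduces to the harmless factors $\pm z^{\mp 1}$ in the definitions of $\wti P_\pm,\wti Q_\pm$, and no further case analysis is required.
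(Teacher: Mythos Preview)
Your proposal is correct and follows essentially the same route as the paper: both arguments combine Lemma~\ref{l3.10} with the integral representation \eqref{3.111} to arrive at the identity
\[
\wti Q_\pm(z,k,k_0) + \wti P_\pm(z,k,k_0)\,m_\pm(z,k_0) = \pm\oint_{\dD}\frac{\zeta+z}{\zeta-z}\,\wti P_\pm(\zeta,k,k_0)\,d\Om_\pm(\zeta,k_0),
\]
and then conclude $\ell^2$-membership by spectral theory. The only difference is packaging: the paper recognizes this integral directly as the $(k,k_0)$ matrix element of the bounded operator $\bbB_{\pm,k_0}(z)=(\U_{\pm,k_0}+zI)(\U_{\pm,k_0}-zI)^{-1}$ applied to $\De_{k_0}$, whereas you phrase it as $\cU_\pm^{-1}$ applied to the bounded $L^2$-function $\zeta\mapsto\frac{\zeta+z}{\zeta-z}I_m$; these are two formulations of the same spectral-theoretic fact. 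For \eqref{3.113} the paper takes a shortcut you may want to adopt: rather than rerunning the argument with the $R_\pm$-system, it simply observes that $(\W(Q_\pm+P_\pm m_\pm))(k)=z[S_\pm+R_\pm m_\pm]$ by Lemma~\ref{l3.2}, and since $\W$ is unitary, \eqref{3.113} follows immediately from \eqref{3.112}.
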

\begin{proof}
Equality \eqref{3.111} is implied by \eqref{3.54} and \eqref{3.55}. \\
Next, let $\bbB_{\pm,k_0}(z)$ denote operators defined on
$\ltm{[k_0,\pm\infty)\cap\Z}$ by
\begin{align}
\bbB_{\pm,k_0}(z) = (\U_{\pm,k_0}+zI)(\U_{\pm,k_0}-zI)^{-1}, \quad
z\in\bbC\backslash\dD.
\end{align}
Since $\U_{\pm,k_0}$ are unitary, the operators $\bbB_{\pm,k_0}(z)$
are bounded for all $z\in\bbC\backslash\dD$, and hence one has
\begin{align}
\bbB_{\pm,k_0}(z)\De_{k_0}=
\big\{\De_{k}^*\bbB_{\pm,k_0}(z)\De_{k_0}\big\}_{k\in[k_0,\pm\infty)\cap\Z}
\in\ltmm{[k_0,\pm\infty)\cap\Z}. \lb{3.115}
\end{align}
Using the spectral representation for the operators
$\bbB_{\pm,k_0}(z)$ and equalities \eqref{3.64}, \eqref{3.93}, and \eqref{3.111}, one obtains
\begin{align}
\De_{k}^*\bbB_{\pm,k_0}(z)\De_{k_0} &=
\oint_{\dD}\frac{\zeta+z}{\zeta-z} \wti
P_\pm(\zeta,k,k_0) \, d\Om_{\pm}(\zeta,k_0) \no
\\
&= \pm\big[\wti Q_\pm(z,k,k_0)+ \wti P_\pm(z,k,k_0)
m_\pm(z,k_0)\big], \quad k \gtrless k_0. \lb{3.116}
\end{align}
Thus, \eqref{3.112} is a consequence of \eqref{3.49a}, \eqref{3.50a},
\eqref{3.115}, and \eqref{3.116}.

Moreover, since $\Big(\begin{smallmatrix}P_\pm(z,k,k_0)\\[1mm]
R_\pm(z,k,k_0)\end{smallmatrix}\Big)_{k\in\Z}$ and
$\Big(\begin{smallmatrix} Q_\pm(z,k,k_0)\\[1mm] S_\pm(z,k,k_0)
\end{smallmatrix}\Big)_{k\in\Z}$,
$z\in\bbC\backslash\{0\}$, satisfy \eqref{3.21}, Lemma \ref{l3.2} implies that
\begin{align}
(\W (Q_\pm (z,\cdot,k_0)+P_\pm (z,\cdot,k_0)m_\pm(z,k_0)))(k) =
z[S_\pm (z,k,k_0)+R_\pm (z,k,k_0)m_\pm(z,k_0)], \quad k\in\Z, \lb{3.117}
\end{align}
and hence \eqref{3.113} follows from \eqref{3.112} and \eqref{3.117}.
\end{proof}

\begin{lemma} \lb{l3.12}
Let $k_0\in\bbZ$. Then the relations in \eqref{3.112} $($equivalently, those in
\eqref{3.113}$)$ uniquely determine the $\Cm$-valued functions
$m_\pm(\cdot,k_0)$ on $\bbC\backslash(\dD\cup\{0\})$.
\end{lemma}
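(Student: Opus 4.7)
My plan is to prove uniqueness by difference: assuming two candidates $m_\pm^{(1)}, m_\pm^{(2)}$ for $m_\pm(\cdot,k_0)$ both satisfy \eqref{3.112}, I would show they coincide on $\bbC\backslash(\dD\cup\{0\})$.

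First, fix $z\in\bbC\backslash(\dD\cup\{0\})$ and set $C_\pm=m_\pm^{(1)}(z,k_0)-m_\pm^{(2)}(z,k_0)\in\Cm$. Subtracting the two versions of \eqref{3.112} produces the matrix-valued sequence
\[
\Phi_\pm:=P_\pm(z,\cdot,k_0)\,C_\pm\in\ltmm{[k_0,\pm\infty)\cap\Z}.
\]
Next, I would identify $\Phi_\pm$ as a genuine eigenvector of the half-lattice operator $\U_{\pm,k_0}$ at eigenvalue $z$. Since $\binom{P_\pm(z,\cdot,k_0)}{R_\pm(z,\cdot,k_0)}$ satisfies the transfer-matrix recursion \eqref{3.21}, right-multiplication by the constant matrix $C_\pm$ preserves that recursion. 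Moreover, a glance at the initial data \eqref{3.49}, \eqref{3.50} shows that $P_\pm(z,k_0,k_0)$ and $R_\pm(z,k_0,k_0)$ are scalar multiples of $I_m$ (namely $\pm zI_m$, $\pm I_m$), so the pair $(P_\pm C_\pm,R_\pm C_\pm)$ inherits the boundary relation at $k_0$ required in items $(iii)$ and $(vi)$ of Lemma \ref{l3.3}. Hence Lemma \ref{l3.3} yields $(\U_{\pm,k_0}\Phi_\pm)(k)=z\Phi_\pm(k)$ for all $k\gtreqless k_0$ in the difference-expression sense.

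Combining this with $\Phi_\pm\in\ltmm{[k_0,\pm\infty)\cap\Z}$ promotes each column of $\Phi_\pm$ to an $\ltm{[k_0,\pm\infty)\cap\Z}$-eigenvector of the bounded, everywhere-defined unitary operator $\U_{\pm,k_0}$. But unitarity forces $\sigma(\U_{\pm,k_0})\subseteq\dD$, so for $z\in\bbC\backslash\dD$ the point $z$ lies in the resolvent set of $\U_{\pm,k_0}$ and thus $\Phi_\pm\equiv0$. Evaluating at $k=k_0$ and exploiting that $P_\pm(z,k_0,k_0)\in\{zI_m,-zI_m,I_m\}$ is invertible whenever $z\ne 0$ gives $C_\pm=0$, i.e.\ $m_\pm^{(1)}(z,k_0)=m_\pm^{(2)}(z,k_0)$. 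The equivalent uniqueness statement coming from \eqref{3.113} follows either by the identical argument applied directly to $R_\pm$ (noting $R_\pm(z,k_0,k_0)=\pm I_m$ is invertible), or by invoking \eqref{3.117} together with the invertibility of $\W_{\pm,k_0}$.

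The only delicate point is the boundary-condition check in the second step: one must ensure that right-multiplication by $C_\pm$ does not destroy the initial constraint tying $\hat P$ to $\hat R$ at $k_0$ in Lemma \ref{l3.3}. This is automatic here precisely because the initial values $P_\pm(z,k_0,k_0)$ and $R_\pm(z,k_0,k_0)$ are scalar-$I_m$ multiples, so the proportionality relation between them commutes with the constant right-multiplier $C_\pm$. Once that is in place, the remainder is a one-line spectral argument using unitarity of $\U_{\pm,k_0}$.
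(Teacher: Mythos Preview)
Your proof is correct and follows essentially the same approach as the paper's: both form the difference $P_\pm(z,\cdot,k_0)C_\pm$, observe it lies in $\ell^2$ and (via Lemma \ref{l3.3}) is an eigenvector of the unitary operator $\U_{\pm,k_0}$ at $z\notin\dD$, and conclude $C_\pm=0$ from the invertibility of $P_\pm(z,k_0,k_0)$. The paper phrases this as a contradiction argument applied to a single vector $x\in\C^m$ rather than to the full matrix $C_\pm$, and it does not spell out the boundary-condition check you include, but the substance is the same.
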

\begin{proof}
We will prove the lemma by contradiction. Assume that there are two
$\Cm$-valued functions $m_+(z,k_0)$ and $\wti m_+(z,k_0)$ satisfying
\eqref{3.112} such that $m_+(z_0,k_0) \neq \wti m_+(z_0,k_0)$ for
some $z_0\in\C\backslash(\dD\cup\{0\})$. Then there is a vector
$x\in\C^m$ such that $(m_+(z_0,k_0) - \wti m_+(z_0,k_0))x \neq 0$ and
by \eqref{3.112},
\begin{align}
p_+(z_0,\cdot,k_0)=P_+(z_0,\cdot,k_0)[m_+(z_0,k_0)-\wti
m_+(z_0,k_0)]x \in \ltm{[k_0,\pm\infty)\cap\Z}, \quad
z\in\bbC\backslash(\dD\cup\{0\}).
\end{align}
Since $P_+(z_0,k_0,k_0)\neq 0$, the sequence of vectors
$\{p_+(z,k,k_0)\}_{k\geq k_0}$ is not identically zero, and hence, by
Lemma \ref{l3.3}, $p_+(z_0,\cdot,k_0)$ is an eigenvector of the
operator $\U_{+,k_0}$ corresponding to the eigenvalue
$z_0\in\C\backslash\dD$. This contradicts unitarity of $\U_{+,k_0}$.

Similarly, one proves the result for $m_-(z,k_0)$. Moreover, one
easily supplies a proof that utilizes \eqref{3.113} instead of
\eqref{3.112}.
\end{proof}

\begin{corollary} \lb{c3.13}
There are solutions $\Big(\begin{smallmatrix}\psi_\pm(z,k)\\[1mm]
\chi_\pm(z,k)\end{smallmatrix}\Big)$, $k\in\Z$, of \eqref{3.21},
unique up to right-multiplication by constant $m\times m$ matrices, so that for some $($and hence for
all\,$)$ $k_1\in\Z$,
\begin{align}
\psi_\pm(z,\cdot),\,\chi_\pm(z,\cdot) \in
\ell^2 ([k_1,\pm\infty)\cap\Z)^{m\times m}, \quad
z\in\C\backslash(\dD\cup\{0\}).
\end{align}
\end{corollary}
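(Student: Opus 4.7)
The plan is to exhibit the Weyl solutions explicitly in terms of the matrix-valued Laurent polynomials introduced earlier, and then to argue uniqueness by using that any other $\ell^2$ solution would produce an $\ell^2$ eigenvector of $\U_{\pm,k_0}$ at a point $z\in\bbC\backslash\dD$, contradicting unitarity.

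For existence, fix $k_0\in\bbZ$ and set
\begin{align*}
\binom{\psi_\pm(z,k)}{\chi_\pm(z,k)} = \binom{Q_\pm(z,k,k_0)+P_\pm(z,k,k_0)m_\pm(z,k_0)}{S_\pm(z,k,k_0)+R_\pm(z,k,k_0)m_\pm(z,k_0)}, \quad k\in\Z.
\end{align*}
Since both $\binom{P_\pm}{R_\pm}$ and $\binom{Q_\pm}{S_\pm}$ satisfy the transfer matrix recursion \eqref{3.21}, so does any constant right-linear combination; hence $\binom{\psi_\pm}{\chi_\pm}$ is a solution of \eqref{3.21}. Membership in $\ell^2([k_0,\pm\infty)\cap\bbZ)^{m\times m}$ is exactly the content of Lemma \ref{l3.11} (combining \eqref{3.112} and \eqref{3.113}). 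Enlarging or shrinking the initial index from $k_0$ to any other $k_1\in\bbZ$ only changes the sequence by finitely many entries in the relevant half-lattice, so the $\ell^2$ property holds for every $k_1\in\bbZ$.

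For uniqueness, I would argue as follows. By Lemma \ref{l3.2} (equivalence of $(i)$ and $(iii)$), any solution $\binom{U(z,\cdot)}{V(z,\cdot)}\in\smm{\Z}^{\,2}$ of \eqref{3.21} is determined by its value at $k_0$, and the initial vectors $\binom{P_\pm(z,k_0,k_0)}{R_\pm(z,k_0,k_0)}$ and $\binom{Q_\pm(z,k_0,k_0)}{S_\pm(z,k_0,k_0)}$ read off from \eqref{3.49}, \eqref{3.50} span $\bbC^m\oplus\bbC^m$ as a right $\Cm$-module whenever $z\neq 0$. Therefore there exist constant matrices $A,B\in\Cm$ (depending on $z$) with
\begin{align*}
\binom{U(z,\cdot)}{V(z,\cdot)} = \binom{P_\pm(z,\cdot,k_0)}{R_\pm(z,\cdot,k_0)}[A-m_\pm(z,k_0)B] + \binom{\psi_\pm(z,\cdot)}{\chi_\pm(z,\cdot)}B.
\end{align*}
If $\binom{U}{V}\in\ell^2([k_1,\pm\infty)\cap\bbZ)^{m\times m}$, then since the second summand is already $\ell^2$, one needs $\binom{P_\pm}{R_\pm}[A-m_\pm B]\in\ell^2([k_1,\pm\infty)\cap\bbZ)^{m\times m}$.

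The main obstacle — and the key lemma-free step — is to rule out that $\binom{P_\pm(z,\cdot,k_0)}{R_\pm(z,\cdot,k_0)}C$ can lie in $\ell^2$ at $\pm\infty$ for a nonzero constant matrix $C\in\Cm$ when $z\in\bbC\backslash(\dD\cup\{0\})$. Pick $x\in\bbC^m$ with $Cx\neq0$; by the initial conditions \eqref{3.49}, \eqref{3.50} one has $P_\pm(z,k_0,k_0)Cx\neq 0$ for such $z$, so $P_\pm(z,\cdot,k_0)Cx$ is not identically zero. But by Lemma \ref{l3.3}, this vector is a generalized eigenvector of $\U_{\pm,k_0}$ at eigenvalue $z$, and if it were $\ell^2$ it would be a genuine eigenvector of the unitary operator $\U_{\pm,k_0}$ at $z\notin\dD$, which is impossible. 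This argument is in the same spirit as the contradiction used in the proof of Lemma \ref{l3.12}. It forces $A=m_\pm(z,k_0)B$, and hence $\binom{U}{V}=\binom{\psi_\pm}{\chi_\pm}B$, establishing uniqueness up to right-multiplication by a constant $m\times m$ matrix.
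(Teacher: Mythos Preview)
Your proof is correct and follows essentially the same approach as the paper: existence via Lemma~\ref{l3.11} and uniqueness via the unitarity contradiction of Lemma~\ref{l3.12}. The paper's own proof simply cites Lemmas~\ref{l3.11} and~\ref{l3.12} directly, whereas you unpack the decomposition and reproduce the contradiction argument explicitly; the content is the same.
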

\begin{proof}
Since any solution of \eqref{3.21} can be expressed as a linear
combination of the Laurent polynomials
$\Big(\begin{smallmatrix}P_\pm(z,\cdot,k_0)\\[1mm]
R_\pm(z,\cdot,k_0)\end{smallmatrix}\Big)$ and
$\Big(\begin{smallmatrix}Q_\pm(z,\cdot,k_0)\\[1mm]
S_\pm(z,\cdot,k_0)\end{smallmatrix}\Big)$, existence and uniqueness
of the solutions
$\Big(\begin{smallmatrix}\psi_\pm(z,\cdot)\\[1mm]
\chi_\pm(z,\cdot)\end{smallmatrix}\Big)$ is implied by Lemmas  
\ref{l3.11} and \ref{l3.12}, respectively.
\end{proof}

For the next result we recall the definition of $a_k$ and $b_k$ in \eqref{3.10} 
and \eqref{3.11}.

\begin{lemma} \lb{l3.14}
Let $z\in\bbC\backslash\{0\}$ and $k_0\in\bbZ$. Then the following
relations hold for all $k\in\Z$,
\begin{align}
\binom{P_-(z,k,k_0-1)}{R_-(z,k,k_0-1)} &=
\binom{P_+(z,k,k_0)}{R_+(z,k,k_0)}
\frac{\wti\rho_{k_0}^{-1}b_{k_0}-\rho_{k_0}^{-1}b_{k_0}^*}{2} +
\binom{Q_+(z,k,k_0)}{S_+(z,k,k_0)}
\frac{\wti \rho_{k_0}^{-1}b_{k_0}+\rho_{k_0}^{-1}b_{k_0}^*}{2}, \lb{3.120}
\\
\binom{Q_-(z,k,k_0-1)}{S_-(z,k,k_0-1)} &=
\binom{P_+(z,k,k_0)}{R_+(z,k,k_0)}
\frac{\wti\rho_{k_0}^{-1}a_{k_0}+\rho_{k_0}^{-1}a_{k_0}^*}{2} +
\binom{Q_+(z,k,k_0)}{S_+(z,k,k_0)}
\frac{\wti \rho_{k_0}^{-1}a_{k_0}-\rho_{k_0}^{-1}a_{k_0}^*}{2}, \lb{3.121}
\end{align}
and
\begin{align}
\binom{P_-(z,k,k_0)}{R_-(z,k,k_0)} &=
\binom{P_+(z,k,k_0)}{R_+(z,k,k_0)} c(z,k_0) +
\binom{Q_+(z,k,k_0)}{S_+(z,k,k_0)} d(z,k_0), \lb{3.122}
\\
\binom{Q_-(z,k,k_0)}{S_-(z,k,k_0)} &=
\binom{P_+(z,k,k_0)}{R_+(z,k,k_0)} d(z,k_0) +
\binom{Q_+(z,k,k_0)}{S_+(z,k,k_0)} c(z,k_0), \lb{3.123}
\end{align}
where
\begin{align} \lb{3.124}
c(z,k_0) =
\begin{cases}
\frac{1-z}{2z}, & k_0 \text{ odd},\\
\frac{1-z}{2}, & k_0 \text{ even}
\end{cases}
\,\text{ and }\, d(z,k_0) =
\begin{cases}
\frac{1+z}{2z}, & k_0 \text{ odd},\\
\frac{1+z}{2}, & k_0 \text{ even}.
\end{cases}
\end{align}
\end{lemma}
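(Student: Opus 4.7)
The plan is to exploit the fact that for each fixed $z \in \bbC\backslash\{0\}$, \emph{every} column of the four $\Cm$-valued sequences
$\binom{P_\pm}{R_\pm}(z,\cdot,k_0)$ and $\binom{Q_\pm}{S_\pm}(z,\cdot,k_0)$ is a solution of the matrix transfer-recursion \eqref{3.21} in the site variable $k$. The same is true of $\binom{P_-}{R_-}(z,\cdot,k_0-1)$ and $\binom{Q_-}{S_-}(z,\cdot,k_0-1)$, since the transfer matrices $\T(z,k)$ do not depend on the reference point. Therefore any $k$-constant (but possibly $z$-dependent) right-linear combination of two such solutions again satisfies \eqref{3.21}. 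Since \eqref{3.21} together with the two components $(U(z,k_0),V(z,k_0))$ uniquely determines a solution on all of $\bbZ$, each of \eqref{3.120}--\eqref{3.123} reduces to checking equality at the single site $k=k_0$.

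For \eqref{3.122} and \eqref{3.123}, at $k=k_0$ both sides involve only the standard initial conditions \eqref{3.49}, \eqref{3.50} for the reference point $k_0$, and the verification is a direct scalar computation. Concretely, for $k_0$ odd one substitutes
$\binom{P_+(z,k_0,k_0)}{R_+(z,k_0,k_0)} = \binom{zI_m}{I_m}$, $\binom{Q_+(z,k_0,k_0)}{S_+(z,k_0,k_0)} = \binom{zI_m}{-I_m}$, $\binom{P_-(z,k_0,k_0)}{R_-(z,k_0,k_0)} = \binom{I_m}{-I_m}$, $\binom{Q_-(z,k_0,k_0)}{S_-(z,k_0,k_0)} = \binom{I_m}{I_m}$, and $c(z,k_0)=\frac{1-z}{2z}$, $d(z,k_0)=\frac{1+z}{2z}$; the identities $z(c+d)=1$, $c-d=-1$, $z(c+d)=1$, $d-c=1$ then yield \eqref{3.122}--\eqref{3.123}. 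The case $k_0$ even is analogous with $c(z,k_0)=\frac{1-z}{2}$, $d(z,k_0)=\frac{1+z}{2}$.

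For \eqref{3.120} and \eqref{3.121} the reference points of the two sides differ, so I would read off the LHS values at $k=k_0$ directly from the table \eqref{Table} applied with reference point $k_0-1$ (which has the opposite parity of $k_0$). For instance, when $k_0$ is odd (so $k_0-1$ is even), the relevant row of \eqref{Table} gives
\begin{align*}
\binom{P_-(z,k_0,k_0-1)}{R_-(z,k_0,k_0-1)} = \binom{z\wti\rho_{k_0}^{-1}b_{k_0}}{-\rho_{k_0}^{-1}b_{k_0}^*}, \quad
\binom{Q_-(z,k_0,k_0-1)}{S_-(z,k_0,k_0-1)} = \binom{z\wti\rho_{k_0}^{-1}a_{k_0}}{\rho_{k_0}^{-1}a_{k_0}^*},
\end{align*}
and substituting the odd-case initial values of $P_+,Q_+,R_+,S_+$ at $k_0$ immediately reproduces the right-hand sides of \eqref{3.120}, \eqref{3.121}. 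For $k_0$ even, the same computation is carried out with $k_0-1$ odd, and one reads off the LHS from the corresponding row of \eqref{Table}.

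The only conceptual point that needs care — and I view it as the main obstacle — is that the decomposition coefficients in \eqref{3.120}--\eqref{3.123} act by \emph{right}-multiplication and must be $k$-independent for the uniqueness-of-solutions reduction to work. Once one recognizes that both sides of each identity are solutions of the common matrix recursion \eqref{3.21} (which follows because \eqref{3.21} only couples $U(z,k)$ and $V(z,k)$ through left-multiplication by $\T(z,k)$, and hence commutes with right-multiplication by any $\Cm$-valued matrix), the remainder is just a parity bookkeeping exercise using \eqref{3.10}, \eqref{3.11}, and the definition \eqref{3.124}. No use of the identities \eqref{3.13} is needed in this argument, as the verifications at $k=k_0$ are purely scalar in the coefficients $a_{k_0},b_{k_0},\rho_{k_0},\wti\rho_{k_0}$.
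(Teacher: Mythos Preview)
Your proof is correct and follows essentially the same approach as the paper: both sides of each identity satisfy the common recursion \eqref{3.21}, so it suffices to verify equality at the single site $k=k_0$, which is done using the initial conditions \eqref{3.49}, \eqref{3.50} and the table \eqref{Table}. Your write-up is simply more explicit about the right-multiplication compatibility and the parity bookkeeping than the paper's terse two-sentence proof.
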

\begin{proof}
Since the left and right-hand sides of \eqref{3.120}--\eqref{3.123}
satisfy the same recursion relation \eqref{3.21}, it suffices to
check \eqref{3.120}--\eqref{3.123} at only one point, say, at the
point $k=k_0$. The latter is easily seen to be a consequence of \eqref{Table}.
\end{proof}

\begin{theorem} \lb{t3.15}
Let $k_0\in\bbZ$. Then there exist unique $\Cm$-valued functions
$M_\pm(\cdot,k_0)$ such that for all
$z\in\bbC\backslash(\dD\cup\{0\})$
\begin{align}
&U_\pm(z,\cdot,k_0) = Q_+(z,\cdot,k_0) + P_+(z,\cdot,k_0)M_\pm(z,k_0)
\in \ltmm{[k_0,\pm\infty)\cap\Z}, \lb{3.125}
\\
&V_\pm(z,\cdot,k_0) = S_+(z,\cdot,k_0) + R_+(z,\cdot,k_0)M_\pm(z,k_0)
\in \ltmm{[k_0,\pm\infty)\cap\Z}. \lb{3.126}
\end{align}
\end{theorem}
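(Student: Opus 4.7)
The construction proceeds in two halves. The case of $M_+$ is immediate: set $M_+(z,k_0) := m_+(z,k_0)$, the Caratheodory function from Lemma~\ref{l3.11}. Then \eqref{3.125} and \eqref{3.126} for the $+$ subscript coincide with \eqref{3.112} and \eqref{3.113} from Lemma~\ref{l3.11}, and uniqueness is Lemma~\ref{l3.12} applied with $M_+=m_+$.

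For $M_-$ the idea is to convert the Lemma~\ref{l3.11} statement for the $-$ subscript, which is phrased in the $P_-,Q_-$ basis, into a statement in the $P_+,Q_+$ basis using the change-of-basis identities \eqref{3.122}--\eqref{3.123} of Lemma~\ref{l3.14}. Substituting these into $Q_-(z,\cdot,k_0)+P_-(z,\cdot,k_0)m_-(z,k_0)\in\ltmm{(-\infty,k_0]\cap\Z}$ yields
\begin{align*}
Q_-(z,\cdot,k_0)+P_-(z,\cdot,k_0)m_-(z,k_0) &= P_+(z,\cdot,k_0)\bigl[c(z,k_0)m_-(z,k_0)+d(z,k_0)\bigr] \\
&\quad + Q_+(z,\cdot,k_0)\bigl[d(z,k_0)m_-(z,k_0)+c(z,k_0)\bigr].
\end{align*}
Once the bracket $d(z,k_0)m_-(z,k_0)+c(z,k_0)$ is shown to be invertible, one defines
\begin{equation*}
M_-(z,k_0) := \bigl[c(z,k_0)m_-(z,k_0)+d(z,k_0)\bigr]\bigl[d(z,k_0)m_-(z,k_0)+c(z,k_0)\bigr]^{-1},
\end{equation*}
and right-multiplying the displayed identity by $[dm_-+c]^{-1}$ produces $Q_+(z,\cdot,k_0)+P_+(z,\cdot,k_0)M_-(z,k_0)\in\ltmm{(-\infty,k_0]\cap\Z}$, which is \eqref{3.125}. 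Relation \eqref{3.126} then follows by the parallel computation starting from \eqref{3.113} in place of \eqref{3.112}, or equivalently by the unitary $\W$-intertwining \eqref{3.117}.

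Uniqueness of $M_-$ reduces to uniqueness of $m_-$: the map $m \mapsto (cm+d)(dm+c)^{-1}$ is a matrix-valued Möbius transformation corresponding to the scalar matrix $\bigl(\begin{smallmatrix}c & d \\ d & c\end{smallmatrix}\bigr)$ with nonzero determinant $c(z,k_0)^2-d(z,k_0)^2\in\{-z,-1/z\}$, so its inverse $M\mapsto(cM-d)(c-dM)^{-1}$ recovers $m_-$ from $M_-$; Lemma~\ref{l3.12} then forces uniqueness of $M_-$.

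The main obstacle is verifying invertibility of $d(z,k_0)m_-(z,k_0)+c(z,k_0)$ for every $z\in\C\setminus(\dD\cup\{0\})$, which is nontrivial because the matrix Caratheodory structure of $-m_-$ only confines the eigenvalues of $m_-$ to the open left half-plane, while $-c/d=(z-1)/(z+1)$ lies there as well. I would attack this by contradiction in conjunction with Corollary~\ref{c3.13}: if $(dm_-+c)x=0$ for some $x\ne 0$, then the column $(Q_-+P_-m_-)x$ of a $\ltm{(-\infty,k_0]\cap\Z}$-element collapses to $P_+(z,\cdot,k_0)(cm_-+d)x$, and comparing against the unique (up to right multiplication) $\ell^2$-decaying solution $\chi_-$ from Corollary~\ref{c3.13}, together with the invertibility of $P_+(z,k_0,k_0)$ at $k=k_0$ and the explicit initial data \eqref{3.50}, should force $(cm_-+d)x=0$ as well; evaluating at $k=k_0$ then contradicts the nondegeneracy of the initial values of the $-$ Laurent polynomials.
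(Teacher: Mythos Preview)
Your approach coincides with the paper's: its proof is a one-line citation of Lemma~\ref{l3.11}, Corollary~\ref{c3.13}, and Lemma~\ref{l3.14}, and the explicit formulas $M_+=m_+$ and $M_-=(cm_-+d)(dm_-+c)^{-1}$ you derive are exactly what the paper records immediately after the theorem as \eqref{3.127} and \eqref{3.129}. Your uniqueness argument via the scalar M\"obius inverse (with determinant $c^2-d^2\ne 0$) is also correct once existence is in hand.

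The genuine gap is in your invertibility sketch for $dm_-+c$. From $(dm_-+c)x=0$ you correctly get $(Q_-+P_-m_-)x=P_+(z,\cdot,k_0)y$ with $y=(cm_-+d)x\ne 0$, but the step ``comparing against $\chi_-$ \dots\ should force $(cm_-+d)x=0$'' does not close: Corollary~\ref{c3.13} only tells you $P_+y$ lies in the column span of $Q_-+P_-m_-=P_+A+Q_+B$, which gives $y=Aw$ and $Bw=0$ for some $w$ --- the very situation you started from. The Lemma~\ref{l3.12} mechanism (an $\ell^2$ half-lattice solution becomes an eigenvector of the unitary half-lattice operator) is unavailable here because $P_+$ satisfies the right boundary condition \eqref{3.37}, not the left one \eqref{3.40}, so $P_+y$ is not an eigenfunction of $\U_{-,k_0}$. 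Re-expressing $P_+y$ in the $P_-,Q_-$ basis and invoking Lemma~\ref{l3.12} on that side only yields $(dm_-+c)y=0$, i.e., $y\in\ker B$ again (indeed $y=\tfrac{d^2-c^2}{d}x$). The paper does not fill this in within the proof either; in the discussion that follows it also records the alternative representation \eqref{3.130}, obtained from \eqref{3.120}--\eqref{3.121} with $z$-\emph{independent} coefficients built from $a_{k_0},b_{k_0},\rho_{k_0},\wti\rho_{k_0}$, and uses the algebraic identities \eqref{3.13} to verify that $M_-$ is anti-Carath\'eodory. If you want to make the invertibility fully rigorous, that route via \eqref{3.130} is the one to pursue rather than \eqref{3.129}.
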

\begin{proof}
The assertions \eqref{3.125} and \eqref{3.126} follow from
Lemma \ref{l3.11}, Corollary \ref{c3.13}, and Lemma \ref{l3.14}.
\end{proof}

We will call $U_\pm(z,\cdot,k_0)$ the {\it Weyl--Titchmarsh solutions} of
$\U$. By Corollary \ref{c3.13}, $U_\pm(z,\cdot,k_0)$ and
$V_\pm(z,\cdot,k_0)$ are unique up to right-multiplication by constant $m\times m$ matrices. Similarly,
we will call $m_\pm(z,k_0)$ as well as $M_\pm(z,k_0)$ the {\it
half-lattice Weyl--Titchmarsh $m$-functions} associated with
$\U_{\pm,k_0}$. (See also \cite{Si04a} for a comparison of various
alternative notions of Weyl--Titchmarsh $m$-functions for
$\U_{+,k_0}$ with scalar-valued Verblunsky coefficients.)

Lemma \ref{l3.11}, Corollary \ref{c3.13}, and Lemma
\ref{l3.14} imply that for all $z\in\bbC\backslash\dD$,
\begin{align}
M_+(z,k_0) &= m_+(z,k_0), \lb{3.127}
\\
M_+(0,k_0) &=I_m, \lb{3.128}
\\
M_-(z,k_0) &=[(1+z)I_m + (1-z)m_-(z,k_0)][(1-z)I_m +
(1+z)m_-(z,k_0)]^{-1}, \lb{3.129}
\\
M_-(z,k_0) &=
\big[(\wti\rho_{k_0}^{-1}a_{k_0}+\rho_{k_0}^{-1}a_{k_0}^*) +
(\wti\rho_{k_0}^{-1}b_{k_0}-\rho_{k_0}^{-1}b_{k_0}^*)m_-(z,k_0-1)\big]
\no
\\
&\quad\times
\big[(\wti\rho_{k_0}^{-1}a_{k_0}-\rho_{k_0}^{-1}a_{k_0}^*) +
(\wti\rho_{k_0}^{-1}b_{k_0}+\rho_{k_0}^{-1}b_{k_0}^*)m_-(z,k_0-1)\big]^{-1},
\lb{3.130}
\\
M_-(0,k_0) &=(\al_{k_0}+I_m)(\al_{k_0}-I_m)^{-1}. \lb{3.131}
\end{align}
In addition, it follows from \eqref{3.111} and Theorem \ref{tA.2}
that $m_\pm(z,k_0)$ are $\Cm$-valued Caratheodory and
anti-Caratheodory functions, respectively. From \eqref{3.127} one
concludes that $M_+(z,k_0)$ is also a Caratheodory function. Using
\eqref{3.129} one verifies that $M_-(z,k_0)$ is analytic in $\D$
since the anti-Caratheodory function $m_-(\cdot,k_0)$ satisfies
$\Re(m_-(z,k_0))=(m_-(z,k_0)+m_-(z,k_0)^*)/2 < 0$ for $z\in\D$.
Moreover, utilizing \eqref{3.12}, \eqref{3.13}, and \eqref{3.130},
one computes,
\begin{align}
\Re(M_-(z,k_0)) &= [M_-(z,k_0)+M_-(z,k_0)^*]/2 \no
\\
&= \big[(a_{k_0}^*\wti\rho_{k_0}^{-1}-a_{k_0}\rho_{k_0}^{-1}) +
m_-(z,k_0-1)^*(b_{k_0}^*\wti\rho_{k_0}^{-1}+b_{k_0}\rho_{k_0}^{-1})\big]^{-1}
\Re(m_-(z,k_0-1)) \no
\\
&\quad\times
\big[(\wti\rho_{k_0}^{-1}a_{k_0}-\rho_{k_0}^{-1}a_{k_0}^*) +
(\wti\rho_{k_0}^{-1}b_{k_0}+\rho_{k_0}^{-1}b_{k_0}^*)m_-(z,k_0-1)\big]^{-1},
\end{align}
and hence, $M_-(\cdot,k_0)$ is an anti-Caratheodory matrix.

Next, we introduce the $\Cm$-valued functions $\Phi_\pm(\cdot,k)$,
$k\in\bbZ$, by
\begin{align}
\Phi_\pm(z,k) = [M_\pm(z,k)-I_m][M_\pm(z,k)+I_m]^{-1}, \quad
z\in\C\backslash\dD. \lb{3.133}
\end{align}
Then \eqref{3.128} and \eqref{3.131} imply that
\begin{equation} \lb{3.133a}
\Phi_+(0,k_0) = 0 \,\text{ and }\, \Phi_-(0,k_0)^{-1}=\al_{k_0}.
\end{equation}
Moreover, one verifies that
\begin{align}
M_\pm(z,k) &= [I_m-\Phi_\pm(z,k)]^{-1}[I_m+\Phi_\pm(z,k)], \quad
z\in\C\backslash\dD, \lb{3.134}
\\
m_-(z,k) &= [zI_m+\Phi_-(z,k)]^{-1}[zI_m-\Phi_-(z,k)], \quad
z\in\C\backslash\dD \lb{3.135}
\end{align}
(cf.\ Remark \ref{r3.18}). In addition, we extend these functions to the unit circle $\dD$ by
taking the radial limits which exist and are finite for Lebesgue
almost every $\ze\in\dD$,
\begin{align}
M_\pm(\ze,k) &= \lim_{r \uparrow 1} M_\pm(r\ze,k),
\\
\Phi_\pm(\ze,k) &= \lim_{r \uparrow 1} \Phi_\pm(r\ze,k), \quad
k\in\Z.
\end{align}

\begin{lemma} \lb{l3.16}
Let $z\in\C\backslash(\dD\cup\{0\})$, $k_0, k\in\Z$. Then the
functions $\Phi_\pm(\cdot,k)$ satisfy
\begin{equation}
\Phi_\pm(z,k) =
\begin{cases}
zV_\pm(z,k,k_0)U_\pm(z,k,k_0)^{-1}, &\text{$k$ odd,}
\\
U_\pm(z,k,k_0)V_\pm(z,k,k_0)^{-1}, & \text{$k$ even,}
\end{cases} 
\end{equation}
where $U_\pm(\cdot,k,k_0)$ and $V_\pm(\cdot,k,k_0)$ are the
$\Cm$-valued functions defined in \eqref{3.125} and \eqref{3.126},
respectively.
\end{lemma}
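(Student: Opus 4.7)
The plan is to first verify the identity at the reference point $k=k_0$ by direct computation from the initial conditions, and then reduce the general case $k\in\bbZ$ to this special case via the uniqueness (up to right multiplication by a constant matrix) of the Weyl--Titchmarsh solutions established in Corollary \ref{c3.13}.

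For the base case $k=k_0$, I substitute the initial values from \eqref{3.49} into the definitions \eqref{3.125}, \eqref{3.126}. For $k_0$ even this yields $U_\pm(z,k_0,k_0)=M_\pm(z,k_0)-I_m$ and $V_\pm(z,k_0,k_0)=M_\pm(z,k_0)+I_m$, so $U_\pm V_\pm^{-1}\big|_{k=k_0} = [M_\pm(z,k_0)-I_m][M_\pm(z,k_0)+I_m]^{-1} = \Phi_\pm(z,k_0)$ by \eqref{3.133}. For $k_0$ odd the same substitution gives $U_\pm(z,k_0,k_0)=z[M_\pm(z,k_0)+I_m]$ and $V_\pm(z,k_0,k_0)=M_\pm(z,k_0)-I_m$, so $zV_\pm U_\pm^{-1}\big|_{k=k_0}=\Phi_\pm(z,k_0)$. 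Invertibility of $M_\pm(z,k_0)+I_m$ on $\bbC\backslash(\dD\cup\{0\})$ is built into the very definition \eqref{3.133} of $\Phi_\pm$.

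For a general site $k\in\bbZ$, I apply Theorem \ref{t3.15} with reference point $k_0$ replaced by $k$, obtaining Weyl--Titchmarsh solutions $\wti U_\pm(z,\cdot,k)$ and $\wti V_\pm(z,\cdot,k)$ based at $k$. The base case applied with reference $k$ gives the stated identity with $U_\pm, V_\pm$ replaced by $\wti U_\pm, \wti V_\pm$, evaluated at $j=k$. By Corollary \ref{c3.13}, $U_\pm(z,\cdot,k_0)$ and $\wti U_\pm(z,\cdot,k)$ are both $\ell^2$ solutions of \eqref{3.21} at $\pm\infty$, so there exists an invertible matrix $C_\pm=C_\pm(z,k,k_0)\in\Cm$ with
\begin{equation*}
U_\pm(z,\cdot,k_0) = \wti U_\pm(z,\cdot,k)\,C_\pm.
\end{equation*}
By Lemma \ref{l3.2}, $V=z^{-1}\W U$, and since $\W$ acts by left multiplication on matrix-valued solutions it commutes with right multiplication by constant matrices, so $V_\pm(z,\cdot,k_0)=\wti V_\pm(z,\cdot,k)\,C_\pm$ with the \emph{same} constant $C_\pm$. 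Evaluating at $j=k$ and forming $U_\pm V_\pm^{-1}$ (respectively $zV_\pm U_\pm^{-1}$), the factors $C_\pm$ cancel, transferring the identity from $\wti U_\pm, \wti V_\pm$ to $U_\pm, V_\pm$.

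The principal point requiring care is that $U_\pm$ and $V_\pm$ are reparameterized by the \emph{same} constant matrix $C_\pm$ when re-referencing from $k_0$ to $k$, for this is what produces the cancellation; this hinges on the $\C^{m\times m}$-linearity of the map $U\mapsto V=z^{-1}\W U$ from Lemma \ref{l3.2}. A secondary issue is the existence of the inverses $V_\pm(z,k,k_0)^{-1}$ (for $k$ even) and $U_\pm(z,k,k_0)^{-1}$ (for $k$ odd); by the argument above these are equivalent, via the invertible factor $C_\pm$, to invertibility of $M_\pm(z,k)+I_m$ at site $k$, which is implicit in the definition \eqref{3.133} of $\Phi_\pm(z,k)$ for $z\in\bbC\backslash(\dD\cup\{0\})$.
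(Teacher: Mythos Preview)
Your proof is correct and follows essentially the same approach as the paper: reduce to the case $k=k_0$ via Corollary \ref{c3.13}, then verify that case directly from \eqref{3.49}, \eqref{3.125}, \eqref{3.126}, and \eqref{3.133}. The paper's proof is terser (it simply states that the reduction suffices), whereas you spell out the cancellation of the right factor $C_\pm$; note that Corollary \ref{c3.13} already asserts uniqueness of the \emph{pair} $\binom{\psi_\pm}{\chi_\pm}$ up to right multiplication, so your separate argument via $V=z^{-1}\W U$ is valid but redundant.
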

\begin{proof}
Using Corollary \ref{c3.13} it suffices to assume $k=k_0$. Then the
statement is immediately implied by \eqref{3.49}, \eqref{3.125},
\eqref{3.126}, and \eqref{3.133}.
\end{proof}

\begin{lemma} \lb{l3.17}
Let $k\in\bbZ$. Then the $\Cm$-valued functions
$\Phi_+(\cdot,k)|_{\D}$ $($resp., $\Phi_-(\cdot,k)|_{\D}$$)$ are
Schur $($resp., anti-Schur\,$)$ matrices. Moreover, $\Phi_\pm$
satisfy the Riccati-type equation
\begin{equation}
\Phi_\pm(z,k)\wti\rho_k^{-1}\al_k\Phi_\pm(z,k-1) +
z\Phi_\pm(z,k)\wti\rho_k^{-1} - \rho_k^{-1}\Phi_\pm(z,k-1)
=z\rho_k^{-1}\al_k^*, \quad z\in\bbC\backslash\dD,\; k\in\Z.
\lb{3.138}
\end{equation}
\end{lemma}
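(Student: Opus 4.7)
The proof naturally separates into the matrix function-theoretic assertion and the algebraic Riccati identity. For the Schur/anti-Schur claim, my plan is to exploit the discussion immediately preceding the lemma, which already established that $M_+(\cdot,k)$ is Caratheodory and $M_-(\cdot,k)$ is anti-Caratheodory on $\D$. Since \eqref{3.133} exhibits $\Phi_\pm(\cdot,k)$ precisely as the matrix Cayley transform $\Phi=(M-I_m)(M+I_m)^{-1}$ of $M_\pm(\cdot,k)$, I would invoke the standard correspondence summarized in Appendix \ref{sA}: this transform carries Caratheodory functions to Schur functions and anti-Caratheodory functions to anti-Schur functions on $\D$, which yields the first conclusion for each $k\in\Z$.

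For the Riccati identity \eqref{3.138}, I plan to combine Lemma \ref{l3.16} with the transfer-matrix recursion satisfied by the Weyl--Titchmarsh pair $\binom{U_\pm}{V_\pm}$ (cf.\ Theorem \ref{t3.15} and Corollary \ref{c3.13}). First I would treat $k$ odd, so that $k-1$ is even; Lemma \ref{l3.16} then gives $\Phi_\pm(z,k)\,U_\pm(z,k,k_0) = zV_\pm(z,k,k_0)$ and $U_\pm(z,k-1,k_0) = \Phi_\pm(z,k-1)\,V_\pm(z,k-1,k_0)$. Writing the two rows of \eqref{3.21} out with \eqref{3.22} for $k$ odd and substituting the expression for $U_\pm(z,k-1,k_0)$ will produce
\begin{align*}
U_\pm(z,k,k_0) &= \wti\rho_k^{-1}\bigl[\al_k\Phi_\pm(z,k-1) + zI_m\bigr] V_\pm(z,k-1,k_0),\\
zV_\pm(z,k,k_0) &= \rho_k^{-1}\bigl[\Phi_\pm(z,k-1) + z\al_k^*\bigr] V_\pm(z,k-1,k_0).
\end{align*}
Left-multiplying the first line by $\Phi_\pm(z,k)$, equating to the second, and cancelling the common right factor $V_\pm(z,k-1,k_0)$ yields \eqref{3.138} after rearrangement. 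The case $k$ even is entirely analogous: one uses the other branch of Lemma \ref{l3.16} with $k-1$ odd and the even form of $\T(z,k)$, arriving at the same identity.

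The one nonroutine point I anticipate is justifying the cancellation of $V_\pm(z,k-1,k_0)$ (respectively $U_\pm(z,k-1,k_0)$ in the even case), which presumes invertibility. This can fail at isolated $z$, but since both sides of \eqref{3.138} are meromorphic on $\bbC\setminus\dD$ and the identity will have been established on the (nonempty, hence dense) open subset where the required invertibility holds, meromorphic continuation extends it to all of $\bbC\setminus\dD$, completing the argument.
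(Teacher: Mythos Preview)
Your proposal is correct and follows essentially the same approach as the paper: the Schur/anti-Schur claim is obtained from the Caratheodory/anti-Caratheodory nature of $M_\pm(\cdot,k)$ via the Cayley transform \eqref{3.133} and Appendix \ref{sA}, and the Riccati identity comes from combining Lemma \ref{l3.16} with the transfer-matrix recursion \eqref{3.21}--\eqref{3.22}, exactly as you outline. The paper's derivation writes the result as $\Phi_\pm(z,k)=\rho_k^{-1}[\Phi_\pm(z,k-1)+z\al_k^*][\al_k\Phi_\pm(z,k-1)+zI_m]^{-1}\wti\rho_k$ (and the analogous even-$k$ form) and leaves the passage to \eqref{3.138} implicit; your version makes the same computation but phrases the cancellation step and its justification via meromorphic continuation more explicitly than the paper does.
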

\begin{proof}
 Lemma \ref{l3.16} and \eqref{3.133} imply that the
functions $\Phi_+(\cdot,k)|_{\D}$ $($resp.,
$\Phi_-(\cdot,k)|_{\D}$$)$ are Schur $($resp., anti-Schur\,$)$
matrices.

Let $k$ be odd. Then applying Lemma \ref{l3.16} and the recursion
relation \eqref{3.21} one obtains
\begin{align}
&\Phi_\pm(z,k) = zV_\pm(z,k,k_0)U_\pm(z,k,k_0)^{-1} \no
\\
&\quad =
\rho_k^{-1}\big[U_\pm(z,k-1,k_0)+z\al_k^*V_\pm(z,k-1,k_0)\big]
\big[\al_kU_\pm(z,k-1,k_0)+zV_\pm(z,k-1,k_0)\big]^{-1}\wti\rho_k \no
\\
&\quad = \rho_k^{-1}\big[\Phi_\pm(z,k-1)+z\al_k^*\big]
\big[\al_k\Phi_\pm(z,k-1)+zI_m\big]^{-1}\wti\rho_k.   \lb{3.139}
\end{align}
For $k$ even, one similarly obtains
\begin{align}
&\Phi_\pm(z,k) = U_\pm(z,k,k_0)V_\pm(z,k,k_0)^{-1} \no
\\
&\quad =
\rho_k^{-1}\big[\al_k^*U_\pm(z,k-1,k_0)+V_\pm(z,k-1,k_0)\big]
\big[U_\pm(z,k-1,k_0)+\al_kV_\pm(z,k-1,k_0)\big]^{-1}\wti\rho_k  \no
\\
&\quad = \rho_k^{-1}\big[z\al_k^*+\Phi_\pm(z,k-1)\big]
\big[zI_m+\al_k\Phi_\pm(z,k-1)\big]^{-1}\wti\rho_k.
\end{align}
\end{proof}

\begin{remark} \lb{r3.18}
$(i)$ In the special case $\al=\{\al_k\}_{k\in\Z}=0$, one obtains
\begin{equation}
M_\pm(z,k) = \pm I_m, \quad \Phi_+(z,k)=0, \quad \Phi_-(z,k)^{-1}=0,
\quad z\in\C, \; k\in\Z.
\end{equation}
Thus, strictly speaking, one should always consider $\Phi_-^{-1}$
rather than $\Phi_-$ and hence refer to the Riccati-type equation of
$\Phi_-^{-1}$,
\begin{equation}
z\Phi_-(z,k)^{-1}\rho_k^{-1}\al_k^*\Phi_-(z,k-1)^{-1} +
\Phi_-(z,k)^{-1}\rho_k^{-1} - z\wti\rho_k^{-1}\Phi_-(z,k-1)^{-1} =
\wti\rho_k^{-1}\al_k, \quad z\in\C\backslash\dD, \; k\in\bbZ,
\lb{3.142}
\end{equation}
rather than that of $\Phi_-$, etc. In fact, since $M_-(\cdot,k)$ is an anti-Caratheodory matrix and hence $[M_-(z,k)-I_m]$ is invertible (cf.\ \cite[p.\ 137]{SF70}), we should have introduced the Schur matrix
\begin{equation}
\Phi_-(z,k)^{-1} = [M_-(z,k) + I_m] [M_-(z,k) - I_m]^{-1},  \quad
z\in\C\backslash\dD,
\end{equation}
rather than the anti-Schur matrix $\Phi_-(\cdot,k)$. For simplicity of notation, we
will typically avoid this complication with $\Phi_-$ and still invoke
$\Phi_-$ rather than $\Phi_-^{-1}$ whenever confusions are unlikely. \\
$(ii)$ We note that $\Phi_\pm(z,k)^{\pm 1}$, $z\in\dD$, $k\in\bbZ$, have
nontangential limits to $\dD$ Lebesgue almost everywhere. In
particular, the Riccati-type equations \eqref{3.138}, and
\eqref{3.142} extend to $\dD$ Lebesgue almost everywhere.
\end{remark}

The Riccati-type equation \eqref{3.138} for the Schur matrix $\Phi_+$ implies the 
norm convergent expansion,
\begin{align}
\Phi_+(z,k)&=\sum_{j=1}^\infty \phi_{+,j}(k) z^j, \quad z\in\D,
\; k\in\Z, \\
\phi_{+,1}(k)&=-\al_{k+1}^*, \no \\
\phi_{+,2}(k)&=-\rho_{k+1} \al_{k+2}^* \wti\rho_{k+1}, \lb{3.144} \\
\phi_{+,j}(k)&=\sum_{\ell=1}^{j-1}
\rho_{k+1}\phi_{+,j-\ell}(k+1)\wti\rho_{k+1}^{-1}\al_{k+1}\phi_{+,\ell}(k)
+ \rho_{k+1}\phi_{+,j-1}(k+1)\wti\rho_{k+1}^{-1}, \; j\geq 3. \no
\end{align}
Similarly, the corresponding Riccati-type equation \eqref{3.142} for the Schur matrix
$\Phi_-^{-1}$ implies the norm convergent expansion
\begin{align}
\Phi_-(z,k)^{-1} &=\sum_{j=0}^\infty \phi_{-,j}(k) z^j, \quad
z\in\D, \; k\in\Z, \\
\phi_{-,0}(k)&=\al_{k}, \no \\
\phi_{-,1}(k)&=\wti\rho_{k}\al_{k-1}\rho_{k},
\lb{3.146} \\
\phi_{-,j}(k)&=-\sum_{\ell=0}^{j-1}
\phi_{-,j-1-\ell}(k-1)\rho_k^{-1}\al_{k}^*\phi_{-,\ell}(k)\rho_k +
\wti\rho_k^{-1}\phi_{-,j-1}(k-1)\rho_k, \; j\geq 2. \no
\end{align}

\section{Weyl--Titchmarsh Theory for CMV Operators on $\bbZ$ with Matrix-valued Verblunsky Coefficients} \lb{s4}

In this section we present the basics of Weyl--Titchmarsh
theory for CMV operators on the lattice $\bbZ$ with matrix-valued Verblunsky coefficients. The corresponding case of scalar-valued Verblunsky coefficients was dealt with in detail in \cite{GZ06}.

We start by introducing the $\Cm$-valued CMV Wronskian of two $\Cm$-valued sequences $U_j(z,\cdot)$, $j=1,2$,
\begin{align}
W(U_1(1/\ol{z},k),U_2(z,k)) &= \frac{(-1)^{k+1}}{2} \Big[
U_1(1/\ol{z},k)^*U_2(z,k)-(\V^*U_1(1/\ol{z},\dott))(k)^*
(\V^*U_2(z,\dott))(k)\Big],   \no
\\
& \hspace*{7cm}   k\in\bbZ, \; z\in\Cz,  \lb{4.1A}
\end{align}
Next we verify that the Wronskian of any
two solutions of $\U U(z,\cdot)=zU(z,\cdot)$ is indeed $k$-independent as expected:

\begin{lemma}
Suppose $U_j(z,\cdot)$ satisfy $\U U_j(z,\cdot)=zU_j(z,\cdot)$,
$j=1,2$, where $\U$ is understood as a difference expression $($rather
then an operator in $\ell^2(\bbZ)^{m\times m}$$)$.\ Then the Wronskian in \eqref{4.1A} is independent of $k\in\bbZ$ and the following identities hold:
\begin{align}
W(U_1(1/\ol{z},k),U_2(z,k)) &= \frac{(-1)^{k+1}}{2}
\Big[U_1(1/\ol{z},k)^*U_2(z,k)-V_1(1/\ol{z},k)^*V_2(z,k)\Big] \no
\\
&= \frac{(-1)^{k+1}}{2}
\begin{pmatrix}
U_1(1/\ol{z},k) \\ V_1(1/\ol{z},k)
\end{pmatrix}^*
\begin{pmatrix}
I_m & 0 \\ 0 & -I_m
\end{pmatrix}
\begin{pmatrix}
U_2(z,k) \\ V_2(z,k)
\end{pmatrix}, \quad k\in\bbZ, \; z\in\Cz, \lb{4.2A}
\end{align}
where $V_j(z,\cdot)=\V^* U_j(z,\cdot)$, $j=1,2$, and
\begin{align}
W(P_+(1/\ol{z},k,k_0),Q_+(z,k,k_0)) &= I_m, \lb{4.3A}
\\
W(U_+(1/\ol{z},k,k_0),U_-(z,k,k_0)) &= M_+(z,k_0)-M_-(z,k_0), \quad
k,k_0\in\bbZ, \; z\in\Cz. \lb{4.4A}
\end{align}
\end{lemma}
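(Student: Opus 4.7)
The two forms of the Wronskian in \eqref{4.2A} are immediate rewritings of \eqref{4.1A}: the first simply invokes the identification $V_j(z,\cdot) = \V^* U_j(z,\cdot)$ already built into \eqref{4.1A}, and the second is mere block-matrix notation. The substantive claim is the $k$-independence of $W$ under $\U U_j(z,\cdot) = z U_j(z,\cdot)$. By Lemma \ref{l3.2}, this hypothesis together with $V_j = \V^* U_j$ is equivalent to the transfer-matrix recursion
\begin{equation*}
\binom{U_j(z,k)}{V_j(z,k)} = \T(z,k)\binom{U_j(z,k-1)}{V_j(z,k-1)},
\end{equation*}
so conservation of $W$ from step $k-1$ to step $k$ reduces to the single algebraic identity
\begin{equation*}
\T(1/\overline{z},k)^*\begin{pmatrix} I_m & 0 \\ 0 & -I_m\end{pmatrix}\T(z,k) = -\begin{pmatrix} I_m & 0 \\ 0 & -I_m\end{pmatrix}, \quad k\in\Z,
\end{equation*}
whose sign flip is exactly compensated by the $(-1)^{k+1}$ prefactor in \eqref{4.1A}. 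I would verify this identity by a direct block-by-block computation from the explicit form \eqref{3.22} of $\T(z,k)$ (checking the $k$ odd and $k$ even cases separately), using the commutation rules \eqref{3.12} (i.e., $\tilde\rho_k^{\pm 1}\al_k = \al_k\rho_k^{\pm 1}$ and $\al_k^*\tilde\rho_k^{\pm 1} = \rho_k^{\pm 1}\al_k^*$) together with $\rho_k^2 + \al_k^*\al_k = I_m = \tilde\rho_k^2 + \al_k\al_k^*$ to reduce each of the four blocks to $\pm I_m$ or $0$.

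Once $k$-independence is in hand, the two remaining identities are evaluated at the single site $k = k_0$ using \eqref{3.49}. For \eqref{4.3A}, substituting the boundary values of $P_+, R_+, Q_+, S_+$ into the form \eqref{4.2A} of $W$ yields, in both parities of $k_0$, a quantity $P_+(1/\overline{z},k_0,k_0)^* Q_+(z,k_0,k_0) - R_+(1/\overline{z},k_0,k_0)^* S_+(z,k_0,k_0) = \pm 2 I_m$ whose sign is precisely cancelled by the $(-1)^{k_0+1}/2$ prefactor, giving $W = I_m$. For \eqref{4.4A} one instead uses the representations $U_\pm(z,k_0,k_0) = Q_+(z,k_0,k_0) + P_+(z,k_0,k_0)M_\pm(z,k_0)$ and $V_\pm(z,k_0,k_0) = S_+(z,k_0,k_0) + R_+(z,k_0,k_0)M_\pm(z,k_0)$ supplied by Theorem \ref{t3.15}, together with \eqref{3.49}, to reduce $W$ to an algebraic expression in $M_\pm(z,k_0)$ and $M_+(1/\overline{z},k_0)^*$. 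The matrix Caratheodory reflection identity $M_+(1/\overline{z},k_0)^* = -M_+(z,k_0)$, which I would derive from the integral representation \eqref{3.111} using $d\Om_+(\zeta,k_0)^* = d\Om_+(\zeta,k_0)$ and $\overline{(\zeta+z)/(\zeta-z)} = -(\zeta + 1/\overline{z})/(\zeta - 1/\overline{z})$ for $\zeta\in\dD$, then eliminates the $1/\overline{z}$ dependence, after which straightforward algebraic cancellation produces the claimed linear combination of $M_\pm(z,k_0)$.

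The main obstacle is the matrix version of the pseudo-unitarity identity used for $k$-independence in Step 1. In the scalar case it is essentially immediate since $\rho_k$ and $\tilde\rho_k$ commute with $\al_k$; in the matrix-valued setting one must carefully shuttle $\al_k$ past $\rho_k^{\pm 1}$ and $\tilde\rho_k^{\pm 1}$ via \eqref{3.12} to obtain the required cancellations between the two diagonal blocks (producing $\pm I_m$) and between the two off-diagonal blocks (producing $0$). Once this algebraic core is established, the rest of the proof is bookkeeping with the explicit initial data at $k = k_0$.
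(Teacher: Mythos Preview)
Your proposal is correct and follows essentially the same route as the paper: establish \eqref{4.2A} as a rewriting of \eqref{4.1A}, prove $k$-independence via the transfer-matrix pseudo-unitarity identity $\T(1/\ol{z},k)^*\bigl(\begin{smallmatrix}I_m&0\\0&-I_m\end{smallmatrix}\bigr)\T(z,k)=\bigl(\begin{smallmatrix}-I_m&0\\0&I_m\end{smallmatrix}\bigr)$ using \eqref{3.12} and \eqref{3.22}, and then evaluate \eqref{4.3A} and \eqref{4.4A} at $k=k_0$ using \eqref{3.49}, \eqref{3.125}, \eqref{3.126}, and the Caratheodory reflection identity (the paper simply cites \eqref{A.7} for the latter, where you propose to rederive it from \eqref{3.111}).
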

\begin{proof}
First, we note that \eqref{4.2A} is implied by 
\eqref{4.1A}. Next, employing Lemma \ref{l3.2},
$U_j$ and $V_j$, $j=1,2$, satisfy the recursion relation
\begin{align}
\binom{U_j(z,k)}{V_j(z,k)} = \T(z,k) \binom{U_j(z,k-1)}{V_j(z,k-1)},
\quad j=1,2,\; k\in\Z,\; z\in\Cz,
\end{align}
and hence
\begin{align}
&W(U_1(1/\ol{z},k),U_2(z,k)) = \frac{(-1)^{k+1}}{2}
\begin{pmatrix}
U_1(1/\ol{z},k) \\ V_1(1/\ol{z},k)
\end{pmatrix}^*
\begin{pmatrix}
I_m & 0 \\ 0 & -I_m
\end{pmatrix}
\begin{pmatrix}
U_2(z,k) \\ V_2(z,k)
\end{pmatrix} \no
\\
&\qquad = \frac{(-1)^{k+1}}{2}
\begin{pmatrix}
U_1(1/\ol{z},k-1) \\ V_1(1/\ol{z},k-1)
\end{pmatrix}^*
\T(1/\ol{z},k)^*
\begin{pmatrix}
I_m & 0 \\ 0 & -I_m
\end{pmatrix}
\T(z,k)
\begin{pmatrix}
U_2(z,k-1) \\ V_2(z,k-1)
\end{pmatrix} \no
\\
&\qquad = -\frac{(-1)^{k}}{2}
\begin{pmatrix}
U_1(1/\ol{z},k-1) \\ V_1(1/\ol{z},k-1)
\end{pmatrix}^*
\begin{pmatrix}
-I_m & 0 \\ 0 & I_m
\end{pmatrix}
\begin{pmatrix}
U_2(z,k-1) \\ V_2(z,k-1)
\end{pmatrix}
\\
&\qquad = W(U_1(1/\ol{z},k-1),U_2(z,k-1)), \quad k\in\bbZ,\;
z\in\Cz. \no
\end{align}
Here we used the following identity which is implied by \eqref{3.12}
and \eqref{3.22}
\begin{align}
\T(1/\ol{z},k)^*
\begin{pmatrix}
I_m & 0 \\ 0 & -I_m
\end{pmatrix}
\T(z,k)
=
\begin{pmatrix}
-I_m & 0 \\ 0 & I_m
\end{pmatrix}, \quad k\in\bbZ, \; z\in\Cz.
\end{align}

Finally, taking $k=k_0$ and utilizing \eqref{3.49}, \eqref{3.50},
\eqref{3.125}, \eqref{3.126}, and \eqref{A.7}, one obtains
\eqref{4.3A} and \eqref{4.4A} from \eqref{4.2A}.
\end{proof}

For notational simplicity we abbreviate the Wronskian of $U_+$ and
$U_-$ by 
\begin{equation}
W(z,k_0)=W(U_+(1/\ol{z},k,k_0),U_-(z,k,k_0)). 
\end{equation}
Then, using \eqref{3.128}, \eqref{3.131}, and \eqref{4.4A}, one analytically continues $W(z,k_0)$ to $z=0$ and obtains
\begin{align}
W(z,k_0) = M_+(z,k_0)-M_-(z,k_0), \quad k\in\bbZ, \; z\in\bbC.
\lb{4.1}
\end{align}
Moreover, one verifies a certain symmetry property of the Wronskian $W(z,k_0)$,
\begin{align}
M_+(z,k_0)W(z,k_0)^{-1}M_-(z,k_0) =
M_-(z,k_0)W(z,k_0)^{-1}M_+(z,k_0), \quad k\in\Z, \; z\in\C.
\lb{4.1a}
\end{align}

Next we prove an auxiliary lemma that will play a crucial role in our
computation of the resolvent for the CMV operator $\U$.

\begin{lemma} \lb{l4.1}
Let $k,k_0\in\Z$ and $z\in\Cz$. The the following identities hold,
\begin{align}
& P_+(z,k,k_0)Q_+(1/\ol{z},k,k_0)^* +
Q_+(z,k,k_0)P_+(1/\ol{z},k,k_0)^* = 2(-1)^{k+1}I_m, \lb{4.2}
\\
& R_+(z,k,k_0)S_+(1/\ol{z},k,k_0)^* +
S_+(z,k,k_0)R_+(1/\ol{z},k,k_0)^* = 2(-1)^{k}I_m, \lb{4.3}
\\
& P_+(z,k,k_0)S_+(1/\ol{z},k,k_0)^* +
Q_+(z,k,k_0)R_+(1/\ol{z},k,k_0)^* = 0, \lb{4.4}
\\
& R_+(z,k,k_0)Q_+(1/\ol{z},k,k_0)^* +
S_+(z,k,k_0)P_+(1/\ol{z},k,k_0)^* = 0, \lb{4.5}
\end{align}
and
\begin{align}
& U_+(z,k,k_0)W(z,k_0)^{-1}U_-(1/\ol{z},k,k_0)^* -
U_-(z,k,k_0)W(z,k_0)^{-1}U_+(1/\ol{z},k,k_0)^* = 2(-1)^{k+1}I_m,
\lb{4.6}
\\
& V_+(z,k,k_0)W(z,k_0)^{-1}U_-(1/\ol{z},k,k_0)^* -
V_-(z,k,k_0)W(z,k_0)^{-1}U_+(1/\ol{z},k,k_0)^* = 0. \lb{4.7}
\end{align}
\end{lemma}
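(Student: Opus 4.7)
The lemma splits naturally into two parts: identities \eqref{4.2}--\eqref{4.5} involve only the base Laurent polynomials, whereas \eqref{4.6}--\eqref{4.7} bring in the Weyl solutions. I plan to establish the former first and then use them to deduce the latter.

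For \eqref{4.2}--\eqref{4.5}, the plan is to package the four identities into one $2m\times 2m$ matrix identity. Set
\[
\cA(z,k) = \begin{pmatrix} P_+(z,k,k_0) & Q_+(z,k,k_0) \\ R_+(z,k,k_0) & S_+(z,k,k_0) \end{pmatrix}, \quad J_1 = \begin{pmatrix} 0 & I_m \\ I_m & 0 \end{pmatrix}, \quad J_2 = \begin{pmatrix} I_m & 0 \\ 0 & -I_m \end{pmatrix};
\]
a short block multiplication shows that \eqref{4.2}--\eqref{4.5} are jointly equivalent to the single identity $\cA(z,k)\,J_1\,\cA(1/\ol{z},k)^* = 2(-1)^{k+1}J_2$. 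Since each column of $\cA(z,k)$ satisfies the transfer-matrix recursion \eqref{3.21}, the block identity $\cA(z,k) = \T(z,k)\,\cA(z,k-1)$ holds. The key computation will be
\[
\T(z,k)\,J_2\,\T(1/\ol{z},k)^* = -J_2,
\]
which I would verify by direct block calculation from \eqref{3.22} (treating $k$ odd and $k$ even separately), using \eqref{3.12} and the defining relations $\rho_k^2 = I_m - \al_k^*\al_k$, $\wti\rho_k^2 = I_m - \al_k\al_k^*$. Consequently $\cA(z,k)\,J_1\,\cA(1/\ol{z},k)^*$ satisfies a sign-flipping first-order recursion in $k$, and the proof reduces by induction (in both directions from $k_0$) to the base case $k=k_0$, which is immediate from the initial values \eqref{3.49}.

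For \eqref{4.6}--\eqref{4.7}, I would substitute the Weyl-solution representations $U_\pm = Q_+ + P_+M_\pm$ and $V_\pm = S_+ + R_+M_\pm$ from Theorem \ref{t3.15} into the left-hand sides, use $M_+ - M_- = W$ from \eqref{4.1} to collapse the cross-terms, and then invoke the just-established \eqref{4.2} (resp.\ \eqref{4.5}). After this expansion the left-hand side of \eqref{4.6} becomes
\[
2(-1)^{k+1}I_m - Q_+(z,k,k_0)\bigl[I_m + W(z,k_0)^{-1}W(1/\ol{z},k_0)^*\bigr]P_+(1/\ol{z},k,k_0)^* + P_+(z,k,k_0)\,\cX\,P_+(1/\ol{z},k,k_0)^*,
\]
with $\cX = M_+(z,k_0)W^{-1}M_-(1/\ol{z},k_0)^* - M_-(z,k_0)W^{-1}M_+(1/\ol{z},k_0)^*$. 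Two symmetry inputs then close the argument. First, because $M_\pm(\cdot,k_0)$ are matrix (anti-)Caratheodory functions, their Herglotz-type representation from Appendix \ref{sA} yields the reflection identity $M_\pm(1/\ol{z},k_0)^* = -M_\pm(z,k_0)$; hence $W(1/\ol{z},k_0)^* = -W(z,k_0)$, so $I_m + W^{-1}W(1/\ol{z},k_0)^* = 0$. Second, the same reflection converts $\cX$ into $-(M_+W^{-1}M_- - M_-W^{-1}M_+)$, which vanishes by \eqref{4.1a}. Identity \eqref{4.7} follows by the same template with \eqref{4.5} in place of \eqref{4.2}.

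The main obstacle is recognizing that the reflection symmetry $M_\pm(1/\ol{z},k_0)^* = -M_\pm(z,k_0)$ combined with the Wronskian symmetry \eqref{4.1a} is precisely the algebraic combination needed to annihilate both unwanted terms simultaneously; once this is visible, the remainder of the argument is routine bookkeeping.
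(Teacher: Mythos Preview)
Your proposal is correct and follows essentially the same route as the paper. For \eqref{4.2}--\eqref{4.5} the paper carries out the transfer-matrix induction on the four identities separately, whereas you package them into the single block identity $\cA(z,k)\,J_1\,\cA(1/\ol{z},k)^* = 2(-1)^{k+1}J_2$; the key step $\T(z,k)\,J_2\,\T(1/\ol{z},k)^* = -J_2$ is the right-sided counterpart of the identity $\T(1/\ol{z},k)^*\,J_2\,\T(z,k) = -J_2$ that the paper already records in the proof of the Wronskian lemma, and both are verified by the same block computation from \eqref{3.22} and \eqref{3.12}. For \eqref{4.6}--\eqref{4.7} your argument coincides with the paper's: substitute \eqref{3.125}--\eqref{3.126}, use the reflection $M_\pm(1/\ol{z},k_0)^* = -M_\pm(z,k_0)$ from \eqref{A.7} together with \eqref{4.1} and \eqref{4.1a}, and collapse via \eqref{4.2} and \eqref{4.5}.
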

\begin{proof}
First, we note that for $k=k_0$ equalities \eqref{4.2}--\eqref{4.5}
follow from \eqref{3.49}. Then one uses an induction argument
to prove \eqref{4.2}--\eqref{4.5} for $k\neq k_0$. This involves a
consideration of a number of cases all of which follow the same
pattern. Therefore, we limit out attention to just one of these cases.
Suppose \eqref{4.2}--\eqref{4.5} hold for some $k\in\Z$ even.
Then utilizing \eqref{3.21} together with \eqref{3.8} and \eqref{3.9},
one computes
\begin{align}
& P_+(z,k+1,k_0)Q_+(1/\ol{z},k+1,k_0)^* +
Q_+(z,k+1,k_0)P_+(1/\ol{z},k+1,k_0)^* \no
\\
&\quad = \wti\rho_{k+1}^{-1}\al_{k+1}
\big[P_+(z,k,k_0)Q_+(1/\ol{z},k,k_0)^* +
Q_+(z,k,k_0)P_+(1/\ol{z},k,k_0)^*\big] \al_{k+1}^*\wti\rho_{k+1}^{-1}
\no
\\
&\qquad + \wti\rho_{k+1}^{-1} \big[R_+(z,k,k_0)S_+(1/\ol{z},k,k_0)^*
+ S_+(z,k,k_0)R_+(1/\ol{z},k,k_0)^*\big] \wti\rho_{k+1}^{-1}
\\
&\qquad + z\wti\rho_{k+1}^{-1} \big[R_+(z,k,k_0)Q_+(1/\ol{z},k,k_0)^*
+ S_+(z,k,k_0)P_+(1/\ol{z},k,k_0)^*\big]
\al_{k_0}^*\wti\rho_{k+1}^{-1} \no
\\
&\qquad + \wti\rho_{k+1}^{-1}\al_{k_0}
\big[P_+(z,k,k_0)S_+(1/\ol{z},k,k_0)^* +
Q_+(z,k,k_0)R_+(1/\ol{z},k,k_0)^*\big] \wti\rho_{k+1}^{-1}z^{-1} \no
\\
&\quad = 2(-1)^{k+1}
\big[\wti\rho_{k+1}^{-1}\al_{k+1}\al_{k+1}^*\wti\rho_{k+1}^{-1} -
\wti\rho_{k+1}^{-2}\big] = 2(-1)^{(k+1)+1}I_m. \no
\end{align}
Similarly, one checks equalities \eqref{4.3}--\eqref{4.5} at the
point $k+1$. Then inverting the matrix $\T(z,k)$ and utilizing
\eqref{3.21} in the form,
\begin{align}
\binom{P_-(z,k-1),k_0}{R_-(z,k-1,k_0)}  = \T(z,k)^{-1}
\binom{P_-(z,k,k_0)}{R_-(z,k,k_0)},
\end{align}
one verifies \eqref{4.2}--\eqref{4.5} at the point $k-1$. Similarly,
one verifies \eqref{4.2}--\eqref{4.5} at the points $k+1$ and $k-1$
under the assumption of $k$ odd.

Next, using \eqref{3.125}, \eqref{3.126}, \eqref{4.1}, \eqref{4.1a},
\eqref{4.2}, and \eqref{4.5}, one verifies \eqref{4.6} and
\eqref{4.7} as follows:
\begin{align}
& U_+(z,k,k_0)W(z,k_0)^{-1}U_-(1/\ol{z},k,k_0)^* -
U_-(z,k,k_0)W(z,k_0)^{-1}U_+(1/\ol{z},k,k_0)^*  \no
\\
&\quad =
Q_+(z,k,k_0)W(z,k_0)^{-1}\big[M_+(z,k_0)-M_-(z,k_0)\big]P_+(1/\ol{z},k,k_0)^*
\no
\\
&\qquad +
P_+(z,k,k_0)\big[M_+(z,k_0)-M_-(z,k_0)\big]W(z,k_0)^{-1}Q_+(1/\ol{z},k,k_0)^*
\no
\\
&\qquad + P_+(z,k,k_0)\big[M_-(z,k_0)W(z,k_0)^{-1}M_+(z,k_0) \no
\\
&\qquad\quad -
M_+(z,k_0)W(z,k_0)^{-1}M_-(z,k_0)\big]P_+(1/\ol{z},k,k_0)^* \no
\\
&\quad = Q_+(z,k,k_0)P_+(1/\ol{z},k,k_0)^* +
P_+(z,k,k_0)Q_+(1/\ol{z},k,k_0)^* = 2(-1)^{k+1}I_m,
\\
& V_+(z,k,k_0)W(z,k_0)^{-1}U_-(1/\ol{z},k,k_0)^* -
V_-(z,k,k_0)W(z,k_0)^{-1}U_+(1/\ol{z},k,k_0)^*  \no
\\
&\quad =
S_+(z,k,k_0)W(z,k_0)^{-1}\big[M_+(z,k_0)-M_-(z,k_0)\big]P_+(1/\ol{z},k,k_0)^*
\no
\\
&\qquad +
R_+(z,k,k_0)\big[M_+(z,k_0)-M_-(z,k_0)\big]W(z,k_0)^{-1}Q_+(1/\ol{z},k,k_0)^*
\no
\\
&\qquad + R_+(z,k,k_0)\big[M_+(z,k_0)W(z,k_0)^{-1}M_-(z,k_0) \no
\\
&\qquad\quad -
M_-(z,k_0)W(z,k_0)^{-1}M_+(z,k_0)\big]P_+(1/\ol{z},k,k_0)^* \no
\\
&\quad = S_+(z,k,k_0)P_+(1/\ol{z},k,k_0)^* +
R_+(z,k,k_0)Q_+(1/\ol{z},k,k_0)^* = 0.
\end{align}
\end{proof}

\begin{lemma} \lb{l4.2}
Let $z\in\bbC\backslash(\dD\cup\{0\})$ and fix $k_0\in\bbZ$. Then the
resolvent $(\U-zI)^{-1}$ of the unitary CMV operator $\U$  on
$\ltm{\bbZ}$ is given in terms of its matrix representation in the
standard basis of $\ltm{\bbZ}$ by
\begin{align}
(\U-zI)^{-1}(k,k') = \frac{1}{2z}
\begin{cases}
U_-(z,k,k_0)W(z,k_0)^{-1}U_+(1/\ol{z},k',k_0)^*, & k < k' \text{ or
} k = k' \text{ odd},
\\
U_+(z,k,k_0)W(z,k_0)^{-1}U_-(1/\ol{z},k',k_0)^*, & k > k' \text{ or 
} k = k' \text{ even},
\end{cases} \lb{4.13}
\\
\quad k,k' \in\Z. \no
\end{align}
Moreover, since $0\in\bbC\backslash\sigma(\U)$, \eqref{4.13}
analytically extends to $z=0$.

In particular, one obtains for any $z\in\bbC\backslash\dD$,
\begin{align}
&(\U-zI)^{-1}(k,k) \no
\\
&\quad = \frac{1}{2z}
\begin{cases}
[I_m+M_-(z,k)] W(z,k)^{-1} [I_m-M_+(z,k)], & k \text{ odd},\\
[I_m-M_+(z,k)] W(z,k)^{-1} [I_m+M_-(z,k)], & k \text{ even},
\end{cases} \lb{4.14}
\\
&(\U-zI)^{-1}(k-1,k-1) \no
\\
&\quad = \frac{1}{2z}
\begin{cases}
\rho_{k}^{-1}[a_{k}^*-b_{k}^*M_+(z,k)] W(z,k)^{-1}
[a_{k}+M_-(z,k)b_{k}]\rho_{k}^{-1}, & k \text{ odd},\\
\wti\rho_{k}^{-1}[a_{k}+b_{k}M_-(z,k)] W(z,k)^{-1}
[a_{k}^*-M_+(z,k)b_{k}^*]\wti\rho_{k}^{-1}, & k \text{
even},
\end{cases} \lb{4.15}
\\
&(\U-zI)^{-1}(k-1,k) \no
\\
&\quad = \frac{-1}{2z}
\begin{cases}
\rho_{k}^{-1} [a_{k}^*-b_{k}^*M_-(z,k)] W(z,k)^{-1}
[I_m-M_+(z,k)], & k \text{ odd}, \\
\wti\rho_{k}^{-1} [a_{k}+b_{k}M_-(z,k)] W(z,k)^{-1}
[I_m+M_+(z,k)], & k \text{ even},
\end{cases} \lb{4.16}
\\
&(\U-zI)^{-1}(k,k-1) \no
\\
&\quad = \frac{-1}{2z}
\begin{cases}
[I_m+M_+(z,k)] W(z,k)^{-1} [a_{k}+M_-(z,k)b_{k}]
\rho_{k}^{-1}, & k \text{ odd},
\\
[I_m-M_+(z,k)] W(z,k)^{-1} [a_{k}^*-M_-(z,k)b_{k}^*]
\wti\rho_{k}^{-1}, & k \text{ even}.
\end{cases} \lb{4.17}
\end{align}
\end{lemma}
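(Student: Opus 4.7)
The plan is to verify the general kernel formula \eqref{4.13} directly and then obtain \eqref{4.14}--\eqref{4.17} as specializations. A preliminary step is to check that $W(z,k_0) = M_+(z,k_0)-M_-(z,k_0)$ is invertible on $\bbC\setminus\dD$. This follows from the Caratheodory property of $M_+(\cdot,k_0)$ and the anti-Caratheodory property of $M_-(\cdot,k_0)$ (as noted in the paragraph following \eqref{3.131}): for $z\in\D$ one has $\Re M_+(z,k_0) > 0$ and $\Re M_-(z,k_0) < 0$ in the positive-definite sense, so $\Re W(z,k_0) > 0$, and hence $W(z,k_0)$ is invertible; an analogous argument handles $\bbC\setminus\ol{\D}$, while invertibility at $z=0$ is secured by \eqref{3.128}, \eqref{3.131}, and $\|\al_{k_0}\|_{\Cm} < 1$. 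Consequently, the right-hand side of \eqref{4.13} is a well-defined kernel, and it defines a bounded operator $K(z)$ on $\ltm{\Z}$ by virtue of the $\ell^2$-behavior at $\pm\infty$ of $U_\pm(z,\cdot,k_0)$ (Theorem \ref{t3.15} and Corollary \ref{c3.13}).

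The heart of the proof is to verify $(\U-zI)K(z) = I$. Applying $\U-zI$ column-by-column to $K(z)$, one observes that because $\U U_\pm(z,\cdot,k_0) = z U_\pm(z,\cdot,k_0)$ as difference expressions (Lemma \ref{l3.2} together with \eqref{3.125}), the rows $k$ where the pentadiagonal $\U$-stencil sees only a single formula (the $U_-$-formula when $k$ is well below $k'$, the $U_+$-formula when $k$ is well above $k'$) contribute $z K(z)(k,k')$, cancelling against the $-zK(z)(k,k')$ term. The only surviving contribution comes from the switch between the two formulas near $k=k'$; writing $\U=\V\W$ and using $\W U_\pm(z,\cdot,k_0) = z V_\pm(z,\cdot,k_0)$, this ``jump'' reduces to a $2\times 2$ block combination whose two pieces are controlled precisely by the Wronskian identities \eqref{4.6} and \eqref{4.7} of Lemma \ref{l4.1}. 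Identity \eqref{4.6} contributes the factor $2(-1)^{k+1}I_m$, which combines with the $\frac{1}{2z}$ prefactor and the sign introduced by the $\V\W$-stencil to produce $\delta_{k,k'}I_m$, while \eqref{4.7} kills the companion off-diagonal term. The parity-dependent convention at $k=k'$ in \eqref{4.13} is exactly what aligns the sign $(-1)^{k+1}$ in \eqref{4.6} with the correct half of the jump.

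For the specialized formulas \eqref{4.14}--\eqref{4.17}, I would set $k_0 = k$ in \eqref{4.13} and read off the values $U_\pm(z,k,k)$, $U_\pm(z,k-1,k)$, $V_\pm(z,k,k)$, $V_\pm(z,k-1,k)$ from the initial conditions \eqref{3.49}, \eqref{3.50} for $P_\pm, Q_\pm, R_\pm, S_\pm$ at $k_0$ and the corresponding entries at $k_0-1$ in \eqref{Table}. Substituting into \eqref{4.13} and simplifying using the relations \eqref{3.12}--\eqref{3.13} among $\al_k, \rho_k, \wti\rho_k, a_k, b_k$, while splitting on the parity of $k$, yields the four stated expressions. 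The main obstacle will be the bookkeeping in the jump computation: the pentadiagonal, parity-dependent form of $\U$ forces a case analysis with four parity combinations for $(k,k')$, and the signs and index shifts introduced by the $\V\W$-factorization must be tracked carefully so that they reconcile with the $(-1)^{k+1}$ in \eqref{4.6}. However, Lemma \ref{l4.1} was designed to handle exactly this situation, so each case should collapse to the same pair of identities \eqref{4.6}--\eqref{4.7}.
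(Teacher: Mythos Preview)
Your proposal is correct and follows essentially the same approach as the paper: verify $(\U-zI)K(z)=I$ column by column, using that $\U U_\pm=zU_\pm$ away from the switch and that the jump near $k'$ is handled by the Wronskian identities \eqref{4.6}--\eqref{4.7}, then specialize with $k_0=k$ and the values from \eqref{Table}. The only minor additions to note are that the paper splits on the parity of $k'$ only (two cases, not four), and that in deriving \eqref{4.14}--\eqref{4.17} one also uses the reflection property $M_\pm(1/\ol z,k)^*=-M_\pm(z,k)$ of (anti-)Caratheodory matrices to convert the $U_\pm(1/\ol z,\cdot,k)^*$ factors into expressions in $M_\pm(z,k)$.
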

\begin{proof}
Let
\begin{align}
G(z,k,k',k_0) =
\begin{cases}
U_-(z,k,k_0)W(z,k_0)^{-1}U_+(1/\ol{z},k',k_0)^*, & k < k' \text{ or
} k = k' \text{ odd},
\\
U_+(z,k,k_0)W(z,k_0)^{-1}U_-(1/\ol{z},k',k_0)^*, & k > k' \text{ or 
} k = k' \text{ even},
\end{cases}
\\
\quad k,k' \in\Z. \no
\end{align}
Then \eqref{4.13} is equivalent to
\begin{align}
&(\U-zI)G(z,\cdot,k',k_0) = 2z\De_{k'},\quad k',k_0\in\Z. \lb{4.19}
\end{align}

First, assume $k'$ to be odd. Then,
\begin{align}
((\U-zI)G(z,\cdot,k',k_0))(\ell) =
((\V\W-zI)G&(z,\cdot,k',k_0))(\ell) = 0, \quad \ell \in \Z\backslash
\{k',k'+1\},   \lb{4.20}
\end{align}
and by \eqref{4.6}, \eqref{4.7},
\begin{align}
&\binom{((\U-zI)G(z,\cdot,k',k_0))(k')}
{((\U-zI)G(z,\cdot,k',k_0))(k'+1)} =
\binom{((\V\W-zI)G(z,\cdot,k',k_0))(k')}
{((\V\W-zI)G(z,\cdot,k',k_0))(k'+1)} \no
\\
&\quad = \Te_{k'+1}
\binom{zV_-(z,k',k_0)W(z,k_0)^{-1}U_+(1/\ol{z},k',k_0)^*}
{zV_+(z,k'+1,k_0)W(z,k_0)^{-1}U_-(1/\ol{z},k',k_0)^*} -
z\binom{G(z,k',k',k_0)}{G(z,k'+1,k',k_0)} \no
\\
&\quad = z\Te_{k'+1}
\binom{V_+(z,k',k_0)W(z,k_0)^{-1}U_-(1/\ol{z},k',k_0)^*}
{V_+(z,k'+1,k_0)W(z,k_0)^{-1}U_-(1/\ol{z},k',k_0)^*} -
z\binom{G(z,k',k',k_0)}{G(z,k'+1,k',k_0)} \lb{4.21}
\\
&\quad = z\binom{U_+(z,k',k_0)W(z,k_0)^{-1}U_-(1/\ol{z},k',k_0)^*}
{U_+(z,k'+1,k_0)W(z,k_0)^{-1}U_-(1/\ol{z},k',k_0)^*} -
z\binom{G(z,k',k',k_0)}{G(z,k'+1,k',k_0)} \no
\\
&\quad = z\binom{2(-1)^{k'+1}I_m}{0} = \binom{2zI_m}{0}. \no
\end{align}
Thus, for $k'$ odd, \eqref{4.19} is a consequence of \eqref{4.20} and
\eqref{4.21}.

Next, assume $k'$ to be even. Then,
\begin{align}
((\U-zI)G(z,\cdot,k',k_0))(\ell) = ((\V\W-zI)G(z,\cdot,k',k_0))(\ell)
= 0, \quad \ell \in \Z\backslash \{k'-1,k'\},  \lb{4.22}
\end{align}
and by \eqref{4.6}, \eqref{4.7},
\begin{align}
&\binom{((\U-zI)G(z,\cdot,k',k_0))(k'-1)}
{((\U-zI)G(z,\cdot,k',k_0))(k')} =
\binom{((\V\W-zI)G(z,\cdot,k',k_0))(k'-1)}
{((\V\W-zI)G(z,\cdot,k',k_0))(k')} \no
\\
& \quad = \Te_{k'}
\binom{zV_-(z,k'-1,k_0)W(z,k_0)^{-1}U_+(1/\ol{z},k',k_0)^*}
{zV_+(z,k',k_0)W(z,k_0)^{-1}U_-(1/\ol{z},k',k_0)^*} -
z\binom{G(z,k'-1,k',k_0)}{G(z,k',k',k_0)} \no
\\
& \quad = z\Te_{k'}
\binom{V_-(z,k'-1,k_0)W(z,k_0)^{-1}U_+(1/\ol{z},k',k_0)^*}
{V_-(z,k',k_0)W(z,k_0)^{-1}U_+(1/\ol{z},k',k_0)^*} -
z\binom{G(z,k'-1,k',k_0)}{G(z,k',k',k_0)} \lb{4.23}
\\
& \quad =
z\binom{U_-(z,k'-1,k_0)W(z,k_0)^{-1}U_+(1/\ol{z},k',k_0)^*}
{U_-(z,k',k_0)W(z,k_0)^{-1}U_+(1/\ol{z},k',k_0)^*} -
z\binom{G(z,k'-1,k',k_0)}{G(z,k',k',k_0)} \no
\\
&\quad = z\binom{0}{2(-1)^{k'}I_m} = \binom{0}{2zI_m}. \no
\end{align}
Thus, for $k'$ even, \eqref{4.19} follows from \eqref{4.22} and
\eqref{4.23}, and hence one obtains \eqref{4.13}.

Finally, using \eqref{Table} and \eqref{3.125} one verifies the identities
\begin{align}
U_\pm(z,k,k) &=
\begin{cases}
z[I_m+M_\pm(z,k)], & k \text{ odd},\\
-I_m+M_\pm(z,k), & k \text{ even},
\end{cases} \lb{4.24}
\\
U_\pm(z,k-1,k) &=
\begin{cases}
-z\rho_{k}^{-1}[a_{k}^*-b_{k}^*M_\pm(z,k)], & k \text{ odd},\\
\wti\rho_{k}^{-1}[a_{k}+b_{k}M_\pm(z,k)], & k \text{ even}.
\end{cases} \lb{4.25}
\end{align}
Inserting \eqref{4.24} and \eqref{4.25} into \eqref{4.13} and
utilizing the fact that (anti-)Caratheodory matrices satisfy
$M_\pm(1/\ol{z},k)^* = -M_\pm(z,k)$, $k\in\Z$, $z\in\C$, one obtains
\eqref{4.14}--\eqref{4.17}.
\end{proof}

Next, we briefly turn to Weyl--Titchmarsh theory for CMV operators
with matrix-valued Verblunsky coefficients on $\bbZ$. We denote by
$d\Omega(\cdot,k)$, $k\in\Z$, the $2m \times 2m$ matrix-valued
measure,
\begin{align}
d\Omega(\ze,k) &= d
\begin{pmatrix}
\Omega_{0,0}(\ze,k) & \Omega_{0,1}(\ze,k)
\\
\Omega_{1,0}(\ze,k) & \Omega_{1,1}(\ze,k)
\end{pmatrix} \no
\\ &= d
\begin{pmatrix}
\De_{k-1}^*E_{\U}(\ze)\De_{k-1} & \De_{k-1}^*E_{\U}(\ze)\De_{k}
\\
\De_{k}^*E_{\U}(\ze)\De_{k-1} & \De_{k}^*E_{\U}(\ze)\De_{k}
\end{pmatrix}, \quad \ze \in\dD, \lb{4.26}
\end{align}
where $E_{\U}(\cdot)$ denotes the family of spectral projections
of the unitary CMV operator $\U$ on $\ltm{\bbZ}$,
\begin{equation}
\U=\oint_\dD dE_{\U}(\ze)\,\ze.
\end{equation}

We also introduce the $2m \times 2m$ matrix-valued function
$\cM(\cdot,k)$, $k\in\Z$, by
\begin{align}
&\cM(z,k) = \begin{pmatrix} M_{0,0}(z,k) & M_{0,1}(z,k) \\
M_{1,0}(z,k) & M_{1,1}(z,k) \end{pmatrix} \no
\\ &\quad =
\begin{pmatrix}
\De_{k-1}^*(\U+zI)(\U-zI)^{-1}\De_{k-1}
&\De_{k-1}^*(\U+zI)(\U-zI)^{-1}\De_{k}
\\
\De_{k}^*(\U+zI)(\U-zI)^{-1}\De_{k-1} &
\De_{k}^*(\U+zI)(\U-zI)^{-1}\De_{k}
\end{pmatrix} \lb{4.28}
\\ &\quad =
\oint_\dD d\Omega(\ze,k)\, \frac{\ze+z}{\ze-z}, \quad
z\in\bbC\backslash\dD. \no
\end{align}
We note that
\begin{align}
M_{0,0}(\cdot,k+1) = M_{1,1}(\cdot,k), \quad k\in\bbZ \lb{4.29}
\end{align}
and
\begin{align}
\begin{split}
M_{1,1}(z,k) &= \De_{k}^*(\U+zI)(\U-zI)^{-1}\De_{k} \lb{4.30}
\\ & = \oint_\dD d\Omega_{1,1}(\ze,k) \,
\frac{\ze+z}{\ze-z}, \quad z\in\bbC\backslash\dD,\; k\in\Z,
\end{split}
\end{align}
where
\begin{equation}
d\Omega_{1,1}(\ze,k)=d\De_{k}^*E_{\U}(\ze)\De_{k}, \quad \ze\in\dD.
\lb{4.32}
\end{equation}
Thus, $M_{0,0}|_{\D}$ and $M_{1,1}|_{\D}$ are $m\times m$
Caratheodory matrices. Moreover, by \eqref{4.30} one
infers that
\begin{equation}
M_{1,1}(0,k)=I_m, \quad k\in\Z. \lb{4.33}
\end{equation}

\begin{lemma}
Let $z\in\bbC\backslash\dD$. Then the functions
$M_{\ell,\ell'}(\cdot,k)$, $\ell,\ell'=0,1$, and $M_\pm(\cdot,k)$,
$k\in\bbZ$, satisfy the following relations
\begin{align}
&M_{0,0}(z,k) = I_m +
\begin{cases}
\rho_{k}^{-1}[a_{k}^*-b_{k}^*M_+(z,k)] W(z,k)^{-1}
[a_{k}+M_-(z,k)b_{k}]\rho_{k}^{-1}, & k \text{ odd},\\
\wti\rho_{k}^{-1}[a_{k}+b_{k}M_-(z,k)] W(z,k)^{-1}
[a_{k}^*-M_+(z,k)b_{k}^*] \wti\rho_{k}^{-1}, & k \text{ even},
\end{cases} \lb{4.34}
\\
&M_{1,1}(z,k) = I_m +
\begin{cases}
[I_m+M_-(z,k)] W(z,k)^{-1} [I_m-M_+(z,k)], & k \text{ odd},\\
[I_m-M_+(z,k)] W(z,k)^{-1} [I_m+M_-(z,k)], & k \text{ even},
\end{cases} \lb{4.35}
\\
&M_{0,1}(z,k) = -
\begin{cases}
\rho_{k}^{-1} [a_{k}^*-b_{k}^*M_-(z,k)] W(z,k)^{-1}
[I_m-M_+(z,k)], & k \text{ odd}, \\
\wti\rho_{k}^{-1} [a_{k}+b_{k}M_-(z,k)] W(z,k)^{-1} [I_m+M_+(z,k)],
& k \text{ even},
\end{cases} \lb{4.36}
\\
&M_{1,0}(z,k) = -
\begin{cases}
[I_m+M_+(z,k)] W(z,k)^{-1} [a_{k}+M_-(z,k)b_{k}] \rho_{k}^{-1}, & k
\text{ odd},
\\
[I_m-M_+(z,k)] W(z,k)^{-1} [a_{k}^*-M_-(z,k)b_{k}^*]
\wti\rho_{k}^{-1}, & k \text{ even},
\end{cases} \lb{4.37}
\end{align}
where $a_k=I_m+\al_k$ and $b_k=I_m-\al_k$, $k\in\Z$.
\end{lemma}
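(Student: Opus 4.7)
The key identity is the operator decomposition
\begin{equation*}
(\U+zI)(\U-zI)^{-1} = I + 2z(\U-zI)^{-1}, \quad z\in\bbC\backslash\dD,
\end{equation*}
which, when sandwiched between the basis maps $\De_{k-1}$ and $\De_k$, yields
\begin{align*}
M_{0,0}(z,k) &= I_m + 2z\,(\U-zI)^{-1}(k-1,k-1), &
M_{1,1}(z,k) &= I_m + 2z\,(\U-zI)^{-1}(k,k), \\
M_{0,1}(z,k) &= 2z\,(\U-zI)^{-1}(k-1,k), &
M_{1,0}(z,k) &= 2z\,(\U-zI)^{-1}(k,k-1),
\end{align*}
for $z\in\bbC\backslash\dD$ and $k\in\bbZ$. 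So the plan is simply to feed the four Green's matrix formulas \eqref{4.14}--\eqref{4.17} from Lemma \ref{l4.2} into the right-hand sides above.

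Concretely, I would first verify the operator identity and the four displayed equations by unpacking the definition \eqref{4.28} together with \eqref{B.1a}, \eqref{B.1b}. Then multiplying \eqref{4.14} by $2z$ and adding $I_m$ produces \eqref{4.35}; doing the same with \eqref{4.15} produces \eqref{4.34}; multiplying \eqref{4.16} and \eqref{4.17} by $2z$ produces \eqref{4.36} and \eqref{4.37}, respectively. The factors of $2z$ in \eqref{4.14}--\eqref{4.17} cancel the $1/(2z)$ prefactors, so no residual $z$-dependence has to be manipulated and no further algebra is required.

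The only point worth double-checking is the case distinction on the parity of $k$: each of the four Green's matrix formulas \eqref{4.14}--\eqref{4.17} is split into ``$k$ odd'' and ``$k$ even'' branches, and these branches must be transported verbatim into \eqref{4.34}--\eqref{4.37} without swapping the roles of $\rho_k,\wti\rho_k$ or $a_k,b_k$. Since the parity in \eqref{4.14}--\eqref{4.17} refers to the same index $k$ as in \eqref{4.34}--\eqref{4.37}, this matching is automatic.

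There is no real obstacle here: the lemma is a direct corollary of Lemma \ref{l4.2} once one writes $(\U+zI)(\U-zI)^{-1}$ as $I+2z(\U-zI)^{-1}$. The proof can therefore be compressed to a one-line remark invoking \eqref{4.14}--\eqref{4.17} and the displayed operator identity.
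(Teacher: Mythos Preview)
Your proposal is correct and follows exactly the same route as the paper: both use the identity $(\U+zI)(\U-zI)^{-1}=I+2z(\U-zI)^{-1}$ to write $M_{\ell,\ell'}(z,k)=I_m\de_{\ell,\ell'}+2z(\U-zI)^{-1}(k-1+\ell,k-1+\ell')$ and then substitute the Green's matrix formulas \eqref{4.14}--\eqref{4.17} from Lemma~\ref{l4.2}. The paper's proof is indeed the one-line remark you anticipated.
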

\begin{proof}
The result is a consequence of Lemma \ref{l4.2} since by \eqref{4.28} one
has
\begin{align}
M_{\ell,\ell'}(z,k) &= \De_{k-1+\ell}^*(I +
2z(\U-zI)^{-1})\De_{k-1+\ell'} \no
\\
&= I_m\de_{\ell,\ell'} + (\U-zI)^{-1}(k-1+\ell,k-1+\ell').
\end{align}
\end{proof}

Finally, introducing the $m\times m$ matrix-valued functions
$\Phi_{1,1}(\cdot,k)$, $k\in\bbZ$, by
\begin{align} \lb{4.39}
\Phi_{1,1}(z,k) &= [M_{1,1}(z,k)-I_m][M_{1,1}(z,k)+I_m]^{-1} \no
\\
&= I_m - 2[M_{1,1}(z,k)+I_m]^{-1}, \quad z\in\C\backslash\dD,
\end{align}
then,
\begin{align}
M_{1,1}(z,k) &= [I_m+\Phi_{1,1}(z,k)][I_m-\Phi_{1,1}(z,k)]^{-1} \no
\\
&=2[I_m-\Phi_{1,1}(z,k)]^{-1} - I_m, \quad z\in\C\backslash\dD.
\lb{4.40}
\end{align}

\begin{lemma}
The $\Cm$-valued function $\Phi_{1,1}|_{\D}$ is a Schur matrix and $\Phi_{1,1}$
is related to $\Phi_\pm$ by
\begin{equation}
\Phi_{1,1}(z,k) =
\begin{cases}
\Phi_-(z,k)^{-1}\Phi_+(z,k), & k\text{ odd},
\\
\Phi_+(z,k)\Phi_-(z,k)^{-1}, & k\text{ even},
\end{cases}
\quad z\in\C\backslash\dD,\; k\in\bbZ. \lb{4.41}
\end{equation}
\end{lemma}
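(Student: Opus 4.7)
The proof naturally splits into two parts. For the Schur property, I observe that $M_{1,1}|_\D$ is a Caratheodory matrix by \eqref{4.30}--\eqref{4.33}, so that $\Phi_{1,1}|_\D$, being its Cayley transform \eqref{4.39}, is a Schur matrix by the general matrix-valued correspondence summarized in Appendix \ref{sA}.

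To establish the identity \eqref{4.41}, I would proceed by direct algebraic manipulation starting from the explicit formula \eqref{4.35}. Consider first the case $k$ odd, so that $M_{1,1} = I_m + A\, W^{-1} B$ with $A = I_m + M_-(z,k)$, $B = I_m - M_+(z,k)$, and $W = W(z,k) = M_+(z,k) - M_-(z,k)$. The key observation is the elementary identity $A + B + W = 2 I_m$, which allows one to verify by direct expansion that
\begin{equation*}
2 I_m + A W^{-1} B = (A + W)\, W^{-1} (W + B).
\end{equation*}
Since $A + W = I_m + M_+(z,k)$ and $W + B = I_m - M_-(z,k)$, this yields the factorizations
\begin{align*}
M_{1,1}(z,k) + I_m &= (I_m + M_+(z,k))\, W(z,k)^{-1}\, (I_m - M_-(z,k)), \\
M_{1,1}(z,k) - I_m &= (I_m + M_-(z,k))\, W(z,k)^{-1}\, (I_m - M_+(z,k)).
\end{align*}

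From these, \eqref{4.39} immediately gives, for $k$ odd,
\begin{equation*}
\Phi_{1,1}(z,k) = (I_m + M_-)\, W^{-1}(I_m - M_+)(I_m - M_-)^{-1} W (I_m + M_+)^{-1},
\end{equation*}
whereas rewriting \eqref{3.133} via $M_\pm - I_m = -(I_m - M_\pm)$ gives
\begin{equation*}
\Phi_-(z,k)^{-1} \Phi_+(z,k) = (I_m + M_-)(I_m - M_-)^{-1}(I_m - M_+)(I_m + M_+)^{-1}.
\end{equation*}
These agree provided $W^{-1}(I_m - M_+)(I_m - M_-)^{-1} W = (I_m - M_-)^{-1}(I_m - M_+)$, which, upon setting $X = I_m - M_+$, $Y = I_m - M_-$ and noting $W = Y - X$, reduces to the elementary noncommutative identity $X Y^{-1}(Y - X) = (Y - X) Y^{-1} X$, both sides expanding to $X - X Y^{-1} X$. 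The case $k$ even is handled by the analogous factorization starting from \eqref{4.35} with $A = I_m - M_+$, $B = I_m + M_-$, yielding $M_{1,1} + I_m = (I_m - M_-) W^{-1}(I_m + M_+)$ and $M_{1,1} - I_m = (I_m - M_+) W^{-1}(I_m + M_-)$; the matching noncommutative identity is then verified with $X = I_m + M_+$, $Y = I_m + M_-$ and $W = X - Y$.

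The main step is the factorization of $M_{1,1} \pm I_m$ via the trick $A + B + W = 2 I_m$; the invertibility of the various factors $I_m \pm M_\pm$ and $W$ on $\C\backslash\dD$ required throughout is guaranteed by the (anti-)Caratheodory properties of $M_\pm(\cdot,k)$ established in the paragraph following \eqref{3.131}, together with the fact that $W(z,k_0)^{-1}$ appears in the well-defined resolvent expression of Lemma \ref{l4.2}.
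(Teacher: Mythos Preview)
Your proof is correct and follows essentially the same route as the paper. Both arguments hinge on the factorization $M_{1,1}+I_m=(I_m+M_+)W^{-1}(I_m-M_-)$ for $k$ odd (and its analogue for $k$ even); you obtain it via the clean trick $A+B+W=2I_m$, while the paper expands directly, citing \eqref{4.1} and \eqref{4.1a}. The only minor difference in the endgame is that the paper uses the second form $\Phi_{1,1}=I_m-2[M_{1,1}+I_m]^{-1}$ from \eqref{4.39} and expands $[I_m-M_-][I_m+M_+]-2W=[I_m+M_-][I_m-M_+]$ in one step, whereas you use the quotient form $(M_{1,1}-I_m)(M_{1,1}+I_m)^{-1}$ and then reduce to the elementary identity $XY^{-1}(Y-X)=(Y-X)Y^{-1}X$; both are equivalent and equally short.
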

\begin{proof}
Suppose $k$ is odd. Then\eqref{4.1}, \eqref{4.1a},
and \eqref{4.35} imply that
\begin{align}
M_{1,1}(z,k)+I_m &= [I_m+M_-(z,k)] W(z,k)^{-1} [I_m-M_+(z,k)]  \no
\\
&\quad + [M_+(z,k)-M_-(z,k)] W(z,k)^{-1} +
W(z,k)^{-1} [M_+(z,k)-M_-(z,k)] \no
\\
&= [I_m+M_+(z,k)] W(z,k)^{-1} [I_m-M_-(z,k)]. \lb{4.42}
\end{align}
Using \eqref{3.133}, \eqref{4.1}, \eqref{4.39}, and \eqref{4.42},
one computes
\begin{align}
\Phi_{1,1}(z,k) &= I_m-2[I_m-M_-(z,k)]^{-1}W(z,k)[I_m+M_+(z,k)]^{-1}
\no
\\
&=[I_m-M_-(z,k)]^{-1} \big[[I_m-M_-(z,k)][I_m+M_+(z,k)]-2W(z,k)\big]
[I_m+M_+(z,k)]^{-1} \no
\\
&=[I_m-M_-(z,k)]^{-1}[I_m+M_-(z,k)]
[I_m-M_+(z,k)][I_m+M_+(z,k)]^{-1}
\\
&=\Phi_-(z,k)^{-1}\Phi_+(z,k), \quad z\in\C\backslash\dD,\;
k\in\bbZ. \no
\end{align}
The result for $k$ even is proved similarly.
\end{proof}

Next we introduce a sequence of $\C^{m\times 2m}$-valued 
Laurent polynomials $\{P(z,k,k_0)\}_{k\in\Z}$ by
\begin{align} \lb{4.44}
P(z,k,k_0) &= \big(P_0(z,k,k_0),P_1(z,k,k_0)\big) \no
\\
&=
\begin{cases}
\big(P_+(z,k,k_0),Q_+(z,k,k_0)\big)
\begin{pmatrix}
\frac{1}{2z}\rho_{k_0} & \frac{1}{2z}a_{k_0}^*
\\
-\frac{1}{2z}\rho_{k_0} & \frac{1}{2z}b_{k_0}^*
\end{pmatrix}, & k_0 \text{ odd},
\\[4mm]
\big(P_+(z,k,k_0),Q_+(z,k,k_0)\big)
\begin{pmatrix}
\frac{1}{2}\wti\rho_{k_0} & \frac{1}{2}a_{k_0}
\\ \frac{1}{2}\wti\rho_{k_0} & -\frac{1}{2}b_{k_0}
\end{pmatrix}, & k_0 \text{ even}.
\end{cases}
\end{align}
Then it is easy to see that $P_j(z,\cdot,k_0)$, $j=0,1$ are linear
combinations of $P_+(z,\cdot,k_0)$ and $Q_+(z,\cdot,k_0)$, and hence
satisfy $\U P_j(z,\cdot,k_0) = zP_j(z,\cdot,k_0)$, $j=0,1$.
Moreover, \eqref{Table} and \eqref{4.44} imply that
\begin{align}
\begin{split}
P(z,k_0-1,k_0) &= (P_0(z,k_0-1,k_0),P_1(z,k_0-1,k_0)) = (I_m,0),
\\
P(z,k_0,k_0) &= (P_0(z,k_0,k_0),P_1(z,k_0,k_0)) = (0,I_m),
\end{split} \lb{4.45}
\end{align}
and hence any solution $U(z,\cdot)$ of $\U U(z,\cdot) = zU(z,\cdot)$
can be expressed as
\begin{align} \lb{4.46}
U(z,\cdot) = P_0(z,\cdot,k_0)U(z,k_0-1)+P_1(z,\cdot,k_0)U(z,k_0).
\end{align}

Our next goal is to show that the Laurent polynomials
$\{P(z,k,k_0)^*\}_{k\in\Z}$ form complete orthonormal system in
$\Ltm{}$. To do that we first prove an auxiliary result analogous to
Lemma \ref{l3.5}.

\begin{lemma} \lb{l4.5}
Suppose $\{F(\cdot,k)\}_{k\in\Z}$ is a sequence of $\Cm$-valued
functions of bounded variation with $F(1,k)=0$ for all $k\in\Z$ 
that satisfies
\begin{align}
(\U F(\zeta,\cdot))(k) = \int_{A_{\zeta}} dF(\zeta',k) \, \zeta',
\quad \zeta\in\dD,\; k\in\Z,
\end{align}
where $\U$ are understood in the sense of difference expressions
rather than difference operators on $\ltm{\Z}$. Then, $F(\cdot,k)$ also
satisfies
\begin{align}
F(\zeta,k) = \int_{A_{\zeta}}P_0(\zeta',k,k_0)\,dF(\zeta',k_0-1) +
\int_{A_{\zeta}}P_1(\zeta',k,k_0)\,dF(\zeta',k_0), \quad
\zeta\in\dD,\; k, k_0\in\Z. \lb{4.48}
\end{align}
\end{lemma}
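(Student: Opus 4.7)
The plan is to imitate the strategy used in Lemma \ref{l3.5}, but now on the full lattice, exploiting the fact (cf.\ \eqref{4.45}, \eqref{4.46}) that a solution of $\U U(z,\cdot) = zU(z,\cdot)$ is determined by its values at the two consecutive sites $k_0-1$ and $k_0$.

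Step 1. Introduce the candidate right-hand side
\begin{align*}
G(\zeta,k) = \int_{A_{\zeta}} P_0(\zeta',k,k_0)\,dF(\zeta',k_0-1) + \int_{A_{\zeta}} P_1(\zeta',k,k_0)\,dF(\zeta',k_0), \quad \zeta\in\dD,\; k\in\Z,
\end{align*}
and check that $F(\zeta,k)=G(\zeta,k)$ at the two reference sites $k=k_0-1$ and $k=k_0$. By the initial conditions \eqref{4.45}, $P(\zeta',k_0-1,k_0)=(I_m,0)$ and $P(\zeta',k_0,k_0)=(0,I_m)$, hence $G(\zeta,k_0-1) = F(\zeta,k_0-1)-F(1,k_0-1) = F(\zeta,k_0-1)$ and similarly $G(\zeta,k_0) = F(\zeta,k_0)$, using the hypothesis $F(1,\cdot)=0$.

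Step 2. Verify that $G$ satisfies the same integrated eigenvalue relation as $F$. Since $P_0(z,\cdot,k_0)$ and $P_1(z,\cdot,k_0)$ are solutions of $\U P_j(z,\cdot,k_0) = z P_j(z,\cdot,k_0)$ (cf.\ the remark after \eqref{4.44}), one can interchange $\U$ (acting in the $k$-variable) with the $\zeta'$-integral to obtain
\begin{align*}
(\U G(\zeta,\cdot))(k) &= \int_{A_{\zeta}} \zeta'\,P_0(\zeta',k,k_0)\,dF(\zeta',k_0-1) + \int_{A_{\zeta}} \zeta'\,P_1(\zeta',k,k_0)\,dF(\zeta',k_0) \\
&= \int_{A_{\zeta}} dG(\zeta',k)\,\zeta', \quad \zeta\in\dD,\; k\in\Z.
\end{align*}

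Step 3. Set $K(\zeta,k) = F(\zeta,k)-G(\zeta,k)$. Then $K(\zeta,k_0-1)=K(\zeta,k_0)=0$ for all $\zeta\in\dD$, and
\begin{align*}
K(1,k)=0, \quad (\U K(\zeta,\cdot))(k) = \int_{A_{\zeta}} dK(\zeta',k)\,\zeta' = (\L\,K(\cdot,k))(\zeta), \quad \zeta\in\dD,\; k\in\Z,
\end{align*}
with $\L$ the boundedly invertible operator introduced in the proof of Lemma \ref{l3.5}.

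Step 4. Conclude $K\equiv 0$. Since $\L$ commutes with every constant $m\times m$ matrix, we may repeat the argument yielding \eqref{4.46} with the spectral parameter $z$ replaced by $\L$: any $\Cm$-valued sequence $K(\cdot,\cdot)$ satisfying $\U K(\cdot,\cdot) = \L K(\cdot,\cdot)$ admits the representation
\begin{align*}
K(\zeta,k) = P_0(\L,k,k_0)\,K(\zeta,k_0-1) + P_1(\L,k,k_0)\,K(\zeta,k_0), \quad \zeta\in\dD,\; k\in\Z,
\end{align*}
where the coefficients $P_j(\L,k,k_0)$, $j=0,1$, are defined by functional calculus (equivalently, by the Laurent polynomial expressions in $\L$ coming from \eqref{4.44}). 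Since $K(\zeta,k_0-1)=K(\zeta,k_0)=0$, this forces $K\equiv 0$, which proves \eqref{4.48}.

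The main obstacle is Step 4, namely the justification that the purely algebraic recursion \eqref{4.46} can be carried out with $z$ replaced by the operator $\L$. This is precisely the point where the argument parallels the corresponding step in Lemma \ref{l3.5}: the transfer-matrix identities from Lemma \ref{l3.2} (and the derivation of \eqref{4.46}) involve only multiplication by $z$ and $z^{-1}$ together with constant matrix coefficients $\al_k,\rho_k,\wti\rho_k$, all of which commute with $\L$ and $\L^{-1}$; hence they transcribe verbatim to the operator setting.
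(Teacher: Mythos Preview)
Your proof is correct and follows essentially the same approach as the paper: define the candidate $G$, verify agreement at $k_0-1$ and $k_0$ via \eqref{4.45}, check the integrated eigenvalue relation, form the difference $K$, and conclude $K\equiv0$ by rerunning the transfer-matrix argument of Lemma \ref{l3.2} with the operator $\L$ in place of $z$. The only cosmetic difference is that you spell out the representation $K(\zeta,k)=P_0(\L,k,k_0)K(\zeta,k_0-1)+P_1(\L,k,k_0)K(\zeta,k_0)$ explicitly, whereas the paper simply cites \eqref{4.46} and Lemma \ref{l3.2}.
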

\begin{proof}
Let $\{G(\cdot,k,k_0)\}_{k\in\Z}$ denote the sequence of
$\Cm$-valued functions,
\begin{align}
G(\zeta,k,k_0) = \int_{A_{\zeta}}P_0(\zeta',k,k_0)\,dF(\zeta',k_0-1)
+ \int_{A_{\zeta}}P_1(\zeta',k,k_0)\,dF(\zeta',k_0), \quad
\zeta\in\dD,\; k, k_0\in\Z. \lb{4.49}
\end{align}
Then it suffices to prove that $F(\zeta,k) = G(\zeta,k,k_0)$,
$\zeta\in\dD$, $k,k_0\in\Z$.

First, we note that \eqref{4.45} and \eqref{4.49} imply that
\begin{align}
\begin{split}
G(\zeta,k_0-1,k_0) &=
\int_{A_{\zeta}}dF(\zeta',k_0-1)=F(\zeta,k_0-1),
\\
G(\zeta,k_0,k_0) &= \int_{A_{\zeta}}dF(\zeta',k_0)=F(\zeta,k_0),
\quad \zeta\in\dD,\; k_0\in\Z, 
\end{split}
\end{align}
and
\begin{align}
(\U G(\zeta,\cdot,k_0))(k) &= \int_{A_{\zeta}} (\U
P_0(\zeta',\cdot,k_0))(k)\,dF(\zeta',k_0-1) + \int_{A_{\zeta}} (\U
P_1(\zeta',\cdot,k_0))(k)\,dF(\zeta',k_0) \no
\\
&= \int_{A_{\zeta}} dG(\zeta',k,k_0) \, \zeta', \quad \zeta\in\dD,\;
k,k_0\in\Z.
\end{align}

Next, defining $K(\zeta,k,k_0) = F(\zeta,k)-G(\zeta,k,k_0)$,
$\zeta\in\dD$, $k,k_0\in\Z$, one obtains
\begin{align*}
\begin{split}
&K(\zeta,k_0-1,k_0) = K(\zeta,k_0,k_0) = 0,
\\
&(\U K(\zeta,\cdot,k_0))(k) = \int_{A_{\zeta}} dK(\zeta',k,k_0) \,
\zeta' , \quad \zeta\in\dD,\; k,k_0\in\Z,
\end{split}
\end{align*}
or equivalently,
\begin{align}
\begin{split}
&K(\zeta,k_0-1,k_0) = K(\zeta,k_0,k_0) = 0, \lb{4.52}
\\
&(\U K(\zeta,\cdot,k_0))(k) = (\L \, K(\cdot,k,k_0))(\zeta), \quad
\zeta\in\dD,\; k,k_0\in\Z,
\end{split}
\end{align}
where $\L$ denotes the boundedly invertible operator on $\Cm$-valued
functions $K$ of bounded variation defined by 
\begin{align}
(\L \, K)(\zeta) = \int_{A_{\zeta}} dK(\zeta')\,  \zeta', \quad
(\L^{-1} K)(\zeta) = \int_{A_{\zeta}} dK(\zeta') \,  {\zeta'}^{-1}.
\end{align}

Finally, since, $\L$ commutes with all constant $m\times m$
matrices, one can repeat the proof of Lemma \ref{l3.2} with $z$
replaced by $\L$ and using \eqref{4.46} obtain that
\eqref{4.52} has the unique solution $K(\zeta,k,k_0)=0$,
$\zeta\in\dD$, $k,k_0\in\Z$, and hence, $F(\zeta,k) =
G(\zeta,k,k_0)$, $\zeta\in\dD$, $k,k_0\in\Z$.
\end{proof}

\begin{lemma} \lb{l4.6}
Let $k_0\in\Z$. Then the set of $\C^{2m\times m}$-valued Laurent polynomials
$\{P(\cdot,k,k_0)^*\}_{k\in\Z}$ forms a complete orthonormal system
on $\dD$ with respect to $\C^{2m\times2m}$-valued measure
$d\Om(\cdot,k_0)$. Explicitly, $P(\cdot,k,k_0)$, $k\in\Z$, satisfy,
\begin{align}
\oint_{\dD} P(\zeta,k,k_0)\,d\Om(\zeta,k_0)\,P(\zeta,k',k_0)^* &=
\de_{k,k'}I_m, \quad k,k'\in\Z \lb{4.54}
\end{align}
and the collection of $\C^{2m}$-valued Laurent polynomials
\begin{align}
\left\{
\begin{pmatrix}
(P(\cdot,k,k_0))_{1,j}\\\vdots\\(P(\cdot,k,k_0))_{2m,j}
\end{pmatrix}\right\}_{ j=1,\dots,2m, \, k\in\Z}
\end{align}
form complete systems in $\Ltm{}$.
\end{lemma}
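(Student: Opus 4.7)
The plan is to mirror the proof of Lemma \ref{l3.6} in the present full-lattice setting, with Lemma \ref{l4.5} playing the role that Lemma \ref{l3.5} played before, and with the initial-condition matrix $P(z,k_0-1+\ell',k_0)$ in \eqref{4.45} replacing the single Kronecker delta at $k_0$.

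For the orthonormality relation \eqref{4.54}, I would apply Lemma \ref{l4.5} to the $\Cm$-valued sequence
$F(\zeta,k)=\De_{k}^* E_{\U}(\zeta)\De_{k_1}$ for arbitrary but fixed $k_1\in\Z$; the hypothesis $F(1,k)=0$ and $(\U F(\zeta,\cdot))(k)=\int_{A_\zeta} dF(\zeta',k)\,\zeta'$ are immediate from the spectral theorem for $\U$ (cf.\ the corresponding step in the proof of Lemma \ref{l3.6}). Lemma \ref{l4.5} then yields
\begin{align}
\De_{k}^* E_{\U}(\zeta)\De_{k_1}
= \sum_{\ell=0}^{1}\int_{A_\zeta} P_\ell(\zeta',k,k_0)\,d\big(\De_{k_0-1+\ell}^* E_{\U}(\zeta')\De_{k_1}\big).
\end{align}
Specializing $k_1=k_0-1+\ell'$ for $\ell'=0,1$ gives, after taking adjoints and invoking \eqref{4.26},
\begin{align}
\De_{k_0-1+\ell'}^* E_{\U}(\zeta)\De_{k}
= \sum_{\ell=0}^{1}\int_{A_\zeta} d\Om_{\ell',\ell}(\zeta',k_0)\,P_\ell(\zeta',k,k_0)^*.
\end{align}
Substituting this back into the general formula with $k_1=k'$ arbitrary then produces
\begin{align}
\De_{k}^* E_{\U}(\zeta)\De_{k'}
= \int_{A_\zeta} P(\zeta',k,k_0)\,d\Om(\zeta',k_0)\,P(\zeta',k',k_0)^*,
\end{align}
and letting $\te\to 2\pi$ with $\zeta=e^{i\te}$ so that the left-hand side becomes $\De_k^*\De_{k'}=\de_{k,k'}I_m$ establishes \eqref{4.54}.

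For completeness, I would first verify that
\begin{align}
\spn\big\{P(\ze,k,k_0)^*\big\}_{k\in\Z}
= \spn\big\{\ze^k I_{2m}\big\}_{k\in\Z},
\end{align}
where the span on the left is taken as a right $\C^{m\times m}$-module (so that each $P(\ze,k,k_0)^*$ is a $2m\times m$ Laurent polynomial block). This follows from \eqref{4.44} since the coefficient matrices used to form $P_0,P_1$ from $P_+,Q_+$ are invertible (for both parities of $k_0$, row-reduction using $a_{k_0}+b_{k_0}=2I_m$ and the invertibility of $\rho_{k_0},\wti\rho_{k_0}$ makes this immediate), together with the fact inferable from \eqref{3.21}, \eqref{3.22}, and \eqref{3.49} that the pair $\big\{P_+(\ze,k,k_0),\,Q_+(\ze,k,k_0)\big\}_{k\in\Z}$ spans all scalar Laurent monomials times $I_m$. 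Having reduced the problem to completeness of $\{\ze^k I_{2m}\}_{k\in\Z}$, I would then repeat the Weierstrass-type argument from the end of the proof of Lemma \ref{l3.6}: assuming $F\in\Ltm{}$ is $d\Om(\cdot,k_0)$-orthogonal to $\ze^k I_{2m}$ for every $k\in\Z$, apply the scalar moment-problem argument (cf.\ \cite[p.\ 24]{Du83}) to each entry of the $\C^{2m}$-valued measure $d\Om(\cdot,k_0)F(\cdot)$ to deduce $d\Om(\cdot,k_0)F(\cdot)=0$, and then conclude $\|F\|_{\Ltm{}}^2=0$ by integrating against $F^*$.

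The main obstacle will be the bookkeeping in the completeness step, where one must carefully justify passing from the (matrix-valued) orthogonality conditions against $\ze^k I_{2m}$ to the componentwise scalar moment vanishing, and then to $F=0$ in the matrix-valued $L^2$ space; the orthonormality step, by contrast, is an essentially mechanical unwinding of Lemma \ref{l4.5} together with the initial conditions \eqref{4.45} and the definition \eqref{4.26} of $d\Om(\cdot,k_0)$.
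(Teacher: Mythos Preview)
Your proposal is correct and follows essentially the same route as the paper's proof: the orthonormality part applies Lemma \ref{l4.5} to $\De_k^* E_{\U}(\zeta)\De_{k_1}$, specializes $k_1\in\{k_0-1,k_0\}$, takes adjoints, substitutes back, and integrates over $\dD$ exactly as you describe; the completeness part reduces to the span identity and then runs the componentwise scalar moment argument from \cite[p.~24]{Du83} followed by pairing with $F^*$. The only cosmetic difference is that the paper writes the spanning set explicitly as $\big\{\big(\begin{smallmatrix}\ze^k I_m\\0\end{smallmatrix}\big),\big(\begin{smallmatrix}0\\\ze^k I_m\end{smallmatrix}\big)\big\}_{k\in\Z}$ rather than your shorthand $\{\ze^k I_{2m}\}_{k\in\Z}$, which makes the right-$\C^{m\times m}$-module structure and the subsequent block-by-block orthogonality conditions \eqref{4.63}--\eqref{4.64} a bit more transparent; you may want to adopt that notation to avoid the dimension mismatch between $2m\times 2m$ and $2m\times m$ objects.
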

\begin{proof}
Fix an integer $k'\in\Z$ and let $\{F(\cdot,k,k')\}_{k\in\Z}$ denote
the $\Cm$-valued sequences of functions of bounded
variation defined by 
\begin{align}
F(\zeta,k,k') = \De_{k}^* E_{\U}(\zeta)\De_{k'}, \quad \zeta\in\dD,
\; k\in\Z. \lb{4.55}
\end{align}
Then,
\begin{align}
(\U F(\zeta,\cdot,k'))(k) &= (\U E_{\U}(\zeta)\De_{k'})(k) =
\left(\int_{A_{\zeta}} dE_{\U}(\zeta')\, \zeta' \De_{k'}\right)(k)
\\ &=
\int_{A_{\zeta}} d\big(\De_k^* E_{\U}(\zeta')\De_{k'}\big) \, \zeta'
= \int_{A_{\zeta}} dF(\zeta',k,k') \, \zeta', \quad \zeta\in\dD,\;
k\in\Z, \no
\end{align}
and hence \eqref{4.48} in Lemma \ref{l4.5} implies that
\begin{align} \lb{4.57}
dF(\zeta,k,k') = P_0(\zeta,k,k_0)\,dF(\zeta,k_0-1,k') +
P_1(\zeta,k,k_0)\,dF(\zeta,k_0,k'), \quad \zeta\in\dD, \; k\in\Z,
\end{align}
or equivalently,
\begin{align} \lb{4.58}
dF(\zeta,k',k) = dF(\zeta,k',k)^* =
dF(\zeta,k',k_0-1)\,P_0(\zeta,k,k_0)^* +
dF(\zeta,k',k_0)\,P_1(\zeta,k,k_0)^*,& \no
\\
\quad \zeta\in\dD, \; k\in\Z.&
\end{align}
In particular, taking $k'=k_0-1$ and $k'=k_0$, one obtains from
\eqref{4.58},  
\begin{align}
dF(\zeta,k_0-1,k) &= dF(\zeta,k_0-1,k_0-1)\,P_0(\zeta,k,k_0)^* +
dF(\zeta,k_0-1,k_0)\,P_1(\zeta,k,k_0)^*, \lb{4.59}
\\
dF(\zeta,k_0,k) &= dF(\zeta,k_0,k_0-1)\,P_0(\zeta,k,k_0)^* +
dF(\zeta,k_0,k_0)\,P_1(\zeta,k,k_0)^*,\quad \zeta\in\dD, \; k\in\Z.
\no
\end{align}
Next, setting $k=k'$ in \eqref{4.59} and plugging it into
\eqref{4.57}, one obtains
\begin{align}
dF(\zeta,k,k') = \sum_{\ell,\ell'=0}^1 P_\ell(\zeta,k,k_0)
\,dF(\zeta,k_0-1+\ell,k_0-1+\ell')\, P_{\ell'}(\zeta,k',k_0)^*,
\quad \zeta\in\dD, \; k,k'\in\Z. \lb{4.60}
\end{align}
Integrating \eqref{4.60} over the unit circle $\dD$ and observing
that by \eqref{4.26} and \eqref{4.55}
$dF(\zeta,k_0-1+\ell,k_0-1+\ell') = d\Om_{\ell,\ell'}(z,k_0)$,
$\ell,\ell'=0,1$, one obtains
\begin{align}
\de_{k,k'}I_m = \oint_\dD\sum_{\ell,\ell'=0}^1 P_\ell(\zeta,k,k_0)
\,d\Om_{\ell,\ell'}(\zeta,k_0)\, P_{\ell'}(\zeta,k',k_0)^*, \quad
\zeta\in\dD, \; k,k'\in\Z,
\end{align}
which is equivalent to \eqref{4.54}.

To prove completeness of $\{P(\cdot,k,k_0)^*\}_{k\in\Z}$ we first note
the fact,
\begin{align}
\spn\{P(z,k,k_0)^*\}_{k\in\bbZ} &= \spn \left\{
\begin{pmatrix} z^k I_m \\ z^{k-1} I_m\end{pmatrix},
\begin{pmatrix} z^{k-1} I_m \\ z^k I_m\end{pmatrix},
\begin{pmatrix} I_m \\ 0\end{pmatrix},
\begin{pmatrix} 0 \\ I_m\end{pmatrix} \right\}_{k\in\Z} \no \\
&= \spn\left\{\begin{pmatrix} z^k I_m\\ 0 \end{pmatrix},
\begin{pmatrix} 0 \\ z^k I_m\end{pmatrix} \right\}_{k\in\bbZ}.
\end{align}
This is a consequence of investigating the leading-order coefficients
of $P_+(z,k,k_0)$ and $Q_+(z,k,k_0)$ (cf.\ Remark \ref{r2.4}) and
\eqref{4.44}). Thus, it suffices to prove that
$\Big\{\Big(\begin{smallmatrix}\ze^k I_m \\ 0\end{smallmatrix}\Big),
\Big(\begin{smallmatrix}0 \\ \ze^k I_m
\end{smallmatrix}\Big)\Big\}_{k\in\Z}$ is a complete system
with respect to $d\Om(\cdot,k_0)$.

Let $F=\Big(\begin{smallmatrix} F_0 \\
F_1 \end{smallmatrix}\Big)\in\Ltm{}$ and suppose $F$ is orthogonal
to all columns of $\Big(\begin{smallmatrix} \ze^k I_m\\ 0
\end{smallmatrix}\Big)$ and $\Big(\begin{smallmatrix} 0 \\
\ze^k I_m\end{smallmatrix}\Big)$ for all $k\in\Z$, that is,
\begin{equation}
\oint_{\dD} \begin{pmatrix} \zeta^k I_m \\ 0\end{pmatrix}^*\,
d\Omega (\zeta,k_0)\, F(\zeta)  = \oint_{\dD} \zeta^{-k} \,
[d\Omega_{0,0}(\zeta,k_0)\,F_0(\zeta)
+d\Omega_{0,1}(\zeta,k_0)\,F_1(\zeta)] =
\begin{pmatrix}0\\\vdots\\0\end{pmatrix}\in\C^{2m} \lb{4.63}
\end{equation}
and
\begin{equation}
\oint_{\dD} \begin{pmatrix} 0 \\ \zeta^k I_m \end{pmatrix}^*\,
d\Omega (\zeta,k_0)\, F(\zeta)  =\oint_{\dD} \zeta^{-k} \,
[d\Omega_{1,0}(\zeta,k_0)\,F_0(\zeta) +
d\Omega_{1,1}(\zeta,k_0)\,F_1(\zeta)] =
\begin{pmatrix}0\\\vdots\\0\end{pmatrix}\in\C^{2m} \lb{4.64}
\end{equation}
for all $k\in\Z$. Note that for a scalar complex-valued measure
$d\om$ equalities $\oint d\om(\ze)\,\ze^n=0$, $n\in\Z$, imply $\oint
d\Re(\om(\ze))\,\ze^n=\oint d\Im(\om(\ze))\,\ze^n = 0$, and hence 
\cite[p.\ 24]{Du83}) implies that $d\om=0$. Applying this
argument to $d(\Omega_{0,0}F_0 + \Omega_{0,1}F_1)_\ell$ and
$d(\Omega_{1,0}F_0 + \Omega_{1,1}F_1)_\ell$, $\ell=1,\dots,2m$, one
obtains
\begin{align}
d\Omega_{0,0}F_0 + d\Omega_{0,1}F_1 &=
\begin{pmatrix}0\\\vdots\\0\end{pmatrix}\in\C^{2m}, \lb{4.65}
\\
d\Omega_{1,0}F_0 + d\Omega_{1,1}F_1 &=
\begin{pmatrix}0\\\vdots\\0\end{pmatrix}\in\C^{2m}. \lb{4.66}
\end{align}
Multiplying \eqref{4.65} by $F_0^*$ on the left and \eqref{4.66} by
$F_1^*$ on the left and adding the results then yields
\begin{align}
\|F\|_{\Ltm{}}^2=\oint_{\dD} F(\zeta)^* \,d\Omega(\zeta,k_0)\,
F(\zeta) = 0.
\end{align}
\end{proof}

\begin{corollary}
The full-lattice CMV operator $\U$ is unitarily equivalent to the
operator of multiplication by $\zeta$ on $\Ltm{}$ for any
$k_0\in\Z$. In particular,
\begin{align}
& \si(\U) = \supp \, (d\Om(\cdot,k_0)), \quad k_0\in\Z.
\end{align}
\end{corollary}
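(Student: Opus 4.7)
The plan is to transcribe the half-lattice argument given right after Lemma \ref{l3.6} to the full-lattice setting, replacing the orthonormal systems $\{\wti P_\pm(\cdot,k,k_0)^*\}_{k\gtreqless k_0}$ by the $\C^{2m\times m}$-valued system $\{P(\cdot,k,k_0)^*\}_{k\in\Z}$ from \eqref{4.44} and the matrix measures $d\Om_\pm(\cdot,k_0)$ by the $2m\times 2m$ matrix measure $d\Om(\cdot,k_0)$ from \eqref{4.26}. Concretely, I would define a linear map $\dot\cU\colon\ltzm{\Z}\to\Ltm{}$ on compactly supported $\C^m$-valued sequences by
\begin{equation*}
(\dot\cU F)(z) = \sum_{k\in\Z} P(1/\overline{z},k,k_0)^*\, F(k), \quad F\in\ltzm{\Z},
\end{equation*}
which produces a $\C^{2m}$-valued Laurent polynomial in $z$ (noting that $P(1/\overline{z},k,k_0)^*\in\C^{2m\times m}$).

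Using the orthonormality relation \eqref{4.54} provided by Lemma \ref{l4.6}, the same calculation as in \eqref{3.73} yields
\begin{align*}
\|\dot\cU F\|_{\Ltm{}}^2 &= \sum_{k,k'\in\Z} F(k)^*\bigg(\oint_\dD P(\zeta,k,k_0)\,d\Om(\zeta,k_0)\,P(\zeta,k',k_0)^*\bigg) F(k') \\
&= \sum_{k\in\Z} F(k)^* F(k) = \|F\|^2_{\ltm{\Z}},
\end{align*}
so $\dot\cU$ is isometric and extends by density to a bounded operator $\cU\colon\ltm{\Z}\to\Ltm{}$. The completeness clause of Lemma \ref{l4.6} then forces the range of $\cU$ to be all of $\Ltm{}$, hence $\cU$ is unitary, with inverse
\begin{equation*}
(\cU^{-1}\hatt F)(k) = \oint_\dD P(\zeta,k,k_0)\,d\Om(\zeta,k_0)\,\hatt F(\zeta), \quad \hatt F\in\Ltm{}.
\end{equation*}

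For the intertwining step, I would use that each column of $P(z,\cdot,k_0)$ satisfies $\U P_j(z,\cdot,k_0) = zP_j(z,\cdot,k_0)$, $j=0,1$ (as noted right after \eqref{4.44}), which by unitarity of $\U$ is equivalent to $\U^* P(z,\cdot,k_0) = z^{-1} P(z,\cdot,k_0)$ for $z\in\Cz$. Combining this (evaluated at $z=1/\overline{\zeta}$) with the $\ell^2$-adjoint identity $\langle P, \U F\rangle = \langle \U^*P, F\rangle$, exactly as in \eqref{3.74}, gives
\begin{align*}
(\cU\U F)(\zeta) = \sum_{k\in\Z} (\U^* P(1/\overline{\zeta},\cdot,k_0))(k)^* F(k) = \zeta\,(\cU F)(\zeta), \quad F\in\ltm{\Z},
\end{align*}
which is the required unitary equivalence $\cU\,\U\,\cU^{-1} = M_\zeta$ on $\Ltm{}$. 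The spectral identity $\si(\U) = \supp(d\Om(\cdot,k_0))$ then follows at once from the standard description of the spectrum of multiplication by $\zeta$ on $L^2(\dD;d\Om(\cdot,k_0))$. I do not anticipate any real obstacle; all the analytic content has already been packaged into Lemma \ref{l4.6}, and the only added bookkeeping is keeping track of the $\C^{m\times 2m}$ vs.\ $\C^{2m\times m}$ block shapes when taking adjoints.
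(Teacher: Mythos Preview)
Your proposal is correct and follows essentially the same approach as the paper: define $\dot\cU$ on $\ltzm{\Z}$ by $(\dot\cU F)(z)=\sum_k P(1/\overline{z},k,k_0)^*F(k)$, use \eqref{4.54} to verify it is isometric, invoke the completeness part of Lemma \ref{l4.6} to conclude $\cU$ is unitary with the stated inverse, and use $\U^* P(\zeta,\cdot,k_0)=\zeta^{-1}P(\zeta,\cdot,k_0)$ to obtain the intertwining $(\cU\U F)(\zeta)=\zeta(\cU F)(\zeta)$. The paper's argument is identical up to the ordering of the isometry/intertwining/surjectivity paragraphs.
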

\begin{proof}
Consider the linear map
$\dot\cU\colon\ltzm{\bbZ}\to\Ltm{}$ from the space of compactly
supported sequences $\ltzm{\bbZ}$ to the set of $\C^{2m}$-valued
Laurent polynomials defined by
\begin{equation}
(\dot \cU F)(z) = \sum_{k=-\infty}^{\infty} P(1/\ol{z},k,k_0)^*
F(k), \quad F\in \ltzm{\bbZ}.
\end{equation}
Using \eqref{4.54} one shows that $\hatt F(\zeta) = (\dot \cU
F)(\zeta)$, $F\in \ltzm{\bbZ}$ has the property
\begin{align}
\|\hatt F\|^2_{\Ltm{}} &= \oint_{\dD} \hatt
F(\zeta)^*d\Om(\zeta,k_0)\,\hatt F(\zeta)
\\ &=
\oint_{\dD} \sum_{k=-\infty}^\infty F(k)^* P(\zeta,k,k_0)
\,d\Om_\pm(\zeta,k_0)\!\! \sum_{k'=-\infty}^{\infty}
P_\pm(\zeta,k',k_0)^*F(k') \no
\\ &=
\sum_{k,k'=-\infty}^{\infty} F(k)^* \left(\oint_{\dD} P(\zeta,k,k_0)
\,d\Om(\zeta,k_0)\, P(\zeta,k',k_0)^*\right)F(k') \no
\\ &=
\sum_{k=-\infty}^{\infty} F(k)^*F(k) = \|F\|^2_{\ltm{\Z}}. \lb{4.70}
\end{align}
Since $\ltzm{\bbZ}$ is dense in $\ltm{\bbZ}$, $\dot \cU$ extends by
continuity to a bounded linear operator $\cU\colon \ltm{\bbZ} \to
\Ltm{}$, and the identity 
\begin{align} \lb{4.71}
(\cU(\U F))(\zeta) &= \sum_{k=-\infty}^{\infty} P(\zeta,k,k_0)^* (\U
F)(k) = \sum_{k=-\infty}^{\infty} (\U^* P(\zeta,\cdot,k_0))(k)^*
F(k)
\\ &=
\sum_{k=-\infty}^{\infty} (\zeta^{-1} P(\zeta,k,k_0))^* F(k) =
\zeta(\cU F)(\zeta), \quad F\in\ltm{\bbZ},   \no
\end{align}
holds. The range of the operator $\cU$ is all of $\Ltm{}$ since the
$\C^{2m\times m}$-valued Laurent polynomials $\{P(\cdot,k,k_0)^*\}_{k\in\Z}$
are complete with respect to $d\Om(\cdot,k_0)$. Hence the inverse
operator $\cU^{-1}$ exists on $\Ltm{}$ and is given by
\begin{equation}
(\cU^{-1}\hatt F)(k) = \oint_{\dD}
P(\zeta,k,k_0)\,d\Om(\zeta,k_0)\,\hatt F(\zeta), \quad \hatt F\in
\Ltm{},
\end{equation}
which together with \eqref{4.70} implies that $\cU$ is unitary. In
addition, \eqref{4.71} shows that the full-lattice unitary
operator $\U$ on $\ltm{\bbZ}$ is unitarily equivalent to the
operators of multiplication by $\zeta$ on $\Ltm{}$,
\begin{align}
(\cU \U \cU^{-1} \hatt F)(\zeta) = \zeta\hatt F(\zeta), \quad \hatt
F\in \Ltm{}.
\end{align}
\end{proof}

\section{Borg--Marchenko-type Uniqueness Results for CMV Operators with
Matrix-valued Verblunsky Coefficients} \lb{s5}

In this section we prove (local and global) Borg--Marchenko-type uniqueness results for
CMV operators with matrix-valued Verblunsky coefficients on half-lattices and on
the full lattice $\bbZ$. We freely use the notation established in Sections \ref{s3},
\ref{s4}, and Appendix \ref{sA}.

We start with uniqueness results on half-lattices.

\begin{theorem} \lb{t5.1}
Assume Hypothesis \ref{h3.1} and let $k_0\in\Z$, $N\in\N$. Then for
the right half-lattice problem the following sets of data $(i)$--$(v)$ are
equivalent:
\begin{align}
(i) &\quad \big\{\al_{k_0+k}\big\}_{k=1}^N.
\\
(ii) &\quad \bigg\{\oint_{\dD}d\Om_+(\zeta,k_0)\, \zeta^k \bigg\}_{k=1}^N.
\\
(iii) &\quad \big\{m_{+,k}(k_0)\big\}_{k=1}^N, \;\text{ where
$m_{+,k}(k_0)$, $k\geq 0$, are the Taylor coefficients of
$m_+(z,k_0)$} \no\\ &\hspace*{5mm} \text{at $z=0$, that is, }\;
m_+(z,k_0) = \sum\nolimits_{k=0}^\infty m_{+,k}(k_0)z^k, \; z\in\D.
\\
(iv) &\quad \big\{M_{+,k}(k_0)\big\}_{k=1}^N, \;\text{ where
$M_{+,k}(k_0)$, $k\geq 0$, are the Taylor coefficients of
$M_+(z,k_0)$} \no\\ &\hspace*{5mm} \text{at $z=0$, that is, }\;
M_+(z,k_0) = \sum\nolimits_{k=0}^\infty M_{+,k}(k_0)z^k, \; z\in\D.
\\
(v) &\quad \big\{\phi_{+,k}(k_0)\big\}_{k=1}^N, \;\text{ where
$\phi_{+,k}(k_0)$, $k\geq 0$, are the Taylor coefficients of
$\Phi_+(z,k_0)$} \no\\ &\hspace*{5mm} \text{at $z=0$, that is, }\;
\Phi_+(z,k_0) = \sum\nolimits_{k=0}^\infty \phi_{+,k}(k_0)z^k, \;
z\in\D.
\end{align}
Similarly, for the left half-lattice problem, the following sets of
data $(vi)$--$(x)$ are equivalent:
\begin{align}
(vi) &\quad \big\{\al_{k_0-k}\big\}_{k=0}^{N-1}.
\\
(vii) &\quad \bigg\{\oint_{\dD}d\Om_-(\zeta,k_0)\, \zeta^k \bigg\}_{k=1}^N.
\\
(viii) &\quad \big\{m_{-,k}(k_0)\big\}_{k=1}^{N}, \;\text{ where
$m_{-,k}(k_0)$, $k\geq 0$, are the Taylor coefficients of
$m_-(z,k_0)$} \no\\ &\hspace*{5mm} \text{at $z=0$, that is, }\;
m_-(z,k_0) = \sum\nolimits_{k=0}^\infty m_{-,k}(k_0)z^k.
\\
(ix) &\quad \big\{M_{-,k}(k_0)\big\}_{k=0}^{N-1}, \;\text{ where
$M_{-,k}(k_0)$, $k\geq 0$, are the Taylor coefficients of
$M_-(z,k_0)$} \no\\ &\hspace*{5mm} \text{at $z=0$, that is, }\;
M_-(z,k_0) = \sum\nolimits_{k=0}^\infty M_{-,k}(k_0)z^k.
\\
(x) &\quad \big\{\phi_{-,k}(k_0)\big\}_{k=0}^{N-1}, \;\text{ where
$\phi_{-,k}(k_0)$, $k\geq 0$, are the Taylor coefficients of
$\Phi_-(z,k_0)^{-1}$} \no\\ &\hspace*{5mm} \text{at $z=0$, that is, }\;
\Phi_-(z,k_0)^{-1} = \sum\nolimits_{k=0}^\infty \phi_{-,k}(k_0)z^k.
\end{align}
\end{theorem}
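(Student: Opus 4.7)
The plan is to establish the five-term chains of equivalences going through the Taylor coefficients of the associated functions, treating the right and left half-lattice problems in parallel. Three mechanisms already developed in Section~\ref{s3} carry all the weight: (a) the Riccati expansions \eqref{3.144}, \eqref{3.146} for the Schur functions $\Phi_+$ and $\Phi_-^{-1}$; (b) the Cayley-type transformations \eqref{3.127}--\eqref{3.135} among $m_\pm$, $M_\pm$, and $\Phi_\pm^{\pm 1}$, together with the evaluations \eqref{3.128}, \eqref{3.131}, \eqref{3.133a} at $z=0$; and (c) the Caratheodory representations \eqref{3.111}.

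For the right half-lattice I would proceed along (i) $\Leftrightarrow$ (v) $\Leftrightarrow$ (iv) $=$ (iii) $\Leftrightarrow$ (ii). The equivalence (i) $\Leftrightarrow$ (v) is the heart of the matter and is proved by induction on $j=1,\dots,N$ using \eqref{3.144}. The forward implication is immediate, since $\phi_{+,j}(k_0)$ is manifestly a polynomial in $\{\al_{k_0+k}\}_{k=1}^{j}$. The backward implication requires exhibiting the $\al_{k_0+j}$-dependence of $\phi_{+,j}(k_0)$ explicitly: iterating the recursion $j-1$ times, one reduces to $\phi_{+,1}(k_0+j-1)=-\al_{k_0+j}^*$ and finds that $\al_{k_0+j}$ enters $\phi_{+,j}(k_0)$ through the single term $-\rho_{k_0+1}\cdots\rho_{k_0+j-1}\al_{k_0+j}^*\wti\rho_{k_0+j-1}\cdots\wti\rho_{k_0+1}$ plus a remainder depending only on $\al_{k_0+1},\dots,\al_{k_0+j-1}$; the sandwiching factors are invertible by Hypothesis~\ref{h3.1}, so $\al_{k_0+j}$ is recoverable. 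The identity (iv) $=$ (iii) is just \eqref{3.127}. For (iv) $\Leftrightarrow$ (v), the normalization $M_+(0,k_0) = I_m$ (equivalently $\Phi_+(0,k_0)=0$) makes both the map \eqref{3.133} and its inverse \eqref{3.134} analytic at $z=0$ with invertible constant term in the inverted factor, so Taylor coefficients up to order $N$ transform bijectively by Cauchy product. Finally, (ii) $\Leftrightarrow$ (iii) follows by inserting the expansion $(\zeta+z)/(\zeta-z) = 1 + 2\sum_{k\ge1}(z/\zeta)^k$ into \eqref{3.111}, giving $m_{+,k}(k_0) = 2\oint_{\dD}\zeta^{-k}\,d\Om_+(\zeta,k_0)$ for $k\ge 1$; Hermiticity of the matrix measure $d\Om_+$ identifies $\oint\zeta^{-k}\,d\Om_+ = \big(\oint\zeta^k\,d\Om_+\big)^*$, so either set of moments determines the other.

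The left half-lattice proof follows the same pattern along (vi) $\Leftrightarrow$ (x) $\Leftrightarrow$ (ix) $\Leftrightarrow$ (viii) $\Leftrightarrow$ (vii), with two minor twists. The Riccati recursion \eqref{3.146}, together with the initial data $\phi_{-,0}(k_0)=\al_{k_0}$ and $\phi_{-,1}(k_0)=\wti\rho_{k_0}\al_{k_0-1}\rho_{k_0}$, gives (vi) $\Leftrightarrow$ (x) by the same forward/backward induction, with back-substitution relying on the telescoped leading term $\wti\rho_{k_0}^{-1}\cdots\wti\rho_{k_0-j+2}^{-1}\wti\rho_{k_0-j+1}\al_{k_0-j}\rho_{k_0-j+1}\cdots\rho_{k_0}$. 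The Cayley equivalence (ix) $\Leftrightarrow$ (x) uses $\Phi_-^{-1} = (M_-+I_m)(M_--I_m)^{-1}$ from Remark~\ref{r3.18}; at $z=0$ both $M_-(0,k_0)-I_m = 2I_m(\al_{k_0}-I_m)^{-1}$ and $\Phi_-(0,k_0)^{-1}-I_m = \al_{k_0}-I_m$ are invertible under Hypothesis~\ref{h3.1}, so Cauchy product again works. The equivalence (viii) $\Leftrightarrow$ (ix) uses \eqref{3.129}: although the denominator naively vanishes at $z=0$, writing $m_-(z,k_0)+I_m = z\wti m_-(z,k_0)$ (from $m_-(0,k_0)=-I_m$, which follows from \eqref{3.110}) and factoring $z$ out of numerator and denominator converts \eqref{3.129} into a Möbius-type relation between $M_-$ and $\wti m_-$ that is analytic at $z=0$ in both directions, with invertible $M_-(0,k_0)-I_m$ controlling the inverse. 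Finally, (vii) $\Leftrightarrow$ (viii) is the analogue of (ii) $\Leftrightarrow$ (iii), now with $m_{-,k}(k_0) = -2\oint_{\dD}\zeta^{-k}\,d\Om_-(\zeta,k_0)$ for $k\ge 1$ owing to the sign convention in \eqref{3.111}.

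The main obstacle is the backward direction of (i) $\Leftrightarrow$ (v) and (vi) $\Leftrightarrow$ (x), which demands a careful identification of the ``newest'' Verblunsky coefficient inside the corresponding Riccati expansion and verification that it is sandwiched between invertible matrix factors constructed exclusively from previously determined data, so that the induction closes cleanly. All remaining equivalences reduce to inversion of matrix-valued power series that are analytic at $z=0$ with invertible leading coefficients, a routine exercise in Cauchy products guaranteed by the evaluations \eqref{3.128}, \eqref{3.131}, \eqref{3.133a} together with Hypothesis~\ref{h3.1}.
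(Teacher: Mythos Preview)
Your argument is correct and provides a genuinely different route to the core equivalence than the paper takes. The paper establishes $(i)\Leftrightarrow(ii)$ and $(vi)\Leftrightarrow(vii)$ directly via the orthogonal-polynomial machinery: the forward direction builds $P_\pm,R_\pm$ from the $\al_k$'s and then expresses the moments as finite sums using orthogonality, while the reverse direction runs the Gram--Schmidt procedure of Corollary~\ref{c3.8} on the moments and then reads off the $\al_k$'s from Theorem~\ref{t3.9}. You instead go through $(i)\Leftrightarrow(v)$ and $(vi)\Leftrightarrow(x)$ using the Riccati expansions \eqref{3.144}, \eqref{3.146}, isolating the newest Verblunsky coefficient as a single invertible-sandwich term and back-substituting. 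Your route is more elementary in that it bypasses the spectral-measure and Gram--Schmidt apparatus entirely; the paper's route, on the other hand, makes the link between the Verblunsky coefficients and the moment problem explicit and reuses results already proved in Section~\ref{s3}. For the remaining links, you and the paper largely agree (Cayley transforms plus Cauchy products), except that the paper connects $(viii)$ to $(x)$ directly via \eqref{3.135}, which avoids the $0/0$ cancellation in \eqref{3.129} that you handle by factoring out $z$.

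Two small computational points worth cleaning up. First, your stated telescoped leading term for the left problem has a stray inverse: combining the $\ell=0$ summand with the last term in \eqref{3.146} (using $\wti\rho_k^{-1}-\al_k\rho_k^{-1}\al_k^*=\wti\rho_k$ from \eqref{3.12}) gives $\phi_{-,j}(k)=\wti\rho_k\,\phi_{-,j-1}(k-1)\,\rho_k+(\text{lower order in }\al_{k-j})$, so the correct sandwich is $\wti\rho_{k_0}\cdots\wti\rho_{k_0-j+1}\,\al_{k_0-j}\,\rho_{k_0-j+1}\cdots\rho_{k_0}$, with all $\wti\rho$ factors uninverted. Second, in your $(viii)\Leftrightarrow(ix)$ step you should record that the denominator after the $z$-factoring is invertible at the origin: since $m_{-,1}(k_0)=2\al_{k_0}$ (from \eqref{3.135} and $\Phi_-(0,k_0)^{-1}=\al_{k_0}$), the constant term is $m_{-,1}(k_0)-2I_m=-2b_{k_0}$, invertible by Hypothesis~\ref{h3.1}. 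Neither point affects the logic, but both are needed for the induction to close as written.
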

\begin{proof}
$(i)\Rightarrow(ii)$ and $(vi)\Rightarrow(vii)$: First, utilizing
relations \eqref{3.37} and \eqref{3.40} with the initial conditions
\eqref{3.49} and \eqref{3.50}, one constructs $\{P_\pm(z,k_0\pm
k,k_0)\}_{k=1}^{N}$ and $\{R_\pm(z,k_0\pm k,k_0)\big\}_{k=1}^{N}$.
We note that the Laurent polynomials
\begin{align}
&\begin{cases}%
z^{-1}P_+(z,k_0+k,k_0), \; R_-(z,k_0-k,k_0), & k_0 \text{ odd},
\\[1mm]
R_+(z,k_0+k,k_0), \; z^{-1}P_-(z,k_0-k,k_0), & k_0 \text{ even},
\end{cases} \lb{5.15}
\intertext{are linear combinations of }
&\begin{cases}%
I_m,z^{-1}I_m,zI_m,z^{-2}I_m,z^2I_m,\dots,z^{(k-1)/2}I_m,z^{-(k+1)/2}I_m,
& k \text{ odd},
\\[1mm]
I_m,z^{-1}I_m,zI_m,z^{-2}I_m,z^2I_m,\dots,z^{-k/2}I_m,z^{k/2}I_m, & k
\text{ even},
\end{cases} \lb{5.16}
\intertext{and}
&\begin{cases}%
R_+(z,k_0+k,k_0), \; P_-(z,k_0-k,k_0), & k_0 \text{ odd},
\\[1mm]
P_+(z,k_0+k,k_0), \; R_-(z,k_0-k,k_0), & k_0 \text{ even},
\end{cases} \lb{5.17}
\intertext{are linear combinations of }
&\begin{cases}%
I_m,zI_m,z^{-1}I_m,z^2I_m,z^{-2}I_m,\dots,z^{-(k-1)/2}I_m,z^{(k+1)/2}I_m,
& k \text{ odd},
\\[1mm]
I_m,zI_m,z^{-1}I_m,z^2I_m,z^{-2}I_m,\dots,z^{k/2}I_m,z^{-k/2}I_m, & k
\text{ even}.
\end{cases} \lb{5.18}
\end{align}
Moreover, the last elements of the sequences in \eqref{5.16} and
\eqref{5.18} represent the leading-order terms of the Laurent polynomials in
\eqref{5.15} and \eqref{5.17}, respectively, and the corresponding
leading-order coefficients are invertible $m \times m$ matrices (cf.\ Remark \ref{r2.4}).

Next, assume $k_0$ and $k$ to be odd. Then utilizing \eqref{5.17}
and \eqref{5.18} one finds $m\times m$ matrices $C_{\pm,j}$ and
$D_{\pm,j}$, $0\leq j\leq k$, such that
\begin{align}
z^{-(k-1)/2}I_m  &= \sum_{j=0}^{k} C_{+,j}\, R_+(z,k_0+j, k_0),
\quad z^{(k+1)/2}I_m  = \sum_{j=0}^{k} D_{+,j}\, R_+(z,k_0+j,k_0),
\\
z^{-(k-1)/2}I_m  &= \sum_{j=0}^{k} C_{-,j}\, P_-(z,k_0-j, k_0),
\quad z^{(k+1)/2}I_m  = \sum_{j=0}^{k} D_{-,j}\, P_-(z,k_0-j,k_0),
\end{align}
and, using \eqref{3.56} and \eqref{3.57}, computes
\begin{align}
\oint_{\dD}d\Om_\pm(\zeta,k_0)\, \zeta^k  =
\oint_{\dD} \big(\zeta^{(k+1)/2}I_m\big) \, d\Om_\pm(\zeta,k_0)
\, \big(\zeta^{-(k-1)/2}I_m\big)^*
= \sum_{j=0}^k D_{\pm,j}\,C_{\pm,j}^*.
\end{align}
The other cases of $k_0$ and $k$ follow similarly.

$(ii)\Rightarrow(i)$ and $(vii)\Rightarrow(vi)$: Since
$d\Om_\pm(\cdot,k_0)$ are nonnegative normalized measures, one has
\begin{align}
\oint_{\dD}d\Om_\pm(\zeta,k_0)\,\zeta^{-k} =
\left(\oint_{\dD}d\Om_\pm(\zeta,k_0)\,\zeta^{k}\right)^*
\, \text{ and } \, \oint_{\dD}d\Om_\pm(\zeta,k_0) = I_m, \lb{5.22}
\end{align}
that is, the knowledge of positive moments imply the knowledge of
negative ones. Applying Corollary \ref{c3.8} one constructs the matrix-valued
orthonormal Laurent polynomials $\{P_\pm(\ze,k_0\pm k,k_0)\}_{k=1}^{N}$ and
$\{R_\pm(\ze,k_0\pm k,k_0)\big\}_{k=1}^{N}$. Subsequently applying
Theorem \ref{t3.9}, in particular, formulas \eqref{3.83} and
\eqref{3.84}, one obtains the coefficients $(i)$ and $(vi)$.

$(ii)\Leftrightarrow(iii)$ and $(vii)\Leftrightarrow(viii)$: These
follow from \eqref{3.111} and \eqref{5.22},
\begin{align}
m_\pm(z,k_0) &= \pm
\oint_{\dD}d\Om_{\pm}(\zeta,k_0)\,\frac{\zeta+z}{\zeta-z} = \pm
I_m \pm 2\sum_{k=1}^{\infty}
z^k\left(\oint_{\dD}d\Om_\pm(\zeta,k_0)\,\zeta^{k}\right)^*, \quad
z\in\D.
\end{align}

$(iii)\Leftrightarrow(iv)$: This is implied by \eqref{3.127}.

$(iv)\Leftrightarrow(v)$: This is a consequence of \eqref{3.133} and
\eqref{3.134}, together with the facts: For $|z|$ sufficiently small,
$\|M_+(z,k_0)-I_m\|_\Cm<1$ by \eqref{3.128}, and
$\|\Phi_+(z,k_0)\|_\Cm<1$ by \eqref{3.133a}. Hence,
\begin{align}
M_+(z,k_0) &= [I_m-\Phi_+(z,k_0)]^{-1}[I_m+\Phi_+(z,k_0)] \no \\
& \hspace{-1.5mm}
\underset{z\to 0}{=} [I_m+\Phi_+(z,k_0)] \sum_{k=0}^\infty \Phi_+(z,k_0)^k,
\\
\Phi_+(z,k_0) &= \big[2^{-1}[M_+(z,k_0)-I_m]\big]
\big[I_m+2^{-1}[M_+(z,k_0)-I_m]\big]^{-1}
\no \\
& \hspace{-1.5mm}
\underset{z\to 0}{=} -\sum_{k=1}^\infty 2^{-k}[I_m-M_+(z,k_0)]^k.
\end{align}

$(ix)\Leftrightarrow(x)$: This is implied by \eqref{3.131}, \eqref{3.133}, \eqref{3.134},
and the fact that, for $|z|$ sufficiently small, $\|\Phi_-(z,k_0)^{-1}\|_{\Cm}<1$ by
\eqref{3.7} and \eqref{3.133a}. Hence,
\begin{align}
M_-(z,k_0) &= [\Phi_-(z,k_0)^{-1}-I_m]^{-1}[\Phi_-(z,k_0)^{-1}+I_m]
\no
\\
&  \hspace{-1.5mm}
\underset{z\to 0}{=} -[\Phi_-(z,k_0)^{-1}+I_m]\sum_{k=0}^\infty \Phi_-(z,k_0)^{-k},
\\
\Phi_-(z,k_0)^{-1} &=
[M_-(z,k_0)+I_m][M_-(z,k_0)-M_-(0,k_0)+M_-(0,k_0)-I_m]^{-1} \no
\\ &=
\big[[M_-(z,k_0)+I_m][M_-(0,k_0)-I_m]^{-1}\big] \no
\\
&\quad \times \big[[M_-(z,k_0)-M_-(0,k_0)][M_-(0,k_0)-I_m]^{-1}+I_m\big]^{-1}
\\ &  \hspace{-1.5mm}
\underset{z\to 0}{=}
\big[[M_-(z,k_0)+I_m][M_-(0,k_0)-I_m]^{-1}\big] \no
\\
&\quad \times \sum_{k=0}^\infty
\big[[M_-(z,k_0)-M_-(0,k_0)][I_m-M_-(0,k_0)]^{-1}\big]^k. \no
\end{align}

$(viii)\Leftrightarrow(x)$: This follows because \eqref{3.111a}, \eqref{3.135}, and
the fact that $\|\Phi_-(z,k_0)^{-1}\|_{\Cm}\leq1$, $z\in\D$, together imply that
\begin{align}
m_-(z,k_0) &= [z\Phi_-(z,k_0)^{-1}+I_m]^{-1}[z\Phi_-(z,k_0)^{-1}-I_m]
\no \\
&  \hspace{-1.5mm}
\underset{z\to 0}{=} [z\Phi_-(z,k_0)^{-1}-I_m] \sum_{k=0}^\infty
\big[-z\Phi_-(z,k_0)^{-1}\big]^k,
\\
z\Phi_-(z,k_0)^{-1} &= [I_m+m_-(z,k_0)][I_m-m_-(z,k_0)]^{-1} \no
\\
&= 2^{-1} [I_m+m_-(z,k_0)]\big[I_m- 2^{-1}[I_m+m_-(z,k_0)]\big]^{-1}
\\
&  \hspace{-1.5mm}
\underset{z\to 0}{=} \sum_{k=1}^\infty 2^{-k}[I_m+m_-(z,k_0)]^k. \no
\qedhere
\end{align}
\end{proof}

Next, we restate Theorem \ref{t5.1}:

\begin{theorem} \lb{t5.2}
Assume Hypothesis \ref{h3.1} for two sequences $\al^{(1)}$,
$\al^{(2)}$ and let $k_0\in\Z$, $N\in\N$. Then for the right
half-lattice problems associated with $\al^{(1)}$ and $\al^{(2)}$
the following items $(i)$--$(iv)$ are equivalent:
\begin{align}
(i) &\quad \al_k^{(1)} = \al_k^{(2)}, \quad k_0+1\leq k\leq k_0+N.
\\
(ii) &\quad m_+^{(1)}(z,k_0)-m_+^{(2)}(z,k_0) \underset{z\to 0}{=} \oh(z^N).
\\
(iii) &\quad M_+^{(1)}(z,k_0)-M_+^{(2)}(z,k_0) \underset{z\to 0}{=} \oh(z^N).
\\
(iv) &\quad \Phi_+^{(1)}(z,k_0)-\Phi_+^{(2)}(z,k_0) \underset{z\to 0}{=} \oh(z^N).
\end{align}
Similarly, for the left half-lattice problems associated with
$\al^{(1)}$ and $\al^{(2)}$, the following items $(v)$--$(viii)$ are equivalent:
\begin{align}
(v) &\quad \al_k^{(1)} = \al_k^{(2)}, \quad k_0-N+1\leq k\leq k_0.
\\
(vi) &\quad m_-^{(1)}(z,k_0)-m_-^{(2)}(z,k_0) \underset{z\to 0}{=} \oh(z^N).
\\
(vii) &\quad M_-^{(1)}(z,k_0)-M_-^{(2)}(z,k_0) \underset{z\to 0}{=} \oh(z^{N-1}).
\\
(viii) &\quad \Phi_-^{(1)}(z,k_0)^{-1}-\Phi_-^{(2)}(z,k_0)^{-1} \underset{z\to 0}{=}
\oh(z^{N-1}).
\end{align}
\end{theorem}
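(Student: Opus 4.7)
The plan is to deduce Theorem \ref{t5.2} directly from Theorem \ref{t5.1}, using the elementary fact that if two $\Cm$-valued functions $f_1$, $f_2$ are analytic at $z=0$ with Taylor expansions $f_\ell(z) = \sum_{k=0}^\infty f_{\ell,k}\, z^k$, $\ell=1,2$, then $f_1(z) - f_2(z) \underset{z\to 0}{=} \oh(z^N)$ is equivalent to $f_{1,k} = f_{2,k}$ for $0 \le k \le N$. Each of the functions $m_\pm(\cdot,k_0)$, $M_+(\cdot,k_0)$, $\Phi_+(\cdot,k_0)$, $M_-(\cdot,k_0)$, and $\Phi_-(\cdot,k_0)^{-1}$ is analytic in a neighborhood of the origin (by \eqref{3.127}--\eqref{3.131}, \eqref{3.133a}, \eqref{3.144}, and \eqref{3.146}, together with the discussion in Remark \ref{r3.18}), so this observation applies in every case below.

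For the right half-lattice statements $(i)$--$(iv)$, I will use that $m_{+,0}(k_0) = M_{+,0}(k_0) = I_m$ by \eqref{3.127} and \eqref{3.128}, and $\phi_{+,0}(k_0) = 0$ by \eqref{3.133a}. Since these zeroth-order Taylor coefficients are universal (independent of the sequence $\al$), the asymptotic conditions $(ii)$, $(iii)$, and $(iv)$ of Theorem \ref{t5.2} translate exactly into the equalities of the Taylor coefficients at orders $1$ through $N$, which are precisely the data sets $(iii)$, $(iv)$, and $(v)$ of Theorem \ref{t5.1}. The equivalences already established in Theorem \ref{t5.1} then match each of these in turn to the coefficient data in item $(i)$ of Theorem \ref{t5.2}.

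For the left half-lattice statements $(v)$--$(viii)$, the plan is the same, but I need to track the shift by one in the index that distinguishes $(vi)$ from $(vii)$ and $(viii)$. From \eqref{3.111} one has $m_{-,0}(k_0) = -I_m$, so $(vi)$ translates into equality of Taylor coefficients at orders $1$ through $N$, matching Theorem \ref{t5.1}\,$(viii)$. By contrast, \eqref{3.131} and \eqref{3.133a} give $M_{-,0}(k_0) = (\al_{k_0}+I_m)(\al_{k_0}-I_m)^{-1}$ and $\phi_{-,0}(k_0) = \al_{k_0}$, both of which depend on $\al_{k_0}$; this is exactly why the asymptotic order must drop to $\oh(z^{N-1})$ in $(vii)$ and $(viii)$. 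Those conditions then correspond to equality of Taylor coefficients at orders $0$ through $N-1$, which are the data sets $(ix)$ and $(x)$ of Theorem \ref{t5.1}, completing the argument.

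The main (and essentially only) obstacle is bookkeeping: one must verify analyticity at $z=0$ for each of the functions involved (in particular, remembering to work with $\Phi_-(\cdot,k_0)^{-1}$ rather than $\Phi_-(\cdot,k_0)$ itself, as stressed in Remark \ref{r3.18}) and carefully match the index shifts between the two formulations. No additional analytic input beyond Theorem \ref{t5.1} and the elementary Taylor-coefficient observation is required.
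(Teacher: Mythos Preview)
Your proposal is correct and follows exactly the paper's approach: the paper's entire proof reads ``This is an immediate consequence of Theorem \ref{t5.1},'' and you have supplied precisely the bookkeeping that makes this immediate consequence transparent. Your careful tracking of which zeroth-order Taylor coefficients are universal (those of $m_\pm$, $M_+$, $\Phi_+$) versus $\al_{k_0}$-dependent (those of $M_-$, $\Phi_-^{-1}$), and how this accounts for the index shift between $(vi)$ and $(vii)$--$(viii)$, is exactly what lies behind the paper's one-line proof.
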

\begin{proof}
This is an immediate consequence of Theorem \ref{t5.1}.
\end{proof}

Finally, we turn to CMV operators on $\bbZ$ and start with two auxiliary results
that play a role in the proofs of analogous
Borg--Marchenko-type uniqueness results for CMV operators on $\bbZ$.

\begin{lemma} \lb{l5.4}
Let $A,B,C,D$ denote some $m\times m$ matrices. Suppose that
$A\neq0$, $B$ is invertible, and $A,B,C,D$ satisfy
\begin{align}
\big[2\sqrt{\norm{A}\norm{D}}+\norm{C}\big]\|B^{-1}\|<1. \lb{5.45}
\end{align}
Then the matrix-valued Riccati-type equation
\begin{align}
XAX + BX + XC + D = 0, \quad
\norm{X}<\frac{1-\norm{C}\|B^{-1}\|}{2\norm{A}\|B^{-1}\|}, \lb{5.46}
\end{align}
has a unique solution $X\in\Cm$ given by
\begin{align}
X=\lim_{n\to\infty}X_n \,\text{ with }\, \norm{X}\leq
\frac{1-\|C\|\,\|B^{-1}\|}{2\|A\|\,\|B^{-1}\|} -
\sqrt{\left(\frac{1-\|C\|\,\|B^{-1}\|}{2\|A\|\,\|B^{-1}\|}\right)^2
- \frac{\|D\|}{\|A\|}}, \lb{5.47}
\end{align}
where
\begin{equation}
X_0=0, \quad X_n=F(X_{n-1}), \; n\in\bbN, \, \text{ and } \, F(X)=-B^{-1}XAX -
B^{-1}XC - B^{-1}D.
\end{equation}

A similar result also holds if $A\neq0$, $C$ is invertible, and
$A,B,C,D$ satisfying
\begin{align}
\big[2\sqrt{\norm{A}\norm{D}}+\norm{B}\big]\|C^{-1}\|<1. \lb{5.45a}
\end{align}
In this case, the matrix-valued Riccati-type equation
\begin{align}
XAX + BX + XC + D = 0, \quad
\norm{X}<\frac{1-\norm{B}\|C^{-1}\|}{2\norm{A}\|C^{-1}\|},
\lb{5.46a}
\end{align}
has a unique solution $X\in\Cm$ given by
\begin{align}
X=\lim_{n\to\infty}X_n \,\text{ with }\, \norm{X}\leq
\frac{1-\|B\|\,\|C^{-1}\|}{2\|A\|\,\|C^{-1}\|} -
\sqrt{\left(\frac{1-\|B\|\,\|C^{-1}\|}{2\|A\|\,\|C^{-1}\|}\right)^2
- \frac{\|D\|}{\|A\|}}, \lb{5.47a}
\end{align}
where
\begin{equation}
X_0=0, \quad X_n=G(X_{n-1}), \; n\in\bbN, \, \text{ and } \, G(X)=-XAXC^{-1} -
BXC^{-1} - DC^{-1}.
\end{equation}
\end{lemma}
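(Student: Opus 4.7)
The plan is to rewrite the matrix-valued Riccati equation as the fixed-point equation $X = F(X)$, where $F$ is defined in the lemma, and to apply the Banach contraction mapping principle on a suitable closed ball in $\Cm$. The underlying scalar quantity governing the iteration is the quadratic
\begin{equation}
q(r) = \norm{A}\norm{B^{-1}}\, r^2 - \bigl(1-\norm{C}\norm{B^{-1}}\bigr) r + \norm{D}\norm{B^{-1}},
\end{equation}
whose discriminant $(1-\norm{C}\norm{B^{-1}})^2 - 4\norm{A}\norm{D}\norm{B^{-1}}^2$ is strictly positive by hypothesis \eqref{5.45}. Thus $q$ has two distinct positive roots $r_- < r_{\max} < r_+$, where the vertex $r_{\max} = (1-\norm{C}\norm{B^{-1}})/(2\norm{A}\norm{B^{-1}})$ is precisely the norm bound appearing in \eqref{5.46}, and a direct computation identifies the smaller root $r_-$ with the right-hand side of the stated bound \eqref{5.47}.

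For the invariance, submultiplicativity and the triangle inequality give $\norm{F(X)} \le \norm{B^{-1}}(\norm{A}\norm{X}^2 + \norm{C}\norm{X} + \norm{D})$, so $F$ maps the closed ball $\Omega := \{X\in\Cm \st \norm{X}\le r_-\}$ into itself exactly because $q(r_-)=0$. For the contraction, the identity $XAX - YAY = (X-Y)AX + YA(X-Y)$ yields
\begin{equation}
\norm{F(X) - F(Y)} \le \norm{B^{-1}}\bigl(\norm{A}(\norm{X}+\norm{Y}) + \norm{C}\bigr)\norm{X-Y},
\end{equation}
whose Lipschitz constant on $\Omega$ equals $\norm{B^{-1}}(2\norm{A} r_- + \norm{C})$. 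Since \eqref{5.45} is strict, $r_-$ lies strictly below the vertex $r_{\max}$, which forces this Lipschitz constant to be strictly less than $1$. The Banach fixed-point theorem then delivers a unique fixed point $X\in\Omega$, realized as the limit of the iteration $X_n = F(X_{n-1})$ with $X_0 = 0$, together with the a priori bound $\norm{X}\le r_-$ establishing \eqref{5.47}.

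It remains to upgrade uniqueness from $\Omega$ to the strictly larger open ball $\{X\st \norm{X} < r_{\max}\}$ implicit in \eqref{5.46}. If $X$ and $Y$ are two solutions with $\norm{X}, \norm{Y} < r_{\max}$, subtracting their Riccati equations and multiplying by $B^{-1}$ on the left yields an identity of the same shape as $F(X)-F(Y)$, and the contraction estimate gives $\norm{X-Y} \le \norm{B^{-1}}(\norm{A}(\norm{X}+\norm{Y}) + \norm{C})\norm{X-Y}$, whose coefficient is strictly bounded above by $\norm{B^{-1}}(2\norm{A} r_{\max} + \norm{C}) = 1$; hence $X=Y$. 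The second assertion of the lemma, under the invertibility of $C$ and the companion hypothesis \eqref{5.45a}, is obtained by repeating the entire argument with the fixed-point map $G$ in place of $F$, interchanging the roles of $B$ and $C$ throughout. The only real subtlety is bookkeeping: one must carefully verify that the explicit formula in \eqref{5.47} is indeed the smaller root of $q$, and that the strict form of \eqref{5.45} simultaneously delivers positivity of the discriminant, strict containment $r_- < r_{\max}$, and the strict contraction constants needed both on $\Omega$ for Banach and on the larger open ball for full uniqueness.
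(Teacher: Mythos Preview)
Your proof is correct and follows essentially the same approach as the paper: both reduce the Riccati equation to the fixed-point equation $X=F(X)$ and apply Banach's contraction mapping principle on a closed ball, using the same Lipschitz estimate for $F$. The only organizational difference is that the paper handles uniqueness on the full open ball $\{\|X\|<r_{\max}\}$ by observing that $F$ is a contraction on $B_\lambda$ for \emph{every} $\lambda\in[r_-,r_{\max})$ (so any putative second solution lies in some such $B_\lambda$ and must agree with the fixed point there), whereas you work on the single ball $B_{r_-}$ and then supply a separate direct subtraction argument; both routes are equally short and valid.
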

\begin{proof}
Since $B$ is invertible, the equation for $X$ in
\eqref{5.46} is equivalent to $F(X)=X$. Therefore, it suffices to
show that $F(\cdot)$ is a strict contraction on some closed ball of
radius $\la$ centered at the origin, $B_\la=\{X\in\Cm \st
\norm{X}\leq\la\}$, and that $F(\cdot)$ preserves $B_\la$, that is
$\norm{F(X)}\leq\la$ whenever $\norm{X}\leq\la$.

First, we check that for any
$\la<\frac{1-\norm{C}\|B^{-1}\|}{2\norm{A}\|B^{-1}\|}$, the map
$F(\cdot)$ is a strict contraction on $B_\la$. Let $X,Y\in B_\la$,
then
\begin{align}
\norm{F(X)-F(Y)} &\leq \big[\norm{A}\|B^{-1}\|\norm{X} +
\norm{A}\|B^{-1}\|\norm{Y} + \norm{C}\|B^{-1}\|\big] \norm{X-Y}
\\
&\leq \big[2\la\norm{A}\|B^{-1}\|+\norm{C}\|B^{-1}\|\big] \norm{X-Y},
\quad 2\la\norm{A}\|B^{-1}\|+\norm{C}\|B^{-1}\|<1. \no
\end{align}

Next, we check that $F(\cdot)$ preserves $B_\la$ for any $\la$
satisfying
\begin{align}
\frac{1-\|C\|\,\|B^{-1}\|}{2\|A\|\,\|B^{-1}\|} -
\sqrt{\left(\frac{1-\|C\|\,\|B^{-1}\|}{2\|A\|\,\|B^{-1}\|}\right)^2
- \frac{\|D\|}{\|A\|}}
\leq\la<\frac{1-\|C\|\,\|B^{-1}\|}{2\|A\|\,\|B^{-1}\|}. \lb{5.49}
\end{align}
Let $X\in B_\la$, then by \eqref{5.49}
\begin{align}
\norm{F(X)}\leq \norm{A}\|B^{-1}\|\la^2 + \norm{C}\|B^{-1}\|\la +
\norm{D}\|B^{-1}\|\leq\la.
\end{align}
Thus, Banach's contraction mapping principle implies that
$F(\cdot)$ has a unique fixed point $X$ for which \eqref{5.46} and
\eqref{5.47} hold.

The second part of the Lemma is proved similarly.
\end{proof}

\begin{corollary} \lb{c5.4}
Let $A_j$, $B_j$, $C_j$, $D_j$, $j=1,2$, denote some $m\times m$
matrices. Suppose that either $B_1$ and $B_2$ are invertible and
\begin{align}
0<\norm{A_j},\,\|B^{-1}_j\| \leq a, \quad \norm{C_j},\,\norm{D_j}
\leq b, \quad j=1,2, \lb{5.54a}
\end{align}
or $C_1$ and $C_2$ are invertible and
\begin{align}
0<\norm{A_j},\,\|C^{-1}_j\| \leq a, \quad \norm{B_j},\,\norm{D_j}
\leq b, \quad j=1,2, \lb{5.55a}
\end{align}
for some $a,b>0$ satisfying $2ab(1+2a^2)\leq1$. Then there exist
unique solutions $X_j$, $j=1,2$, of the matrix-valued Riccati-type
equations
\begin{align}
X_jA_jX_j + B_jX_j + X_jC_j + D_j = 0, \quad
\norm{X_j}<\frac{1-ab}{2a^2}, \quad j=1,2, \lb{5.56a}
\end{align}
and the following estimate holds
\begin{align}
\norm{X_1-X_2} \leq \la(a,b)\big[\norm{A_1-A_2} + \norm{B_1-B_2} +
\norm{C_1-C_2} + \norm{D_1-D_2}\big], \lb{5.57a}
\end{align}
where $\la(a,b)$ is given by
\begin{align}
\la(a,b) = \frac{\max\left\{a, \frac{2a^2b}{1-ab},
a^2b+\frac{2a^3b^2}{1-ab}+\frac{4a^5b^2}{(1-ab)^2},
\frac{4a^3b^2}{(1-ab)^2}\right\}}
{(1-ab)-\frac{4a^3b}{1-ab}}
> 0. \lb{5.58a}
\end{align}
\end{corollary}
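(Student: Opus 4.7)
The plan is to deduce existence and uniqueness of the $X_j$ from Lemma~\ref{l5.4} and then establish the Lipschitz-type estimate \eqref{5.57a} via a Banach-type comparison of the two fixed-point formulations of the Riccati equations. I will work out the case \eqref{5.54a}; case \eqref{5.55a} follows identically by replacing the fixed-point map $F_j(X)=-B_j^{-1}(XA_jX+XC_j+D_j)$ with $G_j(X)=-(XA_jX+B_jX+D_j)C_j^{-1}$ from the second half of Lemma~\ref{l5.4}, which interchanges the roles of $B$ and $C$ and yields the same final constant $\lambda(a,b)$.

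For existence, the hypothesis $2ab(1+2a^2)\le 1$ implies $(1-ab)^2\ge 1-2ab\ge 4a^3b$, equivalently $1-ab\ge 2a\sqrt{ab}$. Combined with $\|C_j\|\|B_j^{-1}\|\le ab$ and $2\sqrt{\|A_j\|\|D_j\|}\,\|B_j^{-1}\|\le 2a\sqrt{ab}$, this verifies the hypothesis $[2\sqrt{\|A_j\|\|D_j\|}+\|C_j\|]\|B_j^{-1}\|\le 1$ of Lemma~\ref{l5.4}, strictly whenever $ab>0$ (the case $b=0$ is trivial with $X_j=0$). The lemma then supplies unique $X_j$ satisfying \eqref{5.56a}, and rationalizing the square root in the explicit bound \eqref{5.47} produces the uniform estimate $\|X_j\|\le 2ab/(1-ab)$. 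Rewriting each Riccati equation as the fixed point $X_j=F_j(X_j)$ and splitting
\begin{equation*}
X_1-X_2=\bigl[F_1(X_1)-F_2(X_1)\bigr]+\bigl[F_2(X_1)-F_2(X_2)\bigr],
\end{equation*}
a direct computation bounds the Lipschitz constant of $F_2$ on the ball $\{\|X\|\le 2ab/(1-ab)\}$ by $q\le ab(4a^2+1-ab)/(1-ab)$; the hypothesis then gives $1-q\ge [(1-ab)^2-4a^3b]/(1-ab)=(1-ab)-4a^3b/(1-ab)$, which is precisely the denominator of $\lambda(a,b)$ in \eqref{5.58a} and is strictly positive when $ab>0$. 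Consequently $\|X_1-X_2\|\le \|F_1(X_1)-F_2(X_1)\|/[(1-ab)-4a^3b/(1-ab)]$.

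The core computation is then the explicit expansion, obtained via $B_2^{-1}-B_1^{-1}=B_2^{-1}(B_1-B_2)B_1^{-1}$,
\begin{equation*}
F_1(X_1)-F_2(X_1)=-B_1^{-1}\bigl[X_1(A_1-A_2)X_1+X_1(C_1-C_2)+(D_1-D_2)\bigr]-B_2^{-1}(B_1-B_2)B_1^{-1}\bigl[X_1A_2X_1+X_1C_2+D_2\bigr].
\end{equation*}
Taking norms and inserting $\|X_1\|\le 2ab/(1-ab)$ together with the bounds on $\|A_j\|$, $\|B_j^{-1}\|$, $\|C_j\|$, $\|D_j\|$, the coefficients of $\|D_1-D_2\|$, $\|C_1-C_2\|$, $\|A_1-A_2\|$, and $\|B_1-B_2\|$ evaluate exactly to $a$, $2a^2b/(1-ab)$, $4a^3b^2/(1-ab)^2$, and $a^2b+2a^3b^2/(1-ab)+4a^5b^2/(1-ab)^2$, i.e., precisely the four entries inside the max in \eqref{5.58a}. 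The main subtlety is to resist the temptation to simplify the second bracket via the Riccati identity $X_1A_1X_1+X_1C_1+D_1=-B_1X_1$: that simplification would tighten the $\|B_1-B_2\|$ coefficient in case \eqref{5.54a} but destroy the $B\leftrightarrow C$ symmetry needed for the parallel treatment of case \eqref{5.55a} to produce the same constant $\lambda(a,b)$.
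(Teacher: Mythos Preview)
Your proof is correct and follows essentially the same approach as the paper: verify the hypothesis of Lemma~\ref{l5.4}, extract the uniform bound $\|X_j\|\le 2ab/(1-ab)$, then telescope $X_1-X_2=F_1(X_1)-F_2(X_2)$ to obtain a self-referential inequality with contraction factor $\frac{4a^3b}{1-ab}+ab$ and the four data-difference coefficients that constitute $\lambda(a,b)$. The only cosmetic difference is that you organize the telescoping as $[F_1(X_1)-F_2(X_1)]+[F_2(X_1)-F_2(X_2)]$ while the paper expands $F_1(X_1)-F_2(X_2)$ term by term; the resulting bounds are identical.
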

\begin{proof}
Suppose $B_j$, $j=1,2$, are invertible and note that
$b\leq1/(2a(1+2a^2))$ implies
\begin{align}
\Big(2\sqrt{\norm{A_j}\norm{D_j}}+\norm{C_j}\Big)\|B^{-1}_j\| \leq
(2\sqrt{ab}+b)a \leq \frac{2a\sqrt{2+4a^2}+1}{2(1+2a^2)} <
\frac{2a(2a+\frac{1}{2a})+1}{2(1+2a^2)} = 1
\end{align}
and
\begin{align}
\frac{1-\|C_j\|\,\|B^{-1}_j\|}{2\|A_j\|\,\|B^{-1}_j\|} -
\sqrt{\left( \frac{1-\|C_j\|\,\|B^{-1}_j\|}{2\|A_j\|\,\|B^{-1}_j\|}
\right)^2 - \frac{\|D_j\|}{\|A_j\|}} \leq \frac{2ab}{1-ab} <
\frac{1-ab}{2a^2} \leq
\frac{1-\norm{C_j}\|B^{-1}_j\|}{2\norm{A_j}\|B^{-1}_j\|}.
\end{align}
Then Lemma \ref{l5.4} implies that the matrix-valued
Riccati-type equations in \eqref{5.56a} have unique solutions $X_j$
satisfying $\norm{X_j}\leq\frac{2ab}{1-ab}$, $j=1,2$ and
$X_j=F_j(X_j)$, where $F_j(X)=-{B_j}^{-1}XA_jX - B^{-1}_jXC_j -
B^{-1}_jD_j$, $j=1,2$. Hence, one computes
\begin{align} \lb{5.61a}
\norm{X_1-X_2} &= \norm{F_1(X_1)-F_2(X_2)} \no
\\
&\leq \norm{B_1^{-1}X_1A_1X_1-B_2^{-1}X_2A_2X_2} +
\norm{B^{-1}_1X_1C_1-B^{-1}_2X_2C_2} +
\norm{B^{-1}_1D_1-B^{-1}_2D_2} \no
\\
&\leq \big[\norm{A_1}\|B^{-1}_2\|\norm{X_1} +
\norm{A_2}\|B^{-1}_2\|\norm{X_2} +
\|B^{-1}_2\|\norm{C_1}\big]\norm{X_1-X_2} \no
\\
&\quad + \|B^{-1}_2\|\norm{X_1}\norm{X_2}\norm{A_1-A_2} +
\|B^{-1}_2\|\norm{X_2}\norm{C_1-C_2} + \|B^{-1}_2\|\norm{D_1-D_2}
\no
\\
&\quad + \big[\norm{A_1}\norm{X_1}^2 + \norm{C_1}\norm{X_1} +
\norm{D_1}\big]\|B^{-1}_1\|\,\|B^{-1}_2\|\norm{B_1-B_2} \no
\\
&\leq \bigg(\frac{4a^3b}{1-ab}+ab\bigg)\norm{X_1-X_2} +
\bigg(\frac{4a^5b^2}{(1-ab)^2}+\frac{2a^3b^2}{1-ab}+a^2b\bigg)
\norm{B_1-B_2}
\\
&\quad + \frac{4a^3b^2}{(1-ab)^2}\norm{A_1-A_2} +
\frac{2a^2b}{1-ab}\norm{C_1-C_2} + a\norm{D_1-D_2}.  \no
\end{align}

Finally, utilizing $b\leq1/(2a(1+2a^2))$, one verifies that
\begin{align}
1-\bigg(\frac{4a^3b}{1-ab}+ab\bigg) = 1-\frac{4a^3b+ab(1-ab)}{1-ab}
> 1-\frac{ab(1+4a^2)}{1-ab} \geq 1-\frac{1+4a^2}{2(1+2a^2)-1} = 0,
\lb{5.62a}
\end{align}
and hence \eqref{5.57a} and \eqref{5.58a} follow from
\eqref{5.61a}, and \eqref{5.62a}.

The case of $C_j$ being invertible, $j=1,2$, is proved analogously.
\end{proof}

Given these preliminaries, we introduce the following notation for
the diagonal and for the neighboring off-diagonal entries of the
Green's matrix of $\U$ (i.e., the discrete integral kernel of $(\U-zI)^{-1}$),
\begin{align}
g(z,k) &= (\U-Iz)^{-1}(k,k),    \lb{5.62b}
\\
h(z,k) &=
\begin{cases}
(\U-Iz)^{-1}(k-1,k), & k \text{ odd}, \\
(\U-Iz)^{-1}(k,k-1), & k \text{ even},
\end{cases}\quad k\in\Z,\; z\in\D.      \lb{5.62c}
\end{align}

Then the subsequent uniqueness results hold for the full-lattice CMV
operator $\U$:

\begin{theorem}  \lb{t5.4}
Assume Hypothesis \ref{h3.1} and let $k_0\in\Z$. Then any of the
following two sets of data
\begin{enumerate}[$(i)$]
\item $g(z,k_0)$ and $h(z,k_0)$ for all $z$ in some open $($nonempty$)$
neighborhood of the origin under the assumption that $h(0,k_0)$ is invertible;
\item $g(z,k_0-1)$ and $g(z,k_0)$ for all $z$ in some open $($nonempty$)$ neighborhood
of the origin and $\al_{k_0}$ under the assumption $\al_{k_0}$ is invertible;
\end{enumerate}
uniquely determine the matrix-valued Verblunsky coefficients $\{\al_k\}_{k\in\Z}$, and
hence the full-lattice CMV operator $\U$.
\end{theorem}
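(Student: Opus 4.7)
The strategy for both parts is to reduce to the half-lattice Borg--Marchenko uniqueness result (Theorem~\ref{t5.2}) by proving that the given data uniquely determine the half-lattice Weyl--Titchmarsh functions $M_\pm(z,k_0)$ on some open neighborhood of $z=0$. Since the data are prescribed on a full neighborhood (not merely to finite order), once $M_\pm(\cdot,k_0)$ are recovered there, all Taylor coefficients at $z=0$ are determined, and Theorem~\ref{t5.2} applied with arbitrary $N\in\bbN$ reconstructs $\al_k$ for $k\ge k_0+1$ from $M_+(\cdot,k_0)$ and $\al_k$ for $k\le k_0$ from $M_-(\cdot,k_0)$, yielding the entire sequence $\{\al_k\}_{k\in\bbZ}$.

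For part $(i)$, assume for definiteness that $k_0$ is odd (the even case is analogous). Formulas~\eqref{4.14} and~\eqref{4.16} from Lemma~\ref{l4.2} reveal that $g(\cdot,k_0)$ and $h(\cdot,k_0)$ share a common right-hand factor
\begin{align*}
C(z,k_0):=\tfrac{1}{2z}W(z,k_0)^{-1}[I_m-M_+(z,k_0)],
\end{align*}
namely $g(z,k_0)=[I_m+M_-(z,k_0)]\,C(z,k_0)$ and $-\rho_{k_0}h(z,k_0)=[a_{k_0}^*-b_{k_0}^* M_-(z,k_0)]\,C(z,k_0)$. The first step is to recover $\al_{k_0}$ from the leading-order behavior of $g$ and $h$: a direct computation using $M_+(0,k_0)=I_m$ and $M_-(0,k_0)=(\al_{k_0}+I_m)(\al_{k_0}-I_m)^{-1}$ yields
\begin{align*}
g(0,k_0)=\tfrac12\al_{k_0}M_{+,1}(k_0),\qquad h(0,k_0)=\tfrac12\rho_{k_0}M_{+,1}(k_0),
\end{align*}
where $M_{+,1}(k_0)$ is the first Taylor coefficient of $M_+(\cdot,k_0)$. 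Invertibility of $h(0,k_0)$ thus forces invertibility of $M_{+,1}(k_0)$, so the product $X:=g(0,k_0)h(0,k_0)^{-1}=\al_{k_0}\rho_{k_0}^{-1}$ is computable from the data. Because $\rho_{k_0}$ commutes with $\al_{k_0}^*\al_{k_0}$, one checks $I_m+X^*X=\rho_{k_0}^{-2}$, whence $\al_{k_0}=X(I_m+X^*X)^{-1/2}$ is uniquely determined. With $\al_{k_0}$ known, the identity $a_{k_0}^*-b_{k_0}^*M_-=(I_m-M_-)+\al_{k_0}^*(I_m+M_-)$ combined with the two displayed relations yields
\begin{align*}
M_-(z,k_0)\,\big[b_{k_0}^* g(z,k_0)-\rho_{k_0}h(z,k_0)\big]=a_{k_0}^* g(z,k_0)+\rho_{k_0}h(z,k_0),
\end{align*}
from which $M_-(\cdot,k_0)$ is determined on a neighborhood of $0$ (invertibility of the bracket $b_{k_0}^* g-\rho_{k_0}h=2C$ near $z=0$ again follows from that of $M_{+,1}(k_0)$). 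Finally, $M_+(\cdot,k_0)$ is read off by inverting \eqref{4.14}.

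For part $(ii)$, the identity $M_{1,1}(z,k)=I_m+2zg(z,k)$ from~\eqref{4.28} together with $M_{0,0}(z,k_0)=M_{1,1}(z,k_0-1)$ from~\eqref{4.29} converts the given $g(\cdot,k_0-1)$ and $g(\cdot,k_0)$ into $M_{0,0}(\cdot,k_0)$ and $M_{1,1}(\cdot,k_0)$ on the neighborhood. Since $\al_{k_0}$ is prescribed, the coefficients $a_{k_0},b_{k_0},\rho_{k_0},\wti\rho_{k_0}$ are known, and formulas~\eqref{4.34}--\eqref{4.35} constitute two matrix equations in the two unknowns $M_+(z,k_0),M_-(z,k_0)$. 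Passing to the Schur formulation via~\eqref{3.133}--\eqref{3.134} converts this into a matrix Riccati-type system for $(\Phi_+(\cdot,k_0),\Phi_-(\cdot,k_0)^{-1})$; since $\Phi_+(0,k_0)=0$ and $\Phi_-(0,k_0)^{-1}=\al_{k_0}$ is invertible by hypothesis, Corollary~\ref{c5.4} (a two-parameter version of Banach's contraction principle for matrix Riccati equations) yields a unique analytic solution on some neighborhood of $z=0$, and hence the desired $M_\pm(\cdot,k_0)$.

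The principal obstacle in both parts is the matrix-valued algebraic inversion: non-commutativity in $\Cm$ demands careful management of factor orderings and verification of invertibility of each matrix to be inverted. The invertibility hypotheses on $h(0,k_0)$ in $(i)$ and on $\al_{k_0}$ in $(ii)$ are precisely what guarantees that the key matrices in the reconstruction procedure are nonsingular, thereby reducing each case to the already-proven Theorem~\ref{t5.2}.
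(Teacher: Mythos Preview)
Your overall strategy—recover $M_\pm(\cdot,k_0)$ (equivalently $\Phi_+$ and $\Phi_-^{-1}$) from the Green's function data and then invoke the half-lattice uniqueness of Theorem~\ref{t5.2}—is exactly the paper's approach, and your recovery of $\al_{k_0}$ and $M_-$ in part~$(i)$ matches the paper's formulas \eqref{5.51}. However, two execution steps have genuine gaps.

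In part~$(i)$, the closing sentence ``$M_+(\cdot,k_0)$ is read off by inverting \eqref{4.14}'' does not go through as stated. Knowing $g$ and $M_-$, solving $2zg=[I_m+M_-]W^{-1}[I_m-M_+]$ for $M_+$ forces you to invert $[I_m+M_-(0,k_0)]=2\al_{k_0}(\al_{k_0}-I_m)^{-1}$, which requires $\al_{k_0}$ invertible—\emph{not} assumed in $(i)$. The paper avoids this by extracting $M_+$ from $g$ and $h$ together (your own common factor $C$ does the job: since $2C=b_{k_0}^*g-\rho_{k_0}h$ is known and $2zC=W^{-1}[I_m-M_+]$, one solves for $M_+$ with $I_m+2zC$ invertible near $z=0$; this is formula \eqref{5.53}).

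In part~$(ii)$, your appeal to Corollary~\ref{c5.4} as ``a two-parameter version'' is misplaced: that corollary compares solutions of two \emph{separate} single-unknown Riccati equations, not a coupled system in $(\Phi_+,\Phi_-^{-1})$. The paper's essential algebraic step—which you have not carried out—is to \emph{decouple} the system. From \eqref{4.14} one derives the identity
\[
zg(z,k_0)\,[I_m+zg(z,k_0)]^{-1}=\Phi_-(z,k_0)^{-1}\Phi_+(z,k_0),
\]
and right-multiplying the equation coming from $g(\cdot,k_0-1)$ by $\Phi_+$ replaces $\Phi_-^{-1}\Phi_+$ by known data, yielding a \emph{single} Riccati equation \eqref{5.59} in $\Phi_+$ alone. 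Lemma~\ref{l5.4} then applies because the linear coefficient $B(0,k_0)=\al_{k_0}^*\al_{k_0}$ is invertible precisely when $\al_{k_0}$ is. After $\Phi_+$ is pinned down, $\Phi_-^{-1}$ is recovered algebraically from \eqref{5.54}.
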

\begin{proof}
{\it Case} $(i)$. First, we note that \eqref{3.18} implies that
\begin{align}
g(0,k_0) &= (\U^{-1})_{k_0,k_0} = (\U^*)_{k_0,k_0} =
(\U_{k_0,k_0})^* =
\begin{cases}
-\al_{k_0}\al_{k_0+1}^*, & k_0 \text{ odd},\\
-\al_{k_0+1}^*\al_{k_0}, & k_0 \text{ even},
\end{cases}
\\
h(0,k_0) &=
\begin{cases}
(\U^{-1})_{k_0-1,k_0} = (\U_{k_0,k_0-1})^* =
-\rho_{k_0}\al_{k_0+1}^*, & k_0 \text{ odd},
\\
(\U^{-1})_{k_0,k_0-1} = (\U_{k_0-1,k_0})^* =
-\al_{k_0+1}^*\wti\rho_{k_0}, & k_0 \text{ even}.
\end{cases}
\end{align}
Since $h(0,k_0)$ is invertible, one can solve the above equalities
for $\rho_{k_0}$ and $\al_{k_0}$,
\begin{align}
&g(0,k_0)h(0,k_0)^{-1} = \al_{k_0}\rho_{k_0}^{-1}, \quad k_0 \text{
odd},
\\
&h(0,k_0)^{-1}g(0,k_0) = \wti\rho_{k_0}^{-1}\al_{k_0} =
\al_{k_0}\rho_{k_0}^{-1}, \quad k_0 \text{ even},
\end{align}
implying
\begin{align}
&\rho_{k_0} =
\begin{cases}
\big[I_m+[g(0,k_0)h(0,k_0)^{-1}]^*[g(0,k_0)h(0,k_0)^{-1}]\big]^{-1/2}, & k_0
\text{ odd},\\
\big[I_m+[h(0,k_0)^{-1}g(0,k_0)]^*[h(0,k_0)^{-1}g(0,k_0)]\big]^{-1/2}, & k_0
\text{ even},
\end{cases}
\end{align}
and hence,
\begin{align}
\al_{k_0} =
\begin{cases}
g(0,k_0)h(0,k_0)^{-1}\rho_{k_0}, & k_0 \text{ odd},\\
h(0,k_0)^{-1}g(0,k_0)\rho_{k_0}, & k_0 \text{ even}.
\end{cases}
\end{align}
Using \eqref{3.10} and \eqref{3.11}, one also obtains
$a_{k_0}=I_m+\al_{k_0}$ and $b_{k_0}=I_m-\al_{k_0}$.

Next, utilizing \eqref{4.14}, \eqref{4.16}, and \eqref{4.17}, one
computes,
\begin{align}
\begin{split}
g(z,k_0)h(z,k_0)^{-1} &= -[I_m+M_-(z,k_0)][a_{k_0}^* -
b_{k_0}^*M_-(z,k_0)]^{-1}\rho_{k_0}, \quad k_0 \text{ odd},
\\
h(z,k_0)^{-1}g(z,k_0) &= -\wti\rho_{k_0}[a_{k_0}^* -
b_{k_0}^*M_-(z,k_0)]^{-1}[I_m+M_-(z,k_0)], \quad k_0 \text{ even}.
\end{split}
\end{align}
Solving for $M_-(z,k_0)$, one then obtains
\begin{align}
M_-(z,k_0) =
\begin{cases}
2g(z,k_0)[b_{k_0}^*g(z,k_0) - \rho_{k_0}h(z,k_0)]^{-1}-I_m, & k_0
\text{ odd},\\
2[g(z,k_0)b_{k_0}^* - h(z,k_0)\wti\rho_{k_0}]^{-1}g(z,k_0)-I_m, &
k_0 \text{ even}. \lb{5.51}
\end{cases}
\end{align}
The right-hand side of the above formula is well-defined for
sufficiently small $|z|$ since $b_{k_0}^*g(z,k_0) -
\rho_{k_0}h(z,k_0)$ for $k_0$ odd and $g(z,k_0)b_{k_0}^* -
h(z,k_0)\wti\rho_{k_0}$ for $k_0$ even are $\Cm$-valued analytic
functions having invertible values at the origin,
\begin{align}
\begin{split}
b_{k_0}^*g(0,k_0) - \rho_{k_0}h(0,k_0) &=
(\al_{k_0}-I_m)\rho_{k_0}^{-1}h(0,k_0), \quad k_0 \text{ odd},
\\
g(0,k_0)b_{k_0}^* - h(0,k_0)\wti\rho_{k_0} &=
h(0,k_0)\wti\rho_{k_0}^{-1}(\al_{k_0}-I_m), \quad k_0 \text{ even}.
\end{split} \lb{5.52}
\end{align}

Next, having $M_-(z,k_0)$ for sufficiently small $|z|$, one solves the equation
\begin{align}
h(z,k_0) = -\frac{1}{2z}
\begin{cases}
\rho_{k_0}^{-1}
[a_{k_0}^*-b_{k_0}^*M_-(z,k_0)][M_+(z,k_0)-M_-(z,k_0)]^{-1}
[I_m-M_+(z,k_0)],
& k_0 \text{ odd},\\
[I_m-M_+(z,k_0)][M_+(z,k_0)-M_-(z,k_0)]^{-1}
[a_{k_0}^*-M_-(z,k_0)b_{k_0}^*] \wti\rho_{k_0}^{-1}, & k_0 \text{
even},
\end{cases}
\end{align}
for $M_+(z,k_0)$ and obtains,
\begin{align}
M_+(z,k_0) &=
\begin{cases}
2[I_m+zg(z,k_0)]\big[I_m+z[b_{k_0}^*g(z,k_0)-\rho_{k_0} h(z,k_0)]\big]^{-1} -I_m, &
k \text{ odd},\\
2\big[I_m+z[g(z,k_0)b_{k_0}^*-
h(z,k_0)\wti\rho_{k_0}]\big]^{-1}[I_m+zg(z,k_0)] -I_m, & k_0 \text{ even}.
\lb{5.53}
\end{cases}
\end{align}
The right-hand side of \eqref{5.53} is well-defined for
sufficiently small $|z|$ since both $I_m+z(b_{k_0}^*g(z,k_0)
-\rho_{k_0}h(z,k_0))$ and $I_m+z(g(z,k_0)b_{k_0}^*-
h(z,k_0)\wti\rho_{k_0})$ are $\Cm$-valued analytic functions having
invertible values at the origin.

Finally, Theorem \ref{t5.1} (parts $(i)$, $(iv)$ and $(vi)$, $(ix)$)
implies that $M_\pm(z,k_0)$ for $z$ in some small neighborhood of
the origin uniquely determine Verblunsky coefficients
$\{\al_{k}\}_{k\in\Z}$.

{\it Case} $(ii)$. Suppose $k_0$ is odd. Then \eqref{3.133}, \eqref{4.14},
\eqref{4.15}, and
\begin{align}
2[I_m+zg(z,k_0)] &= [I_m+M_-(z,k_0)]W(z,k_0)^{-1}[I_m-M_+(z,k_0)]
\no
\\ &\quad +
[M_+(z,k_0)-M_-(z,k_0)]W(z,k_0)^{-1} +
W(z,k_0)^{-1}[M_+(z,k_0)-M_-(z,k_0)] \no
\\ & =
[I_m+M_+(z,k_0)]W(z,k_0)^{-1}[I_m-M_-(z,k_0)] \lb{5.55}
\end{align}
imply the identity,
\begin{align}
& z\rho_{k_0} g(z,k_0-1)\rho_{k_0} \no
\\
&\quad =
\frac12 \big[(I_m+\al_{k_0}^*)-(I_m-\al_{k_0}^*)M_+(z,k_0)\big]
W(z,k_0)^{-1} \big[(I_m+\al_{k_0})+M_-(z,k_0)(I_m-\al_{k_0})\big] \no
\\
&\quad = \frac12 \big[[I_m-M_+(z,k_0)]+\al_{k_0}^*[I_m+M_+(z,k_0)]\big]
W(z,k_0)^{-1} \lb{5.54}
\\
&\qquad \times \big[[I_m+M_-(z,k_0)]+[I_m-M_-(z,k_0)]\al_{k_0}\big]
\no
\\
&\quad = \frac12 [-\Phi_+(z,k_0)+\al_{k_0}^*]
[I_m+M_+(z,k_0)]W(z,k_0)^{-1}[I_m-M_-(z,k_0)]
[-\Phi_-(z,k_0)^{-1}+\al_{k_0}] \no
\\
&\quad = [\al_{k_0}^*-\Phi_+(z,k_0)] [I_m+zg(z,k_0)]
[\al_{k_0}-\Phi_-(z,k_0)^{-1}]. \no
\end{align}
Moreover, \eqref{5.55} also implies
\begin{align}
&zg(z,k_0)[I_m+zg(z,k_0)]^{-1} = [I_m+zg(z,k_0)]^{-1}zg(z,k_0) =
I_m -[I_m+zg(z,k_0)]^{-1} \no
\\ &\quad =
[I_m-M_-(z,k_0)]^{-1} \big[[I_m-M_-(z,k_0)][I_m+M_+(z,k_0)]-2W(z,k_0)\big]
[I_m+M_+(z,k_0)]^{-1} \no
\\ &\quad =
[I_m-M_-(z,k_0)]^{-1}
\big[I_m+M_-(z,k_0)-M_+(z,k_0)-M_-(z,k_0)M_+(z,k_0)\big]
[I_m+M_+(z,k_0)]^{-1} \no
\\ &\quad =
[I_m-M_-(z,k_0)]^{-1}[I_m+M_-(z,k_0)][I_m-M_+(z,k_0)][I_m+M_+(z,k_0)]^{-1}
\lb{5.56}
\\ &\quad =
\Phi_-(z,k_0)^{-1}\Phi_+(z,k_0). \no
\end{align}
Introducing the $\Cm$-valued analytic functions $A(z,k_0)$ and
$B(z,k_0)$ by
\begin{align}
A(z,k_0)=I_m+zg(z,k_0) \,\text{ and }\, B(z,k_0)=z\rho_{k_0}
g(z,k_0-1)\rho_{k_0} - \al_{k_0}^*A(z,k_0)\al_{k_0}, \lb{5.57}
\end{align}
one rewrites \eqref{5.54} as
\begin{align}
\Phi_+(z,k_0)A(z,k_0)\al_{k_0} + B(z,k_0) -
\Phi_+(z,k_0)A(z,k_0)\Phi_-(z,k_0)^{-1} +
\al_{k_0}^*A(z,k_0)\Phi_-(z,k_0)^{-1} = 0.
\end{align}
Multiplying both sides by $\Phi_+(z,k_0)$ on the right and utilizing \eqref{5.56}
then yields the Riccati-type equation for $\Phi_+(z,k_0)$,
\begin{align}
\Phi_+(z,k_0)A(z,k_0)\al_{k_0}\Phi_+(z,k_0) + B(z,k_0)\Phi_+(z,k_0)
- \Phi_+(z,k_0)zg(z,k_0) + \al_{k_0}^*zg(z,k_0) = 0. \lb{5.59}
\end{align}
Since by \eqref{3.128} and \eqref{3.133} $\Phi_+(0,k_0)=0$ and by
\eqref{5.57}
\begin{align}
&zg(z,k_0) \underset{z\to 0}{\longrightarrow} 0, \quad
A(z,k_0)  \underset{z\to 0}{\longrightarrow} I_m, \quad
B(z,k_0)  \underset{z\to 0}{\longrightarrow} \al_{k_0}^*\al_{k_0}, \lb{5.60}
\end{align}
Lemma \ref{l5.4} implies that equation \eqref{5.59} uniquely
determines the analytic function $\Phi_+(z,k_0)$ for $|z|$
sufficiently small.

Having $\Phi_+(z,k_0)$, one obtains $\Phi_-(z,k_0)^{-1}$ from \eqref{5.54}
for $|z|$ sufficiently small,
\begin{align}
\Phi_-(z,k_0)^{-1} = \al_{k_0} - [I_m+zg(z,k_0)]^{-1}
[\al_{k_0}^*-\Phi_+(z,k_0)]^{-1} z\rho_{k_0}g(z,k_0-1)\rho_{k_0}.
\lb{5.61}
\end{align}
The right-hand side of \eqref{5.61} is well-defined since
$I_m+zg(z,k_0)$ and $\al_{k_0}^*-\Phi_+(z,k_0)$ are $\Cm$-valued
analytic functions invertible at the origin.

Finally, Theorem \ref{t5.1} (parts $(i)$, $(v)$ and $(vi)$, $(x)$)
implies that $\Phi_\pm(z,k_0)^{\pm1}$ for $|z|$ sufficiently small
uniquely determine the Verblunsky coefficients $\{\al_{k}\}_{k\in\Z}$.

The case of $k_0$ even is proved similarly.
\end{proof}

In the subsequent result, $g^{(j)}$ and $h^{(j)}$ denote the corresponding quantities
\eqref{5.62b} and \eqref{5.62c} associated with the Verblunsky coefficients
$\alpha^{(j)}$, $j=1,2$.

\begin{theorem}  \lb{t4.6}
Assume Hypothesis \ref{h3.1} for two sequences $\al^{(1)}$,
$\al^{(2)}$ and let $k_0\in\Z$, $N\in\N$. Then for the full-lattice
problems associated with $\al^{(1)}$ and $\al^{(2)}$ the following
local uniqueness results hold:
\begin{enumerate}[$(i)$]
\item
If either $h^{(1)}(0,k_0)$ or $h^{(2)}(0,k_0)$ is invertible and
\begin{align}
\begin{split}
&\big\|g^{(1)}(z,k_0)-g^{(2)}(z,k_0)\big\|_\Cm
+ \big\|h^{(1)}(z,k_0)-h^{(2)}(z,k_0)\big\|_\Cm \underset{z\to 0}{=} \oh(z^N), \lb{5.71} \\
& \, \text{then } \, \al^{(1)}_k = \al^{(2)}_k \,\text{ for }\,
k_0-N \leq k\leq k_0+N+1.
\end{split}
\end{align}
\item
If $\al^{(1)}_{k_0}=\al^{(2)}_{k_0}$, $\al^{(1)}_{k_0}$ is
invertible, and
\begin{align}
\begin{split}
&\big\|g^{(1)}(z,k_0-1)-g^{(2)}(z,k_0-1)\big\|_\Cm +
\big\|g^{(1)}(z,k_0)-g^{(2)}(z,k_0)\big\|_\Cm \underset{z\to 0}{=} \oh(z^N), \lb{5.72}  \\
& \, \text{then } \, \al^{(1)}_k = \al^{(2)}_k \,\text{ for }\,
k_0-N-1 \leq k\leq k_0+N+1.
\end{split}
\end{align}
\end{enumerate}
\end{theorem}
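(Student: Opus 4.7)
The plan is to refine the constructive arguments used in the proof of Theorem~\ref{t5.4}, tracking the precise order of vanishing at $z=0$ through each step, and then to conclude by invoking the half-lattice local Borg--Marchenko uniqueness Theorem~\ref{t5.2}.

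For part $(i)$ I first observe that $g^{(j)}(0,k_0)$ and $h^{(j)}(0,k_0)$ agree under the $\oh(z^N)$ hypothesis, so the explicit recovery formulas at the beginning of the proof of Theorem~\ref{t5.4}\,$(i)$ immediately yield $\al^{(1)}_{k_0} = \al^{(2)}_{k_0}$, hence also identical $\rho_{k_0}$, $\wti\rho_{k_0}$, $a_{k_0}$, $b_{k_0}$. With this agreement in hand, the explicit expressions \eqref{5.51} and \eqref{5.53} for $M_\mp(z,k_0)$ (and their counterparts for $k_0$ even) yield the matching estimates
\begin{align*}
M^{(1)}_-(z,k_0) - M^{(2)}_-(z,k_0) &\underset{z\to 0}{=} \oh(z^N),\\
M^{(1)}_+(z,k_0) - M^{(2)}_+(z,k_0) &\underset{z\to 0}{=} \oh(z^{N+1}),
\end{align*}
the extra factor of $z$ in the $M_+$-formula supplying the higher order of vanishing. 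Theorem~\ref{t5.2}\,$(vii)$ with $N$ replaced by $N+1$ then gives $\al^{(1)}_k = \al^{(2)}_k$ for $k_0-N \leq k \leq k_0$, while Theorem~\ref{t5.2}\,$(iii)$ with $N$ replaced by $N+1$ gives $\al^{(1)}_k = \al^{(2)}_k$ for $k_0+1 \leq k \leq k_0+N+1$; together these cover the asserted range.

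For part $(ii)$ the equality $\al^{(1)}_{k_0} = \al^{(2)}_{k_0}$ is built into the hypothesis, so I would exploit the Riccati representation for $\Phi_+(z,k_0)$ derived in the proof of Theorem~\ref{t5.4}\,$(ii)$ for $k_0$ odd (the case of even $k_0$ being parallel). Writing
\[
A^{(j)}(z,k_0) = I_m + zg^{(j)}(z,k_0), \quad
B^{(j)}(z,k_0) = z\rho_{k_0}\, g^{(j)}(z,k_0-1)\rho_{k_0} - \al^*_{k_0} A^{(j)}(z,k_0)\al_{k_0},
\]
the hypothesis implies that $A^{(1)} - A^{(2)}$, $B^{(1)} - B^{(2)}$, and $zg^{(1)}(\cdot,k_0)-zg^{(2)}(\cdot,k_0)$ are all $\oh(z^{N+1})$. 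Expanding the Riccati equation~\eqref{5.59} as a power series around $z=0$ and observing that at each order $n\ge 1$ the coefficient of $\phi_{+,n}(k_0)$ is precisely the invertible matrix $B^{(j)}(0,k_0) = -\al_{k_0}^*\al_{k_0}$, while the remaining terms in the order-$n$ equation depend only on the Taylor coefficients of $A^{(j)}, B^{(j)}, zg^{(j)}$ through order $n$ and on $\phi_{+,k}(k_0)$ for $k<n$, an induction on $n$ yields $\Phi^{(1)}_+(z,k_0) - \Phi^{(2)}_+(z,k_0) = \oh(z^{N+1})$. Formula~\eqref{5.61} then transfers this estimate to $\Phi^{(1)}_-(z,k_0)^{-1} - \Phi^{(2)}_-(z,k_0)^{-1} = \oh(z^{N+1})$. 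Applying Theorem~\ref{t5.2}\,$(iv)$ with $N$ replaced by $N+1$ and Theorem~\ref{t5.2}\,$(viii)$ with $N$ replaced by $N+2$ then delivers $\al^{(1)}_k = \al^{(2)}_k$ for $k_0+1 \leq k \leq k_0+N+1$ and for $k_0-N-1 \leq k \leq k_0$, respectively, covering the claimed range.

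The main technical obstacle I anticipate is bookkeeping the exact order of vanishing through the various algebraic manipulations, in particular ensuring that the Riccati recursion for $\Phi_+$ does not lose order. The cleanest way around this is the order-by-order Taylor expansion of~\eqref{5.59} described above, which makes the invertibility of the leading coefficient $-\al_{k_0}^*\al_{k_0}$ (supplied by the invertibility of $\al_{k_0}$) visible at every step. An alternative would be to apply Corollary~\ref{c5.4} in a quantitative fashion, but the direct Taylor-coefficient recursion yields sharper and more transparent orders.
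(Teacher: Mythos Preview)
Your proposal is correct and follows the same overall strategy as the paper: recover the half-lattice Weyl--Titchmarsh data $M_\pm$ (resp.\ $\Phi_\pm^{\pm 1}$) from the formulas \eqref{5.51}, \eqref{5.53} (resp.\ \eqref{5.57}, \eqref{5.59}, \eqref{5.61}) developed in the proof of Theorem~\ref{t5.4}, track the order of vanishing at $z=0$, and then invoke Theorem~\ref{t5.2}. The only noteworthy difference is in part $(ii)$: the paper obtains the estimate $\Phi_+^{(1)}(z,k_0)-\Phi_+^{(2)}(z,k_0)=\oh(z^{N+1})$ by applying the Lipschitz bound of Corollary~\ref{c5.4} to the Riccati equation \eqref{5.59}, whereas you argue by an order-by-order Taylor recursion exploiting the invertibility of $B(0,k_0)=-\al_{k_0}^*\al_{k_0}$. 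Both routes are valid and give the same conclusion; your recursion is slightly more elementary (it avoids the contraction-mapping machinery of Lemma~\ref{l5.4} and Corollary~\ref{c5.4}), while the paper's approach packages the stability of the Riccati solution into a single reusable estimate.
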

\begin{proof}
{\it Case} $(i)$. The result is implied by Theorem \ref{t5.2} (parts
$(i)$, $(iii)$ and $(v)$, $(vii)$) upon verifying that \eqref{5.51},
\eqref{5.53}, and \eqref{5.71} imply
\begin{align}
\begin{split}
\big\|M_+^{(1)}(z,k_0)-M_+^{(2)}(z,k_0)\big\|_\Cm &\underset{z\to 0}{=} o(z^{N+1}),
\\
\big\|M_-^{(1)}(z,k_0)-M_-^{(2)}(z,k_0)\big\|_\Cm &\underset{z\to 0}{=} o(z^N).
\end{split}
\end{align}

{\it Case} $(ii)$. The result is a consequence of Theorem \ref{t5.2} (parts
$(i)$, $(iv)$ and $(v)$, $(viii)$) upon verifying that Corollary
\ref{c5.4}, \eqref{5.57}, \eqref{5.59}, \eqref{5.61}, and
\eqref{5.72} imply
\begin{align}
\norm{\Phi_+^{(1)}(z,k_0)-\Phi_+^{(2)}(z,k_0)}_\Cm +
\norm{\Phi_-^{(1)}(z,k_0)^{-1}-\Phi_-^{(2)}(z,k_0)^{-1}}_\Cm \underset{z\to 0}{=} o(z^{N+1}).
\end{align}
\end{proof}

\appendix
\section{Basic Facts on Caratheodory and Schur Functions}
\lb{sA}
\renewcommand{\theequation}{A.\arabic{equation}}
\renewcommand{\thetheorem}{A.\arabic{theorem}}
\setcounter{theorem}{0}
\setcounter{equation}{0}

In this appendix we summarize a few basic properties of matrix-valued Caratheodory
and Schur functions used throughout this manuscript. (For the analogous case of
matrix-valued Herglotz functions we refer to \cite{GT00} and the extensive list of references therein.)

We denote by $\D$ and $\dD$ the open unit disk
and the counterclockwise oriented unit circle in the complex plane $\C$,
\begin{equation}
\D = \{ z\in\C \st \abs{z} < 1 \}, \quad \dD = \{ \ze\in\C \st \abs{\ze} = 1 \}.
\end{equation}
Moreover, we
denote as usual $\Re(A)=(A+A^*)/2$ and $\Im(A)=(A-A^*)/(2i)$
for square matrices $A$ with complex-valued entries.

\begin{definition} \lb{dA.1}
Let $m\in\bbN$ and $F_\pm$, $\Phi_+$, and $\Phi_-^{-1}$ be $m\times m$
matrix-valued analytic functions in $\D$. \\
$(i)$ $F_+$ is called a {\it Caratheodory matrix} if $\Re(F_+(z))\geq 0$ for all
$z\in\D$ and $F_-$ is called an {\it anti-Caratheodory matrix} if $-F_-$ is a
Caratheodory matrix. \\
$(ii)$ $\Phi_+$ is called a {\it Schur matrix} if $\|\Phi_+(z)\|_{\Cm} \leq 1$,
for all $z\in\D$.\  $\Phi_-$ is called an {\it anti-Schur matrix} if $\Phi_-^{-1}$
is a Schur matrix.
\end{definition}

\begin{theorem} \lb{tA.2}
Let $F$ be an $m\times m$ Caratheodory matrix,
$m\in\bbN$. Then $F$ admits the Herglotz representation
\begin{align}
& F(z)=iC+ \oint_{\dD} d\Omega(\zeta) \, \f{\zeta+z}{\zeta-z},
\quad z\in\D, \lb{A.3}
\\
& C=\Im(F(0)), \quad \oint_{\dD} d\Omega(\zeta) = \Re(F(0)),
\end{align}
where $d\Omega$ denotes a nonnegative $m \times m$
matrix-valued measure on $\dD$. The measure $d\Omega$ can
be reconstructed from $F$ by the formula
\begin{equation}
\Omega\big(\Arc\big(\big(e^{i\te_1},e^{i\te_2}\big]\big)\big)
=\lim_{\delta\downarrow 0} \lim_{r\uparrow 1} \f{1}{2\pi}
\oint_{\te_1+\delta}^{\te_2+\delta} d\te \,
\Re\big(F\big(r\zeta\big)\big),
\end{equation}
where
\begin{equation}
\Arc\big(\big(e^{i\theta_1},e^{i\theta_2}\big]\big)
=\big\{\zeta\in\dD\,|\, \theta_1<\te\leq \theta_2\big\}, \quad
\theta_1 \in [0,2\pi), \; \theta_1<\theta_2\leq \theta_1+2\pi. \lb{A.5}
\end{equation}
Conversely, the right-hand side of equation \eqref{A.3} with $C
= C^*$ and $d\Omega$ a finite nonnegative $m \times m$
matrix-valued measure on $\dD$ defines a Caratheodory matrix.
\end{theorem}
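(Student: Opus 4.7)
My plan is to reduce the matrix statement to the classical scalar Herglotz/Riesz representation theorem for Caratheodory functions on the unit disk, then recover the matrix-valued measure by polarization, and finally verify the reconstruction formula via the Poisson kernel identity
\begin{equation}
\Re\bigg(\frac{\zeta+z}{\zeta-z}\bigg) = \frac{1-\abs{z}^2}{\abs{\zeta-z}^2}, \quad \zeta\in\dD, \; z\in\D.
\end{equation}

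First, for any fixed $x\in\C^m$, the scalar function $f_x(z) := x^* F(z) x$ is analytic in $\D$ and satisfies $\Re(f_x(z)) = x^* \Re(F(z)) x \ge 0$, so $f_x$ is a scalar Caratheodory function. The scalar version of the theorem (a standard consequence of the Poisson representation of nonnegative harmonic functions in $\D$) then yields a unique nonnegative Borel measure $d\omega_x$ on $\dD$ and a real constant $c_x=\Im(f_x(0))$ such that
\begin{equation}
f_x(z) = i c_x + \oint_{\dD} \frac{\zeta+z}{\zeta-z}\, d\omega_x(\zeta), \quad z\in\D,
\end{equation}
with $\omega_x(\dD)=\Re(f_x(0))=x^*\Re(F(0))x$.

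Next I would construct the matrix-valued measure $d\Omega$ by polarization. For $x,y\in\C^m$ define the complex Borel measure
\begin{equation}
d\omega_{x,y} := \tfrac{1}{4}\big[d\omega_{x+y} - d\omega_{x-y} + i\,d\omega_{x+iy} - i\,d\omega_{x-iy}\big].
\end{equation}
Using linearity of $x\mapsto f_x(z)$ in $x$ (combined with antilinearity in the first argument), one checks that $d\omega_{x,y}$ is sesquilinear in $(x,y)$ and satisfies $d\omega_{x,x}=d\omega_x\ge 0$. For each Borel set $B\subset\dD$ the map $(x,y)\mapsto \omega_{x,y}(B)$ is therefore a nonnegative sesquilinear form on $\C^m$, so there exists a unique $m\times m$ positive semidefinite matrix $\Omega(B)$ with $\omega_{x,y}(B)=x^*\Omega(B)y$. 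Countable additivity of each $\omega_{x,y}$ transfers to $\Omega$, giving a nonnegative matrix-valued Borel measure $d\Omega$ on $\dD$. Substituting this into the scalar representation produces \eqref{A.3} tested against every pair $(x,y)$, and hence the matrix identity itself, with $C=\Im(F(0))$.

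For the reconstruction formula I would fix an arc $\Arc((e^{i\te_1},e^{i\te_2}])$ and compute, for $r\in(0,1)$ and $\delta>0$ small,
\begin{equation}
\frac{1}{2\pi}\int_{\te_1+\delta}^{\te_2+\delta} \Re\big(F(re^{i\phi})\big)\,d\phi
= \oint_{\dD} P_r(\phi,\te)\,d\Omega(e^{i\te}),
\end{equation}
after interchanging the orders of integration, where $P_r$ denotes the integrated Poisson kernel over the arc. As $r\uparrow 1$ the kernel $P_r$ converges to the indicator of $(\te_1+\delta,\te_2+\delta]$ in the weak-$*$ sense against any finite measure; the shift by $\delta$ then permits a dominated-convergence passage that assigns no mass to the endpoints of the arc, and sending $\delta\downarrow 0$ recovers $\Omega(\Arc((e^{i\te_1},e^{i\te_2}]))$ in the sense of weak-$*$ limits of positive matrix-valued measures. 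This also yields uniqueness of $d\Omega$.

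The converse direction is the easiest part: analyticity of the right-hand side of \eqref{A.3} in $z\in\D$ follows by differentiating under the integral (the integrand is analytic in $z$ and uniformly bounded on compact subsets of $\D$ since $|\zeta-z|$ is bounded below), while the Poisson kernel identity together with the nonnegativity of $d\Omega$ gives
\begin{equation}
\Re(F(z)) = \oint_{\dD} \frac{1-\abs{z}^2}{\abs{\zeta-z}^2}\,d\Omega(\zeta)\;\ge\;0, \quad z\in\D,
\end{equation}
showing that $F$ is Caratheodory. The main obstacle is the polarization step: one must verify that the four scalar measures combine into a genuine matrix-valued measure whose values are positive semidefinite, and this depends on confirming that $d\omega_{x,y}$ depends sesquilinearly on $(x,y)$ at the level of Borel measures (not just at the level of total mass), which in turn rests on the uniqueness statement of the scalar Herglotz theorem applied to $f_{x+y}$, $f_{x-y}$, $f_{x\pm iy}$.
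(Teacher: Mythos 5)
Your proposal is correct, but there is nothing in the paper to compare it against: Theorem \ref{tA.2} is stated in Appendix \ref{sA} without proof, as part of a summary of standard facts on matrix-valued Caratheodory functions (the authors point to the literature on matrix-valued Herglotz functions, e.g.\ \cite{GT00}, for the analogous statements and their proofs). The route you take --- reduce to the scalar Riesz--Herglotz theorem via the quadratic forms $f_x(z)=x^*F(z)x$, recover the matrix measure by polarization using the uniqueness part of the scalar theorem, and then verify the Stieltjes-type inversion formula with the Poisson kernel --- is exactly the standard argument, and all the essential points are in place: you correctly identify that sesquilinearity of $(x,y)\mapsto\omega_{x,y}(B)$ at the level of measures (not just total masses) is the one step that genuinely needs the scalar uniqueness statement, and the $\delta$-shift in the reconstruction formula is correctly explained as the device that resolves the half-open arc convention in the presence of point masses at the endpoints. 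Two cosmetic remarks: the signs in your polarization formula correspond to a form linear in the first slot, whereas the paper's convention $x^*\Omega(B)y$ is antilinear there, so the coefficients of $d\omega_{x\pm iy}$ should be swapped (the diagonal identity $\omega_{x,x}=\omega_x$, which is what positivity rests on, is unaffected); and in the converse direction one should also record that $\Im(F(0))=C$ and $\Re(F(0))=\oint_{\dD}d\Omega(\zeta)$ follow by evaluating \eqref{A.3} at $z=0$, which closes the loop with the normalization stated in the theorem.
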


We note that additive nonnegative $m\times m$ matrices on the
right-hand side of \eqref{A.3} can be absorbed into the measure $d\Om$
since
\begin{equation}
\oint_\dD d\mu_0(\zeta) \, \f{\zeta+z}{\zeta-z}=1, \quad z\in\D,
\lb{A.5a}
\end{equation}
where
\begin{equation}
d\mu_0(\zeta)=\f{d\te}{2\pi}, \quad \zeta=e^{i\te}, \; \te\in
[0,2\pi) \lb{A.5b}
\end{equation}
denotes the normalized Lebesgue measure on the unit circle $\dD$.

Given a Caratheodory (resp., anti-Caratheodory) matrix $F_+$
(resp. $F_-$) defined in $\D$ as in \eqref{A.3}, one extends $F_\pm$ to
all of $\bbC\backslash\dD$ by
\begin{equation}
F_\pm(z)=iC_\pm \pm \oint_{\dD} d\Om_\pm (\zeta) \,
\f{\zeta+z}{\zeta-z}, \quad z\in\bbC\backslash\dD, \;\;
C_\pm=C_\pm^*. \lb{A.6}
\end{equation}
In particular,
\begin{equation}
F_\pm(z) = -F_\pm(1/\ol{z})^*, \quad z\in\C\backslash\ol{\D}. \lb{A.7}
\end{equation}
Of course, this continuation of $F_\pm|_{\D}$ to
$\bbC\backslash\ol\D$, in general, is not an analytic
continuation of $F_\pm|_\D$.

Next, given the functions $F_\pm$ defined in $\bbC\backslash\dD$
as in \eqref{A.6}, we introduce the functions $\Phi_\pm$
by
\begin{equation}
\Phi_\pm(z)=[F_\pm(z)-I_m][F_\pm(z)+I_m]^{-1}, \quad
z\in\bbC\backslash\dD.  \lb{A.11}
\end{equation}
We recall (cf., e.g., \cite[p.\ 167]{SF70}) that if $\pm \Re(F_\pm) \geq 0$, then
$[F_\pm \pm I_m]$ is invertible.
In particular, $\Phi_+|_{\D}$ and $[\Phi_-]^{-1}|_{\D}$ are Schur matrices
(resp., $\Phi_-|_{\D}$ is an anti-Schur matrix). Moreover,
\begin{equation}
F_\pm(z)= [I_m-\Phi_\pm (z)]^{-1} [I_m+\Phi_\pm (z)], \quad
z\in\bbC\backslash\dD.    \lb{A.12}
\end{equation}

\medskip

\noindent {\bf Acknowledgments.}
We are indebted to Konstantin A.\ Makarov and Eduard Tsekanovskii
for helpful discussions.


\end{document}